\newcommand{\diag}{\operatorname{diag}}
\newcommand{\C}{\mathbb{C}}
\newcommand{\R}{\mathbb{R}}
\newcommand{\Z}{\mathbb{Z}}
\newcommand{\T}{\mathbb{T}}
\renewcommand{\d}{\mathrm{d}}
\newcommand{\Exp}[1]{\operatorname{e}^{#1}}
\renewcommand{\L}{\mathscr L}
\newtheorem{pro}{Proposition}
\newtheorem{lemma}{Lemma}
\newtheorem{definition}{Definition}
\newtheorem{theorem}{Theorem}
\newtheorem{cor}{Corollary}
\newcommand{\Res}{\operatorname{Res}}
\newcommand{\Langle}{\langle\!\langle}
\newcommand{\Rangle}{\rangle\!\rangle}
\begin{document}
\title[MOLPUC and Toda hierarchy]{Matrix Orthogonal Laurent Polynomials on the Unit Circle and Toda Type Integrable Systems}
\author{Gerardo Ariznabarreta}\address{Departamento de F\'{\i}sica Te\'{o}rica II (M\'{e}todos Matem\'{a}ticos de la F\'{\i}sica), Universidad Complutense de Madrid, 28040-Madrid, Spain}
\email{gariznab@ucm.es}
\thanks{GA thanks economical support from the Universidad Complutense de Madrid  Program ``Ayudas para Becas y Contratos Complutenses Predoctorales en Espa\~{n}a 2011"}
\author{Manuel Ma\~{n}as}
\email{manuel.manas@ucm.es}
\thanks{MM thanks economical support from the Spanish ``Ministerio de Econom\'{\i}a y Competitividad" research project MTM2012-36732-C03-01,  \emph{Ortogonalidad y aproximacion; teoria y aplicaciones}}
\keywords{Matrix Orthogonal Laurent Polynomials, Borel--Gauss factorization, Christoffel-Darboux kernels, Toda type integrable hierarchies}
\subjclass{15A23,,33C45,37K10,37L60,42C05,46L55}
\maketitle
\begin{abstract}
Matrix orthogonal Laurent polynomials in the unit circle and the theory of Toda-like integrable systems are connected using the Gauss--Borel factorization of two,  left and a right, Cantero--Morales--Vel\'{a}zquez block  moment matrices, which are constructed using a quasi-definite matrix measure. A block Gauss--Borel factorization problem of these moment matrices  leads to two sets of biorthogonal matrix orthogonal Laurent polynomials and matrix Szeg\H{o} polynomials, which can be expressed in terms of Schur complements of bordered truncations of the block moment matrix. The corresponding block extension of the Christoffel--Darboux theory is derived. Deformations of the quasi-definite matrix measure leading to integrable systems of Toda type are studied. The integrable theory is given in this matrix scenario; wave and adjoint wave functions, Lax and Zakharov--Shabat equations, bilinear equations and discrete flows --connected with Darboux transformations--. We generalize the
integrable flows of the Cafasso's matrix  extension of the Toeplitz lattice for the Verblunsky coefficients of Szeg\H{o} polynomials.
An analysis of the Miwa shifts allows for the finding of interesting connections between Christoffel--Darboux kernels  and Miwa shifts of the  matrix orthogonal Laurent polynomials.
\end{abstract}
\tableofcontents

\section{Introduction}

In this paper we extend previous results on orthogonal Laurent polynomials in the unit circle (OLPUC) \cite{carlos} to the matrix  realm (MOLPUC).  To explain better our aims and results we need  a brief account on orthogonal polynomials,  Laurent orthogonal polynomials and their matrix extensions, and also some facts about integrable systems.

\subsection{Historical background}
\subsubsection{Szeg\H{o} polynomials} We will denote the unit circle by $\T:=\{z \in \C:
|z|=1\}$ and $\mathbb{D}:=\{z \in \C : |z|<1\}$ stands for the  unit disk; when $z\in\T$ we will use the parametrization $z=\text{e}^{\text{i}\theta}$ with $\theta\in[0,2\pi)$. In the scalar case, one deals with a complex Borel measure $\mu$ supported in $\T$  that is said to be positive definite if it maps measurable sets into non-negative numbers, that in the  absolutely continuous situation (with respect to the Lebesgue measure  $\d\theta$) has the form $w({\theta}) \d \theta$. For the positive definite situation the orthogonal polynomials in the unit circle (OPUC) or Szeg\H{o} polynomials are defined as those  monic polynomials  $P_n$ of degree  $n$ that satisfy the following system of equations, called orthogonality relations, $\int_{\T}P_n(z) z^{-k} \d \mu(z)=0$, for $ k=0,1,\dots,n-1$, \cite{Szego}.
The connections between orthogonal polynomials on the real line (OPRL) supported in the interval  $[-1,1]$ and OPUC has been explored in the literature, see for example  \cite{Freud,Berriochoa}.  Let us observe that for this analysis the use of spectral theory techniques requires the study of the operator of multiplication by \emph{z}.  Recursion relations for OPRL and OPUC are well known; however, in the real case the three term recurrence laws provide a tridiagonal matrix, the so called Jacobi operator, while  in the unit circle support case the problem leads to a Hessenberg matrix \cite{Golub}, being a more involved scenario that the Jacobi one (as it is not a sparse matrix with a finite number of non vanishing diagonals). In fact, OPUC's recursion relation requires the introduction of  reciprocal or reverse Szeg\H{o} polynomials $P^*_l(z):=z^l \overline{P_l(\bar z^{-1})}$ and the reflection or Verblunsky (Schur parameters is another usual name) coefficients $\alpha_l:=P_l(0)$. The recursion relations for
the Szeg\H{
o} polynomials can be written as $\left(\begin{smallmatrix} P_l \\ P_{l}^* \end{smallmatrix}\right)=
\left(\begin{smallmatrix} z & \alpha_l \\ z \bar \alpha_l & 1 \end{smallmatrix}\right)
\left(\begin{smallmatrix} P_{l-1} \\ P_{l-1}^* \end{smallmatrix}\right)$. There exist numerous studies on the  zeroes of the OPUC,  \cite{Alfaro,Ambrolazde,Barrios-Lopez,Garcia,Godoy,Golinskii2,Mhaskar,Totik} with interesting applications to signal analysis theory \cite{Jones-1,Jones-2,Pan-1,Pan-2}. Despite the mentioned advances for the OPUC theory, the corresponding state of the art in the OPRL context is still much more developed. An issue to stress here is that Szeg\H{o} polynomials are, in general, not a dense set in the Hilbert space $L^2(\T,\mu)$;  Szeg\H{o}'s theorem implies for a nontrivial probability measure $\d\mu$  on $\T$ with
Verblunsky coefficients $\{\alpha_n\}_{n=0}^\infty$ that the corresponding Szeg\H{o}'s polynomials are dense in $L^2(\T,\mu)$ if and
only if $\prod_{n=0}^\infty (1-|\alpha_n)|^2)=0$. For an absolutely continuous probability measure  Kolmogorov's density theorem ensures that density in $L^2(\T,\mu)$ of the  OPUC holds iff   the so called Szeg\H{o}'s condition $\int_{\T}\log (w(\theta)\d\theta=-\infty$ is fulfilled, \cite{Simon-S}. We refer the reader to Barry Simon's books \cite{Simon-1} and \cite{Simon-2} for a very detailed studied of OPUC.

\subsubsection{Orthogonal Laurent  polynomials} Orthogonal Laurent polynomials on the real line (OLPRL), where introduced in \cite{Jones-3,Jones-4} in the context of the strong Stieltjes moment problem.
When this moment problem has a solution, there exist polynomials $\{Q_n\}$, kown as Laurent polynomials,  such that $\int_{\R}x^{-n+j}Q_n(x)\d \mu(x)=0$ for $j=0,\dots,n-1$.
 The theory of Laurent polynomials on the real line was developed in parallel with the theory of orthogonal polynomials, see \cite{Cochran,Diaz,Jones-5} and \cite{Njastad}. Orthogonal Laurent polynomials' theory was carried from the real line to the circle \cite{Thron} and subsequent works broadened the matter (e.g. \cite{Barroso-Vera,CMV,Barroso-Daruis,Barroso-Snake}), treating subjects like recursion relations, Favard's theorem, quadrature problems, and Christoffel--Darboux formulae. The Cantero--Moral--Vel\'{a}zquez  (CMV) \cite{CMV} representation is a hallmark in the study of certain aspects of Szeg\H{o} polynomials, as we mentioned already while the OLPUC are always dense in $L^2(\T,\mu)$ this is not true in general for the OPUC,   \cite{Bul} and \cite{Barroso-Vera}. The bijection between OLPUC in the CMV representation and the ordinary Szeg\H{o} polynomials implies the replacement of complicated recursion relations with  five-term relations similar to the OPRL situation.  Other papers have reviewed and
broadened the study of CMV matrices, see for example \cite{CMV-Simon,Killip} in particular alternative or generic orders in the base used to span the space of OLPUC can be found in \cite{Barroso-Snake}.
In particular, the reading of Simon's account of the CMV theory \cite{CMV-Simon} is illuminating. In fact,  the discovery of the advantages of the CMV ordering goes back to previous work \cite{watkins}.

\subsubsection{Matrix orthogonal polynomials} Orthogonal polynomials with matrix coefficients on the real line were considered in detail  by Krein \cite{krein1, krein2} in 1949,
and thereafter were studied sporadically until the last decade of the XX century. Some relevant papers on this subject are \cite{bere},  \cite{geronimo} and  \cite{nikishin}; in particular,  in \cite{nikishin} the scattering problem is solved
for a kind of discrete Sturm--Liouville operators that are equivalent to the recursion  equation for scalar orthogonal polynomials. They found that
polynomials that satisfy a  relation of the form
\begin{align*}
  xP_{k}(x)&=A_{k}P_{k+1}(x)+B_{k}P_{k}(x)+A_{k-1}^{*}P_{k-1}(x),& k&=0,1,...,
\end{align*}
 are orthogonal with respect to a positive definite measure. This is a  matrix version of Favard's theorem for scalar orthogonal polynomials.
 Then, in the 1990's and the 2000's some authors found that matrix orthogonal  polynomials (MOP) satisfy in certain cases some properties that satisfy
 scalar valued orthogonal polynomials; for example, Laguerre, Hermite and  Jacobi polynomials, i.e., the scalar-type Rodrigues' formula
 \cite{duran20051,duran20052,constin} and a second order differential equation  \cite{duran1997,duran2004,borrego}. Later on, it has been proven \cite{duran2008} that operators of the form
$D$=${\partial}^{2}F_{2}(t)$+${\partial}^{1}F_{1}(t)$+${\partial}^{0}F_{0}$ have as eigenfunctions different infinite families of MOP's. Moreover, in
\cite{borrego} a new family of MOP's satisfying second order differential equations whose coefficients do not behave asymptotically as the identity
matrix was found; see also \cite{cantero}. In \cite{cassatella} the Riemann--Hilbert problem for this matrix situation and the appearance of non-Abelian discrete versions of Painlev\'{e} I were explored , showing singularity confinement --see \cite{cassatella2};
for Riemann--Hilbert problems see also \cite{dominguez}. Let us mention that in \cite{miranian} and \cite{Cafasso} the MOP are expressed in terms of Schur complements that play the role of determinants in the standard scalar case.
For a survey on matrix orthogonal polynomials we refer the reader to \cite{Damanik}.

\subsubsection{Integrable hierarchies and the Gauss--Borel factorization} The seminal paper of M. Sato \cite{sato} and further developments performed by the Kyoto school \cite{date1}-\cite{date3} settled the Lie-group theoretical description of the integrable hierarchies. It was Mulase \cite{mulase} the one who made the connection between factorization problems, dressing procedures and integrability. In this context, Ueno and Takasaki \cite{ueno-takasaki} performed an analysis of the Toda-type hierarchies and their soliton-like solutions.  Adler and van Moerbeke \cite{adler}-\cite{adler-vanmoerbeke-5} have clarified  the connection between the Lie-group factorization, applied to Toda-type hierarchies --what they call discrete Kadomtsev--Petviashvilii (KP)-- and the Gauss--Borel factorization applied to a moment matrix that comes from  orthogonality problems; thus, the corresponding
orthogonal polynomials  are closely related to specific  solutions of the integrable hierarchy. See \cite{bergvelt}, \cite{felipe}, \cite{manas-martinez-alvarez} and \cite{cum} for further developments in relation with the factorization problem, multicomponent Toda lattices and generalized orthogonality. In \cite{Adler-Van-Moerbecke-Toeplitz} a profound study of the OPUC and the Toda type associated lattice, called the Toeplitz lattice (TL), was performed. A relevant reduction of the equations of the TL has been found by Golinskii \cite{Golinskii} in the context of Schur flows when the measure is invariant under conjugation, (also studied in \cite{Simon-Schur} and \cite{Fay1}), another interesting paper on this subject is \cite{Mukaihira}. The Toeplitz lattice was proven to be equivalent to the Ablowitz--Ladik lattice (ALL), \cite{a-l-1,a-l-2}, and that work has been generalized to the link between matrix orthogonal polynomials and the non-Abelian ALL in \cite{Cafasso}. Both of them have to deal
with the Hessenberg operator for the multiplication by $z$. Research about the integrable structure of Schur flows and its connection with ALL has been done (in recent and not so recent works) from a Hamiltonian point of view in \cite{Nenciu}, and other works also introduce connections with Laurent polynomials and $\tau$-functions, like \cite{Fay2}, \cite{Fay3} and \cite{Bertola}.

\subsection{Preliminary material}
\subsubsection{Semi-infinite block matrices}
For the matrix extension considered in the present  work we need to deal with block matrices and block Gauss--Borel factorizations.  For each $m\in \mathbb{N}$, the directed set of natural numbers, we consider  ring of the complex $m\times m$ matrices $\mathbb{M}_m:=\C^{m\times m}$, and its direct limit $\mathbb{M}_{\infty}:=\underset{\longrightarrow}{\lim}\,\mathbb M_m$,  the ring of semi-infinite complex matrices.  We will denote by $\operatorname{diag}_m\subset \mathbb M_m$ the set of diagonal matrices. For any $A \in \mathbb{M}_{\infty}$, $ A_{ij} \in \C$ denotes the $(i,j)$-th element of $A$, while $(A)_{ij} \in \mathbb{M}_m$ denotes the $(i,j)$-th block of it when subdivided into $m\times m$ blocks. We will denote by $G_\infty$ the group of invertible semi-infinite matrices of $\mathbb{M}_\infty$. In this paper two important subgroups are $\mathscr U$, the invertible upper triangular --by blocks--  matrices, and $\mathscr L$, the lower  triangular  --by blocks-- matrices with the
identity matrix along their block diagonal.  The corresponding restriction on invertible upper triangular
block  matrices  is denoted by $\widehat{\mathscr U}$.
Block diagonal matrices will be denoted by $\mathscr D=\{D \in \mathbb{M}_{\infty} : (D)_{i,j}= d_i \cdot \delta_{i,j} \mbox{ with } d_i \in\mathbb{M}_m\} $.
Given a semi-infinite matrix $A\in\mathbb{M}_\infty$ we consider its $l$-th block  leading submatrix
\begin{align*}
 A^{[l]}&=\begin{pmatrix}
   (A)_{0,0} &(A)_{0,1} & \dots &(A)_{0,l-1}\\
   (A)_{1,0} &  (A)_{1,1} & \dots & (A)_{1,l-1}\\
  \vdots&       &       & \vdots\\
   (A)_{l-1,0} & (A)_{l-1,1} & \dots &(A)_{l-1,l-1}
\end{pmatrix}\in\mathbb{M}_{ml}, & (A)_{i,j}&\in\mathbb M_m,
\end {align*}
and  we write
\begin{align}\label{block}
A&=\left(\begin{BMAT}{c|c}{c|c} A^{[l]} & A^{[l,\geq l]} \\ A^{[\geq l, l]} & A^{[\geq l]} \end{BMAT}\right),
\end{align}
for the corresponding block partition of a matrix $A$ where, for example,  $A^{[l,\geq l]}$ denotes all the $(A)_{i,j}$-th blocks of the matrix $A$ with $i<l,j\geq l $. Very much related to the block partition of a matrix $M$ are the Schur complements. The Schur complement
with respect to the upper left block of the block partition
\begin{align*}
 M&=\begin{pmatrix}
    A & B\\
    C & D
   \end{pmatrix}\,\,\in \mathbb{M}_{p+q},&A=(a_{i,j})&\in\mathbb{M}_{p}, D \in \mathbb{M}_q,
\end{align*}
is
\[
M\diagup A :=\operatorname{SC}\left(\begin{BMAT}{c|c}{c|c}
    (a_{i,j}) & B\\
    C & D
   \end{BMAT}\right):= D-C A^{-1} D,
\]
where we have assumed that $A$ is an invertible matrix.

 \subsubsection{Quasi-definiteness}
 Let us recall the reader that measures and linear functionals are closely connected;  given a linear functional $\mathcal L$ on $\Lambda_{[\infty]}$, the set of Laurent polynomials on the circle --or polynomial loops $L_\text{pol}\C$,  we define the corresponding moments of $\mathcal L$ as $c_n:=\mathcal L[z^n]$ for all the possible integer values of $n \in \Z$. The functional $\mathcal L$ is said to be Hermitian whenever $c_{-n}=\overline{c_n}$, $\forall n \in \Z$. Moreover, the functional $\mathcal L$ is defined as quasi-definite (positive definite) when the principal submatrices of the Toeplitz moment matrix $(\Delta_{i,j})$, $\Delta_{i,j}:=c_{i-j}$, associated to the sequence $c_n$ are non-singular (positive definite), i.e. $\forall n \in \Z, \Delta_n:=\det(c_{i-j})_{i,j=0}^n \neq 0 (>0)$. Some aspects on quasi-definite functionals and their perturbations are studied in \cite{alvarez,marcellan}.
It is known \cite{geronimus-2} that when the linear functional $\mathcal L$ is Hermitian and positive definite there exist a finite positive  Borel measure with a support lying on $\T$ such that $\mathcal L[f]=\int_{\T} f \d \mu$, $\forall f\in \Lambda_{[\infty]}$. In addition, a Hermitian positive definite linear functional $\mathcal L$ defines a sesquilinear form $\langle {\cdot},{\cdot}\rangle_{\mathcal L}: \Lambda_{[\infty]} \times \Lambda_{[\infty]} \mapsto \C$ as $\langle f,g\rangle_{\mathcal L}=\mathcal L[f \bar g]$, $\forall f ,g \in \Lambda_{[\infty]}$. Two Laurent polynomials $\{f,g\}\subset\Lambda_{[\infty]}$ are said to be orthogonal with respect to $\mathcal L$ if $\langle f,g \rangle_{\mathcal L}=0$. From the properties of $\mathcal L$ it is easy to see that $\langle {\cdot},{\cdot} \rangle_{\mathcal L} $ is a scalar product and if $\mu$ is the positive finite Borel measure associated to $\L$ we are lead to the corresponding Hilbert space $L^2(\T,\mu)$, the closure of $\Lambda_{[\infty]}$. The
more general setting when $\mathcal L$ is just quasi-definite is associated to a corresponding quasi-definite complex measure $\mu$, see \cite{gautschi}. As  before a sesquilinear form $\langle {\cdot},{\cdot}\rangle_{\mathcal L}$ is defined for any such linear functional $\mathcal L$; thus, we just have the linearity (in the first entry) and skew-linearity (in the second entry) properties. However,  we have no symmetry allowing the interchange of the two arguments. We formally broaden the notion of orthogonality and say that $f$ is orthogonal to $g$ if $\langle f,g \rangle_{\mathcal L}=0$, but we must be careful as in this general situation it could  happen that $ \langle f,g \rangle_{\mathcal L}=0 $ but $ \langle g,f \rangle_{\mathcal L} \neq 0 $.

\subsubsection{Matrix Laurent polynomials and orthogonality} A matrix valued measure $\mu=(\mu_{i,j})$ supported on  $\T$ is said  to be Hermitian and/or positive definite, if for every   Borel subset
 $\mathscr B$ of $\T$ the matrix $\mu(\mathscr B)$ is a Hermitian and/or  positive definite matrix. When the scalar measures $\mu_{i,j}$, $i,j=1,\dots,m$, are  absolutely continuous with respect to the Lebesgue
measure on the circle $\operatorname{d}\theta$, according to the Radon--Nikodym theorem, it can be always expressed using
complex weight (density or Radon--Nikodym derivative of the measure) functions $w_{i,j}$, $i,j=1,\dots,m$, so that
$\operatorname{d}\mu_{i,j}(\theta)=w_{i,j}(\theta)\operatorname{d}\theta$,
$\theta\in[0,2\pi)$.
If, in addition, the matrix measure $\mu$ is Hermitian and positive definite then the matrix
$(w_{i,j}(\theta))$
is a positive definite Hermitian matrix.
 For the sake of notational simplicity we will use, whenever it
is convenient,
 the complex notation $\operatorname{d}\mu(z) = \operatorname{i}\operatorname{e}^{\operatorname{i}\theta}\operatorname{d}
\mu(\theta)$.\

The moments of the matrix measure $\mu$ are
\begin{align*}
c_n&:= \frac{1}{2\pi}\oint_\T z^{-n} \frac{\d\mu(z)}{\operatorname{i}z}=\frac{1}{2\pi}\int_0^{2\pi} \Exp{-in\theta} \d\mu(\theta)\in\mathbb M_m,
\end{align*}
while  the Fourier series of the measure is
\begin{align}\label{moment}
F_{\mu}(u)&:=\sum_{n=-\infty}^\infty c_n u^n,
\end{align}
that for  absolutely continuous measures $\d\mu(\theta)=w(\theta)\d\theta$ satisfies
$F_\mu(\theta)=w(\theta)$.
 Let $D(0;r,R)=\{z\in\C: r<|z|<R\}$ denote the annulus around $z=0$ with interior and exterior radii $r$ and $R$, $R_{ij,\pm}:=\big(\limsup\limits_{n\to\infty}\sqrt[n]{|c_{ij,\pm n}|}\big)^{\mp 1}$ and $R_+=\min\limits_{i,j=1,\dots, m}R_{ij,+}$ and $R_{ij,-}=\max\limits_{i,j=1,\dots, m} R_{ij,-}$. Then, according to the Cauchy--Hadamard theorem the series $F_{\mu}(z)$ converges uniformly in any compact set $K$, $K\subset D(0;R_-,R_+)$.

The space $\Lambda_{m,[p,q]}:=\mathbb{M}_m\{\mathbb{I}z^{-p},\mathbb{I}z^{-p+1},\dots,\mathbb{I}z^q\}$ (where $\mathbb{I} \in \mathbb{M}_m$ is the identity matrix) of complex
 Laurent polynomials with $m\times m$ matrix coefficients and the corresponding restrictions on their degrees is a $\mathbb M_m$ free module of rank $p+q+1$. We denote by  $L_{\text{pol}}\mathbb{M}_m$   the infinite set of Laurent matrix polynomials or polynomial loops in $\mathbb{M}_m$.

Given a matrix measure $\mu$  we introduce
the following  left and right  matrix valued sesquilinear forms in the loop space $L\mathbb M_m$ considered as left and right modules for the ring $\mathbb M_m$, respectively,
\begin{align}\label{prodL}
\langle\!\langle f ,g\rangle\!\rangle_{L}:=\oint_\mathbb{T}g(z)\frac{\d\mu(z)}{\operatorname{i}z}f(z)^\dagger\in\mathbb M_m,
\\\label{prodR}
\langle\!\langle f ,g\rangle\!\rangle_{R}:=\oint_\mathbb{T}f(z)^\dagger\frac{\d\mu(z)}{\operatorname{i}z} g(z)\in\mathbb M_m.
\end{align}
The sesquilinearity of these forms means that the following two properties hold
\begin{enumerate}
\item $\Langle f_1+f_2, g\Rangle_H=\Langle f_1, g\Rangle_H+\Langle f_2, g\Rangle_H$ and $\Langle f, g_1+g_2\Rangle_H=\Langle f, g_1\Rangle_H+\Langle f,  g_2\Rangle_H$ for all $f,f_1,f_2,g,g_1,g_2\in L\mathbb M_m$ and $H=L,R$.
\item $\Langle mf,g\Rangle_L=\Langle f,g \Rangle_L m^\dagger$,  $\Langle f,m g\Rangle_L = m \Langle f,g\Rangle_L$,  $\Langle fm,g\Rangle_R=m^\dagger\Langle f,g \Rangle_R$ and $\Langle f,gm\Rangle_R =  \Langle f,g\Rangle_R m$, for all $f,g\in L\mathbb M_m$ and $m\in\mathbb M_m$.
\end{enumerate}
Moreover, if the matrix measure is Hermitian then so are  these  forms; i.e.,
\begin{align*}
 \langle\!\langle f ,g\rangle\!\rangle_{H}^{\dagger}&=\langle\!\langle g ,f\rangle\!\rangle_{H}, & H&=L,R.
\end{align*}
Actually, from these sesquilinear forms, for a positive definite Hermitian measure we can derive the corresponding scalar products
\begin{align*}
\langle f ,g\rangle_H^{\dagger}=\langle f ,g\rangle_H&:= \operatorname{Tr}[\langle\!\langle f ,g\rangle\!\rangle_H],&  \Arrowvert f \Arrowvert^2_H&=\langle f ,f \rangle_H,&H&=\,\,L,R,
\end{align*}
and corresponding  Hilbert spaces $\mathcal{H}^H$ with a norm --of Frobenius type-- given by
\begin{align*}
  \Arrowvert f \Arrowvert_H&=+\sqrt{\langle f ,f \rangle_H},&H&=L,R.
\end{align*}
A set $\{p_l\}_{l=0}^{N}\subset\mathcal{H}^H$, $H=L,R$, is an orthogonal set if and only if
\begin{align*}
\langle\!\langle p_l^H ,p_j^H\rangle\!\rangle_{K}&=\delta_{ij}h_j,  &  &h_j\in \mathbb{M}_m.
\end{align*}

\subsection{On the content of the paper}

In  previous papers we have approached the study of the link between orthogonality and integrability within an algebraic/group theoretical point of view. Our keystone relies on the fact that a number of facets of orthogonality and integrability can be described with the aid of the Gauss--Borel factorization of an infinite matrix. This approach was applied in \cite{afm-2} for the analysis of multiple orthogonal polynomials of mixed
type, allowing for an algebraic proof of the Christoffel--Darboux formula, alternative to the analytic one, based on the Riemann--Hilbert problem (and constrained therefore by convenient analytic conditions) given in \cite{kuijlaars}. This approach was also used successfully in \cite{carlos} in where a CMV ordering of the Fourier basis induced, for a given measure on the unitary circle, a moment matrix whose Gauss--Borel factorization leads to OLPUC. Recursion relations and Christoffel--Darboux formula appeared also in a straightforward manner.   Also  continuous and discrete deformations, and $\tau$-function theory was  extended to the circular case under the suitable choice of moment matrices and shift operators. In this last paper we only requested to the measure to be quasi-definite, condition that implies the existence of the Gauss--Borel factorization. Let us mention that we have applied this method in the finding of Christoffel--Darboux type formulae in other situations, see \cite{araznibarreta,
carlos2}.

In this paper we consider  two semi-infinite block  matrices, whose coefficients (matrices in $\C^{m\times m}$)  are left and right matrix moments, ordered in a   Cantero--Morales--Vel\'{a}zquez style, of a matrix measure on the circle. The corresponding  block Gauss-Borel factorization of these CMV block moment matrices leads to MOLPUC. To be more precise,  we get  the right and left versions of two biorthogonal families of matrix Laurent polynomials and corresponding Szeg\H{o} polynomials. When the matrix measure is Hermitian  these two families happen to be proportional resulting in two families of MOLPUC. Following \cite{miranian, Cafasso} we express them as Schur complements of bordered truncated moment matrices. We also prove,  in an algebraic manner using the Gauss--Borel factorization, the five term recursion relations and the Christoffel--Darboux formula. Let us stress that in this paper we introduce an intertwining operator $\eta$ not used in \cite{carlos}  that clarifies the appearance of
reciprocal polynomials and simplifies the algebraic proofs. The recursion relations indicate which deformations of the quasi-definite matrix measure lead to integrable systems of Toda type. Thus, we discuss  the following elements: wave and adjoint wave functions, Lax and Zakharov--Shabat equations, bilinear equations and discrete flows --connected with Darboux transformations--. In this context we find a generalization of the  matrix  Cafasso's extension of the Toeplitz lattice for the Verblunsky coefficients of Szeg\H{o} polynomials. The Cafasso flows correspond to what we call total flows, which are only a part of the integrable flows associated with MOLPUC. We unsuccessfully tried  to get a matrix $\tau$ theory, but despite this failure we get  interesting byproducts. We analyze the role of Miwa shifts in this context and, as a collateral effect, nicely connect them with the Christoffel--Darboux kernels. These formulae suggest a link of these kernels with the Cauchy propagators that in the Grassmannian $\bar\partial$ approach to multicomponent KP hierarchy was used in \cite{mio1,mio2}. This identification allows us to give in Theorem \ref{elteorema} expressions of the MOLPUC in terms of products of their Miwa shifted and non shifted  quasi-norms. Despite that these expressions lead to the $\tau$-function representation  in the scalar case this is not the case within the matrix context.

Let us mention that the  submodules of matrix Laurent polynomials considered in this paper have the higher and lower powers constrained  to be of some particular form, implied by the chosen CMV ordering. In \cite{carlos} this limitation was overcome by the introduction of extended CMV orderings of the Fourier basis, which allowed for general subpaces of Laurent polynomials. A similar procedure can be performed in this matrix situation; but, as its development follows very closely the ideas of \cite{carlos} we prefer to avoid its inclusion here.

The layout of this paper is as follows.  \S \ref{Sec-matrix orthogonality} is devoted to orthogonality theory, in particular in \S\ref{Sec-moment} we consider the left and right block CMV moment matrices and perform corresponding block Gaussian factorizations in \S\ref{Sec-MOLPUC},  getting the associated families of right and left MOLPUC and matrix Szeg\H{o} polynomials and their biorthogonality relations. We also get the recursion relations and Schur complement expressions of them in terms of bordered truncations of the moment matrices. Then, in \S \ref{Sec-second} we introduce the matrix second kind functions that are connected with the Fourier series of the measure and that will be relevant later on for the adjoint Baker functions.
The reconstruction of the recursion relations from the Gauss--Borel factorizations is performed in \S \ref{Sec-recursions}; the Christoffel--Darboux formulae for this non Abelian scenario are given in  \S \ref{Sec-CD}. Observe that in this case the projection operators are projectors in a module over the ring $\C^{m\times m}$, that in the Hermitian definite positive situation lead to orthogonal projections in the standard geometrical sense. The integrability aspects are treated in \S \ref{Sec-integrability}. Given adequate deformations of the moment matrices we find wave functions, Lax equations and Zakharov--Shabat equations in \S\ref{Sec-Toda-0}; here we also consider a generalization of the Cafasso's Toeplitz lattice and the bilinear equations formulation of the hierarchy.
Finally, we extend to this matrix context the discrete flows for the Toeplitz lattice, intimately related to Darboux transformations in \S \ref{Sec-discrete} and also derive the bilinear equations fulfilled by the MOLPUC in \S \ref{Sec-bilinear}. Finally, in \S \ref{Sec-Miwa} we consider the action of Miwa transformations and get the previously mentioned results. We conclude the paper with a series of appendices that serve as support of certain sections.

Finally, let us stress that this paper is not just an extension of the results of \cite{carlos} to the matrix realm but we also  have introduced important elements not discussed there, which also hold in that scalar case, as the $\eta$ operator, a different proof of the Chirstoffel--Darboux formula with no need of associated polynomials and new relations between Christoffel--Darboux kernels and Miwa shifted MOLPUC.

\section{Matrix orthogonality  and block Gauss--Borel factorization}\label{Sec-matrix orthogonality}
In this section, inspired by the CMV construction \cite{CMV} and the previous work \cite{carlos}, for a given matrix measure we introduce an appropriate block moment matrix, that when factorized as a product of lower and upper block  matrices, gives a set of biorthogonal matrix Laurent polynomials on the unit circle. This Borel--Gauss factorization problem also allows us to derive the recursion relations and the Christoffel--Darboux theory.
\subsection{The CMV right and left moment matrices for quasi-definite matrix measures}\label{Sec-moment}
The following $m\times m$ matrix valued vectors will be relevant in the construction of biorthogonal families of MOLPUC
\begin{definition} The CMV vectors are given by
\begin{align*}
\chi_1(z):=&(\mathbb{I},0,\mathbb{I}z,0,\mathbb{I}z^{2},\ldots)^{\top},\\
\chi_2(z):=&(0,\mathbb{I} ,0,\mathbb{I}z,0,\mathbb{I}z^2,\ldots)^{\top},\\
\chi_a^{*}(z):=&z^{-1}\chi_a(z^{-1}), &  a =1,2,\\
\chi(z):=&\chi_1(z)+\chi_2^{*}(z)=(\mathbb{I},\mathbb{I}z^{-1},\mathbb{I}z,\mathbb{I}z^{-2},\mathbb{I}z^{2},\ldots)^{\top}.
\end{align*}
\end{definition}
In the sequel the matrix $\chi^{(l)}$ will denote the $l$-th component of the matrix vector $\chi$
\begin{align*}
\chi  =(\chi^{(0)},\chi^{(1)},\chi^{(2)},\dots)^\top.
\end{align*}

.
\begin{definition} The CMV left and right moment matrices of the measure $\mu$ are
\begin{align}\label{defgL}
g^L&:=\oint_\mathbb{T}\chi(z)\,\frac{\d\mu(z)}{\operatorname{i}z}\,\left(\chi(z)\right)^\dagger=2\pi\begin{pmatrix}
   c_0 & c_{-1} & c_1 & c_{-2} & \dots\\
   c_{1} & c_0 & c_2 & c_{-1} &\dots\\
   c_{-1} & c_{-2} & c_0 & c_{-3}&\dots\\
      c_{2} & c_{1} & c_{3} & c_{0}&\dots\\
  \vdots&\vdots& \vdots& \vdots & \ddots
\end{pmatrix},\\
\label{defgR}
g^R&:=\oint_\mathbb{T}(\chi(z)^\top)^\dagger\,\frac{\d\mu(z)}{\operatorname{i}z}\,\chi(z)^\top=2\pi\begin{pmatrix}
   c_0 & c_1 & c_{-1} & c_2 & \dots\\
   c_{-1} & c_0 & c_{-2} & c_1 &\dots\\
   c_1 & c_2 & c_0 & c_3&\\
      c_{-2} & c_{-1} & c_{-3} & c_{0}&\dots\\
 \vdots&\vdots& \vdots& \vdots & \ddots
\end{pmatrix}.
\end{align}
\end{definition}
Notice that when $\d\mu(\theta)$ is Hermitian so are the moment matrices $g ^L$ and $g^R$.\\

In the scalar case \cite{carlos} the only requirement that the moment matrix needs to meet is to be  Gaussian factorable; i.e., all the principal minors of the matrix are request to be not degenerated. The measure from which this moment matrix
is constructed receives the name of quasi-definite measure. This condition is related to the existence of \emph{biorthogonal} polynomials
of all degrees --also called non-triviality of the measure--.
 In the matrix case the requirement is a bit more relaxed.
\begin{definition}
 The matrix measure $\mu$ is said to be  \emph{quasi-definite} if its truncated moment matrices satisfy
\begin{align*}
 \det\big((g^H)^{[l]}\big)\neq 0 \text{ for } H=R,L \text{ and } l=1,2,3,\dots
\end{align*}
\end{definition}

Notice that $(g^H)^{[l]} \in \mathbb{M}_{ml}$; a quite different situation from the scalar case in which all the principal minors had to be
non degenerate, while in the matrix case only the $ml$-order principal minors should meet this requirement. Actually, this is the
only restriction --besides having compact support on $\T$-- that from hereon the matrix measures must satisfy, since
when this condition holds
\begin{pro}\label{Pro-existence LU}
 The moment matrices $g^H$, $H=L,R$,  of a matrix quasi-definite measure $\mu$ admit a block Gauss--Borel factorization.
\end{pro}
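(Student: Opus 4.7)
The strategy is to construct the two triangular factors of $g^H=(S^H)^{-1}\tilde S^H$ (with $S^H\in\mathscr L$ and $\tilde S^H\in\widehat{\mathscr U}$) level by level via block Schur complements, so as to reduce the claim to the classical equivalence between the existence of an LU-type decomposition and the non-vanishing of all leading principal block minors.

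First I would dispose of uniqueness, since this controls the coherence of the inductive construction to come. The intersection $\mathscr L\cap\widehat{\mathscr U}$ consists only of the identity (a block-unitriangular lower matrix that is also block upper triangular is forced to be $\mathbb I$), so any two candidate factorizations $(S_1^H)^{-1}\tilde S_1^H=(S_2^H)^{-1}\tilde S_2^H$ must coincide via $S_2^H(S_1^H)^{-1}=\tilde S_2^H(\tilde S_1^H)^{-1}\in\mathscr L\cap\widehat{\mathscr U}=\{\mathbb I\}$.

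Next, the main step: an induction on $l$ that produces truncated factorizations $(g^H)^{[l]}=(S^{H,[l]})^{-1}\tilde S^{H,[l]}$ with $S^{H,[l]},\tilde S^{H,[l]}\in\mathbb M_{ml}$ block lower (with identity blocks on the diagonal) and block upper triangular invertible, respectively. The base case $l=1$ is exactly the hypothesis that $(g^H)^{[1]}=(g^H)_{0,0}$ is invertible. For the inductive step I would partition
\begin{align*}
(g^H)^{[l+1]}=\begin{pmatrix}(g^H)^{[l]} & B_l\\ C_l & D_l\end{pmatrix},\qquad D_l\in\mathbb M_m,
\end{align*}
and define
\begin{align*}
S^{H,[l+1]}:=\begin{pmatrix} S^{H,[l]} & 0\\ -C_l\bigl((g^H)^{[l]}\bigr)^{-1} & \mathbb I\end{pmatrix},\qquad
\tilde S^{H,[l+1]}:=\begin{pmatrix}\tilde S^{H,[l]} & S^{H,[l]}B_l \\ 0 & \Sigma_l\end{pmatrix},
\end{align*}
with Schur complement $\Sigma_l:=D_l-C_l\bigl((g^H)^{[l]}\bigr)^{-1}B_l$. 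A routine block multiplication yields $S^{H,[l+1]}(g^H)^{[l+1]}=\tilde S^{H,[l+1]}$. The step that carries the actual content is ensuring that $\Sigma_l$ is invertible, so that $\tilde S^{H,[l+1]}$ remains in $\widehat{\mathscr U}$; here I would invoke the block determinantal identity $\det(g^H)^{[l+1]}=\det(g^H)^{[l]}\cdot\det\Sigma_l$ together with the quasi-definiteness hypothesis $\det(g^H)^{[l+1]}\neq 0$ to force $\det\Sigma_l\neq 0$.

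The passage to the semi-infinite limit is then automatic: by construction the top-left $ml\times ml$ block of $S^{H,[l+1]}$ (respectively $\tilde S^{H,[l+1]}$) coincides with $S^{H,[l]}$ (respectively $\tilde S^{H,[l]}$), so every block entry stabilizes and defines matrices $S^H\in\mathscr L$ and $\tilde S^H\in\widehat{\mathscr U}$, and the identity $S^Hg^H=\tilde S^H$ holds blockwise, hence as an equality in $\mathbb M_\infty$. The only genuine obstacle in the whole argument is the control of the Schur complement at each inductive level via the block determinant identity; the remainder is careful bookkeeping of the block partition, and the case $H=L$ is handled identically to $H=R$.
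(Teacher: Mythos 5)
Your proposal is correct and follows essentially the same route as the paper: both factor $(g^H)^{[l+1]}$ by peeling off a block Schur complement with respect to the leading principal submatrix $(g^H)^{[l]}$, use quasi-definiteness to invert those submatrices, and pass to the direct limit since the block entries stabilize. The only difference is cosmetic (you induct upward while the paper iterates the Schur factorization downward), and you are somewhat more explicit about the two points the paper leaves implicit, namely uniqueness and the invertibility of each Schur complement via $\det(g^H)^{[l+1]}=\det(g^H)^{[l]}\det\Sigma_l$.
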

\begin{proof}
See Appendix \ref{proofs}.
\end{proof}

\subsubsection{The generalized matrix Szeg\H{o} polynomials}
\begin{definition}\label{GSzg}Given a matrix quasi-definite measure $\mu$   the set of  monic matrix polynomials
$\{P_{i,l}^L\}_{l=0}^{\infty}$, $\{P_{i,l}^R\}_{l=0}^\infty$, $i=1,2$, with $\deg P_{i,l}^H=l$, $H=L;R$, satisfying
\begin{align*}
  \langle\!\langle z^j \mathbb I ,P_{1,l}^L(z)\rangle\!\rangle_{L}&=\oint_\T P_{1,l}^L(z)\frac{d\mu(z)}{\operatorname{i}z}z^{-j}=0, & j=0,\dots,l-1,\\
  \langle\!\langle P_{2,l}^L(z),z^j \mathbb I\rangle\!\rangle_{L}&=\oint_\T z^{j} \frac{d\mu(z)}{\operatorname{i}z}[P_{2,l}^L(z)]^\dagger=0, & j=0,\dots,l-1,\\
  \langle\!\langle P_{2,l}^R(z),z^{j} \mathbb I\rangle\!\rangle_{R}&=\oint_\T [P_{2,l}^R(z)]^\dagger \frac{d\mu(z)}{\operatorname{i}z}z^{j}=0, & j=0,\dots,l-1,\\
  \langle\!\langle z^{j} \mathbb I, P_{1,l}^R(z)\rangle\!\rangle_{R}&=\oint_\T z^{-j} \frac{d\mu(z)}{\operatorname{i}z}P_{1,l}^R(z)=0, & j=0,\dots,l-1,
\end{align*}
are said to be   Szeg\H{o} polynomials.
\end{definition}

\begin{pro}
 The matrix Szeg\H{o} polynomials introduced in Definition \ref{GSzg} for the quasi-definite situation  exist and are unique. Moreover, there exist matrices $h^H_r\in\mathbb M_m$, $H=L,R$, such that  the biorthogonality conditions are fulfilled
\begin{align*}
 \delta_{r,j}h^H_r&:=\langle\!\langle P_{2,r}^H,P_{1,j}^H \rangle\!\rangle_H ,& H&=R,L.
\end{align*}
\end{pro}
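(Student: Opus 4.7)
The plan is to derive existence, uniqueness and biorthogonality from the block Gauss--Borel factorizations of $g^L$ and $g^R$ guaranteed by Proposition~\ref{Pro-existence LU}. For each $H\in\{L,R\}$, fix the factorization
\begin{equation*}
g^H=(S^H_1)^{-1}\,h^H\,(S^H_2)^{-\dagger},
\end{equation*}
with $S^H_1,S^H_2\in\mathscr L$ block lower unitriangular, $h^H\in\mathscr D$ block diagonal with blocks $h^H_r\in\mathbb M_m$. Since the factorization is unique when the principal truncations $(g^H)^{[l]}$ are invertible, uniqueness of the to-be-constructed polynomials will ultimately trace back to quasi-definiteness.

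Next, introduce the matrix Laurent polynomial vectors
\begin{equation*}
\phi^H_i(z):=S^H_i\,\chi(z),\qquad i=1,2,\;H=L,R.
\end{equation*}
Because $S^H_i$ is block lower unitriangular and $\chi(z)=(\mathbb I,\mathbb I z^{-1},\mathbb I z,\mathbb I z^{-2},\ldots)^\top$, the $l$-th entry $\phi^H_{i,l}(z)$ is a matrix Laurent polynomial whose highest or lowest power is exactly that of $\chi^{(l)}(z)$, with identity leading coefficient. Following the CMV prescription of \cite{CMV,carlos}, multiplying $\phi^H_{i,l}$ by the appropriate integer power of $z$ (depending on the parity of $l$) converts it into a monic matrix polynomial $P^H_{i,l}$ of degree $l$ (or its reciprocal, handled by the intertwining operator $\eta$ mentioned in the introduction); this defines the candidate Szeg\H{o} polynomials.

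The biorthogonality then follows by a direct algebraic computation. Using the sesquilinear forms \eqref{prodL}--\eqref{prodR} and the definitions \eqref{defgL}--\eqref{defgR} of $g^H$, one verifies
\begin{equation*}
\Langle \phi^H_2,\phi^H_1\Rangle_H=S^H_1\,g^H\,(S^H_2)^{\dagger}=S^H_1(S^H_1)^{-1}h^H(S^H_2)^{-\dagger}(S^H_2)^{\dagger}=h^H,
\end{equation*}
which is block diagonal. Translating this identity from the Laurent-polynomial vectors $\phi^H_i$ to the ordinary polynomials $P^H_{i,l}$ (via the aforementioned multiplications by powers of $z$) yields
$\Langle P^H_{2,r},P^H_{1,j}\Rangle_H=\delta_{r,j}h^H_r$,
and in particular shows that each $P^H_{i,l}$ obeys the orthogonality relations of Definition~\ref{GSzg} (the row/column block of $h^H$ in the corresponding position is zero off-diagonal against the basis elements $z^j\mathbb I$ encoded in $\chi$).

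The main obstacle I anticipate is the bookkeeping in step two and three: carefully matching the CMV-ordered components of $\phi^H_i(z)$ with the ordinary matrix polynomials $P^H_{i,l}$, accounting for the two families (even vs.\ odd indices), the reciprocal/reversal operation $\eta$, and the slight asymmetry between the left and right sesquilinear forms \eqref{prodL}--\eqref{prodR}. Once this dictionary is pinned down, both existence and biorthogonality are immediate from the factorization, and uniqueness follows from that of the Gauss--Borel decomposition under the quasi-definiteness hypothesis.
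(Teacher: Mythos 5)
Your proposal is correct and follows essentially the route the paper itself takes: the proposition is stated there without a separate proof, but the surrounding development --- Theorem \ref{bio} (biorthogonality of the $\phi^H_i$ obtained from the Gauss--Borel factors), the quasi-orthogonality relations \eqref{ort}--\eqref{ort1}, the dictionary \eqref{CMVSi}--\eqref{CMVSf} between CMV Laurent polynomials and Szeg\H{o} polynomials, and Proposition \ref{pro:quasinorms} identifying $h^H_r$ with the diagonal blocks $D^H_r$ --- is precisely your program carried out, including the parity bookkeeping you anticipate (odd CMV indices yield \emph{reversed} polynomials of the opposite $L/R$ and $1/2$ families, not merely a power of $z$ times the same family). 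One small imprecision: uniqueness does not follow from uniqueness of the Gauss--Borel factorization alone, which only shows the \emph{constructed} polynomials are well defined; one still needs the standard argument that the difference of two monic solutions of the orthogonality relations in Definition \ref{GSzg}, after multiplication by the appropriate power of $z$, lies in the truncated module spanned by $\{\chi^{(j)}\}_{j<l}$ and is $\Langle\cdot,\cdot\Rangle_H$-orthogonal to it, hence vanishes because $(g^H)^{[l]}$ is invertible --- the same reasoning the paper invokes in its proof of \eqref{CMVSi} and \eqref{CMVSf}.
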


Now we introduce the matrix extension of the  Verblunsky coefficients.
\begin{definition}
The Verblunsky matrices of a matrix quasi-definite measure are
\begin{align*}
\alpha_{i,l}^H&:= P_{i,l}^H(0), & i&=1,2, & l&=1,2,3,... & H&=L,R,
\end{align*}
 and the reciprocal or reversed Szeg\H{o} matrix polynomials are given by
\begin{align*}
 ( P_l^H)^*(z)&:=z^{l}\big(P^H_l\big(\bar z^{-1}\big)\big)^\dagger, & H&=L,R.
\end{align*}
\end{definition}

Notice that in the Hermitian positive definite case the matrices $h^H_l$, $H=L,R$, $l=0,1,2,\dots$,  can be interpreted as a kind of ``matrix-valued norms" for the matrix Szeg\H{o} polynomials, as the square-root of their traces is a norm  indeed.

\subsection{The CMV matrix Laurent polynomials}\label{Sec-MOLPUC}
We consider now the $m\times m$ block $LU$ factorization of the  moment matrices \eqref{defgL} and \eqref{defgR}; in fact, there are two block  Gauss--Borel factorizations, for both the right and left moment matrices, to consider
\begin{align}\label{eq:fac1}
g^L&:=S_1^{-1} D^L \widehat{S_2}=S_1^{-1}S_2,   &S_1&\in \mathscr L,&S_2&\in\mathscr U, & \widehat{S_2}&\in \widehat{\mathscr{U}},& D_L& \in \mathscr{D},\\
g^R&:=Z_2 D^R \widehat{Z_1}^{-1}=Z_2Z_1^{-1}, &Z_2&\in \mathscr L,&Z_1&\in\mathscr U, & \widehat{Z_1}&\in \widehat{\mathscr{U}},& D_R& \in \mathscr{D}.\label{eq:fac2}
\end{align}
For the entries of the block diagonal matrices we use the  notation
\begin{align}\label{intro-tau}
 D^H&=\diag(D^H_0,D^H_1,\dots), & H&=L,R.
\end{align}

The reader should notice that in the Hermitian case the two normalized matrices of the factorization are related
\begin{align}\label{hermitian}
 S_1^\dagger&=\widehat{S_2}^{-1},  &   Z_2^\dagger&=\widehat{Z_1}^{-1},
\end{align}
and the block diagonal matrices are Hermitian; $\big(D^H\big)^\dagger=D^H$, $H=L,R$.

\begin{definition}We introduce the following partial CMV matrix Laurent polynomials
\begin{align*}
 \phi_{1,1}^L&:=S_1\chi_1(z), & \phi_{1,2}^L&:=S_1\chi_2^*(z),& \phi_{2,1}^L&:=\big(S_2^{-1}\big)^\dagger\chi_1(z), & \phi_{2,2}^L&:=\big(S_2^{-1}\big)^\dagger\chi_2^*(z),\\
\phi_{1,1}^R&:=\chi_1^\top(z)Z_1, &  \phi_{1,2}^R&:=[\chi_2^*]^\top(z)Z_1,& \phi_{2,1}^R&:=\chi_1^\top(z)\big(Z_2^{-1}\big)^\dagger, & \phi_{2,2}^R&:=[\chi_2^*]^\top(z)\big(Z_2^{-1}\big)^\dagger,
\end{align*}
and CMV matrix Laurent polynomials
\begin{align}
\phi_1^L&:=\phi_{1,1}^L+\phi_{1,2}^L=S_1\chi(z), &  \phi_2^L&:=\phi_{2,1}^L+\phi_{2,2}^L=\big(S_2^{-1}\big)^\dagger\chi(z),\label{laurent1}\\
\phi_1^R&:=\phi_{1,1}^R+\phi_{1,2}^R=\chi^\top(z)Z_1, &  \phi_2^R&:=\phi_{2,1}^R+\phi_{2,2}^R=\chi^\top(z)\big(Z_2^{-1}\big)^\dagger.\label{laurent2}
\end{align}

\end{definition}
Notice that these semi-infinite vectors with matrix coefficients $(\varphi^H_{j})^{(l)}(z)$, $l=0,1,\dots$, can be written as
\begin{align*}
\phi_j^L&=:
\begin{pmatrix}
(\varphi^L_{j})^{(0)}(z)\\
(\varphi^L_{j})^{(1)}(z) \\
\vdots
\end{pmatrix},&
\phi_j^R&=:((\varphi^R_{j})^{(0)}(z),
(\varphi^R_{j})^{(1)}(z),\dots),& j&=1,2.
\end{align*}
For the Hermitian case we have
\begin{align}
  \label{laurent-hermitian}
  (\varphi^L_{2})^{(l)}(z)&=(D_l^L)^{-1}(\varphi^L_{1})^{(l)}(z), &
   (\varphi^R_{2})^{(l)}(z)&=(\varphi^R_{1})^{(l)}(z)D_l^R, & l&=0,1,\dots.
\end{align}

\subsubsection{Biorthogonality}

From the Gaussian factorization, whose existence is ensured for quasi-definite matrix measures, we infer  that these
matrix Laurent polynomials satisfy  biorthogonal type relations.
\begin{theorem}\label{bio}
The matrix Laurent polynomials  $\big\{(\varphi^H)_{1}^{(l)}\big\}_{l=0}^\infty$ and $\big\{(\varphi^H)_{2}^{(l)}\big\}_{l=0}^\infty$,  $H=L,R$, introduced in \eqref{laurent1} and \eqref{laurent2} are biorthogonal on the unit circle
\begin{align}\label{ortCMV}
\langle\!\langle (\varphi^H_2)^{(j)},(\varphi^H_1)^{(k)}\rangle\!\rangle_{H}&= \mathbb{I}\delta_{j,k}, & H&=L,R,&j,k&=0,1,\dots
\end{align}
\end{theorem}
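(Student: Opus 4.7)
The plan is to recognize that the CMV block moment matrices $g^L$ and $g^R$ are precisely the ``Gramians'' of the CMV vector $\chi$ in the matrix sesquilinear pairings \eqref{prodL}--\eqref{prodR}, and that the Gauss--Borel factorizations \eqref{eq:fac1}--\eqref{eq:fac2} are exactly the algebraic statement that triangulates these Gramians into the identity. No analytic input beyond the existence of the factorization (granted by Proposition \ref{Pro-existence LU}) is needed.

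Concretely, I would first read off from \eqref{defgL} and \eqref{defgR} the blockwise identifications
\begin{align*}
(g^L)_{k,l}&=\langle\!\langle \chi^{(l)},\chi^{(k)}\rangle\!\rangle_L, & (g^R)_{l,p}&=\langle\!\langle \chi^{(l)},\chi^{(p)}\rangle\!\rangle_R,
\end{align*}
which just follows by noting $[\chi^{(l)}(z)]^\dagger=\chi^{(l)}(z)^{-1}$ on $\mathbb T$ and comparing powers of $z$ entry by entry.

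Next, for the left case I would use the expansions
\begin{align*}
(\phi^L_1)^{(i)}(z)&=\sum_k (S_1)_{i,k}\,\chi^{(k)}(z), & (\phi^L_2)^{(j)}(z)&=\sum_l \bigl((S_2^{-1})^\dagger\bigr)_{j,l}\,\chi^{(l)}(z),
\end{align*}
and plug them into $\langle\!\langle (\phi^L_2)^{(j)},(\phi^L_1)^{(i)}\rangle\!\rangle_L$. Applying the two sesquilinearity rules listed after \eqref{prodR} (matrix linearity on the second slot, $\dagger$-linearity on the first slot) moves the coefficients $(S_1)_{i,k}$ to the left and $\bigl[\bigl((S_2^{-1})^\dagger\bigr)_{j,l}\bigr]^\dagger=(S_2^{-1})_{l,j}$ to the right of the pairing $\langle\!\langle\chi^{(l)},\chi^{(k)}\rangle\!\rangle_L=(g^L)_{k,l}$. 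The double sum collapses to the $(i,j)$ block of $S_1\,g^L\,S_2^{-1}$, and by \eqref{eq:fac1} this equals $\mathbb I\,\delta_{i,j}$.

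The right case is carried out in exactly the same way, using the corresponding sesquilinearity rules for $\langle\!\langle\cdot,\cdot\rangle\!\rangle_R$ (which move scalars out on the opposite sides) together with the factorization \eqref{eq:fac2}, yielding $(Z_2^{-1} g^R Z_1)_{j,k}=\mathbb I\,\delta_{j,k}$. The only thing to be careful with is bookkeeping the $\dagger$'s when pulling matrix coefficients through each slot of the pairings; this is where the appearance of $(S_2^{-1})^\dagger$ and $(Z_2^{-1})^\dagger$ in the definitions \eqref{laurent1}--\eqref{laurent2} (rather than the bare inverses) is precisely designed to cancel with the conjugation produced by the sesquilinearity axiom, so that the Gauss--Borel identity applies cleanly. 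I do not anticipate any genuine obstacle: once the blockwise identification of $g^H$ with the Gramian of $\chi$ is in hand, the result is a one-line consequence of sesquilinearity and the factorization.
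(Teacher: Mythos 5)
Your proposal is correct and follows essentially the same route as the paper: the paper's proof is exactly the compact form of your computation, writing the matrix of pairings as $S_1 g^L S_2^{-1}$ and $Z_2^{-1} g^R Z_1$ and invoking the Gauss--Borel factorizations \eqref{eq:fac1}--\eqref{eq:fac2} to conclude both equal $\mathbb{I}$. Your block-by-block sesquilinearity bookkeeping is just the expanded version of the same one-line argument.
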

\begin{proof}It is straightforward to check that
\begin{align*}
\oint_\mathbb{T}\phi^L_1 (z)\frac{\d\mu(z)}{\operatorname{i}z} (\phi^L_2(z))^\dagger &=S_1\left(\oint_\mathbb{T}\chi(z) \frac{\d\mu(z)}{\operatorname{i}z} \chi(z)^\dagger\right)S_2^{-1}=S_1g ^LS_2^{-1}=\mathbb{I},\\
\oint_\mathbb{T}(\phi^R_2(z))^\dagger \frac{\d\mu(z)}{\operatorname{i}z} \phi^R_1(z) &=Z_2^{-1}\left(\oint_\mathbb{T}\overline{\chi(z)} \frac{\d\mu(z)}{\operatorname{i}z} \chi(z)^\top \right)Z_1=Z_2^{-1}g^RZ_1=\mathbb{I}.
\end{align*}
\end{proof}

In order to relate the CMV matrix Laurent polynomials to the Szeg\H{o} polynomials,
we rewrite the \emph{quasi-orthogonality }conditions from Theorem \ref{bio}
\begin{align}\label{ort}
\begin{aligned}
\oint_{\T} (\varphi^L_{1})^{(2l)} (z) \frac{\d\mu(z)}{\operatorname{i}z} z^{-k}&=0, &k=-l,\dots,l-1, \\
\oint_{\T} (\varphi^L_{1})^{(2l+1)}(z) \frac{\d\mu(z)}{\operatorname{i}z} z^{-k}&=0, &k=-l,\dots,l,\\
\oint_{\T}  z^{k} \frac{\d\mu(z)}{\operatorname{i}z} [(\varphi^L_{2})^{(2l)}(z)]^\dagger&=0, &k=-l,\dots,l-1,\\
\oint_{\T}  z^{k} \frac{\d\mu(z)}{\operatorname{i}z} [(\varphi^L_{2})^{(2l+1)}(z)]^\dagger&=0, &k=-l,\dots,l,
\end{aligned}\\\notag\\\label {ort1}
\begin{aligned}
\oint_{\T} [(\varphi^R_{2})^{(2l)}(z)]^\dagger \frac{\d\mu(z)}{\operatorname{i}z} z^{k}&=0, &k=-l,\dots,l-1,\\
\oint_{\T} [(\varphi^R_{2})^{(2l+1)}(z)]^\dagger \frac{\d\mu(z)}{\operatorname{i}z} z^{k}&=0, &k=-l,\dots,l,\\
\oint_{\T}  z^{-k} \frac{\d\mu(z)}{\operatorname{i}z} (\varphi^R_{1})^{(2l)}(z)&=0, &k=-l,\dots,l-1,\\
\oint_{\T}  z^{-k} \frac{\d\mu(z)}{\operatorname{i}z} (\varphi^R_{1})^{(2l+1)}(z)&=0, &k=-l,\dots,l.
\end{aligned}
\end{align}

\begin{pro}
  For a quasi-definite matrix measure $\mu$ the matrix Szeg\H{o} polynomials and the CMV matrix Laurent polynomials are related in the following way for the left case
  \begin{align}\label{CMVSi}\begin{aligned}
z^l(\varphi^L_{1})^{(2l)}(z)&= P_{1,2l}^{L}(z),\\
z^{l+1}(\varphi^L_{1})^{(2l+1)}(z)&= (P_{2,2l+1}^{R})^*(z),\\
z^l(D^L_{2l})^\dagger(\varphi^L_{2})^{(2l)}(z)&= P_{2,2l}^{L}(z),\\
z^{l+1}(D^L_{2l+1})^\dagger(\varphi^L_{2})^{(2l+1)}(z)&= (P_{1,2l+1}^{R})^*(z),\end{aligned}
  \end{align}
  and
\begin{align}\begin{aligned}
z^l(\varphi^R_{2})^{(2l)}(z)&= P_{2,2l}^{R}(z),\\
z^{l+1}(\varphi^R_{2})^{(2l+1)}(z)&= (P_{1,2l+1}^{L})^*(z),\\
z^l(\varphi^R_{1})^{(2l)}(z)D^R_{2l}&= P_{1,2l}^{R}(z),\\
\label{CMVSf}z^{l+1}(\varphi^R_{1})^{(2l+1)}(z)D^R_{2l+1}&= (P_{2,2l+1}^{L})^*(z)\end{aligned}
  \end{align}
  for the right case.
\end{pro}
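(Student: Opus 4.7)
The plan is to verify each of the eight identities in \eqref{CMVSi}--\eqref{CMVSf} by the same three-step recipe: first, use the structure of the Gauss--Borel factors to pin down the degree and the normalization of the left-hand side; second, translate the quasi-orthogonality relations \eqref{ort}, \eqref{ort1} by a reindexing of the dummy integration index into the defining orthogonality relations of the appropriate matrix Szeg\H{o} polynomial from Definition \ref{GSzg}; third, conclude by uniqueness, which is guaranteed by the quasi-definiteness of $\mu$.

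For the first step I will use that $S_1, Z_2 \in \mathscr L$ carry identity blocks on their diagonals, while the factorizations $g^L = S_1^{-1}D^L \widehat{S_2}$ and $g^R = Z_2 D^R \widehat{Z_1}^{-1}$ force the block diagonals of $S_2$ and $Z_1^{-1}$ to be $D^L$ and $D^R$, respectively. Combined with the CMV pattern $\chi^{(2l)} = \mathbb{I}z^l$ and $\chi^{(2l+1)} = \mathbb{I}z^{-(l+1)}$, this immediately shows that $z^l(\varphi^L_1)^{(2l)}(z)$ is a monic polynomial of degree $2l$ and that $z^{l+1}(\varphi^L_1)^{(2l+1)}(z)$ has degree at most $2l+1$ with constant term $\mathbb{I}$. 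The pre-factors $(D^L_l)^\dagger$ multiplying $\varphi^L_2$ and post-factors $D^R_l$ multiplying $\varphi^R_1$ in the statement are exactly what is needed to cancel the non-identity block-diagonal entries of $(S_2^{-1})^\dagger$ and $Z_1$, yielding again monic polynomials (even $l$) or polynomials with constant term $\mathbb{I}$ (odd $l$) of the required degree.

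For the second step the key algebraic tool is the identity $[P(z)]^\dagger = z^{-n}P^*(z)$ on $\T$ for any $m\times m$ matrix polynomial $P$ of degree $n$, which is immediate from $\bar z = z^{-1}$ on $\T$ and the definition of the reciprocal polynomial. The \emph{even}-index cases then follow directly: multiplying the first line of \eqref{ort} by $z^l$ inside the integrand and setting $j=k+l$ yields exactly the defining relation of $P^L_{1,2l}$, and the treatment of $P^L_{2,2l}$ from the third line of \eqref{ort} is identical after moving the factor $z^l(D^L_{2l})^\dagger$ across the $\dagger$ and cancelling $D^L_{2l}$ on the right of the equation. The \emph{odd}-index cases require the reciprocal identity to be applied on the Szeg\H{o} side: for instance, it converts $\oint_\T [P^R_{2,2l+1}(z)]^\dagger \frac{\d\mu}{\im z}z^j=0$ (valid for $j=0,\ldots,2l$) into $\oint_\T (P^R_{2,2l+1})^*(z) \frac{\d\mu}{\im z}z^{-j'}=0$ (valid for $j'=1,\ldots,2l+1$), which is precisely the orthogonality inherited by $z^{l+1}(\varphi^L_1)^{(2l+1)}$ from the second line of \eqref{ort}. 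The four right-handed identities in \eqref{CMVSf} are handled by the completely symmetric argument using \eqref{ort1} and the factorization of $g^R$.

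The third step is a standard uniqueness argument: a matrix polynomial of degree at most $n$ with prescribed leading coefficient (monic) or prescribed constant term, satisfying $n$ independent orthogonality conditions against a quasi-definite sesquilinear form, is uniquely determined. I expect the main obstacle to be not conceptual but combinatorial: carefully tracking the precise position (left versus right, dagger versus not) of the normalizing matrices $D^L_l, D^R_l$ in each of the eight identities, and applying the transpose-versus-dagger manipulations consistently on the unit circle. Once the pattern is established for one case, say $z^l(\varphi^L_1)^{(2l)}=P^L_{1,2l}$, the remaining seven verifications amount to careful repetitions of it, with appropriate index shifts and placements of $\dagger$.
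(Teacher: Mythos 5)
Your proposal is correct and is essentially the paper's own argument: both rest on reading off the degree and normalization from the triangular structure of the Gauss--Borel factors, reindexing the quasi-orthogonality relations \eqref{ort}--\eqref{ort1} into the defining relations of the Szeg\H{o} polynomials, and invoking quasi-definiteness to force the identification. The only cosmetic difference is that the paper phrases the last step as ``the difference of the two sides is a polynomial of bounded degree orthogonal to all the relevant powers, hence zero,'' whereas you phrase it as uniqueness of the monic (or constant-term-normalized) solution of the orthogonality system; these are the same argument.
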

\begin{proof}
  Taking the differences between the RHS and LHS of the equalities we get matrix polynomials, of degree $d=2l-1,2l$, that when paired via $\langle\!\langle\cdot,\cdot\rangle\!\rangle_H$, $H=L,R$, to all the powers $z^j$, $j=0,\dots,q$ cancels. Therefore, as we have a quasi-definite matrix measure, with moment matrices having non-null principal block minors, the only possibility for the difference is to be $0$.
\end{proof}
The last identifications together with \eqref{GSzg} define some of the entries
of the Gaussian factorization matrices.
\begin{pro} \label{pro:quasinorms}
The \emph{matrix quasi-norms} $h^H_k$ introduced in  Definition \ref{GSzg}  and the coefficients $D^H_k$ given in \eqref{intro-tau} satisfy
\begin{align*}
h^L_{2l}&=D^L_{2l},  &  h^L_{2l+1}&=D^R_{2l+1},\\
h^R_{2l}&=D^R_{2l}    &   h^R_{2l+1}&=D^L_{2l+1}.
\end{align*}
\end{pro}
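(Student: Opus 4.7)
The plan is to extract each $h^H_k$ from the biorthogonality relations \eqref{ortCMV} of the CMV matrix Laurent polynomials by substituting the identifications \eqref{CMVSi}--\eqref{CMVSf} that express $(\varphi^H_a)^{(k)}$ in terms of the Szeg\H{o} polynomials (or their reciprocals). The two cases---even and odd $k$---will be treated in parallel but with an essential twist in the odd case.

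For the even case $k=2l$, I would start from $\langle\!\langle(\varphi^L_2)^{(2l)},(\varphi^L_1)^{(2l)}\rangle\!\rangle_L=\mathbb I$, i.e.
\[
\mathbb I=\oint_\T (\varphi^L_1)^{(2l)}(z)\,\frac{\d\mu(z)}{\im z}\,\bigl((\varphi^L_2)^{(2l)}(z)\bigr)^\dagger,
\]
substitute $(\varphi^L_1)^{(2l)}(z)=z^{-l}P^L_{1,2l}(z)$ and $(\varphi^L_2)^{(2l)}(z)=z^{-l}(D^L_{2l})^{-\dagger}P^L_{2,2l}(z)$ from \eqref{CMVSi}, and use $\bar z^{-l}=z^l$ on $\T$ to cancel the scalar factors $z^{\pm l}$. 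What survives is exactly $h^L_{2l}(D^L_{2l})^{-1}$, so $h^L_{2l}=D^L_{2l}$. An entirely parallel calculation with the right sesquilinear form and \eqref{CMVSf} yields $h^R_{2l}=D^R_{2l}$.

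The odd case $k=2l+1$ is the interesting one, because the identifications involve reciprocal Szeg\H{o} polynomials of the \emph{opposite} type. Performing the same substitution in $\langle\!\langle(\varphi^L_2)^{(2l+1)},(\varphi^L_1)^{(2l+1)}\rangle\!\rangle_L=\mathbb I$ gives, after absorbing the factors $z^{\pm(l+1)}$ via $\bar z=z^{-1}$ on $\T$,
\[
D^L_{2l+1}=\oint_\T (P^R_{2,2l+1})^*(z)\,\frac{\d\mu(z)}{\im z}\,\bigl((P^R_{1,2l+1})^*(z)\bigr)^\dagger.
\]
At this point I would invoke the unit-circle identity $P^*(z)=z^{\deg P}P(z)^\dagger$ (a direct consequence of the defining formula for the reciprocal polynomial once $\bar z^{-1}=z$ on $\T$). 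Applying it to both factors, the overall power of $z$ cancels and the integral collapses into $\langle\!\langle P^R_{2,2l+1},P^R_{1,2l+1}\rangle\!\rangle_R=h^R_{2l+1}$, giving $h^R_{2l+1}=D^L_{2l+1}$. The symmetric computation starting from the right biorthogonality at odd index gives $h^L_{2l+1}=D^R_{2l+1}$.

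The only delicate point, which is also the single place where something non-trivial happens, is the algebraic juggling in the odd case: one must keep the non-commuting factors $D^H_{2l+1}$ on the correct side after taking the dagger, and one must correctly apply $P^*(z)=z^{\deg P}P(z)^\dagger$ as an identity valid only on $\T$ (which is precisely where the integral lives). All the even-index manipulations, by contrast, are routine.
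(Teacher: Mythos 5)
Your proof is correct and follows exactly the route the paper intends: the paper states this proposition without a proof, treating it as an immediate consequence of combining the identifications \eqref{CMVSi}--\eqref{CMVSf} with the biorthogonality relations \eqref{ortCMV} and the defining property of the $h^H_k$, which is precisely the computation you carry out (including the correct placement of the non-commuting factor $(D^H_k)^{-1}$ and the use of $(P^H_l)^*(z)=z^l\,P^H_l(z)^\dagger$ on $\T$ in the odd case). Nothing to add.
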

For the first non-trivial block diagonal of the factors in the Gaussian--Borel factorization  we get
\begin{pro}\label{faclosalpha}The matrices of the block $LU$ factorization can be written more explicitly in terms of the Verblunsky coefficients as follows
\begin{align*}
S_1&=\begin{pmatrix}
  \mathbb{I}& 0 & 0 &0&0&\hdots\\
  [\alpha_{2,1}^R]^\dagger&\mathbb{I}& 0 &0&0&\hdots\\
  *&\alpha_{1,2}^L&\mathbb{I}&0&0&\hdots\\
  *&*& [\alpha_{2,3}^R]^\dagger &\mathbb I& 0&\hdots\\
  * & *  &*&\alpha_{1,4}^L &\mathbb{I}&& \\
  \vdots&\vdots&\vdots&&\ddots&\ddots
\end{pmatrix},&
\widehat{S_2}^{-1}&=\begin{pmatrix}
  \mathbb{I}& \alpha_{1,1}^R & * &*&*&\dots\\
  0 &\mathbb{I}& [\alpha_{2,2}^L]^\dagger&*&*&\dots\\
0&0&   \mathbb{I}& \alpha_{1,3}^R & * &\dots\\
0&0&0& \mathbb{I}& [\alpha_{2,4}^L]^\dagger &\\
0&0&0&0&\mathbb{I}&\ddots\\
\vdots&\vdots&\vdots&\vdots&&\ddots&
\end{pmatrix},\\
Z_2^{-1}&=\begin{pmatrix}
  \mathbb{I}& 0 & 0 &0&0&\dots\\
  \alpha_{1,1}^L&\mathbb{I}& 0 &0&0&\dots\\
  *&[\alpha_{2,2}^R]^\dagger&\mathbb{I}&0&0&\dots\\
    *&*&\alpha_{1,3}^L&\mathbb{I}& 0 &\dots\\
     *&*& *&[\alpha_{2,4}^R]^\dagger&\mathbb{I}&\\
  \vdots&\vdots& \vdots&& \ddots&\ddots\\
\end{pmatrix}, &
\widehat{Z_1}&=\begin{pmatrix}
  \mathbb{I}& [\alpha_{2,1}^L]^\dagger & * &*&*&\dots\\
  0 &\mathbb{I}& \alpha_{1,2}^R&*&*&\dots\\
  0&0& \mathbb{I}& [\alpha_{2,3}^L]^\dagger & * &\dots\\
0&0&    0 &\mathbb{I}& \alpha_{1,4}^R&\\
\vdots&\vdots&\vdots&\vdots&\ddots&\ddots
\end{pmatrix}.
\end{align*}
\end{pro}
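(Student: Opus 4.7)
The plan is to read each non-trivial first sub- or super-diagonal block of the four factors directly off the identities \eqref{laurent1}--\eqref{laurent2}, which express the CMV vectors $\phi^{H}_{j}$ as the factorization factors acting on $\chi(z)$, and then to identify the resulting Fourier coefficients with Verblunsky matrices through the dictionaries \eqref{CMVSi}--\eqref{CMVSf}. The only inputs needed are the definition $\alpha^{H}_{i,l}:=P^{H}_{i,l}(0)$ and the remark that the leading matrix coefficient of a reversed Szeg\H{o} polynomial $(P^{H}_{l})^{*}$ equals $[\alpha^{H}_{i,l}]^{\dagger}$ whenever $P^{H}_{i,l}$ is monic with constant term $\alpha^{H}_{i,l}$.

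Since $\chi^{(2l)}=\mathbb{I}z^{l}$ and $\chi^{(2l+1)}=\mathbb{I}z^{-l-1}$, the rescaling that \eqref{CMVSi} and \eqref{CMVSf} call for is $z^{l}$ in the even case and $z^{l+1}$ in the odd case. After this rescaling, the unique basis element of $\chi$ that can contribute to the constant term (in the even case) or to the top-degree term $z^{2l+1}$ (in the odd case) of the resulting Szeg\H{o}-type polynomial is $\chi^{(2l-1)}$, respectively $\chi^{(2l)}$; that is, the immediately preceding CMV component. Hence the coefficient multiplying that unique neighbor is exactly the first sub- or super-diagonal block we want to compute.

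Carrying this out for $S_{1}$: writing $(\varphi^{L}_{1})^{(n)}=\chi^{(n)}+\sum_{k<n}(S_{1})_{n,k}\chi^{(k)}$ and invoking the first two lines of \eqref{CMVSi}, the constant term in the even case yields $(S_{1})_{2l,2l-1}=\alpha^{L}_{1,2l}$, while the leading term in the odd case yields $(S_{1})_{2l+1,2l}=[\alpha^{R}_{2,2l+1}]^{\dagger}$. For $\widehat{S_{2}}^{-1}$ I would first use $S_{2}=D^{L}\widehat{S_{2}}$ to rewrite $\phi^{L}_{2}=((D^{L})^{\dagger})^{-1}(\widehat{S_{2}}^{-1})^{\dagger}\chi$, obtaining
\[
(D^{L}_{n})^{\dagger}(\varphi^{L}_{2})^{(n)}(z)=\chi^{(n)}(z)+\sum_{k<n}[(\widehat{S_{2}}^{-1})_{k,n}]^{\dagger}\chi^{(k)}(z),
\]
and then apply the third and fourth lines of \eqref{CMVSi} to read off $(\widehat{S_{2}}^{-1})_{2l-1,2l}=[\alpha^{L}_{2,2l}]^{\dagger}$ and $(\widehat{S_{2}}^{-1})_{2l,2l+1}=\alpha^{R}_{1,2l+1}$. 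The right-side factors $Z_{2}^{-1}$ and $\widehat{Z_{1}}$ are handled by the same procedure applied to $\phi^{R}_{2}=\chi^{\top}(Z_{2}^{-1})^{\dagger}$ and, after using $Z_{1}=\widehat{Z_{1}}(D^{R})^{-1}$, to $\phi^{R}_{1}D^{R}=\chi^{\top}\widehat{Z_{1}}$; the identifications \eqref{CMVSf} then produce the remaining four subdiagonal entries listed in the statement.

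No serious obstacle arises; the argument is essentially bookkeeping. The one point that requires care is the parity-indexed shift and the appearance of $\dagger$ whenever the identification passes through a reversed Szeg\H{o} polynomial rather than the polynomial itself; organizing the eight cases into a small table pairing $(j,H,\text{parity})$ with the corresponding line of \eqref{CMVSi} or \eqref{CMVSf} makes the verification entirely mechanical. The entries marked $*$ encode higher-order Fourier coefficients of the CMV polynomials and are not constrained by the first sub/super-diagonal, so they require no separate treatment.
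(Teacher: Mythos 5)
Your argument is correct: reading the first sub/super‑diagonal blocks off the triangular structure of $S_1$, $(\widehat{S_2}^{-1})^\dagger$, $(Z_2^{-1})^\dagger$, $\widehat{Z_1}$ and matching the constant term (even index) or top‑degree coefficient (odd index, via the reversed polynomial) against \eqref{CMVSi}--\eqref{CMVSf} pins down all eight entries, and your handling of the diagonal normalizations $S_2=D^L\widehat{S_2}$, $Z_1=\widehat{Z_1}(D^R)^{-1}$ is exactly what is needed. The paper states this proposition without proof, and your coefficient‑comparison bookkeeping is evidently the intended derivation.
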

This gives the following stucture for the MOLPUC
\begin{pro}
\begin{enumerate}
  \item  The MOLPUC are of the form
  \begin{align*}
(\phi^L_1)^{(2l)}&=\alpha_{1,2l}^Lz^{-l}+\cdots+z^l, & (\phi^L_1)^{(2l+1)}&=z^{-l-1}+\cdots +(\alpha^R_{2,2l+1})^\dagger z^l,\\
(\phi^L_2)^{(2l)}&=\big((h^L_{2l})^\dagger\big)^{-1}\big(\alpha^L_{2,2l}z^{-l}+\cdots+z^l\big), &
(\phi^L_2)^{(2l+1)}&=\big((h^R_{2l+1})^\dagger\big)^{-1}\big(z^{-l-1}+\cdots+(\alpha^R_{1,2l+1})^\dagger z^{l}\big),\\
(\phi^R_1)^{(2l)}&=\big(\alpha^R_{1,2l}z^{-l}+\cdots+z^l\big)(h^R_{2l})^{-1}, &
(\phi^R_1)^{(2l+1)}&=\big(z^{-l-1}+\cdots+(\alpha^L_{2,2l+1})^\dagger z^{l}\big)(h^L_{2l+1})^{-1},\\
(\phi^R_2)^{(2l)}&=\alpha_{2,2l}^Rz^{-l}+\cdots+z^l, & (\phi^R_2)^{(2l+1)}&=z^{-l-1}+\cdots +\alpha^L_{1,2l+1} z^l.
  \end{align*}
  \item The ``quasi-norms" and the MOLPUC fulfill
  \begin{align}\label{norms1}
    h^R_{2l+1}&=\oint_\T (\phi_1^L)^{(2l+1)}(z)\frac{\d\mu (z)}{\operatorname{i}z}z^{l+1}, &
    h^L_{2l}&=\oint_\T (\phi_1^L)^{(2l)}(z)\frac{\d\mu (z)}{\operatorname{i}z}z^{-l},\\\notag
    h^R_{2l}&=\oint_\T   \big((\phi_2^R)^{(2l)}(z)\big)^\dagger\frac{\d\mu (z)}{\operatorname{i}z}z^{l}, &
    h^L_{2l+1}&=\oint_\T   \big((\phi_2^R)^{(2l+1)}(z)\big)^\dagger\frac{\d\mu (z)}{\operatorname{i}z}z^{-l-1}.
  \end{align}
\end{enumerate}
\end{pro}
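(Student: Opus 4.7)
The plan is to tackle the two parts of the Proposition separately, relying on the explicit block structure of the Gauss--Borel factors from Proposition \ref{faclosalpha} for Part~(1), and on the biorthogonality of Theorem \ref{bio} combined with the quasi-orthogonality relations \eqref{ort}, \eqref{ort1} for Part~(2). Throughout I will exploit the pairing built into $\chi(z)$: its $2k$-th block is $\mathbb I z^k$ and its $(2k{+}1)$-th block is $\mathbb I z^{-k-1}$.

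For Part~(1), the expressions for $(\phi_1^L)^{(2l)}$, $(\phi_1^L)^{(2l+1)}$ and their right analogues $(\phi_2^R)^{(2l)}$, $(\phi_2^R)^{(2l+1)}$ follow by direct inspection of the defining formulas $\phi_1^L = S_1\chi$, $\phi_2^R = \chi^\top(Z_2^{-1})^\dagger$, using the block LU forms displayed in Proposition \ref{faclosalpha}. The rightmost nonzero entry on row $2l$ of $S_1$ is the identity in column $2l$ (giving the leading power $z^l$) and the preceding entry is $\alpha_{1,2l}^L$ in column $2l-1$ (giving the tail power $z^{-l}$ with the stated coefficient); similarly for odd rows and for the right objects. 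For $(\phi_2^L)^{(2l)}$ and $(\phi_2^L)^{(2l+1)}$, rather than expand $(S_2^{-1})^\dagger$ directly, I would apply the Szeg\H{o}-to-CMV identifications \eqref{CMVSi}, reading off the leading and constant coefficients of $P_{2,2l}^L$ and of the reciprocal polynomial $(P_{1,2l+1}^R)^*$, and then invoke Proposition \ref{pro:quasinorms} to replace the $D^L_k$ prefactor by $(h^L_{2l})^\dagger$ or $(h^R_{2l+1})^\dagger$. The right analogues for $(\phi_1^R)^{(2l)}$ and $(\phi_1^R)^{(2l+1)}$ use \eqref{CMVSf} in the same way.

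For Part~(2), each of the four claimed formulas follows from the diagonal biorthogonality $\oint (\phi_1^H)^{(k)} \frac{\d\mu}{\operatorname{i}z}[(\phi_2^H)^{(k)}]^\dagger = \mathbb I$ (and its right-hand counterpart) by expanding the second factor in powers of $z$ and invoking the quasi-orthogonality \eqref{ort}, \eqref{ort1} to annihilate all but a single term. For instance, for $h^L_{2l}$ I would write $[(\phi_2^L)^{(2l)}(z)]^\dagger = \big((\alpha^L_{2,2l})^\dagger z^l + \cdots + \mathbb I z^{-l}\big)(h^L_{2l})^{-1}$, noting that the powers range over $\{z^{-l},\ldots,z^l\}$; the relations \eqref{ort} kill the pairing with $z^{-l+1},\ldots,z^l$, leaving only the $z^{-l}$ contribution, whose coefficient $(h^L_{2l})^{-1}$ rearranges into the claimed identity. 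The cases $h^R_{2l+1}$, $h^R_{2l}$, $h^L_{2l+1}$ are handled by exactly the same mechanism, using the $R$-biorthogonality and the corresponding orthogonality windows from \eqref{ort1}, and keeping track of the power in $z$ that falls outside the orthogonality range.

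No step should present serious difficulty; the main piece of care is bookkeeping, in particular tracking how taking $\dagger$ on the unit circle (where $\bar z = z^{-1}$) flips the sign of powers while conjugating matrix coefficients, and being attentive to the fact that the Verblunsky and $(h^H_r)^{-1}$ factors sit on specific sides of the Laurent polynomial (\emph{left} versus \emph{right}) depending on whether one works with $\phi^L$ or $\phi^R$. Once these conventions are fixed, both parts reduce to reading off coefficients and applying the already established orthogonality relations.
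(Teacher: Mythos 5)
Your proposal is correct and follows essentially the same route as the paper, which likewise derives Part (1) from the Gauss--Borel factorizations together with Propositions \ref{pro:quasinorms} and \ref{faclosalpha}, and Part (2) from the biorthogonality \eqref{ortCMV} combined with the explicit forms of item (1) and the quasi-orthogonality relations \eqref{ort}, \eqref{ort1}. Your detour through \eqref{CMVSi}--\eqref{CMVSf} for the $\phi_2^L$ and $\phi_1^R$ entries is only a cosmetic variant of reading the coefficients off $\widehat{S_2}^{-1}$ and $\widehat{Z_1}$ directly.
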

\begin{proof}
\begin{enumerate}
  \item Use \eqref{eq:fac1}, \eqref{eq:fac2} and Propositions \ref{pro:quasinorms} and \ref{faclosalpha}.
  \item Consider the biorthogonality \eqref{ortCMV} together with the explicit expressions of the first item in this Proposition and orthogonality relations \eqref{ort} and \eqref{ort1}.
\end{enumerate}
\end{proof}

Recalling \eqref{hermitian} we conclude from Proposition \ref{faclosalpha} that in the Hermitian context we have
\begin{align*}
  \alpha^H_{1,l}&=\alpha_{2,l}^H, & H&=L,R, & l&=0,1,\dots,\\
( D_l^H)^\dagger&=D^H_l, & H&=L,R, & l&=0,1,\dots.
\end{align*}
It is not difficult to see comparing the previous result with the proof of the Gaussian factorization \eqref{Gss} that in terms of Schur complements we have
\begin{pro}
\begin{enumerate}
\item The matrices $D^H_l\in\C^{m\times m}$, $H=L,R$, $l=0,1,\dots$, from the diagonal block of the bock LU factorization can be written as the following Schur complements
\begin{align}\label{com}
 D^H_l &=(g^H)^{[l+1]}\diagup (g^H)^{[l]},  & H&=L,R & l&=0,1,\dots.
\end{align}
\item The  Verblunsky matrices can be expressed as
\begin{align*}
\alpha_{1,2k}^L&=-\sum_{i=0}^{2k-1}(g^L)_{2k,i}(((g^{L})^{[2k]})^{-1})_{i,2k-1}, &
[\alpha_{2,2k+1}^R]^\dagger&=-\sum_{i=0}^{2k}(g^L)_{2k+1,i}(((g^{L})^{[2k+1]})^{-1})_{i,2k},\\
[\alpha_{2,2k}^R]^\dagger&=-\sum_{i=0}^{2k-1}(g^R)_{2k,i}(((g^R)^{[2k]})^{-1})_{i,2k-1},&
\alpha_{1,2k+1}^L&=-\sum_{i=0}^{2k}(g^R)_{2k+1,i}(((g^{R})^{[2k+1]})^{-1})_{i,2k},\\
[\alpha_{2,2k}^L]^\dagger&=-\sum_{i=0}^{2k-1}(((g^{L})^{[2k]})^{-1})_{2k-1,i}(g^L)_{i,2k},&
\alpha_{1,2k+1}^R&=-\sum_{i=0}^{2k}([(g^{L})^{[2k+1]})^{-1})_{2k,i}(g^L)_{i,2k+1},\\
\alpha_{1,2k}^R&=-\sum_{i=0}^{2k-1}(([(g^R)^{[2k]})^{-1})_{2k-1,i}(g^R)_{i,2k}, &
[\alpha_{2,2k+1}^L]^\dagger&=-\sum_{i=0}^{2k}(((g^R)^{[2k+1]})^{-1})_{2k,i}(g^R)_{i,2k+1}.
\end{align*}
\end{enumerate}
\end{pro}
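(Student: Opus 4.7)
The plan is to deduce both parts directly from the block LU factorizations $g^L=S_1^{-1}D^L\widehat{S_2}$ and $g^R=Z_2 D^R\widehat{Z_1}^{-1}$ of \eqref{eq:fac1}--\eqref{eq:fac2}, combined with the explicit identification of the first subdiagonal/superdiagonal blocks of $S_1,\widehat{S_2}^{-1},Z_2^{-1},\widehat{Z_1}$ given in Proposition \ref{faclosalpha}.

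For item (1), I would restrict the full factorization to the leading $(l+1)$-block submatrix. By the block-triangularity of $S_1,\widehat{S_2}$ (resp.\ $Z_2,\widehat{Z_1}$), truncating to $(l+1)\times(l+1)$ blocks commutes with the product, so $(g^H)^{[l+1]}$ inherits a block LU factorization whose diagonal factor is $\diag(D^H_0,\dots,D^H_l)$. Partitioning
\[
(g^H)^{[l+1]}=\begin{pmatrix}(g^H)^{[l]}& B\\ C & E\end{pmatrix}
\]
and applying the elementary block identity
\[
\begin{pmatrix}(g^H)^{[l]}& B\\ C & E\end{pmatrix}=\begin{pmatrix}\mathbb{I}&0\\ C((g^H)^{[l]})^{-1}& \mathbb{I}\end{pmatrix}\begin{pmatrix}(g^H)^{[l]}&0\\ 0& E-C((g^H)^{[l]})^{-1}B\end{pmatrix}\begin{pmatrix}\mathbb{I}&((g^H)^{[l]})^{-1}B\\ 0& \mathbb{I}\end{pmatrix},
\]
uniqueness of the block LU factorization (guaranteed by quasi-definiteness) forces the $(l,l)$-th block of $D^H$ to coincide with $E-C((g^H)^{[l]})^{-1}B$, which is exactly the Schur complement $(g^H)^{[l+1]}\diagup(g^H)^{[l]}$.

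For item (2), I would convert the factorization into a zero condition on off-diagonal blocks of $S_1g^L$, $g^L\widehat{S_2}^{-1}$, $Z_2^{-1}g^R$, $g^R\widehat{Z_1}$ and read off the required entries. For instance, $S_1g^L=D^L\widehat{S_2}$ is block upper triangular, so for every $j<l$
\[
0=(S_1g^L)_{l,j}=(g^L)_{l,j}+\sum_{i=0}^{l-1}(S_1)_{l,i}(g^L)_{i,j}.
\]
This is the linear system $\bigl((S_1)_{l,0},\dots,(S_1)_{l,l-1}\bigr)(g^L)^{[l]}=-\bigl((g^L)_{l,0},\dots,(g^L)_{l,l-1}\bigr)$, whose resolution (possible because $(g^L)^{[l]}$ is non-singular) gives
\[
(S_1)_{l,l-1}=-\sum_{i=0}^{l-1}(g^L)_{l,i}\bigl(((g^L)^{[l]})^{-1}\bigr)_{i,l-1}.
\]
Taking $l=2k$ and $l=2k+1$ and identifying $(S_1)_{l,l-1}$ with $\alpha^L_{1,2k}$ and $[\alpha^R_{2,2k+1}]^\dagger$, respectively, via Proposition \ref{faclosalpha}, produces the first two displayed formulas. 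The symmetric argument applied to $g^L\widehat{S_2}^{-1}=S_1^{-1}D^L$ (lower triangular, so $(j-1,j)$ blocks of $\widehat{S_2}^{-1}$ are fixed by an analogous system and give $[\alpha^L_{2,2k}]^\dagger$ and $\alpha^R_{1,2k+1}$), and then the parallel computations for $Z_2^{-1}g^R=D^R\widehat{Z_1}^{-1}$ and $g^R\widehat{Z_1}=Z_2D^R$, yield the remaining four identities.

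No serious analytic obstacle appears: the whole argument is linear algebra and uniqueness of block Gauss--Borel factorization. The one thing requiring care is purely combinatorial bookkeeping, namely matching each of the eight sums on the right-hand side of the statement to the correct factor ($S_1$, $\widehat{S_2}^{-1}$, $Z_2^{-1}$ or $\widehat{Z_1}$), the correct parity of $l$, and the correct placement of the dagger, using the explicit subdiagonal/superdiagonal patterns displayed in Proposition \ref{faclosalpha}.
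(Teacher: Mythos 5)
Your proof is correct and follows essentially the same route the paper intends: the paper offers no written proof beyond the remark that the claim follows by comparing Proposition \ref{faclosalpha} with the iterated Schur-complement factorization \eqref{Gss}, and your argument simply makes that comparison explicit (truncation of the block LU factorization plus uniqueness for item (1), and the vanishing of the off-diagonal blocks of $S_1g^L$, $g^L\widehat{S_2}^{-1}$, $Z_2^{-1}g^R$, $g^R\widehat{Z_1}$ for item (2), which is the same linear system, e.g.\ $S_1^{[\geq l,l]}(g^L)^{[l]}=-S_1^{[\geq l]}(g^L)^{[\geq l,l]}$, used in the proof of Lemma \ref{lemma-alternative}). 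No gaps; your write-up is in fact more detailed than the paper's.
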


\subsubsection{Alternative ways to express the CMV matrix Laurent polynomials}\label{alternative}
For later use we now present  some alternative expressions for the MOLPUC  $(\varphi^H_{i})^{(l)}(z)$, $H=L,R$, $l=0,1,\dots$  in terms of Schur complements of bordered truncated matrices
\begin{lemma}\label{lemma-alternative}
The next expressions hold true
 \begin{align}\label{sc1}
& \begin{aligned}
   (\varphi^L_{1})^{(l)}(z)&=(S_2)_{ll}\begin{pmatrix}
    0 &0 &\dots &0& \mathbb{I}
  \end{pmatrix}((g^L)^{[l+1]})^{-1}\chi^{[l+1]}
 =\chi^{(l)}-\begin{pmatrix}
    (g^L)_{l,0} & (g^L)_{l,1} & \cdots &(g^L)_{l,l-1}
  \end{pmatrix}((g^L)^{[l]})^{-1}\chi^{[l]}\\
&=\operatorname{SC}\left(\begin{BMAT}{cccc|c}{cccc|c}
       (g^L)_{0,0} & (g^L)_{0,1} & \dots & (g^L)_{0,l-1} & \chi(z)^{(0)}\\
       (g^L)_{1,0} & (g^L)_{1,1} & \dots & (g^L)_{1,l-1} & \chi(z)^{(1)}\\
                   & \vdots      &       &               &  \vdots\\
       (g^L)_{l-1,0} & (g^L)_{l-1,1} & \dots & (g^L)_{l-1,l-1} & \chi(z)^{(l-1)}\\
      (g^L)_{l,0} & (g^L)_{l,1} & \dots & (g^L)_{l,l-1} & \chi(z)^{(l)}\\
      \end{BMAT}\right),
 \end{aligned}\\
&\begin{aligned}
\,[(\varphi^L_{2})^{(l)}(z)]^\dagger&=(\chi^{[l+1]})^\dag
  ((g^L)^{[l+1]})^{-1}\begin{pmatrix}
    0 \\0\\\vdots \\0\\ \mathbb{I}
  \end{pmatrix}=
  \Big((\chi^{(l)})^{\dag}-(\chi^{[l]})^\dag ((g_{L})^{[l]})^{-1}\begin{pmatrix}
    (g^L)_{0,l}\\(g^L)_{1,l}\\\vdots\\(g^L)_{l-1,l}
  \end{pmatrix}\Big)(D^L)_{l}\\
  &=\operatorname{SC}\left(\begin{BMAT}{cccc|c}{cccc|c}
       (g^L)_{0,0} & (g^L)_{0,1} & \dots & (g^L)_{0,l-1} & (g^L)_{0,l}\\
       (g^L)_{1,0} & (g^L)_{1,1} & \dots & (g^L)_{1,l-1} & (g^L)_{1,l}\\
                   & \vdots      &       &               &  \vdots\\
       (g^L)_{l-1,0} & (g^L)_{l-1,1} & \dots & (g^L)_{l-1,l-1} & (g^L)_{l-1,l}\\
      [\chi(z)^\dagger]^{(0)} & [\chi(z)^\dagger]^{(1)} & \dots & [\chi(z)^\dagger]^{(l-1)} & [\chi(z)^\dagger]^{(l)}\\
      \end{BMAT}\right)(D^L)_{l}
\end{aligned}
\end{align}
and
\begin{align*}
&
  \begin{aligned}
  (\varphi^R_{1})^{(l)}(z)&=[\chi^{[l+1]}]^\top
  (((g^R)^{[l+1]})^{-1}\begin{pmatrix}
    0 \\0\\\vdots \\0\\ \mathbb{I}
  \end{pmatrix}=[\chi^{(l)}]^\top-[\chi^{[l]}]^\top ((g^R)^{[l]})^{-1}\begin{pmatrix}
    (g^R)_{0,l}\\(g^R)_{1,l}\\\vdots\\(g^R)_{l-1,l}
  \end{pmatrix}((D^R)^{-1})_l\\
&=\operatorname{SC}\left(\begin{BMAT}{cccc|c}{cccc|c}
       (g^R)_{0,0} & (g^R)_{0,1} & \cdots & (g^R)_{0,l-1} & (g^R)_{0,l}\\
       (g^R)_{1,0} & (g^R)_{1,1} & \cdots & (g^R)_{1,l-1} & (g^R)_{1,l}\\
        \vdots           & \vdots      &       &            \vdots   &  \vdots\\
       (g^R)_{l-1,0} & (g^R)_{l-1,1} & \dots & (g^R)_{l-1,l-1} & (g^R)_{l-1,l}\\
      [\chi(z)^\top]^{(0)} & [\chi(z)^\top]^{(1)} & \dots & [\chi(z)^\top]^{(l-1)} & [\chi(z)^\top]^{(l)}\\
      \end{BMAT}\right)((D^R)^{-1})_{l},
  \end{aligned}\\
&\begin{aligned}
\,  [(\varphi^R_{2})^{(l)}(z)]^\dagger&=(D^R)_l \begin{pmatrix}
    0  &\dots &0& \mathbb{I}
  \end{pmatrix}  (((g^R)^{[l+1]})^{-1}[(\chi^{[l+1]})^{\top}]^\dagger=
[(\chi^{(l)})^{\top}]^\dagger-\begin{pmatrix}
    (g^R)_{l,0}  & \cdots & (g^R)_{l,l-1}
  \end{pmatrix}((g^{R})^{[l]})^{-1}[(\chi^{[l]})^{\top}]^\dagger\\
&=\operatorname{SC}\left(\begin{BMAT}{cccc|c}{cccc|c}
       (g^R)_{0,0} & (g^R)_{0,1} & \dots & (g^R)_{0,l-1} &  [(\chi(z)^\top)^\dagger]^{(0)}\\
       (g^R)_{1,0} & (g^R)_{1,1} & \dots & (g^R)_{1,l-1} & [(\chi(z)^\top)^\dagger]^{(1)}\\
                   & \vdots      &       &               &  \vdots\\
       (g^R)_{l-1,0} & (g^R)_{l-1,1} & \dots & (g^R)_{l-1,l-1} & [(\chi(z)^\top)^\dagger]^{(l-1)}\\
      (g^R)_{l,0} & (g^R)_{l,1} & \dots & (g^R)_{l,l-1} & [(\chi(z)^\top)^\dagger]^{(l)}\\
      \end{BMAT}\right).
\end{aligned}
\end{align*}
\end{lemma}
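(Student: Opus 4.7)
The proof will proceed by extracting rows and columns of the block factorization matrices from the Gauss–Borel identities of Propositions~\ref{Pro-existence LU} and then reading off each expression from the definitions \eqref{laurent1}--\eqref{laurent2}. Throughout, I will use that $S_1,Z_2^{-1}\in\mathscr L$ have $\mathbb I$ on the block diagonal, that $S_2,Z_1^{-1}\in\mathscr U$, and that $(S_2)_{ll}=D^L_l$, $(Z_1^{-1})_{ll}=(D^R_l)^{-1}$.

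For $(\varphi^L_1)^{(l)}$, I begin from $\phi_1^L=S_1\chi(z)$, which gives $(\varphi^L_1)^{(l)}(z)=\sum_{k=0}^{l-1}(S_1)_{l,k}\chi^{(k)}(z)+\chi^{(l)}(z)$. The relation $S_1 g^L=S_2$, taken at $(l,j)$ with $j<l$ (where $S_2$ vanishes), yields
\begin{align*}
\bigl((S_1)_{l,0},\ldots,(S_1)_{l,l-1}\bigr)\,(g^L)^{[l]}=-\bigl((g^L)_{l,0},\ldots,(g^L)_{l,l-1}\bigr),
\end{align*}
which by quasi-definiteness solves uniquely and, when substituted back, produces the middle expression in \eqref{sc1}. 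For the first expression, I use $(g^L)^{-1}=S_2^{-1}S_1$; since $S_2^{-1}$ is block-upper triangular with diagonal $(D^L_l)^{-1}$, the last block row of $((g^L)^{[l+1]})^{-1}$ equals $(D^L_l)^{-1}$ times the last block row of $S_1^{[l+1]}$. Pairing with $\chi^{[l+1]}$ and multiplying by $(S_2)_{ll}=D^L_l$ recovers $(\varphi^L_1)^{(l)}(z)$.

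For $[(\varphi^L_2)^{(l)}]^\dagger$, the analogous argument uses $[\phi_2^L]^\dagger=\chi^\dagger S_2^{-1}$ and the identity $g^L S_2^{-1}=S_1^{-1}$. Comparing $(j,l)$ blocks for $j<l$ (where $S_1^{-1}$ is zero above the diagonal) extracts the last block column of $S_2^{-1}$ above its diagonal in terms of $(g^L)^{[l]}$ and the right-hand column $((g^L)_{0,l},\ldots,(g^L)_{l-1,l})^\top$. The diagonal entry $(S_2^{-1})_{ll}=(D^L_l)^{-1}$ must be pulled out on the right, which explains the trailing factor $(D^L)_l$ in the stated formula. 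The right-hand formulas for $(\varphi^R_1)^{(l)}$ and $[(\varphi^R_2)^{(l)}]^\dagger$ are obtained by the mirror argument: from $g^R=Z_2 Z_1^{-1}$ and the definitions involving $\chi^\top Z_1$ and $\chi^\top (Z_2^{-1})^\dagger$, the roles of rows and columns are swapped, and $(D^R)_l^{-1}$ appears on the right instead of $D^L_l$ because $(Z_1^{-1})_{ll}=(D^R_l)^{-1}$.

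Finally, each of the four expressions is recognized as a Schur complement by direct comparison with the definition $M/A=D-CA^{-1}B$: in every case $A=(g^H)^{[l]}$, while $B$ and $C$ are either the bordering columns/rows of moments or the corresponding components of $\chi(z)$, $\chi(z)^\dagger$, $\chi(z)^\top$, $(\chi(z)^\top)^\dagger$, and $D$ is the corner block. The main bookkeeping obstacle is tracking the placement of the diagonal factor $D^H_l$ (left or right, inverted or not) in each of the four cases, since non-commutativity forbids absorbing it symmetrically; all four choices are determined by whether the Laurent polynomial carries $\chi$ from the left (via $S_1$, $(Z_2^{-1})^\dagger$) or from the right (via $Z_1$, $(S_2^{-1})^\dagger$), and by which Gauss–Borel identity one inverts.
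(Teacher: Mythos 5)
Your proposal is correct and follows essentially the same route as the paper's own proof: both extract the relevant block row or column of $S_1$, $S_2^{-1}$, $Z_1$, $Z_2^{-1}$ from the Gauss--Borel identities (your reading of $S_1g^L=S_2$ at the $(l,j)$ blocks with $j<l$ is exactly the paper's relation $S_1^{[\geq l,l]}(g^L)^{[l]}=-S_1^{[\geq l]}(g^L)^{[\geq l,l]}$, and your use of $((g^L)^{[l+1]})^{-1}=(S_2^{[l+1]})^{-1}S_1^{[l+1]}$ is the paper's $S_2^{[l]}((g^L)^{[l]})^{-1}=S_1^{[l]}$), both dispatch the right-handed cases by the mirror/structural analogy, and both recognize the Schur complement forms by direct inspection. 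The only blemish is the parenthetical claim $(Z_1^{-1})_{ll}=(D^R_l)^{-1}$, which should read $(Z_1)_{ll}=(D^R_l)^{-1}$ (equivalently $(Z_1^{-1})_{ll}=D^R_l$, since $Z_1^{-1}=D^R\widehat{Z_1}^{-1}$); this slip does not affect the placement of the diagonal factors you correctly deduce.
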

\begin{proof}
See Appendix \ref{proofs}.
\end{proof}

Following \cite{Cafasso} we give expressions in terms of Schur complements for the matrix Szeg\H{o} polynomials, in terms of bordered truncated matrices of the right and left block CMV moment matrices. Extending though similar expressions given in \cite{Cafasso}   in terms of standard block moment matrices.
\begin{cor}
The left matrix Szeg\H{o} polynomials can be rewritten as the following Schur complements of bordered truncated CMV moment matrices
\begin{align*}
P_{1,2l}^L(z)&=z^l \operatorname{SC}\left(\begin{BMAT}{ccc|c}{c|c}
                   &(g^L)^{[2l]} &               &  \chi(z)^{[2l]}\\
      (g^L)_{2l,0}  & \dots       & (g^L)_{2l,2l-1} & \chi(z)^{(2l)}
      \end{BMAT}\right),  \\
P_{1,2l+1}^L(z)&=z^{l+1} \operatorname{SC}\left(\begin{BMAT}{ccc|c}{c|c}
                   &(g^R)^{[2l+1]} &               &  \chi^*(z)^{[2l+1]}\\
      (g^R)_{2l+1,0}  & \dots       & (g^R)_{2l+1,2l-1} & \chi^*(z)^{(2l+1)}\\
      \end{BMAT}\right),\\
[P_{2,2l}^L(z)]^\dagger&=\bar{z}^l \operatorname{SC}\left(\begin{BMAT}{c|c}{ccc|c}
                                  &  (g^L)_{0,2l}\\
              (g^L)^{[2l]}       &  \vdots\\
                                  &  (g^L)_{2l-1,2l}\\
         (\chi(z)^\dagger)^{[2l]} & (\chi(z)^\dagger)^{(2l)}
      \end{BMAT}\right),\quad
[P_{2,2l+1}^L(z)]^\dagger=\bar{z}^{l+1} \operatorname{SC}\left(\begin{BMAT}{c|c}{ccc|c}
                                  &  (g^R)_{0,2l+1}\\
              (g^R)^{[2l+1]}       &  \vdots\\
                                  &  (g^R)_{2l,2l+1}\\
         (\chi^*(z)^\dagger)^{[2l+1]} & (\chi^*(z)^\dagger)^{(2l+1)}
      \end{BMAT}\right),
\end{align*}
while for the right polynomials we have
\begin{align*}
P_{1,2l}^R(z)&=z^l \operatorname{SC}\left(\begin{BMAT}{c|c}{ccc|c}
                                  &  (g^R)_{0,2l}\\
               (g^R)^{[2l]}       &  \vdots\\
                                  &  (g^R)_{2l-1,2l}\\
         (\chi(z)^\top)^{[2l]} & (\chi(z)^\top)^{(2l)}\\
      \end{BMAT}\right),\quad
P_{1,2l+1}^R(z)=z^{l+1} \operatorname{SC}\left(\begin{BMAT}{c|c}{ccc|c}
                                  &  (g^L)_{0,2l+1}\\
               (g^L)^{[2l+1]}       &  \vdots\\
                                  &  (g^L)_{2l,2l+1}\\
         (\chi^*(z)^\top)^{[2l+1]} & (\chi^*(z)^\top)^{(2l+1)}
      \end{BMAT}\right),\\
[P_{2,2l}^R(z)]^\dagger&=\bar{z}^l \operatorname{SC}\left(\begin{BMAT}{ccc|c}{c|c}
                   & (g^R)^{[2l]} &               &  (\chi(z)^\dagger)^{[2l]}\\
      (g^R)_{2l,0}  & \dots       & (g^R)_{2l,2l-1} & (\chi(z)^\dagger)^{(2l)}
      \end{BMAT}\right), \\
[P_{2,2l+1}^R(z)]^\dagger&=\bar{z}^{l+1} \operatorname{SC}\left(\begin{BMAT}{ccc|c}{c|c}
                   & (g^L)^{[2l+1]} &               &  (\chi^*(z)^\dagger)^{[2l+1]}\\
      (g^R)_{2l+1,0}  & \dots       & (g^R)_{2l+1,2l-1} & (\chi^*(z)^\dagger)^{(2l+1)}
      \end{BMAT}\right).      \end{align*}
\end{cor}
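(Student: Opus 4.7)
The strategy is to translate the Schur complement expressions for the CMV Laurent polynomials provided by Lemma~\ref{lemma-alternative} into expressions for the matrix Szeg\H{o} polynomials, using the identifications \eqref{CMVSi}--\eqref{CMVSf}. The argument splits naturally according to whether the Szeg\H{o} index is even or odd.

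For the even-indexed Szeg\H{o} polynomials the identifications are direct: $P^L_{1,2l}(z)=z^l(\varphi^L_1)^{(2l)}(z)$, $P^R_{2,2l}(z)=z^l(\varphi^R_2)^{(2l)}(z)$, and analogously for $P^L_{2,2l}$ and $P^R_{1,2l}$ (the latter two carrying an extra diagonal weight $(D^L_{2l})^\dagger$ or $D^R_{2l}$). Substituting into the Schur complement formulas of Lemma~\ref{lemma-alternative} immediately reproduces, up to the prefactor $z^l$, the first, third, fifth and seventh entries of the corollary. The adjoint formulas for $[P^H_{2,2l}]^\dagger$ follow by Hermitian conjugation of both sides of the identification combined with the dagger version of the Lemma; the $(D^H)^\dagger$ weight from \eqref{CMVSi}/\eqref{CMVSf} and the diagonal weight appearing in the Lemma combine so that no residual $D^H$ factor survives in the final Schur complement.

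For the odd-indexed cases the identifications only produce the reciprocal $(P)^*$ in terms of $\varphi$, so one extra step is required. Using the involutivity $(P^*)^*=P$ in the explicit form
\begin{align*}
 P(z) &= z^{N}\,\bigl[P^*(\bar z^{-1})\bigr]^\dagger, & N&=\deg P=2l+1,
\end{align*}
one must transport the Schur complement of Lemma~\ref{lemma-alternative} through the substitution $z\mapsto\bar z^{-1}$ followed by a Hermitian conjugation. The computational heart of the argument is the elementary identity
\begin{align*}
 \bigl(\chi^{(j)}(\bar z^{-1})\bigr)^\dagger &= z\,\chi^*(z)^{(j)}, & j&=0,1,2,\dots,
\end{align*}
which one verifies by separating the even $j=2k$ and odd $j=2k+1$ sub-cases of the CMV ordering (where $\chi^{(2k)}(z)=z^{k}\mathbb{I}$, $\chi^{(2k+1)}(z)=z^{-k-1}\mathbb{I}$, while $\chi^*(z)^{(2k)}=z^{-k-1}\mathbb{I}$ and $\chi^*(z)^{(2k+1)}=z^{k}\mathbb{I}$). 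Applied entry-wise, this identity converts the $\chi(z)$-bordered Schur complement of Lemma~\ref{lemma-alternative} into a $\chi^*(z)$-bordered one, producing a single global factor of $z$; this factor, together with the powers $z^{2l+1}\cdot z^{-(l+1)}$ generated by the $(\cdot)^*$ inversion and the cancellation of the $D^H$ weights (exactly as in the even case), combines to give the announced prefactor $z^{l+1}$.

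The main obstacle is the systematic bookkeeping of the eight cases: one must track for each Szeg\H{o} polynomial which of $g^L$, $g^R$ and which of $\chi$, $\chi^*$ ends up in the final Schur complement. The apparent $L\leftrightarrow R$ swap visible in the odd-index rows of the corollary is forced by \eqref{CMVSi}--\eqref{CMVSf} (for instance, $(P^R_{1,2l+1})^*$ is tied to $\varphi^L_2$, so the Schur complement for $P^R_{1,2l+1}$ is naturally built from $g^L$ and $\chi^*$). Quasi-definiteness of $\mu$ guarantees the invertibility of every $(g^H)^{[l]}$ entering the constructions, so all Schur complements involved are well defined.
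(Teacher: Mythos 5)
Your proposal is correct and follows essentially the same route as the paper, which simply substitutes the Schur-complement expressions of Lemma \ref{lemma-alternative} into the identifications \eqref{CMVSi}--\eqref{CMVSf}; you merely spell out the bookkeeping (the $(\cdot)^*$ inversion, the identity $(\chi^{(j)}(\bar z^{-1}))^\dagger=z\,\chi^*(z)^{(j)}$, and the cancellation of the $D^H$ weights) that the paper leaves implicit.
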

\begin{proof} These relations appear when one introduces in \eqref{CMVSi} and \eqref{CMVSf} the expressions of the CMV polynomials in terms of Schur complements.
\end{proof}

\subsection{Matrix second kind functions}\label{Sec-second}
The following matrix fashion of rewritting previous left objects
\begin{align*}
 \begin{pmatrix}
  \phi_{1,1}^L&\phi_{1,2}^L\\
  \phi_{2,1}^L&\phi_{2,2}^L
 \end{pmatrix}&=
\left[\begin{array}{cc}
                      S_1&0\\
                      0&[S_2^{-1}]^\dagger
                     \end{array} \right]
\begin{pmatrix}
 \chi_1 & \chi_2^*\\
 \chi_1 & \chi_2^*
\end{pmatrix}=
\begin{pmatrix}
S_1\chi_1&S_1\chi_2^*\\
[S_2^{-1}]^\dagger\chi_1& [S_2^{-1}]^\dagger\chi_2^*
\end{pmatrix}, \\
 \begin{pmatrix}
  \phi_1^L\\
  \phi_2^L
 \end{pmatrix}&=
\begin{pmatrix}
  \phi_{1,1}^L&\phi_{1,2}^L\\
  \phi_{2,1}^L&\phi_{2,2}^L
 \end{pmatrix}
\begin{pmatrix}
  1\\
  1\end{pmatrix},\\
 \left[\begin{array}{cc}
                      [g^L]^\dagger&0\\
                      0&g^L
                     \end{array} \right]&=
\left[\begin{array}{cc}
                      [S_2]^{\dagger}[S_1^{-1}]^\dagger&0\\
                      0&S_1^{-1}S_2
                     \end{array} \right]=
\left[\begin{array}{cc}
                      [S_2]^\dagger&0\\
                      0&S_1^{-1}
                     \end{array} \right]
\left[\begin{array}{cc}
                      [S_1^{-1}]^\dagger&0\\
                      0&S_2
                     \end{array} \right]
\end{align*}
 and the right ones
\begin{align*}
 \begin{pmatrix}
  \phi_{1,1}^R&\phi_{2,1}^R\\
  \phi_{1,2}^R&\phi_{2,2}^R
 \end{pmatrix}&=
\begin{pmatrix}
 \chi_1^\top & \chi_1^\top\\
 [\chi_2^*]^\top & [\chi_2^*]^\top
\end{pmatrix}
\left[\begin{array}{cc}
                      Z_1&0\\
                      0&[Z_2^{-1}]^\dagger
                     \end{array} \right],\\
 \begin{pmatrix}
  \phi_1^R & \phi_2^R
 \end{pmatrix}&=
\begin{pmatrix}
  1&1\end{pmatrix}
\begin{pmatrix}
  \phi_{1,1}^R&\phi_{2,1}^R\\
  \phi_{1,2}^R&\phi_{2,2}^R
 \end{pmatrix},\\
 \left[\begin{array}{cc}
                      [g^R]^\dagger&0\\
                      0&g^R
                     \end{array} \right]&=
\left[\begin{array}{cc}
                      [Z_1^{-1}]^\dagger&0\\
                      0&Z_2
                     \end{array} \right]
\left[\begin{array}{cc}
                      [Z_2]^\dagger&0\\
                      0&Z_1^{-1}
                     \end{array} \right]
\end{align*}
inspires the next
\begin{definition}
 The partial matrix CMV second kind sequences are given by
\begin{align*}
 \begin{pmatrix}
  C_{1,1}^L& C_{1,2}^L\\
  C_{2,1}^L&C_{2,2}^L
 \end{pmatrix}&:=
\left[\begin{array}{cc}
                      [S_1^{-1}]^\dagger&0\\
                      0&S_2
                     \end{array} \right]
\begin{pmatrix}
 \chi_1^* & \chi_2\\
 \chi_1^* & \chi_2
\end{pmatrix}=
\begin{pmatrix}
[S_1^{-1}]^\dagger\chi_1^*&[S_1^{-1}]^\dagger\chi_2\\
S_2\chi_1^*& S_2\chi_2
\end{pmatrix},\\
 \begin{pmatrix}
  C_{1,1}^R&C_{2,1}^R\\
  C_{1,2}^R&C_{2,2}^R
 \end{pmatrix}&:=
\begin{pmatrix}
 [\chi_1^*]^\top & [\chi_1^*]^\top\\
 \chi_2^\top & \chi_2^\top
\end{pmatrix}
\left[\begin{array}{cc}
                      [Z_1^{-1}]^\dagger&0\\
                      0&Z_2
                     \end{array} \right]=
\begin{pmatrix}
[\chi_1^*]^\top [Z_1^{-1}]^\dagger & [\chi_1^*]^\top Z_2\\
\chi_2^\top [Z_1^{-1}]^\dagger& \chi_2^\top Z_2
\end{pmatrix},
\end{align*}
and the corresponding matrix CMV second kind sequences are
\begin{align*}
 \begin{pmatrix}
  C_1^L\\
  C_2^L
 \end{pmatrix}&=
\begin{pmatrix}
  C_{1,1}^L&C_{1,2}^L\\
  C_{2,1}^L&C_{2,2}^L
 \end{pmatrix}
\begin{pmatrix}
  1\\
  1\end{pmatrix},\\
 \begin{pmatrix}
  C_1^R & C_2^R
 \end{pmatrix}&=
\begin{pmatrix}
  1&1\end{pmatrix}
\begin{pmatrix}
  C_{1,1}^R&C_{2,1}^R\\
  C_{1,2}^R&C_{2,2}^R
 \end{pmatrix}.
\end{align*}
\end{definition}
Complementary to the above definition
\begin{definition}
The associated CMV Fourier series are
 \begin{align*}
 \begin{pmatrix}
  \Gamma_{2,1}^L&\Gamma_{2,2}^L\\
  \Gamma_{1,1}^L&\Gamma_{1,2}^L
 \end{pmatrix}&:=
\left[\begin{array}{cc}
                      [g^L]^\dagger&0\\
                      0&g^L
                     \end{array} \right]
\begin{pmatrix}
 \chi_1^* & \chi_2\\
 \chi_1^* & \chi_2
\end{pmatrix}=
\begin{pmatrix}
[g^L]^\dagger\chi_1^*&[g^L]^\dagger\chi_2\\
g^L\chi_1^*& g^L\chi_2
\end{pmatrix},\\
 \begin{pmatrix}
  \Gamma_{2,1}^R&\Gamma_{1,1}^R\\
  \Gamma_{2,2}^R&\Gamma_{1,2}^R
 \end{pmatrix}&:=
\begin{pmatrix}
 [\chi_1^*]^\top & [\chi_1^*]^\top\\
 \chi_2^\top & \chi_2^\top
\end{pmatrix}
\left[\begin{array}{cc}
                      [g^R]^\dagger&0\\
                      0&g^R
                     \end{array} \right]=
\begin{pmatrix}
[\chi_1^*]^\top [g^R]^\dagger & [\chi_1^*]^\top g^R\\
\chi_2^\top [g^R]^\dagger& \chi_2^\top g^R
\end{pmatrix},
\end{align*}
\end{definition}
for which we have:
\begin{pro}\label{previous}
\begin{enumerate}
\item The elements  $\Gamma^H$ and $C^H$, $H=L,R$,  are related in the following way
\begin{align*}
\begin{pmatrix}
  \Gamma_2^L\\
  \Gamma_1^L
 \end{pmatrix}&=
\left[\begin{array}{cc}
                      [S_2]^\dagger&0\\
                      0&S_1^{-1}
                     \end{array} \right]
\begin{pmatrix}
  C_1^L\\
  C_2^L
 \end{pmatrix},&
 \begin{pmatrix}
  \Gamma_2^R & \Gamma_1^R
 \end{pmatrix}&=
\begin{pmatrix}
  C_1^R & C_2^R
 \end{pmatrix}
\left[\begin{array}{cc}
                      [Z_2]^\dagger&0\\
                      0&Z_1^{-1}
                     \end{array} \right].
\end{align*}
 \item The second kind functions can be expressed as Schur complements as follows
\begin{align*}
[(C_1^L)^{(l)}(z)]^\dagger&=\operatorname{SC}\left(\begin{array}{c|c}
                                  &  (g^L)_{0,l}\\
               (g^L)^{[l]}       &  \vdots\\
                                  &  (g^L)_{l-1,l}\\
\hline
         (\Gamma_2^L(z)^\dagger)^{[l]} & (\Gamma_2^L(z)^\dagger)^{(l)}\\
      \end{array}\right)(D_L^{-1})_l,\\
(C_2^L)^{(l)}(z))&=\operatorname{SC}\left(\begin{array}{ccc|c}
                   & (g^L)^{[l]} &               &  (\Gamma_1(z))^{[l]}\\
\hline
      (g^L)_{l,0}  & \dots       & (g^L)_{l,l-1} & (\Gamma_1(z))^{(l)}\\
      \end{array}\right), \\
[(C_1^R)^{(l)}(z)]^\dagger&=\operatorname{SC}\left(\begin{array}{ccc|c}
                   & (g^R)^{[l]} &               &  (\Gamma_2^R(z)^\dagger)^{[l]}\\
\hline
      (g^R)_{l,0}  & \dots       & (g^R)_{l,l-1} & (\Gamma_2^R(z)^\dagger)^{(l)}\\
      \end{array}\right), \\
(C_2^R)^{(l)}(z)&=\operatorname{SC}\left(\begin{array}{c|c}
                                  &  (g^R)_{0,l}\\
               (g^R)^{[l]}       &  \vdots\\
                                  &  (g^R)_{l-1,l}\\
\hline
         (\Gamma_1^R(z))^{[l]} & (\Gamma_1^R(z))^{(l)}\\
      \end{array}\right)(D_R^{-1})_l.
\end{align*}
\item In terms of the matrix Laurent orthogonal polynomials and the Fourier series of the matrix measure we have
\begin{align}\label{CF}
\begin{aligned}
 (C_1^L)^{(l)}(z)&=2\pi z^{-1}(\varphi_2^L)^{(l)}(z^{-1})F_{\mu}^\dagger(z),
  & (C_2^L)^{(l)}(z)&=2\pi z^{-1}(\varphi_1^L)^{(l)}(z^{-1})F_{\mu}(z^{-1}),\\
 (C_1^R)^{(l)}(z)&=2\pi z^{-1}F_{\mu}^\dagger(z)(\varphi_2^R)^{(l)}(z^{-1}),
  & (C_2^R)^{(l)}(z)&=2\pi z^{-1}F_{\mu}(z^{-1})(\varphi_1^R)^{(l)}(z^{-1}).
\end{aligned}
\end{align}
\end{enumerate}
\end{pro}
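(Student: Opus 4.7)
The plan is to handle the three items in turn. Each is essentially a one-line consequence of the Gauss–Borel factorizations \eqref{eq:fac1}, \eqref{eq:fac2}, combined with either the definition of the CMV moment matrices, the Schur-complement machinery of Lemma \ref{lemma-alternative}, or the Fourier expansion \eqref{moment}.

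For item (1), I would first collapse the block columns of the defining displays. Since $(\chi_a^*)^* = \chi_a$, one has $\chi^* = \chi_1^* + \chi_2$; summing the two block columns in each definition therefore gives $C_1^L = [S_1^{-1}]^\dagger \chi^*$, $C_2^L = S_2\chi^*$, $\Gamma_2^L = [g^L]^\dagger \chi^*$ and $\Gamma_1^L = g^L\chi^*$. From \eqref{eq:fac1}, $g^L = S_1^{-1}S_2$, so $[g^L]^\dagger = S_2^\dagger [S_1^{-1}]^\dagger$, which immediately yields $\Gamma_2^L = S_2^\dagger C_1^L$ and $\Gamma_1^L = S_1^{-1} C_2^L$. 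The right-case identities follow from the same manipulation using $g^R = Z_2 Z_1^{-1}$.

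For item (2) I would adapt the proof of Lemma \ref{lemma-alternative} almost verbatim. That lemma expressed the action of rows/columns of $S_i^{\pm 1}$, $Z_i^{\pm 1}$ on $\chi$ (or $\chi^\dagger$) as the Schur complement of a truncated moment matrix bordered by $\chi$. Here the only change is that the vector being acted upon is $\chi^*$, equivalently, after one application of the factorization, $\Gamma_i^H$ (up to a dagger). Since the extracted row/column of $S_i^{\pm 1}$, $Z_i^{\pm 1}$ depends solely on the factorization and not on the vector it multiplies, replacing the $\chi$-border in the proof of Lemma \ref{lemma-alternative} by the corresponding $\Gamma_i^H$-border produces the four displayed Schur-complement formulas without additional work.

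Item (3) is a direct substitution. Using $g^L = S_1^{-1}S_2$ I write $(C_1^L)(z) = [S_1^{-1}]^\dagger \chi^*(z) = [S_2^{-1}]^\dagger [g^L]^\dagger \chi^*(z) = [S_2^{-1}]^\dagger \Gamma_2^L(z)$. Componentwise, $(\Gamma_2^L)^{(l)}(z) = \sum_k [(g^L)_{k,l}]^\dagger \chi^{*(k)}(z)$; writing the CMV components as $\chi^{(k)}(z) = z^{\sigma(k)}\mathbb I$ (so that $\chi^{*(k)}(z) = z^{-\sigma(k)-1}\mathbb I$) and using $(g^L)_{k,l} = 2\pi c_{\sigma(l)-\sigma(k)}$ from \eqref{defgL}, a single change of summation index collapses the sum to $2\pi z^{-\sigma(l)-1} F_\mu^\dagger(z)$ in view of \eqref{moment}. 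Pulling the scalar $z$-factors through $[S_2^{-1}]^\dagger$ and recognizing $\sum_k [S_2^{-1}]^\dagger_{l,k}\, z^{-\sigma(k)} = (\varphi_2^L)^{(l)}(z^{-1})$ gives the first identity of \eqref{CF}. The remaining three identities follow by the same manipulation, either via the right factorization $g^R = Z_2 Z_1^{-1}$, or by swapping the roles of $\chi$ and $\chi^*$ using $\chi(z^{-1}) = z\chi^*(z)$. The main nuisance is the bookkeeping in item (3): keeping track of the CMV permutation $\sigma$, the placement of Hermitian conjugates on matrix-valued coefficients, and the identification of the formal series $\sum_m c_m^\dagger z^m$ with $F_\mu^\dagger(z)$; once those conventions are fixed, every step is purely formal.
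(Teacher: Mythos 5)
Your proposal is correct and follows essentially the same route as the paper: item (1) by structural comparison via the Gauss--Borel factorizations, item (2) by rerunning the Schur-complement argument of Lemma \ref{lemma-alternative} with the border $\chi$ replaced by the corresponding $\Gamma^H_i$, and item (3) by the same Fourier-series computation (the paper inserts the integral representation of $g^H$ directly and pairs it with $\phi_2^R$, whereas you first evaluate $\Gamma$ explicitly and then apply the triangular factor --- a cosmetic reordering of the same calculation). No gaps.
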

\begin{proof}
The first part of the proposition follows directly from comparison of the structure of the relations from the
previous lemma with the definitions of the CMV matrix polynomials. For example
\begin{align*}
 \Gamma_1^L&=S_1^{-1} C_2^L\Rightarrow C_2^L=S_1 \Gamma_1^L  &
\mbox{ same structure as } &
 &\phi_1^L=S_1 \chi & & \mbox{ replacing } \Gamma_1^L\longleftrightarrow \chi.
\end{align*}
For the second part of the Proposition, we shall only prove one of the cases since the rest of them can be
proven following the same procedure. First, from the definition of the second kind functions, we have
\begin{align*}
 C_1^R(z)&=(\chi^*(z))^\top [Z_1^{-1}]^\dagger\\&=(\chi^*(z))^\top [g^R]^\dagger (Z_2^{-1})^\dagger\\
 &=[\chi^*(z)]^{\top}\oint_\T[\chi^\top(u)]^\dagger \left[\frac{\d\mu(u)}{\operatorname{i}u}\right]^\dagger [\chi(u)]^\top Z_2^{-1}\\&=
[\chi^*(z)]^{\top}\oint_\T[\chi^\top(u)]^\dagger \left[\frac{\d\mu(u)}{\operatorname{i}u}\right]^\dagger \phi_2^R(u).
\end{align*}
Taking the $l^{\text{th}}$ component of this vector of matrices we get
\begin{align*}
 (C_1^R)^{(l)}(z)&=\int_0^{2\pi}\sum_{n=-\infty}z^{n-1}\Exp{\operatorname{i}n\theta}[\d\mu(\theta)]^\dagger(\varphi_2^R)^{(l)}
 \big(\Exp{\operatorname{i}\theta}\big)\\&=
\sum_{k,n=-\infty}z^{n-1}\int_0^{2\pi} \Exp{\operatorname{i}(n+k)\theta}[\d\mu(\theta)]^\dagger(\varphi_{2,k}^R)^{(l)}\\
&=2\pi z^{-1} \Big(\sum_{k=-\infty}(\varphi_{2,k}^R)^{(l)}z^{-k}\Big)\Big(\sum_{n=-\infty}c_{n+k}^\dagger z^{n+k}\Big)\\&=
2\pi z^{-1}F_{\mu}^\dagger(z)(\varphi_2^R)^{(l)}(z^{-1}).
\end{align*}
\end{proof}
Recalling the previously stated relation between the $\Gamma^H$ and the $C^H$ it follows from  Proposition \ref{previous} that
\begin{pro} The associated CMV Fourier series satisfy
\begin{align*}
 \Gamma_{1,j}^L&=2\pi z^{-1}F_\mu(z^{-1})\chi^{(j)}(z^{-1}), &
 \Gamma_{2,j}^L&=2\pi z^{-1}F_\mu^{\dagger}(z)\chi^{(j)}(z^{-1}),\\
 \Gamma_{1,j}^R&=2\pi z^{-1}F_\mu(z^{-1})\chi^{(j)}(z^{-1}), &
 \Gamma_{2,j}^R&=2\pi z^{-1}F_\mu^\dagger(z)\chi^{(j)}(z^{-1}).
\end{align*}
\end{pro}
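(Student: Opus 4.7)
The plan is to derive the four displayed identities as a short corollary of Proposition~\ref{previous}; indeed, the sentence ``\emph{Recalling the previously stated relation between the $\Gamma^H$ and the $C^H$ it follows from Proposition~\ref{previous} that}'' already telegraphs the strategy. To be consistent with \eqref{CF}, the index $j$ in the statement is to be read as the block-component index, matching the role played by $l$ in \eqref{CF}.

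First I would observe that summing the two columns (respectively rows) in the defining arrays for $\Gamma^L$ and $\Gamma^R$, and using the identity $\chi^* = \chi_1^* + \chi_2$, one gets the compact expressions
\begin{align*}
\Gamma_1^L &= g^L\chi^*, & \Gamma_2^L &= (g^L)^\dagger \chi^*, & \Gamma_1^R &= (\chi^*)^\top g^R, & \Gamma_2^R &= (\chi^*)^\top (g^R)^\dagger.
\end{align*}
Combining these with the Gauss--Borel factorizations $g^L = S_1^{-1}S_2$, $g^R = Z_2 Z_1^{-1}$ and the definitions \eqref{laurent1}--\eqref{laurent2} recovers exactly item~(1) of Proposition~\ref{previous}, namely
\begin{align*}
\Gamma_1^L &= S_1^{-1}C_2^L, & \Gamma_2^L &= S_2^\dagger C_1^L, & \Gamma_1^R &= C_2^R Z_1^{-1}, & \Gamma_2^R &= C_1^R Z_2^\dagger.
\end{align*}

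The key step is to take the $j$-th block component of each relation and substitute the explicit expressions \eqref{CF}. For the first case,
\begin{align*}
(\Gamma_1^L)^{(j)}(z) &= \sum_k (S_1^{-1})_{j,k}(C_2^L)^{(k)}(z) = 2\pi z^{-1}\Big(\sum_k (S_1^{-1})_{j,k}(\varphi_1^L)^{(k)}(z^{-1})\Big) F_\mu(z^{-1}).
\end{align*}
The bracketed sum equals $(S_1^{-1}\phi_1^L)^{(j)}(z^{-1}) = \chi^{(j)}(z^{-1})$, since $S_1^{-1}\phi_1^L = S_1^{-1}(S_1\chi) = \chi$. Because $\chi^{(j)}(z^{-1})$ is a scalar multiple of the identity matrix, it commutes with $F_\mu(z^{-1})$, producing the claimed formula for $\Gamma_{1,j}^L$. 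The other three identities follow by the same mechanism: for $\Gamma_2^L$ use $S_2^\dagger \phi_2^L = \chi$ with the $(C_1^L)^{(k)}$ entry of \eqref{CF}; for $\Gamma_1^R$ use $\phi_1^R Z_1^{-1} = \chi^\top$ with the $(C_2^R)^{(k)}$ entry; and for $\Gamma_2^R$ use $\phi_2^R Z_2^\dagger = \chi^\top$ with the $(C_1^R)^{(k)}$ entry. In every case the Gauss--Borel factor appearing alongside $C^H$ is precisely cancelled by the transformation sending $\phi^H$ back to $\chi$.

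The main obstacle is essentially notational: one must identify the two-subscript notation $\Gamma^H_{i,j}$ of the statement with the block-component $(\Gamma^H_i)^{(j)}$, and be mindful of the convention (implicit in \eqref{CF}) under which $F_\mu^\dagger(z) = \sum_n c_n^\dagger z^n$ with $\bar z = z^{-1}$ on $\T$. Once these are fixed the proof is just the substitution above, and if one prefers a self-contained derivation it can equivalently be done by direct calculation using $(g^L)_{l,k} = 2\pi c_{n(k)-n(l)}$ (where $\chi^{(k)}(u) = \mathbb{I} u^{n(k)}$) and reindexing the resulting series as $\sum_m c_m z^{-m} = F_\mu(z^{-1})$.
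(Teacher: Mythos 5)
Your proposal is correct and follows exactly the route the paper intends: the paper offers no written proof beyond the remark that the result ``follows from Proposition \ref{previous}'', and your argument is precisely that derivation carried out in detail, combining $\Gamma_1^L=S_1^{-1}C_2^L$ (and its three analogues) with the explicit formulae \eqref{CF} and the cancellation $S_1^{-1}\phi_1^L=\chi$, etc. The reading of $\Gamma^H_{i,j}$ as the $j$-th block component of $\Gamma^H_i$ and the observation that $\chi^{(j)}(z^{-1})$ is a scalar multiple of $\mathbb{I}$ (hence commutes with $F_\mu$) are both the right resolutions of the paper's notational looseness.
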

Another interesting representation of these functions is
\begin{pro} The second kind functions have the following Cauchy integral type formulae
\begin{align*}
  [C_{1,1}^L(z)]^\dagger&=\oint_\T [z^{-1}\frac{u}{u-z^{-1}}]^\dagger \frac{\d\mu(u)}{\operatorname{i}u} [\phi_2^L(u)]^\dagger, &
  C_{2,1}^L(z)&=\oint_\T \phi_1^L(u) \frac{\d\mu(u)}{\operatorname{i}u} [z^{-1}\frac{u}{u-z^{-1}}], & |z|&>1,\\
  [C_{1,2}^L(z)]^\dagger&=\oint_\T [-z^{-1}\frac{u}{u-z^{-1}}]^\dagger \frac{\d\mu(u)}{\operatorname{i}u} [\phi_2^L(u)]^\dagger, &
  C_{2,2}^L(z)&=\oint_\T \phi_1^L(u) \frac{\d\mu(u)}{\operatorname{i}u} [-z^{-1}\frac{u}{u-z^{-1}}] ,& |z|&<1,
\\
  [C_{1,1}^R(z)]^\dagger&=\oint_\T [\phi_2^R(u)]^\dagger \frac{\d\mu(u)}{\operatorname{i}u} [z^{-1}\frac{u}{u-z^{-1}}]^\dagger,  &
  [C_{2,1}^R(z)]&=\oint_\T [z^{-1}\frac{u}{u-z^{-1}}] \frac{\d\mu(u)}{\operatorname{i}u} \phi_1^R(u), & |z|&>,1\\
  [C_{1,2}^R(z)]^\dagger&=\oint_\T [\phi_2^R(u)]^\dagger \frac{\d\mu(u)}{\operatorname{i}u} [-z^{-1}\frac{u}{u-z^{-1}}]^\dagger,  &
  [C_{2,1}^R(z)]&=\oint_\T [-z^{-1}\frac{u}{u-z^{-1}}] \frac{\d\mu(u)}{\operatorname{i}u} \phi_1^R(u), & |z|&<1.
\end{align*}
\end{pro}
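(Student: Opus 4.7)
The plan is to reduce the eight identities to two elementary Cauchy--kernel pairings between the CMV vectors and then invoke the Gauss--Borel factorizations $g^L=S_1^{-1}S_2$ and $g^R=Z_2Z_1^{-1}$ to convert the moment-matrix integrals into the stated form. The preliminary computation is the evaluation of $\chi(u)^\dagger\chi_1^*(z)$ and $\chi(u)^\dagger\chi_2(z)$ for $u\in\T$. Since each component $\chi^{(k)}(u)$ is a scalar multiple of $\mathbb{I}$ and $\bar u=u^{-1}$, geometric summation yields
\begin{align*}
\chi(u)^\dagger\chi_1^*(z)&=\sum_{k=0}^{\infty}u^{-k}z^{-k-1}=z^{-1}\frac{u}{u-z^{-1}}, &|z|&>1,\\
\chi(u)^\dagger\chi_2(z)&=\sum_{k=0}^{\infty}u^{k+1}z^{k}=-z^{-1}\frac{u}{u-z^{-1}}, &|z|&<1,
\end{align*}
with uniform convergence in $u\in\T$ legitimating the subsequent interchange of $\sum$ and $\oint_\T$.

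For the four left identities I would exploit the two equivalent rewritings $S_2=S_1 g^L$ and $(S_1^{-1})^\dagger=(S_2^{-1})^\dagger (g^L)^\dagger$ of the factorization. For the $C_{2,\bullet}^L$ pair this is direct: inserting the integral definition of $g^L$ and pulling out the scalar kernel one obtains
\begin{align*}
C_{2,1}^L(z)=S_2\chi_1^*(z)=S_1 g^L\chi_1^*(z)=\oint_\T\phi_1^L(u)\,\frac{\d\mu(u)}{\operatorname{i}u}\,\chi(u)^\dagger\chi_1^*(z),
\end{align*}
and the first kernel identity completes the case; replacing $\chi_1^*$ by $\chi_2$ gives $C_{2,2}^L$ in the disk $|z|<1$ with the expected sign change. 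For the $C_{1,\bullet}^L$ pair I would take adjoints: from $[C_{1,1}^L(z)]^\dagger=\chi_1^*(z)^\dagger S_1^{-1}=\chi_1^*(z)^\dagger g^L S_2^{-1}$ the scalar factor $[\chi_1^*(z)]^\dagger\chi(u)=[z^{-1}u/(u-z^{-1})]^\dagger$ emerges inside the integral, while $\chi(u)^\dagger S_2^{-1}$ is recognized as $[\phi_2^L(u)]^\dagger$.

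The right-moment formulae follow by an entirely symmetric recipe based on $Z_2=g^R Z_1$ and $Z_1^{-1}=Z_2^{-1}g^R$. The one bookkeeping nuance is that taking adjoints now produces expressions of the shape $[[\chi_a^{(*)}(z)]^\top]^\dagger$; a direct check on components shows these coincide with $\chi_a^{(*)}(\bar z)$, so that the pairings $\chi(u)^\top\chi_1^*(\bar z)$ and $\chi(u)^\top\chi_2(\bar z)$ produce the adjoint kernels $\pm[z^{-1}u/(u-z^{-1})]^\dagger$ by exactly the same geometric summation. The principal obstacle, such as it is, is solely the combinatorial bookkeeping required to match each of the eight cases with the correct sign, kernel, and annular domain of convergence; no ingredient beyond the factorizations of Section \ref{Sec-MOLPUC} and term-by-term integration of a uniformly convergent geometric series is needed.
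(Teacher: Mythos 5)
Your proof is correct and follows essentially the same route as the paper's: direct substitution of the Gauss--Borel factorization into the definitions of the partial second kind functions, which converts the moment-matrix integrals into pairings of the CMV vectors with the Cauchy kernels, evaluated as geometric series on the stated annuli. The paper's own proof simply leaves those geometric series unsummed and asserts that they are the expansions of the kernels in the statement, so your explicit summation and domain bookkeeping is, if anything, a slightly more detailed rendering of the same argument.
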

\begin{proof}
Direct substitution leads to
 \begin{align*}
  [C_{1,1}^L](z)^\dagger&=\oint_\T \Big[\sum_{n=0}z^{-1}(uz)^{-n}\Big]^\dagger \frac{\d\mu(u)}{\operatorname{i}u} [\phi_2^L(u)]^\dagger, &
  C_{2,1}^L(z)&=\oint_\T \phi_1^L(u) \frac{\d\mu(u)}{\operatorname{i}u} \Big[\sum_{n=0}z^{-1}(uz)^{-n}\Big],\\
  [C_{1,2}^L(z)]^\dagger&=\oint_\T \Big[\sum_{n=0}u(uz)^{n}\Big]^\dagger \frac{\d\mu(u)}{\operatorname{i}u} [\phi_2^L(u)]^\dagger, &
  C_{2,2}^L(z)&=\oint_\T \phi_1^L(u) \frac{\d\mu(u)}{\operatorname{i}u} \Big[\sum_{n=0}u(uz)^{n}\Big],\\
  [C_{1,1}^R(z)]^\dagger&=\oint_\T [\phi_2^R(u)]^\dagger \frac{\d\mu(u)}{\operatorname{i}u} \Big[\sum_{n=0}z^{-1}(uz)^{-n}\Big]^\dagger,  &
  C_{2,1}^R(z)&=\oint_\T \Big[\sum_{n=0}z^{-1}(uz)^{-n}\Big] \frac{\d\mu(u)}{\operatorname{i}u} \phi_1^R(u),\\
  [C_{1,2}^R(z)]^\dagger&=\oint_\T [\phi_2^R(u)]^\dagger \frac{\d\mu(u)}{\operatorname{i}u} \Big[\sum_{n=0}u(uz)^{n}\Big]^\dagger,  &
  C_{2,1}^R(z)&=\oint_\T \Big[\sum_{n=0}u(uz)^{-n}\Big] \frac{\d\mu(u)}{\operatorname{i}u} \phi_1^R(u).\\
\end{align*}
But  these are the series expansions of the functions of the proposition. We will not deal here with convergence problems since their discussion follows the ideas in \cite {carlos}.
\end{proof}

\subsection{Recursion relations}\label{Sec-recursions}
In order to get the recursion relations we introduce the following
\begin{definition} For each pair $ i,j\in \Z_+$ we consider the block semi-infinite matrix $E_{i,j}$ whose only non zero $m\times m$ block is the $i,j$-th block where the identity of $\mathbb M_m$ appears.  Then, we define the projectors
 \begin{align*}
   \Pi_{1}&:=\sum_{j=0}^\infty E_{2j,2j},&  \Pi_{2}&:=\sum_{j=0}^\infty E_{2j+1,2j+1},
 \end{align*}
 and the following matrices
\begin{gather*}
\begin{aligned}
\Lambda_{1}&:=\sum_{j=0}^{\infty}E_{2j,2+2j},&
\Lambda_{2}&:=\sum_{j=0}^{\infty} E_{1+2j,3+2j},&
\Lambda&:=\sum_{j=0}^{\infty}E_{j,j+1},
\end{aligned}\\
 \Upsilon:=\Lambda_1+\Lambda_2^{\top}+E_{1,1}\Lambda^{\top}.
\end{gather*}
\end{definition}

The matrix $\Upsilon$, which can be written more explicitly as follows
\begin{align*}\Upsilon=\left(
\begin{BMAT}{cc:cc:cc:cc:ccc}{cc:cc:cc:cc:ccc}
0 & 0 & \mathbb{I} & 0 & 0 & 0 & 0 & 0 & 0 & 0 &\cdots  \\
\mathbb{I} & 0 & 0 & 0 & 0 & 0 & 0 & 0 & 0 & 0 &\cdots  \\
0 & 0 & 0 & 0 & \mathbb{I} & 0 & 0 & 0 & 0 & 0 &\cdots  \\
0 & \mathbb{I} & 0 & 0 & 0 & 0 & 0 & 0 & 0 & 0 &\cdots  \\
0 & 0 & 0 & 0 & 0 & 0 & \mathbb{I} & 0 & 0 & 0 &\cdots  \\
0 & 0 & 0 & \mathbb{I} & 0 & 0 & 0 & 0 & 0 & 0 &\cdots  \\
0 & 0 & 0 & 0 & 0 & 0 & 0 & 0 & \mathbb{I} & 0 &\cdots  \\
0 & 0 & 0 & 0 & 0 & \mathbb{I} & 0 & 0 & 0 & 0 &\cdots  \\
0 & 0 & 0 & 0 & 0 & 0 & 0 & 0 & 0 & 0 &\cdots  \\
0 & 0 & 0 & 0 & 0 & 0 & 0 & \mathbb{I} & 0 & 0 &\cdots  \\
\vdots & \vdots & \vdots & \vdots & \vdots & \vdots &
\vdots & \vdots & \vdots & \vdots &\ddots
\end{BMAT}\right),
\end{align*}
satisfies
\begin{align*}
  \Upsilon^{\dagger}=\Upsilon^{-1}=\Upsilon^\top
\end{align*}
and has the following properties
\begin{pro}
The next \emph{eigen-value} type relations hold true
\begin{align}
\Upsilon \chi(z)&=z\chi(z),&
\Upsilon^{-1} \chi(z)&=z^{-1}\chi(z), \\
\chi(z)^{\top} \Upsilon^{-1} &= z \chi(z)^{\top},&
\chi(z)^{\top} \Upsilon &= z^{-1}\chi(z)^{\top}.
\end{align}
\end{pro}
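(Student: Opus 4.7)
The strategy is to verify $\Upsilon\chi(z)=z\chi(z)$ component by component, splitting $\chi=\chi_1+\chi_2^*$ along the decomposition that makes each summand of $\Upsilon=\Lambda_1+\Lambda_2^\top+E_{1,1}\Lambda^\top$ transparent, and then to deduce the other three identities from this one together with the fact that $\Upsilon^{-1}=\Upsilon^\top$.

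First I would record the explicit components: $\chi^{(2k)}(z)=\mathbb{I}z^{k}$ and $\chi^{(2k+1)}(z)=\mathbb{I}z^{-k-1}$ for $k\ge 0$, so that $\chi_1$ carries the even-indexed entries and $\chi_2^*$ the odd ones. Next I would compute the action of each of the three summands of $\Upsilon$ block-row by block-row. The piece $\Lambda_1=\sum_j E_{2j,2j+2}$ moves $\chi^{(2j+2)}=\mathbb{I}z^{j+1}$ into row $2j$, which is exactly $z\,\chi^{(2j)}$; this handles all even rows at once. The piece $\Lambda_2^\top=\sum_j E_{2j+3,2j+1}$ moves $\chi^{(2j+1)}=\mathbb{I}z^{-j-1}$ into row $2j+3$; re-indexing $j'=j+1$ this reads ``row $2j'+1$ receives $\mathbb{I}z^{-j'}=z\,\chi^{(2j'+1)}$'', so every odd row with $j'\ge 1$ is taken care of. Finally, $E_{1,1}\Lambda^\top$ simplifies to $E_{1,0}$, which places $\chi^{(0)}=\mathbb{I}$ in row $1$, and this matches $z\,\chi^{(1)}=z\cdot\mathbb{I}z^{-1}=\mathbb{I}$, supplying the boundary term at $j'=0$ that $\Lambda_2^\top$ misses. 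Summing the three contributions row by row yields $\Upsilon\chi=z\chi$.

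The identity $\Upsilon^{-1}\chi=z^{-1}\chi$ now follows by multiplying the previous equation by $\Upsilon^{-1}$ on the left. For the two row-vector identities I would first establish $\Upsilon^\top=\Upsilon^{-1}$: since each of $\Lambda_1$, $\Lambda_2^\top$ and $E_{1,0}$ has at most one nonzero $\mathbb{I}$-block per row and per column, and their supports are pairwise disjoint and together exhaust one entry per row and per column of $\Upsilon$, the matrix $\Upsilon$ is a block-permutation matrix; hence $\Upsilon\Upsilon^\top=\mathbb{I}$. Taking the transpose of $\Upsilon\chi=z\chi$ gives $\chi^\top\Upsilon^\top=z\chi^\top$, i.e.\ $\chi^\top\Upsilon^{-1}=z\chi^\top$, and transposing $\Upsilon^{-1}\chi=z^{-1}\chi$ gives $\chi^\top\Upsilon=z^{-1}\chi^\top$.

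The only real obstacle is the bookkeeping at row $1$, where $\Lambda_2^\top$ leaves a gap: one has to see that the extra summand $E_{1,1}\Lambda^\top$ is placed there precisely to supply the missing $\mathbb{I}=z\cdot z^{-1}$. Once the index juggling in Step~2 is done carefully, the remaining assertions reduce to the permutation property of $\Upsilon$ and straightforward transposition.
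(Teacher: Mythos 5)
Your proof is correct and follows essentially the same route as the paper: a row-by-row verification of $\Upsilon\chi=z\chi$ using the decomposition $\Upsilon=\Lambda_1+\Lambda_2^\top+E_{1,1}\Lambda^\top$ (the paper packages the same bookkeeping, including the boundary term at row $1$, into the identities $\Lambda_1\chi=z\Pi_1\chi$ and $\Lambda_2^\top\chi=(z\Pi_2-E_{1,1}\Lambda^\top)\chi$), followed by transposition and inversion to get the remaining three relations. The only addition is that you justify $\Upsilon^\top=\Upsilon^{-1}$ via the block-permutation structure, a fact the paper asserts without proof just before the proposition.
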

\begin{proof}
It  follows from the relations
 \begin{align*}
\Lambda_{1} \chi(z)&=z\Pi_{1}  \chi(z),&
\Lambda_{2}  \chi(z)&=z^{-1}\Pi_{2}  \chi(z), \\
\Lambda_{1}^{\top} \chi(z)&=(z^{-1}\Pi_1-E_{0,0}\Lambda) \chi(z),&
\Lambda_{2}^{\top} \chi(z)&=(z\Pi_{2} -E_{1,1}\Lambda^{\top}) \chi(z).
\end{align*}
\end{proof}

From which the following \emph{symmetry} relations are obtained
\begin{pro}
  The  moment matrices commute with $\Upsilon$; i.e.
  \begin{align}\label{conmut}
\Upsilon g ^H&=g^H\Upsilon,& H&=L,R.
\end{align}
\end{pro}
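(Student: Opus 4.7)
The plan is to sandwich $\Upsilon$ inside the integrals defining $g^L$ and $g^R$, using the eigen-relations of the preceding proposition together with the unitarity $\Upsilon^\dagger=\Upsilon^{-1}$. I do not expect any serious obstacle; the argument reduces to a one-line matching of integrands.

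First I would derive two ``conjugated'' eigen-relations: for $z\in\T$,
\begin{align*}
\chi(z)^\dagger\,\Upsilon &= z\,\chi(z)^\dagger, & \Upsilon\,(\chi(z)^\top)^\dagger &= z^{-1}\,(\chi(z)^\top)^\dagger.
\end{align*}
These are immediate from taking the Hermitian conjugate of $\Upsilon\chi(z)=z\chi(z)$ and of $\chi(z)^\top\Upsilon=z^{-1}\chi(z)^\top$, respectively, and then using $\Upsilon^\dagger=\Upsilon^{-1}$ together with $\bar z=z^{-1}$ on the unit circle. (Equivalently, one can observe directly that the blocks of $\chi(z)$ are scalar multiples of $\mathbb{I}$, so $\chi(z)^\dagger=\chi(z^{-1})^\top$ on $\T$ and the conjugated relations are just the original ones with $z\mapsto z^{-1}$.)

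With these in hand, the commutation of $\Upsilon$ with $g^L$ is a one-line computation: inserting $\Upsilon$ on the left of the defining integral pulls out a factor $z$ via $\Upsilon\chi(z)=z\chi(z)$, while inserting $\Upsilon$ on the right pulls out the same factor via $\chi(z)^\dagger\Upsilon=z\chi(z)^\dagger$; since $z$ is a scalar it commutes through the matrix measure, and both sides collapse to the common integrand
\begin{align*}
\oint_\T z\,\chi(z)\,\tfrac{\d\mu(z)}{\operatorname{i}z}\,\chi(z)^\dagger.
\end{align*}
The identical argument for $g^R$ uses the other pair of eigen-relations $\Upsilon(\chi(z)^\top)^\dagger=z^{-1}(\chi(z)^\top)^\dagger$ on the left and $\chi(z)^\top\Upsilon=z^{-1}\chi(z)^\top$ on the right, producing the common integrand $\oint_\T z^{-1}(\chi(z)^\top)^\dagger\,\tfrac{\d\mu(z)}{\operatorname{i}z}\,\chi(z)^\top$.

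The only delicate point, if any, is bookkeeping: one has to keep straight which block-vector is a row versus a column and in which slot of the sesquilinear integral the Hermitian conjugate appears, so that the $z$ and $z^{-1}$ exponents cancel consistently on the two sides of the commutator. Beyond that, the proof uses nothing more than the stated eigen-relations, the unitarity of $\Upsilon$, and the scalar nature of $z$.
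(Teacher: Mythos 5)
Your proof is correct and is essentially the paper's own argument: both insert $\Upsilon$ into the defining integrals, use $\Upsilon\chi(z)=z\chi(z)$ and $\chi(z)^\top\Upsilon=z^{-1}\chi(z)^\top$ together with $\Upsilon^\dagger=\Upsilon^{-1}$ and $\bar z=z^{-1}$ on $\T$, and match the two integrands. The conjugated eigen-relations you derive are exactly what the paper writes compactly as $(z^{-1}\chi(z))^\dagger=z\,\chi(z)^\dagger$ in its one-line computation.
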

\begin{proof}
Is a consequence of
\begin{align*}
\Upsilon g^L&=\oint_{\T}z\chi(z) \frac{\d\mu(z)}{\operatorname{i}z} \chi(z)^{\dagger}=\oint_{\T}\chi(z) \frac{\d\mu(z)}{\operatorname{i}z} (z^{-1}\chi(z))^{\dagger}=g^L \Upsilon,\\
\Upsilon g^R&=\oint_{\T}\overline{z\chi(z)}\, \frac{\d\mu(z)}{\operatorname{i}z} \chi(z)^{\top}=\oint_{\T}\overline{\chi(z)}\,\frac{\d\mu(z)}{\operatorname{i}z} z^{-1}\chi(z)^\top=g^R \Upsilon.
\end{align*}
\end{proof}
We now introduce another important matrix in the CMV theory
\begin{definition} The intertwining matrix $\eta$ is
\begin{align*}\eta&:=\left(
\begin{BMAT}{c:cc:cc:ccc}{c:cc:cc:ccc}
 \mathbb{I} & 0 & 0 & 0 & 0 & 0 & 0 & \cdots\\
 0 & 0 & \mathbb{I} & 0 & 0 & 0 & 0 & \cdots\\
 0 & \mathbb{I} & 0 & 0 & 0 & 0 & 0 & \cdots\\
 0 & 0 & 0 & 0 & \mathbb{I} & 0 & 0 & \cdots\\
 0 & 0 & 0 & \mathbb{I} & 0 & 0 & 0 & \cdots\\
 0 & 0 & 0 & 0 & 0 & 0 & \mathbb{I} & \cdots\\
 0 & 0 & 0 & 0 & 0 & \mathbb{I} & 0 & \cdots\\
 \vdots & \vdots & \vdots & \vdots & \vdots & \vdots & \vdots & \ddots
\end{BMAT}\right).
\end{align*}
\end{definition}
Which, as the reader can easily check, has the following properties
\begin{align*}
 \eta^{-1} & =\eta, & \eta \chi(z)&=\chi\big(z^{-1}\big), & \chi(z)^{\top}\eta= \chi\big(z^{-1}\big)^{\top}.
\end{align*}
When $z \in \T$ we have that  $\eta \chi=\bar\chi$ and $\chi^\top\eta=\chi^\dagger$ which lead to the intertwining property
\begin{pro}
 The left and right moment matrices satisfy the intertwining type property
 \begin{align*}
  \eta g^R&= g^L \eta.
 \end{align*}
\end{pro}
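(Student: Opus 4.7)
The plan is to prove $\eta g^R = g^L \eta$ by direct substitution into the integral definitions of $g^L$ and $g^R$ from \eqref{defgL} and \eqref{defgR}, and then to transport the $\eta$ factor across the integrand using the two ``eigen-relations'' for $\eta$ acting on $\chi$ that are listed just before the statement.

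First I would record the immediate consequences of the identities $\eta\chi(z)=\chi(z^{-1})$ and $\chi(z)^{\top}\eta=\chi(z^{-1})^{\top}$. Since $\eta$ has real entries, conjugating gives $\eta\overline{\chi(z)}=\overline{\chi(z^{-1})}$, so for $z\in\T$ (where $\bar z=z^{-1}$) this collapses to $\eta\overline{\chi(z)}=\chi(z)$; equivalently, using that $\bigl(\chi(z)^{\top}\bigr)^{\dagger}=\overline{\chi(z)}$, we obtain
\begin{equation*}
\eta\bigl(\chi(z)^{\top}\bigr)^{\dagger}=\chi(z),\qquad z\in\T.
\end{equation*}
Similarly, from $\chi(z)^{\top}\eta=\chi(z^{-1})^{\top}$ and $\eta^2=\mathbb{I}$, together with the relation $\chi(z)^{\dagger}=\chi(z^{-1})^{\top}$ valid on $\T$, I get $\chi(z)^{\top}=\chi(z)^{\dagger}\eta$ for $z\in\T$. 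These are exactly the two ingredients needed to ``move'' $\eta$ from the left of the integrand for $g^R$ all the way to the right.

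The computation then reads
\begin{equation*}
\eta g^R=\oint_{\T}\underbrace{\eta\bigl(\chi(z)^{\top}\bigr)^{\dagger}}_{=\chi(z)}\,\frac{\d\mu(z)}{\operatorname{i}z}\,\chi(z)^{\top}=\oint_{\T}\chi(z)\,\frac{\d\mu(z)}{\operatorname{i}z}\,\underbrace{\chi(z)^{\dagger}\eta}_{=\chi(z)^{\top}}=\Bigl(\oint_{\T}\chi(z)\,\frac{\d\mu(z)}{\operatorname{i}z}\,\chi(z)^{\dagger}\Bigr)\eta=g^L\eta,
\end{equation*}
which is the desired identity.

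There is no real obstacle here; the only thing worth being careful about is that the two rewrites $\eta(\chi^{\top})^{\dagger}=\chi$ and $\chi^{\top}=\chi^{\dagger}\eta$ are only valid on $\T$, which is precisely the locus of integration, so the manipulation is legitimate. An alternative, equally short route would be to read off the identity at the level of block entries: using $c_{-n}$ in \eqref{defgL} versus $c_n$ in \eqref{defgR}, multiplication by $\eta$ on the appropriate side implements the swap of odd--even block rows (respectively columns) that interchanges the two moment arrangements, giving $\eta g^R=g^L\eta$ coefficient by coefficient. I would present the integral form since it fits the surrounding algebraic style and reuses the intertwining relations that were just established.
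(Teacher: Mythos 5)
Your proof is correct and follows essentially the same route as the paper: both arguments substitute the integral definitions of the moment matrices and transport $\eta$ through the integrand using $\eta\chi(z)=\chi(z^{-1})$, $\chi(z)^{\top}\eta=\chi(z^{-1})^{\top}$ and the fact that $\bar z=z^{-1}$ on $\T$. The only cosmetic difference is that the paper verifies the equivalent identity $\eta g^L\eta=g^R$ and then uses $\eta^2=\mathbb{I}$, whereas you compute $\eta g^R=g^L\eta$ directly.
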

\begin{proof}
 It is straightforward to realize that
 \begin{align*}
  \eta g^L \eta &= \oint_{\T} \eta \chi(z) \frac{\d\mu(z)}{\operatorname{i}z} \chi(z)^{\dagger} \eta=
  \oint_{\T} \chi(\bar{z}) \frac{\d\mu(z)}{\operatorname{i}z} \chi\big(\bar{z}^{-1}\big)^{\top}=
  \oint_{\T} \overline{\chi(z)}\,\frac{\d\mu(z)}{\operatorname{i}z} \chi(z)^{\top}= g^R.
 \end{align*}
\end{proof}
\begin{pro}
 Both the matrices $\Upsilon$ and $\eta$ are related by
 \begin{align*}
  \eta \Upsilon= \Upsilon^{-1} \eta.
 \end{align*}
\end{pro}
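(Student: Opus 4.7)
The plan is to verify the equivalent identity $\eta\Upsilon\eta=\Upsilon^{-1}$, which is equivalent to $\eta\Upsilon=\Upsilon^{-1}\eta$ upon multiplying on the right by $\eta$ and using $\eta^{-1}=\eta$. The strategy is to test both sides on the generating vector $\chi(z)$ and then promote the resulting functional identity to an identity of semi-infinite matrices.

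First I would apply both operators to $\chi(z)$. For the left-hand side, the relation $\eta\chi(z)=\chi(z^{-1})$ yields $\eta\Upsilon\eta\,\chi(z)=\eta\Upsilon\chi(z^{-1})$. Substituting $z\mapsto z^{-1}$ in the eigenvalue relation $\Upsilon\chi(z)=z\chi(z)$ gives $\Upsilon\chi(z^{-1})=z^{-1}\chi(z^{-1})$, so
\begin{align*}
\eta\Upsilon\eta\,\chi(z)=z^{-1}\eta\chi(z^{-1})=z^{-1}\chi(z).
\end{align*}
On the other hand, the eigenvalue relation for $\Upsilon^{-1}$ immediately gives $\Upsilon^{-1}\chi(z)=z^{-1}\chi(z)$. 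Hence $(\eta\Upsilon\eta-\Upsilon^{-1})\chi(z)=0$ for every $z\in\T$.

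To conclude as an equality of matrices, I would observe that $\Upsilon$, $\Upsilon^{-1}=\Upsilon^{\top}$, and $\eta$ are all block-banded: each row of $\eta\Upsilon\eta-\Upsilon^{-1}$ has only finitely many nonzero $m\times m$ entries. Therefore, for every $i$, the $i$-th row of $(\eta\Upsilon\eta-\Upsilon^{-1})\chi(z)$ is a finite $\mathbb M_m$-linear combination of monomials $\mathbb{I}z^{n}$ with distinct $n\in\Z$. Vanishing for all $z\in\T$ forces each block coefficient to vanish by linear independence of $\{z^{n}\}_{n\in\Z}$, which yields $\eta\Upsilon\eta=\Upsilon^{-1}$ and hence the claim.

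The step requiring the most care is the last one: one must ensure that the semi-infinite matrix product is well defined (no issues of convergence arise thanks to the banded structure) and that the identity on $\chi(z)$ genuinely determines the matrices. Once this is settled the argument is essentially a one-line eigenvalue manipulation, and no combinatorial unpacking of the block structure of $\Upsilon=\Lambda_1+\Lambda_2^{\top}+E_{1,1}\Lambda^{\top}$ or of $\eta$ is required.
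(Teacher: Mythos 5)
Your argument is correct. Note that the paper itself offers no proof of this proposition (it is stated as immediate, presumably from the explicit block forms of $\Upsilon$ and $\eta$ or from their action on $\chi$), so there is nothing to contrast with directly; but your route — computing $\eta\Upsilon\eta\,\chi(z)=z^{-1}\chi(z)=\Upsilon^{-1}\chi(z)$ and then promoting this to a matrix identity — is exactly in the spirit of the paper's proofs of the neighbouring symmetry statements ($\Upsilon g^H=g^H\Upsilon$ and $\eta g^R=g^L\eta$), which also rest on the eigenvalue relations for $\chi$. Your care in the final step is well placed and the reasoning there is sound: all matrices involved are block-banded, so each row of $\eta\Upsilon\eta-\Upsilon^{-1}$ applied to $\chi(z)$ is a finite matrix-coefficient Laurent polynomial in $z$, and since the components of $\chi$ run over distinct monomials $\mathbb{I}z^{n}$, $n\in\Z$, its vanishing on $\T$ kills every block coefficient.
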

Now we proceed to the \emph{dressing} of  $\Upsilon$ and $\eta$. We first notice that
\begin{pro}
  The following equations hold
\begin{align*}
 S_1 \Upsilon S_1^{-1}&= S_2 \Upsilon S_2^{-1}, \\ Z_1^{-1}\Upsilon Z_1&=Z_2^{-1}\Upsilon Z_2,\\
Z_2^{-1}\eta \Upsilon^{p} S_1^{-1}&= Z_1^{-1}\eta \Upsilon^{p}S_2^{-1} &p&\in\Z.
\end{align*}
\end{pro}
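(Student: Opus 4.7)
The plan is to derive each of the three identities by dressing the symmetry relations \eqref{conmut} and the intertwining property $\eta g^R = g^L \eta$ with the Gauss–Borel factorizations $g^L = S_1^{-1} S_2$ and $g^R = Z_2 Z_1^{-1}$ established in \eqref{eq:fac1}--\eqref{eq:fac2}. No technical obstacle is expected; the argument is a short chain of substitutions followed by suitable left/right multiplications.

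For the first identity I would substitute $g^L = S_1^{-1}S_2$ into $\Upsilon g^L = g^L \Upsilon$ to get $\Upsilon S_1^{-1}S_2 = S_1^{-1}S_2 \Upsilon$, and then multiply on the left by $S_1$ and on the right by $S_2^{-1}$ to obtain $S_1 \Upsilon S_1^{-1} = S_2 \Upsilon S_2^{-1}$. The second identity is analogous: substitute $g^R = Z_2 Z_1^{-1}$ into $\Upsilon g^R = g^R \Upsilon$ and multiply on the left by $Z_2^{-1}$ and on the right by $Z_1$.

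For the third identity, I would first observe that since $\Upsilon$ commutes with both $g^L$ and $g^R$ by \eqref{conmut}, the intertwining relation $\eta g^R = g^L \eta$ propagates through powers of $\Upsilon$: for every $p \in \Z$,
\begin{equation*}
\eta \Upsilon^{p} g^R = \eta g^R \Upsilon^{p} = g^L \eta \Upsilon^{p}.
\end{equation*}
Substituting the two factorizations yields $\eta \Upsilon^{p} Z_2 Z_1^{-1} = S_1^{-1} S_2 \eta \Upsilon^{p}$, which after multiplying on the left by $S_1$ and on the right by $Z_1$ becomes
\begin{equation*}
S_1 \eta \Upsilon^{p} Z_2 = S_2 \eta \Upsilon^{p} Z_1.
\end{equation*}

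To finish, I would take the inverse of both sides. Using $\eta^{-1} = \eta$ this gives $Z_2^{-1} \Upsilon^{-p} \eta S_1^{-1} = Z_1^{-1} \Upsilon^{-p} \eta S_2^{-1}$, and then the preceding proposition $\eta \Upsilon = \Upsilon^{-1} \eta$ (iterated to $\Upsilon^{-p} \eta = \eta \Upsilon^{p}$) yields $Z_2^{-1} \eta \Upsilon^{p} S_1^{-1} = Z_1^{-1} \eta \Upsilon^{p} S_2^{-1}$, as required.
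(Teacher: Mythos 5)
Your proof is correct: all three identities follow exactly as you describe by substituting the factorizations $g^L=S_1^{-1}S_2$, $g^R=Z_2Z_1^{-1}$ into the commutation relations \eqref{conmut} and the intertwining property $\eta g^R=g^L\eta$, and the final manipulation with $\eta^{-1}=\eta$ and $\Upsilon^{-p}\eta=\eta\Upsilon^{p}$ is sound. The paper omits the proof of this proposition, but the surrounding text makes clear that this dressing argument is precisely the intended one, so your proposal fills the gap in the expected way.
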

Which allow us to define
\begin{definition}
Let us define
\begin{align} \label{jota}
J^L&:= S_1 \Upsilon S_1^{-1}= S_2 \Upsilon S_2^{-1}, &
J^R&:= Z_1^{-1}\Upsilon Z_1=Z_2^{-1}\Upsilon Z_2,
\end{align}
and for any $p\in\Z$  introduce
\begin{align} \label{Ccas}
 C_{[p]}=Z_2^{-1}\eta \Upsilon^{p} S_1^{-1}= Z_1^{-1}\eta \Upsilon^{p}S_2^{-1}.
\end{align}
\end{definition}
\paragraph{Observations}
\begin{enumerate}
\item
\begin{align*}
 C_{[-|p|]}=Z_2^{-1}\eta \Upsilon^{-|p|} S_1^{-1}=Z_2^{-1}\Upsilon^{|p|}\eta  S_1^{-1}
\end{align*}
\item In the Hermitian case
\begin{align*}
 C_{[0]}=Z_2^{-1}\eta S_1^{-1}= Z_1^{-1} \eta S_2^{-1}=D^R \widehat{Z_1}^{-1}\eta\,\, \widehat{S_2}^{-1} D^L=
D^RZ_2^{\dagger}\eta S_1^\dagger D^L \Longrightarrow C_{[0]}^\dagger=D^LC_{[0]}^{-1}D^R.
\end{align*}
\end{enumerate}
\begin{pro}
Powers  of $J_H$ can be expressed as follows
 \begin{align*}
(J^R)^{l-p}&=C_{[p]}[C_{[l]}]^{-1}, & (J^L)^{p-l}&=[C_{[l]}]^{-1}C_{[p]}.
 \end{align*}
\end{pro}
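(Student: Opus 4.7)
The plan is to prove both identities by direct substitution, choosing for each one the representation of $C_{[p]}$ and $C_{[l]}$ that causes the inner matrices to telescope. For the first identity $(J^R)^{l-p}=C_{[p]}C_{[l]}^{-1}$, I would use the representation $C_{[p]}=Z_2^{-1}\eta\Upsilon^p S_1^{-1}$ for both factors, so that the $S_1^{-1}$ on the right of $C_{[p]}$ cancels with $S_1$ coming from $C_{[l]}^{-1}$:
\begin{align*}
C_{[p]}C_{[l]}^{-1}=Z_2^{-1}\eta\Upsilon^p S_1^{-1}\cdot S_1\Upsilon^{-l}\eta^{-1}Z_2=Z_2^{-1}\eta\Upsilon^{p-l}\eta^{-1}Z_2.
\end{align*}
Dually, for $(J^L)^{p-l}=C_{[l]}^{-1}C_{[p]}$ I would use the representation $C_{[p]}=Z_1^{-1}\eta\Upsilon^p S_2^{-1}$, so that the $Z_1$ factors cancel in the middle.

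The key algebraic fact that carries the proof is the relation $\eta\Upsilon=\Upsilon^{-1}\eta$ already established in the excerpt, which gives $\eta\Upsilon^{p-l}\eta^{-1}=\Upsilon^{-(p-l)}=\Upsilon^{l-p}$. Plugging this into the expression above yields $C_{[p]}C_{[l]}^{-1}=Z_2^{-1}\Upsilon^{l-p}Z_2$, and by definition \eqref{jota} this equals $(Z_2^{-1}\Upsilon Z_2)^{l-p}=(J^R)^{l-p}$. For the second identity, the $\eta$ factors in $C_{[l]}^{-1}C_{[p]}=S_2\Upsilon^{-l}\eta^{-1}Z_1\cdot Z_1^{-1}\eta\Upsilon^p S_2^{-1}$ collapse immediately without needing the conjugation identity, giving $S_2\Upsilon^{p-l}S_2^{-1}=(J^L)^{p-l}$.

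Neither step poses a genuine obstacle; the only thing to be careful about is the bookkeeping of sign conventions in the exponent, since the conjugation by $\eta$ in the first case flips $\Upsilon$ to $\Upsilon^{-1}$ and accounts for the asymmetric appearance of $l-p$ versus $p-l$ in the two stated formulas. The whole argument really just observes that the two alternative expressions for $C_{[p]}$ in \eqref{Ccas} are precisely tailored to dress $\Upsilon^p$ into $J^R$ from one side and into $J^L$ from the other, with $\eta$ providing the twist that distinguishes left from right.
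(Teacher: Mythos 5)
Your proof is correct: the paper states this proposition without giving any proof, and your computation is precisely the intended one-line verification. Choosing the $S_1$-representation of $C_{[p]}$ for the first identity and the $Z_1$-representation for the second so that the inner factors cancel, and then using $\eta\Upsilon=\Upsilon^{-1}\eta$ (with $\eta^{-1}=\eta$) to convert $\eta\Upsilon^{p-l}\eta^{-1}$ into $\Upsilon^{l-p}$, reduces both sides directly to the definitions in \eqref{jota}, with the sign asymmetry between $l-p$ and $p-l$ correctly traced to the single $\eta$-conjugation in the right-handed case.
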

Now we give the schematic shape of some of these matrices
\begin{align*}J ^H&=\left(
\begin{BMAT}{cc:cc:cc:cc:ccc}{cc:cc:cc:cc:ccc}
* & * & * & 0 & 0 & 0 & 0 & 0 & 0 & 0 &\cdots  \\
* & * & * & 0 & 0 & 0 & 0 & 0 & 0 & 0 &\cdots  \\
0 & * & * & * & * & 0 & 0 & 0 & 0 & 0 &\cdots  \\
0 & * & * & * & * & 0 & 0 & 0 & 0 & 0 &\cdots  \\
0 & 0 & 0 & * & * & * & * & 0 & 0 & 0 &\cdots  \\
0 & 0 & 0 & * & * & * & * & 0 & 0 & 0 &\cdots  \\
0 & 0 & 0 & 0 & 0 & * & * & * & * & 0 &\cdots  \\
0 & 0 & 0 & 0 & 0 & * & * & * & * & 0 &\cdots  \\
0 & 0 & 0 & 0 & 0 & 0 & 0 & * & * & * &\cdots  \\
0 & 0 & 0 & 0 & 0 & 0 & 0 & * & * & * &\cdots  \\
\vdots & \vdots & \vdots & \vdots & \vdots & \vdots &
\vdots & \vdots & \vdots & \vdots &\ddots
\end{BMAT}\right), &
(J^{H})^{-1}&=\left(
\begin{BMAT}{cc:cc:cc:cc:ccc}{cc:cc:cc:cc:ccc}
* & * & 0 & 0 & 0 & 0 & 0 & 0 & 0 & 0 &\cdots  \\
* & * & * & * & 0 & 0 & 0 & 0 & 0 & 0 &\cdots  \\
* & * & * & * & 0 & 0 & 0 & 0 & 0 & 0 &\cdots  \\
0 & 0 & * & * & * & * & 0 & 0 & 0 & 0 &\cdots  \\
0 & 0 & * & * & * & * & 0 & 0 & 0 & 0 &\cdots  \\
0 & 0 & 0 & 0 & * & * & * & * & 0 & 0 &\cdots  \\
0 & 0 & 0 & 0 & * & * & * & * & 0 & 0 &\cdots  \\
0 & 0 & 0 & 0 & 0 & 0 & * & * & * & * &\cdots  \\
0 & 0 & 0 & 0 & 0 & 0 & * & * & * & * &\cdots  \\
0 & 0 & 0 & 0 & 0 & 0 & 0 & 0 & * & * &\cdots  \\
\vdots & \vdots & \vdots & \vdots & \vdots & \vdots &
\vdots & \vdots & \vdots & \vdots &\ddots
\end{BMAT}\right),\\
C_{[0]}&=\left(
\begin{BMAT}{c:cc:cc:ccc}{c:cc:cc:ccc}
 * & * & 0 & 0 & 0 & 0 & 0 & \cdots\\
 * & * & * & 0 & 0 & 0 & 0 & \cdots\\
 0 & * & * & * & 0 & 0 & 0 & \cdots\\
 0 & 0 & * & * & * & 0 & 0 & \cdots\\
 0 & 0 & 0 & * & * & * & 0 & \cdots\\
 0 & 0 & 0 & 0 & * & * & * & \cdots\\
 0 & 0 & 0 & 0 & 0 & * & * & \cdots\\
 \vdots & \vdots & \vdots & \vdots & \vdots & \vdots & \vdots & \ddots
\end{BMAT}\right), &
C_{[-1]}&=\left(
\begin{BMAT}{c:cc:cc:ccc}{c:cc:cc:ccc}
  * & * & 0 & 0 & 0 & 0 & 0 & \cdots\\
 * & * & * & 0 & 0 & 0 & 0 & \cdots\\
 0 & * & * & * & 0 & 0 & 0 & \cdots\\
 0 & 0 & * & * & * & 0 & 0 & \cdots\\
 0 & 0 & 0 & * & * & * & 0 & \cdots\\
 0 & 0 & 0 & 0 & * & * & * & \cdots\\
 0 & 0 & 0 & 0 & 0 & * & * & \cdots\\
 \vdots & \vdots & \vdots & \vdots & \vdots & \vdots & \vdots & \ddots
\end{BMAT}\right).
\end{align*}

Where the $*$ are non-zero $m\times m$ blocks that thanks to the factorization problem can be written in terms of the Verblunsky coefficients
as we will see later. The shape of each matrix is a consequence of  the two possible definitions (in terms of upper or lower block-triangular matrices).
 For the explicit form of these matrices see Appendix \ref{explicit}.

 A first consequence is the following relations among Verblunsky coefficients and the \emph{matrix quasi-norms} of the Szeg\H{o} polynomials
\begin{pro}\label{rrr}
The following   relations are fulfilled
 \begin{align*}
h_{m}^R [\alpha_{2,m}^L]^\dagger&= [\alpha_{2,m}^R]^\dagger h_{m}^L,  &   h_{m}^L \alpha_{1,m+1}^R&= \alpha_{1,m+1}^L h_{m}^R,\\
\ \alpha_{1,m}^L h_{m}^R&=h_{m}^L \alpha_{1,m}^R,  &   [\alpha_{2,m+1}^R]^\dagger h_{m}^L&=h_{m}^R [\alpha_{2,m+1}^L]^\dagger,\\
 h_k^L&=\Big(\mathbb{I}-\alpha_{1,k}^L[\alpha_{2,k}^R]^\dagger \Big)h_{k-1}^{L},  &   h_k^R&=\Big(\mathbb{I}-[\alpha_{2,k}^R]^\dagger \alpha_{1,k}^L\Big) h_{k-1}^R,\\
h_k^L&= \ h_{k-1}^{L}\Big(\mathbb{I}-\alpha_{1,k}^R[\alpha_{2,k}^L]^\dagger\Big),   & h_k^R&= h_{k-1}^{R}\Big(\mathbb{I}-[\alpha_{2,k}^L]^\dagger \alpha_{1,k}^R\Big),
\end{align*}
\end{pro}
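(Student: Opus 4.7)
The plan is to exploit the double description of the dressed matrices $J^L$, $J^R$ and $C_{[0]}$ supplied by \eqref{jota} and \eqref{Ccas}. Each of these admits two distinct expressions in terms of $S_1, S_2$ or $Z_1, Z_2$, and each of their (finitely many) non-zero block entries, fixed by the banded CMV pattern exhibited explicitly in the schematics displayed earlier, can therefore be computed in two different ways. Equating the two computations yields a polynomial identity among the Verblunsky matrices and the block-diagonal factors $D^H_k$, which Proposition \ref{pro:quasinorms} recasts as the quasi-norms $h^H_k$ in the form required by the statement.

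For the four intertwining relations I would work with $C_{[0]} = Z_2^{-1}\eta S_1^{-1} = Z_1^{-1}\eta S_2^{-1} = D^R\widehat{Z_1}^{-1}\eta\widehat{S_2}^{-1}(D^L)^{-1}$. Using Proposition \ref{faclosalpha} to populate the first sub/super-diagonals of the factors with the Verblunsky matrices, and the fact that $\eta$ interchanges the two rows (and two columns) of each $2\times 2$ block above the $0$-th position, the two descriptions of $C_{[0]}$ give two different expressions for each block entry on its principal diagonal and its first sub/super-diagonals. Matching the two expressions for $(C_{[0]})_{m-1,m}$ and $(C_{[0]})_{m,m-1}$ (in the two parities of $m$) reproduces the four identities $h^R_m[\alpha^L_{2,m}]^\dagger=[\alpha^R_{2,m}]^\dagger h^L_m$, $h^L_m\alpha^R_{1,m+1}=\alpha^L_{1,m+1}h^R_m$, $\alpha^L_{1,m}h^R_m=h^L_m\alpha^R_{1,m}$ and $[\alpha^R_{2,m+1}]^\dagger h^L_m=h^R_m[\alpha^L_{2,m+1}]^\dagger$.

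For the four recursive identities expressing $h^H_k$ in terms of $h^H_{k-1}$ the natural tool is the diagonal block $(J^L)_{k,k}$, computed from $J^L S_1 = S_1 \Upsilon$ and from $J^L S_2 = S_2 \Upsilon$. The $(k,k)$ block of the right-hand side, evaluated through $S_1$ (which contributes only lower-triangular Verblunsky data $\alpha^L_{1,k}, [\alpha^R_{2,k}]^\dagger$), contains a term proportional to $\alpha^L_{1,k}[\alpha^R_{2,k}]^\dagger$ times the diagonal of $S_1$ one row below, while the same entry computed through $S_2 = D^L\widehat{S_2}$ involves $D^L_k$ on the diagonal and $D^L_{k-1}$ together with $\alpha^R_{1,k}, [\alpha^L_{2,k}]^\dagger$ on the strictly upper part. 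The comparison, performed separately for even and odd $k$ and then re-interpreted via Proposition \ref{pro:quasinorms}, yields the two-sided recursion $h^L_k=(\mathbb{I}-\alpha^L_{1,k}[\alpha^R_{2,k}]^\dagger)h^L_{k-1}=h^L_{k-1}(\mathbb{I}-\alpha^R_{1,k}[\alpha^L_{2,k}]^\dagger)$. The analogous analysis applied to $J^R$ with the factorization of $g^R$ delivers the two remaining recursions for $h^R_k$.

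The principal obstacle is the parity bookkeeping forced on us by the CMV construction: both $\Upsilon$ and $\eta$ act differently on odd and even index blocks, and Proposition \ref{pro:quasinorms} crosses the left and right quasi-norms between the two parities. Consequently each of the eight statements splits into an even-$m$ (or even-$k$) case and an odd case that must be handled separately before being unified into the uniform formulation of the proposition. Once the correct block entry of $J^L, J^R$ or $C_{[0]}$ is selected and the explicit data of Proposition \ref{faclosalpha} is substituted, the remaining block-matrix algebra is routine.
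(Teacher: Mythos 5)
Your overall strategy---equating the two factorized expressions of the dressed operators and reading off block entries---is the right one, and for the first four (intertwining) relations it coincides with what the paper does: those identities are exactly the statements that the diagonal and off-diagonal blocks of $C_{[0]}^{\pm1}$ and $C_{[-1]}^{\pm1}$, computed once through $Z_2^{-1}\eta\Upsilon^pS_1^{-1}$ and once through $Z_1^{-1}\eta\Upsilon^pS_2^{-1}$ (equivalently $D^R\widehat{Z_1}^{-1}\eta\Upsilon^p\widehat{S_2}^{-1}(D^L)^{-1}$), must agree. One bookkeeping caveat: $C_{[0]}$ is tridiagonal but its nonzero off-diagonal entries sit only at $(2k,2k-1)$ and $(2k+1,2k+2)$, so ``matching $(C_{[0]})_{m-1,m}$ and $(C_{[0]})_{m,m-1}$ in the two parities of $m$'' is not available; the complementary parities come from $C_{[-1]}^{\pm1}$, which you do not invoke. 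The paper's proof is precisely ``compare the two possible definitions of $C_{[0]}^{\pm1}$ and $C_{[-1]}^{\pm1}$,'' and all four matrices are genuinely needed to cover both parities of all eight identities.

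The genuine gap is in your treatment of the four quasi-norm recursions via the diagonal blocks of $J^L$ and $J^R$. Because $\Upsilon$ shifts indices by $\pm2$ within each parity class (it is $\Lambda_1+\Lambda_2^\top+E_{1,1}\Lambda^\top$), the entries of $J^H=S\Upsilon S^{-1}$ that involve quasi-norms are only the two-step ratios, e.g. $(J^L)_{2k+1,2k-1}=h^R_{2k+1}(h^R_{2k-1})^{-1}=\big(\mathbb{I}-[\alpha^R_{2,2k+1}]^\dagger\alpha^L_{1,2k+1}\big)\big(\mathbb{I}-[\alpha^R_{2,2k}]^\dagger\alpha^L_{1,2k}\big)$ and $\big((J^L)^{-1}\big)_{2k+2,2k}=h^L_{2k+2}(h^L_{2k})^{-1}$; the diagonal blocks such as $(J^L)_{2k,2k}=-h^L_{2k}\alpha^R_{1,2k+1}[\alpha^L_{2,2k}]^\dagger(h^L_{2k})^{-1}=-\alpha^L_{1,2k+1}[\alpha^R_{2,2k}]^\dagger$ only reproduce consequences of the intertwining relations and contain no ratio $h^H_k(h^H_{k-1})^{-1}$ at all. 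Knowing all the two-step products does not determine the individual one-step factors, and the entries that would let you peel one factor off, such as $(J^L)_{2k,2k-1}=-h^L_{2k}\alpha^R_{1,2k+1}(h^R_{2k-1})^{-1}=-\alpha^L_{1,2k+1}\big(\mathbb{I}-[\alpha^R_{2,2k}]^\dagger\alpha^L_{1,2k}\big)$, require cancelling a Verblunsky matrix that may be singular in the quasi-definite setting. The one-step recursions live instead in the sub- and super-diagonal entries of $C_{[0]}^{\pm1}$ and $C_{[-1]}^{\pm1}$, whose $\eta$ factor couples consecutive, opposite-parity indices: for instance $(C_{[0]})_{2k,2k-1}=h^R_{2k}(h^R_{2k-1})^{-1}=\mathbb{I}-[\alpha^R_{2,2k}]^\dagger\alpha^L_{1,2k}$ and $(C_{[-1]}^{-1})_{2k+1,2k}=h^R_{2k+1}(h^R_{2k})^{-1}=\mathbb{I}-[\alpha^R_{2,2k+1}]^\dagger\alpha^L_{1,2k+1}$ give the $h^R$ recursion for even and odd index respectively. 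Replacing your $J$-based step by this reading of $C_{[0]}^{\pm1}$ and $C_{[-1]}^{\pm1}$ closes the argument and is exactly the paper's proof.
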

\begin{proof}
Just compare the two possible definitions of $C_{[0]}^{\pm{1}}$ and $C_{[-1]}^{\pm{1}}$.
\end{proof}
Notice that the two relations in each column  coincide in the Hermitian case.

\begin{pro}\label{rec}
 The next
 \emph{eigen-value} properties hold
 \begin{align*}
 J^L \Phi_1^L &= z \Phi_1^L,                             &       (J^L)^{-1} \Phi_1^L &= z^{-1} \Phi_1^L, &
 [J^L]^\dagger \Phi_2^L &= z^{-1} \Phi_2^L,              &       ([J^L]^\dagger)^{-1} \Phi_2^L &= z \Phi_2^L,\\
 \Phi_2^R [J^R]^\dagger&= z \Phi_2^R,                     &       \Phi_2^R ([J^R]^\dagger)^{-1}&= z^{-1} \Phi_2^R, &
 \Phi_1^R J^R&= z^{-1} \Phi_1^R,                         &       \Phi_1^R (J^R)^{-1}&= z \Phi_1^R,
 \end{align*}
 and the following properties are fulfilled
 \begin{align*}
 C_{[p]}\Phi_1^L(z)&=z^p\Big( \Phi_2^R\big(\bar{z}^{-1}\big)\Big)^\dagger, &
 \Phi_1^R(z)C_{[p]}&=z^p \Big( \Phi_2^L\big(\bar{z}^{-1}\big)\Big)^\dagger.
\end{align*}
\end{pro}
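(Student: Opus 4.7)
The plan is to reduce every identity to the eigen-relations already established for $\Upsilon$ and $\eta$ acting on $\chi$, by dressing with $S_1,S_2,Z_1,Z_2$. For $J^L\Phi_1^L=z\Phi_1^L$ I would substitute $J^L=S_1\Upsilon S_1^{-1}$ together with $\Phi_1^L=S_1\chi(z)$; the two $S_1$ factors cancel and $\Upsilon\chi=z\chi$ closes the identity. The companion identity with $(J^L)^{-1}$ is identical via $\Upsilon^{-1}\chi=z^{-1}\chi$. For the two identities involving $\Phi_2^L=(S_2^{-1})^\dagger\chi(z)$, I would instead invoke the alternative form $J^L=S_2\Upsilon S_2^{-1}$ from \eqref{jota} and take Hermitian conjugate, using $\Upsilon^\dagger=\Upsilon^{-1}$, to obtain $(J^L)^\dagger=(S_2^{-1})^\dagger\Upsilon^{-1}S_2^\dagger$; the telescoping against $\Phi_2^L$ then yields the eigenvalue $z^{-1}$. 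The right-module identities for $\Phi_1^R=\chi(z)^\top Z_1$ and $\Phi_2^R=\chi(z)^\top(Z_2^{-1})^\dagger$ are strictly analogous, now using the right eigen-relations $\chi(z)^\top\Upsilon=z^{-1}\chi(z)^\top$ and $\chi(z)^\top\Upsilon^{-1}=z\chi(z)^\top$ together with the two expressions $J^R=Z_1^{-1}\Upsilon Z_1=Z_2^{-1}\Upsilon Z_2$.

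For the relations involving $C_{[p]}$, the key is to combine $\Upsilon^p\chi(z)=z^p\chi(z)$ with $\eta\chi(z)=\chi(z^{-1})$. Inserting $C_{[p]}=Z_2^{-1}\eta\Upsilon^p S_1^{-1}$ and $\Phi_1^L=S_1\chi$, telescoping gives $C_{[p]}\Phi_1^L(z)=z^p Z_2^{-1}\chi(z^{-1})$. It then only remains to identify this with $z^p\big(\Phi_2^R(\bar{z}^{-1})\big)^\dagger$: since the components of $\chi$ are integer powers of $z$ times $\mathbb{I}$, the elementary reality identity $\overline{\chi(\bar{z}^{-1})}=\chi(z^{-1})$ yields $\big(\Phi_2^R(\bar{z}^{-1})\big)^\dagger=Z_2^{-1}\overline{\chi(\bar{z}^{-1})}=Z_2^{-1}\chi(z^{-1})$, which matches. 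The symmetric identity $\Phi_1^R(z)C_{[p]}=z^p\big(\Phi_2^L(\bar{z}^{-1})\big)^\dagger$ is handled by the same pattern, using instead the alternative expression $C_{[p]}=Z_1^{-1}\eta\Upsilon^p S_2^{-1}$, the right eigen-rule $\chi(z)^\top\eta=\chi(z^{-1})^\top$, and $\chi(w)^\top\Upsilon^p=w^{-p}\chi(w)^\top$ with $w=z$.

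The only mildly delicate point is the adjoint bookkeeping: for each identity one has to pick the right representative of $J^H$ or $C_{[p]}$ among the two equivalent forms of \eqref{jota} and \eqref{Ccas}, so that after dressing the surviving matrices meet the correct factor of $\Phi_i^H$. This is precisely what the identities $S_1\Upsilon S_1^{-1}=S_2\Upsilon S_2^{-1}$ and their analogues are designed to permit, and once the appropriate choice is made each proof collapses to a single cancellation plus one application of the $\chi$ eigen-relations. No further analytic input beyond those already recorded in \S\ref{Sec-recursions} is needed.
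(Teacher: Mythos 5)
Your proposal is correct and follows essentially the same route as the paper: the paper's proof likewise dresses the eigen-relations $\Upsilon^{\pm1}\chi(z)=z^{\pm1}\chi(z)$, $\chi(z)^\top\Upsilon^{\pm1}=z^{\mp1}\chi(z)^\top$ and $\eta\chi(z)=\chi(z^{-1})$ with the appropriate representative of $J^H$ or $C_{[p]}$, telescopes the factorization matrices, and identifies $Z_2^{-1}\chi(z^{-1})$ with $z^{-p}\cdot z^{p}\big(\Phi_2^R(\bar z^{-1})\big)^\dagger$ exactly as you do. One bookkeeping remark: in the last identity, after applying $\chi(z)^\top\eta=\chi(z^{-1})^\top$ the subsequent rule $\chi(w)^\top\Upsilon^p=w^{-p}\chi(w)^\top$ must be used with $w=z^{-1}$ (not $w=z$), which is precisely what produces the stated factor $z^{+p}$.
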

\begin{proof}The results follow directly from the action of $\Upsilon^{\pm 1}$ and $\eta$ on $\chi$ and the definitions
of $J^H,C_{[p]}$ and $\Phi^H_i$.
For example
 \begin{align*}
J^L \Phi^L_{1} =& S_1 \Upsilon S_1^{-1}S_1 \chi(z)=S_1 \Upsilon \chi(z)= z S_1 \chi(z)= z \Phi^L_{1} \\
 C_{[p]}\Phi^L_{1}=& Z_2^{-1} \eta \Upsilon^{p} S_1^{-1} S_1 \chi(z)= Z_2^{-1} \eta z^p \chi(z) = z^p Z_2^{-1} \chi\big(z^{-1}\big)=
 z^p \Big( \Phi_2^R\big(\bar{z}^{-1}\big)\Big)^\dagger.
 \end{align*}
For the remaining relations one proceeds in a similar way.
\end{proof}

This last proposition implies
\begin{pro} \label{ss}The following recursion relations for the left Laurent polynomials hold
\begin{align*}
 z(\varphi_1^L)^{(2k)}=&-\alpha_{1,2k+1}^L(\mathbb{I}-[\alpha_{2,2k}^R]^\dagger \alpha_{1,2k}^L)(\varphi_1^L)^{(2k-1)}
-\alpha_{1,2k+1}^L[\alpha_{2,2k}^R]^\dagger (\varphi_1^L)^{(2k)}-\alpha_{1,2k+2}^L (\varphi_1^L)^{(2k+1)}+(\varphi_1^L)^{(2k+2)},\\
z(\varphi_1^L)^{(2k+1)}=&(\mathbb{I}-[\alpha_{2,2k+1}^R]^\dagger \alpha_{1,2k+1}^L)(\mathbb{I}-[\alpha_{2,2k}^R]^\dagger \alpha_{1,2k}^L)(\varphi_1^L)^{(2k-1)}\\&+(\mathbb{I}-[\alpha_{2,2k+1}^R]^\dagger \alpha_{1,2k+1}^L)[\alpha_{2,2k}^R]^\dagger(\varphi_1^L)^{(2k)}-[\alpha_{2,2k+1}^R]^\dagger\alpha_{1,2k+2}^L(\varphi_1^L)^{(2k+1)}+[\alpha_{2,2k+1}^R]^\dagger(\varphi_1^L)^{(2k+2)},\\
z(\varphi_2^L)^{(2k)}(z)= &- \alpha_{2,2k+1}^R(\varphi_2^L)^{(2k-1)}(z)- \alpha_{2,2k+1}^R[\alpha_{1,2k}^L]^\dagger(\varphi_2^L)^{(2k)}- (\mathbb{I}-\alpha_{2,2k+1}^R [\alpha_{1,2k+1}^L ]^\dagger)(\varphi_2^L)^{(2k+1)}(z)\\
&+ (\mathbb{I}-\alpha_{2,2k+1}^R [\alpha_{1,2k+1}^L]^\dagger)(\mathbb{I}-\alpha_{2,2k+2}^R [\alpha_{1,2k+2}^L]^\dagger)(\varphi_2^L)^{(2k+2)}(z),\\
z(\varphi_2^L)^{(2k+1)}(z)=& (\varphi_2^L)^{(2k-1)}(z) + [\alpha_{1,2k}^L]^\dagger (\varphi_2^L)^{(2k)}(z)-[\alpha_{1,2k+1}^L]^\dagger \alpha_{2,2k+2}^R (\varphi_2^L)^{(2k+1)}(z)\\&+[\alpha_{1,2k+1}^L]^\dagger (\mathbb{I}- \alpha_{2,2k+2}^R[\alpha_{1,2k+2}^L]^\dagger)(\varphi_2^L)^{(2k+2)}(z).
\end{align*}
While for the right polynomials are
\begin{align*}
 z(\varphi_1^R)^{(2k)}=&-(\varphi_1^R)^{(2k-1)}\alpha_{1,2k+1}^L-(\varphi_1^R)^{(2k)} [\alpha_{2,2k}^R]^\dagger \alpha_{1,2k+1}^L-(\varphi_1^R)^{(2k+1)}\alpha_{1,2k+2}^L(\mathbb{I}-[\alpha_{2,2k+1}^R]^\dagger\alpha_{1,2k+1}^L )\\&+(\varphi_1^R)^{(2k+2)}(\mathbb{I}-[\alpha_{2,2k+2}^R]^\dagger\alpha_{1,2k+2}^L )(\mathbb{I}-[\alpha_{2,2k+1}^R]^\dagger\alpha_{1,2k+1}^L ),\\
z(\varphi_1^R)^{(2k+1)}=&(\varphi_1^R)^{(2k-1)}+(\varphi_1^R)^{(2k)} [\alpha_{2,2k}^R]^\dagger-(\varphi_1^R)^{(2k+1)}\alpha_{1,2k+2}^L [\alpha_{2,2k+1}^R]^\dagger\\&+(\varphi_1^R)^{(2k+2)}(\mathbb{I}-[\alpha_{2,2k+2}^R]^\dagger\alpha_{1,2k+2}^L )[\alpha_{2,2k+1}^R]^\dagger,\\
 z(\varphi_2^R)^{(2k)}=&-(\varphi_2^R)^{(2k-1)}(\mathbb{I}- \alpha_{2,2k}^R[\alpha_{1,2k}^L]^\dagger)\alpha_{2,2k+1}^R-(\varphi_2^R)^{(2k)}[ \alpha_{1,2k}^L]^\dagger\alpha_{2,2k+1}^R\\&-(\varphi_2^R)^{(2k+1)}\alpha_{2,2k+2}^R+(\varphi_2^R)^{(2k+2)}\\
z(\varphi_2^R)^{(2k+1)}=&(\varphi_2^R)^{(2k-1)}(\mathbb{I}-\alpha_{2,2k}^R[\alpha_{1,2k}^L ]^\dagger)(\mathbb{I}- \alpha_{2,2k+1}^R[\alpha_{1,2k+1}^L]^\dagger)+(\varphi_2^R)^{(2k)} [\alpha_{1,2k}^L]^\dagger(\mathbb{I}- \alpha_{2,2k+1}^R[\alpha_{1,2k+1}^L]^\dagger)\\
&-(\varphi_2^R)^{(2k+1)}\alpha_{2,2k+1}^R[\alpha_{1,2k+1}^L ]^\dagger+(\varphi_2^R)^{(2k+2) }[\alpha_{1,2k+1}^L]^\dagger.
\end{align*}
\end{pro}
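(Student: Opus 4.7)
The plan is to derive each of the four families of recursions from the eigenvalue relations collected in Proposition \ref{rec}, namely $J^L \phi_1^L = z \phi_1^L$, $(J^L)^\dagger \phi_2^L = z^{-1}\phi_2^L$, $\phi_2^R (J^R)^\dagger = z \phi_2^R$, and $\phi_1^R J^R = z^{-1}\phi_1^R$. These already encode the recursions; what remains is to compute the nonzero block entries of $J^L$ and $J^R$ in closed form in terms of the Verblunsky matrices $\alpha_{i,l}^{H}$ and the matrix quasi-norms $h_l^{H}$.

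I would first compute the block entries of $J^L = S_1 \Upsilon S_1^{-1} = S_2 \Upsilon S_2^{-1}$ using the explicit block triangular forms of $S_1$ and $\widehat{S}_2^{-1}$ given in Proposition \ref{faclosalpha}, together with the diagonal data $D^L$ from Proposition \ref{pro:quasinorms}. Since $\Upsilon$ has the CMV shift pattern described in the excerpt, conjugation by $S_1$ reproduces the block pentadiagonal skeleton of $J^L$ shown in the schematic picture above. The fact that $S_1\Upsilon S_1^{-1}$ and $S_2\Upsilon S_2^{-1}$ must coincide supplies a cross-check and, combined with the consistency relations of Proposition \ref{rrr}, allows each nonzero block of $J^L$ to be rewritten in the particular combinations appearing in the statement, in particular the projector-like factors $\mathbb{I} - [\alpha_{2,l}^R]^\dagger \alpha_{1,l}^L$ and $\mathbb{I} - \alpha_{2,l}^R[\alpha_{1,l}^L]^\dagger$ together with the quasi-norm shift identities $h_k^{H}$.

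Once the entries of $J^L$ are known, the stated recursions for $\phi_1^L$ follow by reading off, respectively, the $(2k)$-th and $(2k+1)$-th block rows of $J^L \phi_1^L = z \phi_1^L$; the alternating even/odd CMV structure is what produces the two qualitatively different recursions. The recursion for $\phi_2^L$ comes from the analogous reading of $(J^L)^\dagger \phi_2^L = z^{-1}\phi_2^L$, whose blocks are Hermitian conjugates (and reindexings) of those already computed, so no genuinely new calculation is required. For the right polynomials I would apply the same procedure to $J^R = Z_2\Upsilon Z_2^{-1} = Z_1^{-1}\Upsilon Z_1$, using the explicit forms of $Z_1, Z_2$ from Proposition \ref{faclosalpha}; the only subtlety is that the eigenvalue relation now acts on a row vector from the right, so one reads the columns of the product accordingly, and the roles of $\alpha_{1}^{H}$ and $\alpha_{2}^{H}$ are swapped in the symmetric way dictated by the intertwining between the left and right factorizations.

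The main technical obstacle will be the bookkeeping in passing from the raw products $S_1\Upsilon S_1^{-1}$ and $Z_2\Upsilon Z_2^{-1}$ to the simplified form with the projector-like factors: this is precisely where Proposition \ref{rrr} must be invoked repeatedly, and it is where even/odd index parity leads to genuinely different algebraic expressions. Once the nonzero blocks of $J^L$ and $J^R$ are in hand the verification of the four families of recursions reduces to direct block multiplication, and a full derivation of these blocks is naturally relegated to the material of Appendix \ref{explicit}.
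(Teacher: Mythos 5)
Your proposal matches the paper's own route: the paper derives Proposition \ref{ss} precisely by substituting the explicit block entries of the dressed matrices (computed from $S_1\Upsilon S_1^{-1}=S_2\Upsilon S_2^{-1}$ and $Z_1^{-1}\Upsilon Z_1=Z_2^{-1}\Upsilon Z_2$ via Proposition \ref{faclosalpha}, simplified with Proposition \ref{rrr}, and tabulated in Appendix \ref{explicit}) into the eigenvalue relations of Proposition \ref{rec}. The only bookkeeping slip is that for the stated multiplication-by-$z$ recursions of $\phi_2^L$ and $\phi_1^R$ you must use $([J^L]^\dagger)^{-1}$ and $(J^R)^{-1}$ rather than $[J^L]^\dagger$ and $J^R$ (which give the $z^{-1}$ relations), so the blocks of $(J^H)^{-1}$ need to be computed as well — but these come from the same dressing computation, so the argument is otherwise identical to the paper's.
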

We have written down just the recursion relations for $z$ and not those for $z^{-1}$,  which can be derived similarly to these ones. For the complete recursion expressions see
Appendix \ref{zeta-1}.

\begin{pro}\label{sss}The following relations hold true
\begin{align*}
 \left((\varphi_2^R)^{(2k)}\big(\bar z^{-1}\big)\right)^\dagger&=(\mathbb{I}-[\alpha_{2,2k}^R]^\dagger \alpha_{1,2k}^L)(\varphi_1^L)^{(2k-1)}(z)+[\alpha_{2,2k}^R]^\dagger(\varphi_1^L)^{(2k)}(z),\\
 \left((\varphi_2^R)^{(2k+1)}\big(\bar z^{-1}\big)\right)^\dagger&=-\alpha_{1,2k+2}^L(\varphi_1^L)^{(2k+1)}(z)+(\varphi_1^L)^{(2k+2)}(z),\\
 \left((\varphi_2^L)^{(2k)}\big(\bar z^{-1}\big)\right)^\dagger&=(\varphi_1^R)^{(2k-1)}(z)+(\varphi_1^R)^{(2k)}(z)[\alpha_{2,2k}^R]^\dagger,\\
 \left((\varphi_2^L)^{(2k+1)}\big(\bar z^{-1}\big)\right)^\dagger&=-(\varphi_1^R)^{(2k+1)}(z)\alpha_{1,2k+2}^L+(\varphi_1^R)^{(2k+2)}(z)(\mathbb{I}-[\alpha_{2,2k+2}^R]^\dagger \alpha_{1,2k+2}^L),\\
 \frac{1}{z}\left((\varphi_2^R)^{(2k)}\big(\bar z^{-1}\big) \right)^\dagger &= -[\alpha_{2,2k+1}^R]^\dagger(\varphi_1^L)^{(2k)}(z)+(\varphi_1^L)^{(2k+1)}(z),\\
 \frac{1}{z}\left((\varphi_2^R)^{(2k+1)}\big(\bar z^{-1}\big) \right)^\dagger &=(\mathbb{I}-\alpha_{1,2k+1}^L[\alpha_{2,2k+1}^R]^\dagger )(\varphi_1^L)^{(2k)}(z)+\alpha_{1,2k+1}^L(\varphi_1^L)^{(2k+1)}(z),\\
 \frac{1}{z}\left((\varphi_2^L)^{(2k+1)}\big(\bar z^{-1}\big) \right)^\dagger &=(\varphi_1^R)^{(2k)}(z)+(\varphi_1^R)^{(2k)}(z)\alpha_{1,2k+1}^L,\\
 \frac{1}{z}\left((\varphi_2^L)^{(2k)}\big(\bar z^{-1}\big) \right)^\dagger &=-(\varphi_1^R)^{(2k)}(z)[\alpha_{2,2k+1}^R]^\dagger+(\varphi_1^R)^{(2k+1)}(z)(\mathbb{I}-\alpha_{1,2k+1}^L[\alpha_{2,2k+1}^R]^\dagger ).
 \end{align*}
\end{pro}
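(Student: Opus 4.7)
The plan is to derive the eight identities as the componentwise expansion of the four matrix relations supplied by Proposition \ref{rec} with $p=0$ and $p=-1$, namely
\[
C_{[0]}\Phi_1^L(z)=\bigl(\Phi_2^R(\bar z^{-1})\bigr)^\dagger,\qquad
C_{[-1]}\Phi_1^L(z)=z^{-1}\bigl(\Phi_2^R(\bar z^{-1})\bigr)^\dagger,
\]
\[
\Phi_1^R(z)\,C_{[0]}=\bigl(\Phi_2^L(\bar z^{-1})\bigr)^\dagger,\qquad
\Phi_1^R(z)\,C_{[-1]}=z^{-1}\bigl(\Phi_2^L(\bar z^{-1})\bigr)^\dagger.
\]
The first two of these, read block by block, produce the four relations involving $(\varphi_2^R)^{(\cdot)}(\bar z^{-1})^\dagger$ (the first four without the $1/z$ prefactor, the last four with it); the last two yield analogously the four $(\varphi_2^L)^{(\cdot)}(\bar z^{-1})^\dagger$ relations.

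The substantive step is to evaluate the block entries of $C_{[0]}$ and $C_{[-1]}$. I will use the twin expressions
$C_{[0]}=Z_2^{-1}\eta S_1^{-1}=Z_1^{-1}\eta S_2^{-1}$ and $C_{[-1]}=Z_2^{-1}\eta\Upsilon^{-1}S_1^{-1}=Z_1^{-1}\eta\Upsilon^{-1}S_2^{-1}$ from \eqref{Ccas}. Since $\eta$ is the involution exchanging rows $2k-1\leftrightarrow 2k$ for $k\ge 1$, and the two factors flanking it are triangular with explicitly known diagonal and first (sub/super)diagonal entries given in Proposition \ref{faclosalpha}, each row of $C_{[0]}$ and $C_{[-1]}$ will have only two nonzero blocks. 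Using $Z_2^{-1}\eta S_1^{-1}$ produces one expression for those two entries, while $Z_1^{-1}\eta S_2^{-1}$ produces another; reconciling them invokes the compatibility identities of Proposition \ref{rrr}, and in particular the telescoping identities
\[
h^L_k=(\mathbb I-\alpha^L_{1,k}[\alpha^R_{2,k}]^\dagger)\,h^L_{k-1},\qquad h^R_k=(\mathbb I-[\alpha^R_{2,k}]^\dagger\alpha^L_{1,k})\,h^R_{k-1},
\]
which are exactly what generates the combinations $\mathbb I-[\alpha^R_{2,2k}]^\dagger\alpha^L_{1,2k}$ and $\mathbb I-\alpha^L_{1,2k+1}[\alpha^R_{2,2k+1}]^\dagger$ that show up on the right-hand sides of the eight displayed identities.

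Once the nonzero block entries of $C_{[0]}$ and $C_{[-1]}$ have been recorded, matching the $2k$-th and $(2k+1)$-th block components of the four intertwining identities above yields the eight claimed formulas. The main obstacle is purely combinatorial: the two nonzero blocks in each row of $C_{[0]}$ (respectively $C_{[-1]}$) sit at different column positions depending on the parity of the row, so one must keep careful track of the shift produced by $\eta$ (and of the extra shift produced by $\Upsilon^{-1}$ in the $C_{[-1]}$ case) when identifying which $(\varphi_1^L)^{(j)}$ or $(\varphi_1^R)^{(j)}$ multiplies which Verblunsky combination. No new input beyond Propositions \ref{rec}, \ref{faclosalpha}, and \ref{rrr} is required; the argument is a parallel, on the $C_{[p]}$ side, of the calculation that produces the Lax-type recursions of Proposition \ref{ss} on the $J^H$ side.
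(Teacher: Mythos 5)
Your proposal is correct and follows essentially the same route as the paper: the paper's proof also consists of substituting the explicit block entries of $C_{[0]}$ and $C_{[-1]}$ (recorded in Appendix \ref{explicit}, and obtained exactly as you describe, by comparing the two factorizations $Z_2^{-1}\eta\Upsilon^p S_1^{-1}=Z_1^{-1}\eta\Upsilon^p S_2^{-1}$ via Propositions \ref{faclosalpha} and \ref{rrr}) into the intertwining identities of Proposition \ref{rec} for $p=0,-1$. Your block-by-block matching and your use of the telescoping identities for $h^H_k$ reproduce the paper's argument faithfully.
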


\begin{proof}
 These relations appear just by substituting into \eqref{rec} the expressions of the blocks of $(J_H)^{\pm 1},\,C_{[0]},\,C_{[-1]}$.
\end{proof}
Using Proposition \ref{ss} and the matrix CMV recursion relations in Proposition \ref{sss} one derives the recursion relations for the matrix Szeg\H{o} polynomials.

\begin{align*}
 zP_{1,2l+1}^L(z)-P_{1,2l+2}^L(z)&=-\alpha_{1,2l+2}^L(P_{2,2l+1}^R(z))^*\\
 (P_{2,2l}^R(z))^*-(\mathbb{I}-(\alpha_{2,2l}^R)^\dagger \alpha_{1,2l}^L)(P_{2,2l-1}^R(z))^*&=(\alpha_{2,2l}^R)^\dagger P_{1,2l}^L(z)\\
 (P_{2,2l}^L(z))^*-(P_{2,2l-1}^L(z))^*(\mathbb{I}- \alpha_{1,2l}^R(\alpha_{2,2l}^L)^\dagger)&=P_{1,2l}^R(z)(\alpha_{2,2l}^L)^\dagger\\
 zP_{1,2l+1}^R(z)-P_{1,2l+2}^R(z)&=-(P_{2,2l+1}^L(z))^*\alpha_{1,2l+2}^L\\
 (P_{2,2l+1}^R(z))^*-(P_{2,2l}^R(z))^*&=(\alpha_{2,2l+1}^R)^\dagger zP_{1,2l}^L(z)\\
 P_{1,2l+1}^L(z)-(\mathbb{I}-\alpha_{1,2l+1}^L (\alpha_{2,2l+1}^R)^\dagger)zP_{1,2l}^L(z)&=\alpha_{1,2l+1}^L (P_{2,2l+1}^R(z))^*\\
 P_{1,2l+1}^R(z)-zP_{1,2l}^R(z)(\mathbb{I}-(\alpha_{2,2l+1}^L)^\dagger\alpha_{1,2l+1}^R)&=(P_{2,2l+1}^L(z))^*\alpha_{1,2l+1}^R \\
 (P_{2,2l}^L(z))^*-(P_{2,2l+1}^L(z))^*&=-zP_{1,2l}^R(z)(\alpha_{2,2l+1}^L)^\dagger
\end{align*}
Which after the the prescription
\begin{align*}
 x_N^l&:=\alpha_{1,N}^L & x_N^r&:=\alpha_{1,N}^R \\
 y_N^l&:=(\alpha_{2,N}^L)^\dagger & y_N^r&:=(\alpha_{2,N}^R)^\dagger
\end{align*}
coincide with the formulas in \cite{Cafasso}.

\subsection{Christoffel--Darboux Theory}\label{Sec-CD}
To conclude this section we show how the Gaussian factorization leads to the Christoffel--Darboux theorem for the matrix Laurent polynomials on the unit circle context.
In this particular situation we must consider two different cases. As we are working in a non Abelian situation we first have projections in the corresponding modules, ``orthogonal" in the ring (our blocks) context. Secondly, when the matrix measures are Hermitian and positive definite, we will have a scalar product, and the projections to consider are orthogonal indeed.
\subsubsection{Projections in modules}
Given  a right or left $\mathbb M_m$ module $M$ any idempotent endomorphism $\pi\in\operatorname{End}_{\mathbb M_m}(M)$, $\pi^2=\pi$, is called a projection. For any given projection $\pi$  we have
$\operatorname{Ker}\pi=\operatorname{Im}(1-\pi)$,   $\operatorname{Ker}(1-\pi)=\operatorname{Im}\pi$, and the following direct decomposition holds: $M=\operatorname{Im}\pi\oplus \operatorname{Im}(1-\pi)$. Two projections  $\pi$ and $\pi'$ are said to be orthogonal if $\pi\pi'=0$; observe that $(1-\pi)$ is  idempotent and moreover  orthogonal to $\pi$.
Orthogonality is not related here to any inner product so far, is just a construction in the module. In particular,  in our discussion of  matrix Laurent polynomials we introduce the following free modules
\begin{align*}
  \Lambda_{[l]}:=\mathbb{M}_m\Big\{\chi^{(j)}\Big\}_{j=0}^l=\begin{cases}
\Lambda_{m,[-k,k]}, &l=2k,\\
\Lambda_{m,[-k-1,k]}, &l=2k+1.
  \end{cases}
\end{align*}
That we can consider as a left free module, when multiplied by the left, and denoted by $V_{[l+1]}$ or as a right free module (when multiplication by matrices is performed by the right) and denoted by by $W_{[l+1]}$. We will denote by $V=\lim\limits_{\rightarrow}V_{[l]}$ and $W=\lim\limits_{\rightarrow}W_{[l]}$ the corresponding direct limits, the left and right modules of matrix Laurent polynomials.
The bilinear form
\begin{align*} 
 G(f,g)&=\langle\!\langle g^{\dagger}, f\rangle\!\rangle_{L}=\langle\!\langle f^{\dagger},g\rangle\!\rangle_{R}=\oint_\mathbb{T}f(z)\frac{\d\mu(z)}{\operatorname{i}z}g(z), &
G_{i,j}&:=\oint_{\mathbb{T}}\chi^{(i)}(z) \frac{\d\mu(z)}{\operatorname{i}z} (\chi^{(j)}(z))^{\top},
  \end{align*}
fulfills
  \begin{align*}
G=      \eta g^R&= g^L \eta.
  \end{align*}
  This can be understood as a change of basis in the left and right modules $W_{[l]}$ and $V_{[l]}$; the left moment matrix can be understood as the matrix of the bilinear form $G$
  when on the left module $W_{[l]}$ we apply the isomorfism or change o basis represented by the $\eta$ matrix. Similarly,  the right moment matrix can be understood as the matrix of the bilinear form $G$
  when on the right module $V_{[l]}$ we apply the isomorfism represented by the $\eta$ matrix.  Observe that the $G$ dual vectors introduced in  Appendix \ref{modules} are of the form
\begin{align*}
\left((\varphi_1^L)_{j}\right)^*&=(\varphi_2^L)_{j}^{\dagger}, &
\left((\varphi_2^L)_{j}^{\dagger} \right)^*&=(\varphi_1^L)_{j},\\
\left((\varphi_1^R)_{j}\right)^*&=(\varphi_2^R)_{j}^{\dagger},  &
\left((\varphi_2^R)_{j}^{\dagger} \right)^*&=(\varphi_1^R)_{j}.
\end{align*}
Thus, following  Appendix \ref{modules} we consider the ring of $G$ projections in these left and right modules
\begin{definition}\label{kernels}
\begin{enumerate}
  \item
The Christoffel--Darboux projectors
\begin{align*}
\pi_L^{[l]}&: V  \longrightarrow  V_{[l]} , &
\pi_R^{[l]}&: W\longrightarrow W_{[l]},
\end{align*}
are the ring left and right projections associated to the bilinear form $G$.
  \item The matrix Christoffel-Darboux kernels are
\begin{align}\label{CDkernel}
  K^{L,[l]}(z,z')&:=\sum_{k=0}^{l-1}[(\varphi^L_2)^{(k)}(z)]^\dagger (\varphi^L_{1})^{(k)}(z'), &
K^{R,[l]}(z,z')&:=\sum_{k=0}^{l-1}(\varphi^R_{1})^{(k)}(z)[(\varphi^R_{2})^{(k)}(z')]^\dagger.
\end{align}
\end{enumerate}
\end{definition}
\begin{pro}
  For the projections and matrix Christoffel-Darboux kernels introduced in Definition \ref{kernels} we have the following relations
  \begin{align*}
   ( \pi_L^{[l]}f)(z)&=\int_{\mathbb T}f(z') \frac{\d\mu(z')}{\operatorname{i}z'} K^{L,[l]}(z',z)=\sum_{k=0}^{l-1}\langle\!\langle (\varphi_2^L)^{(k)},f \rangle\! \rangle_L (\varphi_1^L)^{(k)}(z),& \forall f\in V,\\
   [ ( \pi_L^{[l]}f)(z)]^\dagger&= \oint_\T K^{L,[l]}(z,z') \frac{\d\mu(z')}{\operatorname{i}z'} [f(z')]^\dagger =\sum_{k=0}^{l-1}[(\varphi_2^L)^{(k)}(z)]^\dagger\langle\!\langle f,(\varphi_1^L)^{(k)} \rangle \!\rangle_L, & \forall f\in V,\\
   ( \pi_R^{[l]}f)(z)&=\oint_\T K^{R,[l]}(z,z') \frac{\d\mu(z')}{\operatorname{i}z'} f(z')= \sum_{k=0}^{l-1}(\varphi_1^R)^{(k)}(z)\langle\!\langle (\varphi_2^R)^{(k)},f \rangle\! \rangle_R,&\forall\in W,\\
   [ ( \pi_R^{[l]}f)(z)]^\dagger&= \oint_\T [f(z')]^\dagger \frac{\d\mu(z')}{\operatorname{i}z}K^{R,[l]}(z',z)=\sum_{k=0}^{l-1}\langle\!\langle f,(\varphi_1^R)^{(k)} \rangle \!\rangle_R [(\varphi_2^R)^{(k)}(z)]^\dagger,&\forall\in W.
  \end{align*}
\end{pro}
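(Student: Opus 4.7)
The plan is to establish each of the four identities in two moves: first, I would verify that the finite series on the right equals the projection $\pi^{[\cdot]}_H$ using biorthogonality, and second, I would derive the integral expression on the left by direct substitution of the kernel definition \eqref{CDkernel} into the integral.

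For the first left-module identity, my starting point is that $\{(\varphi^L_1)^{(k)}\}_{k\ge 0}$ is an $\mathbb{M}_m$-basis of $V$, because $\phi_1^L=S_1\chi$ with $S_1\in\mathscr L$ invertible and block lower triangular with identity diagonal; the first $l$ of these vectors then span $V_{[l]}$ while the remainder span a complementary submodule. I would apply the operator $T:f\mapsto\sum_{k=0}^{l-1}\langle\!\langle(\varphi^L_2)^{(k)},f\rangle\!\rangle_L(\varphi^L_1)^{(k)}$ to a basis vector $c\cdot(\varphi^L_1)^{(j)}$ with $c\in\mathbb{M}_m$: the sesquilinearity rule $\langle\!\langle f,mg\rangle\!\rangle_L=m\langle\!\langle f,g\rangle\!\rangle_L$ pulls $c$ out, and biorthogonality (Theorem \ref{bio}) collapses the sum to $c\cdot(\varphi^L_1)^{(j)}$ when $j<l$ and to $0$ otherwise. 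This shows $T$ is idempotent with image $V_{[l]}$ and kernel the complementary submodule; combined with the identification of the $G$-dual of $(\varphi^L_1)_j$ as $(\varphi^L_2)_j^\dagger$ stated just above Definition \ref{kernels} (and formalized in Appendix \ref{modules}), one concludes $T=\pi_L^{[l]}$. To match the integral form I would substitute \eqref{CDkernel} into $\oint f(z')\frac{\d\mu(z')}{\operatorname{i}z'}K^{L,[l]}(z',z)$, interchange the finite sum with the integral, and factor $(\varphi^L_1)^{(k)}(z)$ out of the $z'$-integral; what remains inside is exactly $\langle\!\langle(\varphi^L_2)^{(k)},f\rangle\!\rangle_L$ by \eqref{prodL}.

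The right-module identity proceeds by the mirror argument with $\phi^R_1$ and the pairing \eqref{prodR}, using the corresponding biorthogonality; the relevant triangular structure now comes from the factors $Z_2\in\mathscr L$ and $Z_1^{-1}\in\mathscr U$, so that $\{(\varphi^R_1)^{(k)}\}_{k=0}^{l-1}$ is a basis of $W_{[l]}$ over $\mathbb M_m$ acting on the right. The two dagger identities I would handle by a single substitution of \eqref{CDkernel} into the displayed integral: for the left case, $[(\varphi^L_2)^{(k)}(z)]^\dagger$ sits outside the $z'$-integral on the left, and the remaining $z'$-integral is $\langle\!\langle f,(\varphi^L_1)^{(k)}\rangle\!\rangle_L$ by \eqref{prodL}; the right dagger case is analogous, with the roles of $\varphi_1^R$ and $\varphi_2^R$ and the order of the factors adjusted accordingly.

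The main anticipated obstacle is conceptual rather than computational: confirming that the explicit idempotent built from the biorthogonal bases coincides with the abstract $G$-projection defined in Appendix \ref{modules}. Once this identification is secured via the dual-basis remark preceding Definition \ref{kernels}, the rest is routine, since all sums are finite (so Fubini is trivial) and no analytic convergence issues arise; each of the four identities then reduces to a single substitution of the kernel definition and an application of the sesquilinearity rules.
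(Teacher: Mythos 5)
Your proposal is correct, and it follows exactly the route the paper intends: the paper actually states this proposition without proof, since it is meant to be the direct unwinding of the abstract $G$-projection formula from Appendix \ref{modules} with the dual bases $\left((\varphi_1^H)_{j}\right)^*=(\varphi_2^H)_{j}^{\dagger}$, together with substitution of the kernel definition \eqref{CDkernel} and the sesquilinear forms \eqref{prodL}--\eqref{prodR}. Your idempotency check via biorthogonality and the triangularity argument for $\{(\varphi_1^H)^{(k)}\}_{k<l}$ being a basis of the truncated module are exactly the details the paper leaves implicit, and they are carried out correctly.
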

\begin{pro}
The Christoffel--Darboux kernels have the reproducing property
\begin{align}
K^{H,[l]}(z,y)&=\oint_\T K^{H,[l]}(z,z')\frac{\d\mu(z')}{\operatorname{i} z'} K^{H,[l]}(z',y), & H&=L,R.
\end{align}
\end{pro}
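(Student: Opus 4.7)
The plan is to verify the reproducing property by direct substitution of the explicit sum expansion \eqref{CDkernel} for the Christoffel--Darboux kernels and then collapse one of the sums via the biorthogonality relation \eqref{ortCMV} established in Theorem \ref{bio}. This should be essentially a one-line computation in each of the two cases $H = L,R$, with the only subtlety being a careful bookkeeping of the order of factors (we are in the non-Abelian matrix setting) and of daggers, so that the bilinear pairings $\langle\!\langle\cdot,\cdot\rangle\!\rangle_L$ and $\langle\!\langle\cdot,\cdot\rangle\!\rangle_R$ defined in \eqref{prodL}--\eqref{prodR} are recognized correctly.

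Concretely, for the left case I would substitute the two copies of the kernel into the right-hand side to obtain
\begin{align*}
\oint_\T K^{L,[l]}(z,z')\,\frac{\d\mu(z')}{\operatorname{i}z'}\,K^{L,[l]}(z',y)
&=\sum_{j,k=0}^{l-1}[(\varphi_2^L)^{(j)}(z)]^\dagger
\Bigl(\oint_\T (\varphi_1^L)^{(j)}(z')\,\frac{\d\mu(z')}{\operatorname{i}z'}\,[(\varphi_2^L)^{(k)}(z')]^\dagger\Bigr)
(\varphi_1^L)^{(k)}(y),
\end{align*}
and then identify the bracketed integral with $\langle\!\langle (\varphi_2^L)^{(k)},(\varphi_1^L)^{(j)}\rangle\!\rangle_L=\delta_{j,k}\mathbb{I}$ by \eqref{ortCMV}, so one sum collapses and the remaining sum reproduces $K^{L,[l]}(z,y)$.

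For the right case the argument is completely analogous: plugging in the definition produces the factor
\[
\oint_\T [(\varphi_2^R)^{(j)}(z')]^\dagger\,\frac{\d\mu(z')}{\operatorname{i}z'}\,(\varphi_1^R)^{(k)}(z')
=\langle\!\langle (\varphi_2^R)^{(j)},(\varphi_1^R)^{(k)}\rangle\!\rangle_R=\delta_{j,k}\mathbb{I},
\]
which again kills one of the two indices and leaves $K^{R,[l]}(z,y)$. I do not foresee any real obstacle; the calculation is purely formal and uses only the already established biorthogonality. If one prefers a more conceptual phrasing, the statement is simply the fact that $\pi_H^{[l]}$ is a projection, i.e. $(\pi_H^{[l]})^2=\pi_H^{[l]}$, applied to the reproducing kernel representation of $\pi_H^{[l]}$ given in the preceding proposition; the direct sum computation above makes this idempotence explicit.
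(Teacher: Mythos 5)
Your proof is correct and is essentially the paper's own argument: the paper simply invokes the idempotency of the projectors $\pi^{[l]}_H$, which is exactly the conceptual rephrasing you give at the end, and your direct double-sum computation with the biorthogonality relation \eqref{ortCMV} is just the explicit content of that idempotence. The factor ordering and the identification of the inner integrals with $\langle\!\langle\cdot,\cdot\rangle\!\rangle_L$ and $\langle\!\langle\cdot,\cdot\rangle\!\rangle_R$ are both handled correctly.
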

\begin{proof}
This follows from the idempotency  property of the $\pi$'s.
\end{proof}
Moreover,
\begin{pro}If the matrix measure $\mu$ is Hermitian  then
\begin{enumerate}
\item The followings expansions are satisfied
  \begin{align*}
   ( \pi_L^{[l]}f)(z)&=\sum_{k=0}^{l-1}\langle\!\langle (\varphi_1^L)^{(k)}, f \rangle \!\rangle_L (h^{L}_k)^{-1}(\varphi_1^L)^{(k)}(z),& \forall f\in V,\\
   ( \pi_R^{[l]}f)(z)&= \sum_{k=0}^{l-1}(\varphi_1^R)^{(k)}(z)(h^{R}_k)^{-1}\langle\!\langle (\varphi_1^R)^{(k)},f \rangle \!\rangle_R,&\forall f\in W.
  \end{align*}
  \item The following Hermitian type property holds for the projectors
  \begin{align*}
    \Langle \pi^{[l]}_Hf,g\Rangle_H&=  \Langle f,\pi^{[l]}_Hg\Rangle_H, & H&=R,L.
  \end{align*}
 \end{enumerate}
\end{pro}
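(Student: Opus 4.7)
The strategy is to combine the already-established biorthogonal expansion of $\pi_H^{[l]}$ given in the previous proposition with the Hermitian-case identities \eqref{laurent-hermitian}, which express each polynomial in the second biorthogonal family as the first multiplied by the diagonal block $D_k^H$. This conversion turns the biorthogonal sum into an ``orthogonal'' one with $(h_k^H)^{-1}$ as weight, after which item (2) follows from Hermiticity alone.

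\emph{Item (1).} First I will take the expansion $(\pi_L^{[l]}f)(z)=\sum_{k=0}^{l-1}\Langle (\varphi_2^L)^{(k)},f\Rangle_L\,(\varphi_1^L)^{(k)}(z)$ from the previous proposition and substitute $(\varphi_2^L)^{(k)}=(D_k^L)^{-1}(\varphi_1^L)^{(k)}$ coming from \eqref{laurent-hermitian}. The sesquilinearity rule $\Langle mu,v\Rangle_L=\Langle u,v\Rangle_L m^\dagger$ applied with $m=(D_k^L)^{-1}$, together with Hermiticity of $D_k^L$ (so that $((D_k^L)^{-1})^\dagger=(D_k^L)^{-1}$), gives $\Langle(\varphi_2^L)^{(k)},f\Rangle_L=\Langle(\varphi_1^L)^{(k)},f\Rangle_L\,(D_k^L)^{-1}$. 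Reinserting this and identifying $D_k^L$ with the quasi-norm $h_k^L$ via Proposition~\ref{pro:quasinorms} produces the claimed formula. The right projector is handled symmetrically, starting from $(\varphi_2^R)^{(k)}=(\varphi_1^R)^{(k)}D_k^R$ and using the right-hand sesquilinearity rule $\Langle um,v\Rangle_R=m^\dagger\Langle u,v\Rangle_R$.

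\emph{Item (2).} I will substitute the expansion from item~(1) into both sides of $\Langle \pi_H^{[l]}f,g\Rangle_H=\Langle f,\pi_H^{[l]}g\Rangle_H$. On the left-hand side, the constant matrix coefficient $\Langle(\varphi_1^L)^{(k)},f\Rangle_L(h_k^L)^{-1}$ is pulled out of the outer bracket by $\Langle mu,v\Rangle_L=\Langle u,v\Rangle_L m^\dagger$; the resulting adjoint collapses by Hermiticity of $h_k^L$, and the factor $\Langle(\varphi_1^L)^{(k)},f\Rangle_L^\dagger$ becomes $\Langle f,(\varphi_1^L)^{(k)}\Rangle_L$ by Hermiticity of the sesquilinear form itself. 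On the right-hand side, the analogous manipulation uses $\Langle u,mv\Rangle_L=m\Langle u,v\Rangle_L$ to extract the matrix coefficient $\Langle(\varphi_1^L)^{(k)},g\Rangle_L(h_k^L)^{-1}$ directly, with no adjoint appearing. Both sides therefore reduce to the same sum $\sum_{k=0}^{l-1}\Langle(\varphi_1^L)^{(k)},g\Rangle_L\,(h_k^L)^{-1}\,\Langle f,(\varphi_1^L)^{(k)}\Rangle_L$. The right case is identical, with the roles of the left and right sesquilinearity rules exchanged.

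\emph{Main obstacle.} The delicate point is keeping track of the $\dagger$ factors that appear each time a matrix is moved across a sesquilinear bracket, and verifying that Hermiticity of the form together with Hermiticity of $D_k^H$ (equivalently of $h_k^H$) is exactly what forces these adjoints to disappear, so that the projector becomes truly self-adjoint rather than merely conjugate to its adjoint by some similarity.
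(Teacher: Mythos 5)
Your overall strategy is the natural one (the paper states this proposition without an explicit proof, and your route through the biorthogonal expansion of the previous proposition plus \eqref{laurent-hermitian} is surely what is intended), and your argument for item (2) is sound: it only uses Hermiticity of the middle factor and of the sesquilinear form, so it goes through verbatim. The problem is in item (1), at the step ``identifying $D_k^L$ with the quasi-norm $h_k^L$ via Proposition~\ref{pro:quasinorms}.'' That proposition does \emph{not} say this for all $k$: it gives $D^L_{2j}=h^L_{2j}$ but $D^L_{2j+1}=h^R_{2j+1}$ (and symmetrically $D^R_{2j+1}=h^L_{2j+1}$). What your computation actually and correctly produces is
\begin{align*}
(\pi_L^{[l]}f)(z)&=\sum_{k=0}^{l-1}\Langle (\varphi_1^L)^{(k)},f\Rangle_L\,(D^L_k)^{-1}\,(\varphi_1^L)^{(k)}(z),
\end{align*}
and the passage from $(D^L_k)^{-1}$ to $(h^L_k)^{-1}$ is only licensed at even $k$.

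This is not a removable bookkeeping issue, because $h^L_k\neq h^R_k$ for genuinely matrix-valued Hermitian measures: already at $k=1$ one finds $h^L_1\propto c_0-c_{-1}c_0^{-1}c_1$ while $h^R_1\propto c_0-c_1c_0^{-1}c_{-1}$, which differ as soon as $c_1$ and $c_0^{-1}$ fail to commute suitably (take $c_0=\mathbb{I}$ and $c_1$ a nilpotent $2\times2$ matrix). So either you prove the formula with $(D^L_k)^{-1}$ (equivalently, with $(h^L_{2j})^{-1}$ at even indices and $(h^R_{2j+1})^{-1}$ at odd ones) and flag the discrepancy with the statement as written, or you must supply an argument that $h^L_{2j+1}=h^R_{2j+1}$ in the Hermitian case --- and no such argument exists in general. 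As written, your proof does not establish the literal formula claimed; you should make the even/odd alternation explicit and note that the paper's own Christoffel--Darboux corollary, which carefully keeps both $h^L$ and $h^R$, supports the alternating version.
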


When the matrix measure is Hermitian and positive definite we have a standard scalar product and a complex Hilbert space, and the projections $\pi^{[l]}_H$ are orthogonal projections --not only in the module  but in the geometrical sense as well-- to the subspaces of truncated matrix Laurent polynomials; notice that there are two different, however equivalent, scalar products and distances involved. In this situation, as is well known, these projections
give the best approximation within the truncated Laurent polynomials and the corresponding left and right distances.
\subsubsection{The Christoffel-Darboux type formulae}
\begin{theorem} \label{th:CD}For $\bar z z'\neq 1$ the  matrix Christoffel--Darboux kernels fulfill
\begin{align}\label{quer1}\begin{aligned}
  K^{L,[2l]}(z,z')(1-\bar{z}z')=& (\varphi_1^R)^{(2l)}(\bar z^{-1})h_{2l}^R (h^R_{2l-1})^{-1}(\varphi_1^L)^{(2l-1)}(z')-(\varphi_1^R)^{(2l-1)}(\bar z^{-1})(\varphi_1^L)^{(2l)}(z')\\
 =&\left[[z(\varphi_2^L)^{(2l+1)}(z)]^\dagger h_{2l+1}^R- [z(\varphi_2^L)^{(2l)}(z)]^\dagger h_{2l}^L \alpha_{1,2l+1}^R\right] (h^R_{2l-1})^{-1}(\varphi_1^L)^{(2l-1)}(z')- \\
&-\left[[z(\varphi_2^L)^{(2l-2)}(z)]^\dagger + [z(\varphi_2^L)^{(2l-1)}(z)]^\dagger [\alpha_{2,2l-1}^R]^\dagger\right](\varphi_1^L)^{(2l)}(z'),\\
 K^{L,[2l+1]}(z,z')(1-\bar{z}z')=&[z(\varphi_2^L)^{(2l+1)}(z)]^\dagger h_{2l+1}^R (h_{2l}^{R})^{-1}[(\varphi_2^R)^{(2l)}]^\dagger(z'^{-1})-[z(\varphi_2^L)^{(2l)}(z)]^\dagger[(\varphi_2^R)^{(2l+1)}]^\dagger(z'^{-1})\\
 =&[z(\varphi_2^L)^{(2l+1)}(z)]^\dagger h_{2l+1}^R \left[ (h_{2l-1}^{R})^{-1}(\varphi_1^L)^{(2l-1)}(z')+ [\alpha_{2,2l}^L]^\dagger (h_{2l}^{L})^{-1}(\varphi_1^L)^{(2l)}(z')\right]\\
 &-z(\varphi_2^L)^{(2l)}(z)]^\dagger \left[(\varphi_1^L)^{(2l+2)}(z')-\alpha_{1,2l+2}^L(\varphi_1^L)^{(2l+1)}(z')  \right],
\end{aligned}\\
\label{quer2}
\begin{aligned}
K^{R,[2l]}(z,z')(1-\bar{z'}z)=&[(\varphi_2^L)^{(2l)}]^\dagger(z^{-1}) h_{2l}^L (h_{2l-1}^{L})^{-1}[(\varphi_2^R)^{(2l-1)}(z')]^\dagger-[(\varphi_2^L)^{(2l-1)}]^\dagger(z^{-1})[(\varphi_2^R)^{(2l)}(z')]^\dagger\\
=&z\left[(\varphi_1^R)^{(2l+1)}(z)h_{2l+1}^L-(\varphi_1^R)^{(2l)}(z)h_{2l}^R[\alpha_{2,2l+1}^L]^\dagger  \right] (h_{2l+1}^{L})^{-1}[(\varphi_2^R)^{(2l-1)}(z')]^\dagger\\
&-z\left[(\varphi_1^R)^{(2l-2)}(z)-(\varphi_1^R)^{(2l-1)}(z)\alpha_{1,2l-1}^L  \right][(\varphi_2^R)^{(2l)}(z')]^\dagger,\\
 K^{R,[2l+1]}(z,z')(1-\bar{z'}z)=&z(\varphi_1^R)^{(2l+1)}(z)h_{2l+1}^L (h_{2l}^{L})^{-1}(\varphi_1^L)^{(2l-1)}(\bar z'^{-1})-z(\varphi_1^R)^{(2l)}(z)(\varphi_1^L)^{(2l+1)}(\bar z'^{-1})\\
=&z(\varphi_1^R)^{(2l+1)}(z)h_{2l+1}^L \left[ (h_{2l-1}^{L})^{-1}(\varphi_2^R)^{(2l-1)}(z')]^\dagger+\alpha_{1,2l}^R (h_{2l}^{R})^{-1}[(\varphi_2^R)^{(2l)}(z')]^\dagger\right]\\
 &-z(\varphi_1^R)^{(2l)}(z)\left[(\varphi_2^R)^{(2l+2)}(z')]^\dagger - [\alpha_{2,2l+2}^R]^\dagger [(\varphi_2^R)^{(2l+1)}(z')]^\dagger\right].
\end{aligned}
\end{align}
\end{theorem}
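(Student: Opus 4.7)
The plan is to mimic the algebraic ``commutator proof'' of the Christoffel--Darboux formula, as used in the scalar case \cite{carlos,afm-2}, now carried out in the block setting with the dressed shift $J^L$ of \eqref{jota}; the right-hand identity \eqref{quer2} is obtained by the parallel derivation with $J^R$. First I would encode the kernel as a sandwich
\[
K^{L,[l]}(z,z')=\bigl(\Phi_2^L(z)\bigr)^\dagger\,\Pi^{[l]}\,\Phi_1^L(z'),\qquad \Pi^{[l]}:=\sum_{k=0}^{l-1}E_{k,k}.
\]
From Proposition \ref{rec} one has $J^L\Phi_1^L(z')=z'\Phi_1^L(z')$, and daggering $[J^L]^\dagger\Phi_2^L(z)=z^{-1}\Phi_2^L(z)$ gives the transposed eigen-relation $(\Phi_2^L(z))^\dagger J^L=\bar z^{-1}(\Phi_2^L(z))^\dagger$. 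Inserting $J^L$ on either side of $\Pi^{[l]}$ and subtracting produces the key commutator identity
\[
(1-\bar z z')\,K^{L,[l]}(z,z')\;=\;\bar z\,\bigl(\Phi_2^L(z)\bigr)^\dagger\,[J^L,\Pi^{[l]}]\,\Phi_1^L(z'),
\]
which is the block-matrix replacement of the classical scalar identity.

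Next, because $J^L$ has the block-pentadiagonal CMV shape drawn after \eqref{Ccas}, the commutator $[J^L,\Pi^{[l]}]$ is supported on only a handful of block entries straddling the cut at $l$. Reading those entries off from the explicit form of $J^L$ given in Appendix \ref{explicit} (whose blocks are built directly from the Verblunsky matrices, cf.\ Proposition \ref{faclosalpha}) yields exactly the \emph{first} equality of each of \eqref{quer1} and \eqref{quer2}. The reciprocal-type factors $(\varphi_1^R)^{(\cdot)}(\bar z^{-1})$ and the quasi-norms $h_k^H$ appear automatically here because the nonzero entries of $J^L$ at the boundary are expressible through the matrices $C_{[p]}$, and Proposition \ref{rec} converts $C_{[p]}\Phi_1^L(z')=z'^{\,p}(\Phi_2^R(\bar z'^{-1}))^\dagger$ into the stated reciprocal components; similarly for the conjugate side.

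To pass from the short ``commutator'' form to the \emph{second} equality of \eqref{quer1}, \eqref{quer2}, I would substitute the CMV recursions of Proposition \ref{ss} (for the action of $z$) and Proposition \ref{sss} (for the reciprocal relations) into the boundary combination obtained in the previous step. The identities of Proposition \ref{rrr}, which express the quasi-norms $h_k^H$ through the Verblunsky matrices on both sides, collapse the resulting expressions to precisely the displayed form. For the right case, the argument is identical with $J^L$ replaced by $J^R$, using $\Phi_1^R(z)J^R=z^{-1}\Phi_1^R(z)$ and $\Phi_2^R(z)([J^R]^\dagger)^{-1}=z^{-1}\Phi_2^R(z)$ from Proposition \ref{rec}, together with the right-hand recursions.

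The only non-routine step is the second one: the nonzero blocks of $[J^L,\Pi^{[l]}]$ must be enumerated separately for $l=2l$ and $l=2l+1$, because the even and odd block columns of $J^L$ have different profiles, and one must match the entries obtained to the correct boundary terms. This is, however, a finite bookkeeping exercise rather than a conceptual obstacle, since only a small fixed number of block entries of $J^L$ can contribute.
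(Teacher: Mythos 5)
Your proposal is correct and follows essentially the same route as the paper: the paper's proof in Appendix \ref{proofs} evaluates $[(\varphi_2^L)^{[2l]}(z)]^\dagger\,(J^L)^{[2l]}\,(\varphi_1^L)^{[2l]}(z')$ by letting the truncated $J^L$ act once to the right (eigenvalue $z'$) and once to the left (eigenvalue $\bar z^{-1}$) and comparing the boundary corrections, which is exactly your commutator identity $(1-\bar z z')K^{L,[l]}=\bar z\,(\Phi_2^L)^\dagger[J^L,\Pi^{[l]}]\Phi_1^L$ in slightly different notation, with the explicit boundary blocks read off from Appendix \ref{explicit} and the two displayed forms of each identity related through Propositions \ref{ss}, \ref{sss} and \ref{rrr}. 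The only cosmetic difference is the order in which the two equalities are established (the paper obtains the purely left-polynomial form first and then invokes Proposition \ref{sss}, while you propose the reverse), which does not affect the substance.
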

\begin{proof}
  See Appendix \ref{proofs}.
\end{proof}

In terms of the matrix Sezg\H{o} polynomials we have
\begin{cor}The matrix Christoffel--Darboux kernels can be expressed in terms of the matrix Sezg\H{o} polynomials as follows
\begin{align*}
 K^{L,[2l]}(z,z')&=\frac{(\bar{z}z'^{-1})^l}{1-\bar{z}z'}\left[ P_{1,2l}^R(\bar{z}^{-1}) (h^R_{2l-1})^{-1} (P_{2,2l-1}^R)^*(z')-
 (P_{2,2l-1}^L)^*(\bar{z}^{-1}) (h^L_{2l-1})^{-1} P_{1,2l}^L(z') \right],\\
 K^{L,[2l+1]}(z,z')&=\frac{\bar{z}^{l+1}z'^{-l}}{1-\bar{z}z'}\left[ P_{1,2l+1}^R(\bar{z}^{-1}) (h^R_{2l})^{-1} (P_{2,2l}^R)^*(z')-
 (P_{2,2l}^L)^*(z^{-1}) (h^L_{2l})^{-1} P_{1,2l+1}^L(z') \right],\\
 K^{R,[2l]}(z,z')&=\frac{(z\bar{z'}^{-1})^l}{1-\bar{z'}z}\left[ [P_{2,2l}^L(\bar{z}^{-1})]^\dagger (h^L_{2l-1})^{-1} [(P_{1,2l-1}^L)^*(z')]^\dagger-
 [(P_{1,2l-1}^R)^*(\bar{z}^{-1})]^\dagger (h^R_{2l-1})^{-1}[ P_{2,2l}^R(z')]^\dagger \right],\\
 K^{R,[2l+1]}(z,z')&=\frac{z^{l+1}\bar{z'}^{-l}}{1-\bar{z'}z}\left[ [P_{2,2l+1}^L(\bar{z}^{-1})]^\dagger (h^L_{2l})^{-1} [(P_{1,2l}^L)^*(z')]^\dagger-
 [(P_{1,2l}^R)^*(\bar{z}^{-1})]^\dagger (h^R_{2l})^{-1} [P_{1,2l+1}^R(z')]^\dagger \right],
\end{align*}
where we assume that $\bar zz'\neq 1$.
\end{cor}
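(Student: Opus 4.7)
The plan is to obtain these identities as a direct consequence of Theorem~\ref{th:CD}, by rewriting the CMV matrix Laurent polynomials that appear there in terms of the matrix Szeg\H{o} polynomials via the change of variable formulas \eqref{CMVSi}--\eqref{CMVSf}. There is no new analysis involved; everything reduces to bookkeeping of powers of $z$, $z'$ and the identifications $h^L_{2l}=D^L_{2l}$, $h^R_{2l}=D^R_{2l}$, $h^L_{2l+1}=D^R_{2l+1}$, $h^R_{2l+1}=D^L_{2l+1}$ provided by Proposition~\ref{pro:quasinorms}.

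Concretely, I would focus on the first identity for $K^{L,[2l]}$, the other three being entirely analogous. Starting from the first line of \eqref{quer1},
\[
K^{L,[2l]}(z,z')(1-\bar zz')= (\varphi_1^R)^{(2l)}(\bar z^{-1})\,h_{2l}^R (h^R_{2l-1})^{-1}(\varphi_1^L)^{(2l-1)}(z')-(\varphi_1^R)^{(2l-1)}(\bar z^{-1})(\varphi_1^L)^{(2l)}(z'),
\]
I substitute using the CMV/Szeg\H{o} dictionary \eqref{CMVSf}, evaluated at $\bar z^{-1}$: from $z^l(\varphi^R_{1})^{(2l)}(z)D^R_{2l}=P_{1,2l}^R(z)$ one gets $(\varphi^R_1)^{(2l)}(\bar z^{-1})=\bar z^{\,l}P_{1,2l}^R(\bar z^{-1})(h^R_{2l})^{-1}$, and from the shifted identity $z^l(\varphi^R_1)^{(2l-1)}(z)D^R_{2l-1}=(P_{2,2l-1}^L)^*(z)$ one gets $(\varphi^R_1)^{(2l-1)}(\bar z^{-1})=\bar z^{\,l}(P_{2,2l-1}^L)^*(\bar z^{-1})(h^L_{2l-1})^{-1}$. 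Similarly \eqref{CMVSi} gives $(\varphi^L_1)^{(2l)}(z')=(z')^{-l}P_{1,2l}^L(z')$ and $(\varphi^L_1)^{(2l-1)}(z')=(z')^{-l}(P_{2,2l-1}^R)^*(z')$. The factor $h^R_{2l}(h^R_{2l})^{-1}$ cancels in the first term, and both terms acquire the common prefactor $(\bar zz'^{-1})^l$, yielding exactly the announced formula for $K^{L,[2l]}$ after division by $1-\bar zz'$.

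For the odd-order left kernel $K^{L,[2l+1]}$ I would start from either line of the corresponding formula in \eqref{quer1} and instead use the relations involving $\phi_2^L$ to pull out the appropriate $(P_2^L)^*$ factor; here the identifications $h^L_{2l+1}=D^R_{2l+1}$ and $h^R_{2l}=D^R_{2l}$ are what make the norm factors rearrange correctly. The two formulas in \eqref{quer2} for $K^{R,[2l]}$ and $K^{R,[2l+1]}$ are then obtained by the mirror-image computation using the right-hand identities in \eqref{CMVSf} (respectively \eqref{CMVSi}) to rewrite $(\varphi^R_2)^{(k)}$ and $(\varphi^L_2)^{(k)}$.

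No real obstacle is expected; the only delicate point is to be systematic with the exponents of $z$ and $z'$ when evaluating at $\bar z^{-1}$ (so that the $z^{l}$ prefactor in \eqref{CMVSi}--\eqref{CMVSf} becomes $\bar z^{-l}$ and moves to the other side as $\bar z^{\,l}$), and with the placement of the normalisation matrices $(h^{H}_k)^{\pm 1}$ on the correct side, since in the non-commutative setting one cannot reorder them. Choosing a consistent convention --- substitute first the two $\phi^R$ factors on the left, then the two $\phi^L$ factors on the right --- makes the cancellation transparent and reproduces the four stated identities in a uniform manner.
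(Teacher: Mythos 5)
Your proposal is correct and is precisely the argument the paper intends: the corollary is stated without an explicit proof because it follows by substituting the dictionary \eqref{CMVSi}--\eqref{CMVSf} (together with the identifications of Proposition \ref{pro:quasinorms}) into the first lines of \eqref{quer1} and \eqref{quer2}, exactly as you describe. Your worked-out case for $K^{L,[2l]}$ checks out, including the cancellation of $h^R_{2l}(h^R_{2l})^{-1}$ and the emergence of the common prefactor $(\bar z z'^{-1})^l$.
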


As we have just seen, letting an operator act to the left or to the
right and comparing the two results has been very successful with
$J_{K}$. Actually we still have the operators $C_{0}, C_{-1}$ to which we can also apply the same procedure
to get some other interesting relations for the CD kernels.
\begin{pro}
 The next relations between $K^{L}$ and $K^{R}$ hold
\begin{align*}
K^{R,[2l+1]}(z,\frac{1}{\bar{z}'})&=K^{L,[2l+1]}(\frac{1}{\bar{z}},z'),\\
K^{R,[2l+2]}(z,\frac{1}{\bar{z}'})-K^{L,[2l+2]}(\frac{1}{\bar{z}},z')&=
(\varphi^R_{1})^{(2k+1)}(z)(\varphi^L_{1})^{(2k+2)}(z')-
(\varphi^R_{1})^{(2k+2)}(z)\left(\mathbb{I}-[\alpha_{2,2k+2}^R]^\dagger\alpha_{1,2k+2}^L \right)(\varphi^L_{1})^{(2k+1)}(z'),\\
\frac{1}{z'}K^{R,[2l+1]}(z,\frac{1}{\bar{z}'})-\frac{1}{z}K^{L,[2l+1]}(\frac{1}{\bar{z}},z')&=
(\varphi^R_{1})^{(2k)}(z)(\varphi^L_{1})^{(2k+1)}(z')-
(\varphi^R_{1})^{(2k+1)}(z)\left(\mathbb{I}-\alpha_{1,2k+1}^L[\alpha_{2,2k+1}^R]^\dagger \right)(\varphi^L_{1})^{(2k)}(z'),\\
\frac{1}{z'}K^{R,[2l+2]}(z,\frac{1}{\bar{z}'})&=\frac{1}{z}K^{L,[2l+2]}(\frac{1}{\bar{z}},z').
\end{align*}
\end{pro}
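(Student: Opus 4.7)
\medskip
\noindent\emph{Proof proposal.}
The plan is to mimic the derivation of the Christoffel--Darboux formula given earlier via $J^{H}$, but now dressing the operators $C_{[0]}$ and $C_{[-1]}$ introduced in \eqref{Ccas}. The starting observation is that the kernels admit the compact operator form
\begin{align*}
K^{L,[l]}(z,z')&=[\Phi_2^L(z)]^{\dagger}\,\Pi^{[l]}\,\Phi_1^L(z'),&
K^{R,[l]}(z,z')&=\Phi_1^R(z)\,\Pi^{[l]}\,[\Phi_2^R(z')]^{\dagger},
\end{align*}
where $\Pi^{[l]}=\sum_{k=0}^{l-1}E_{k,k}$ is the obvious truncation projector. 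Using the eigenvalue-type identities from Proposition \ref{rec},
$C_{[p]}\Phi_1^L(w)=w^{p}\bigl[\Phi_2^R(\bar w^{-1})\bigr]^{\dagger}$ and $\Phi_1^R(z)C_{[p]}=z^{p}\bigl[\Phi_2^L(\bar z^{-1})\bigr]^{\dagger}$,
one rewrites both $K^{R,[l]}(z,\bar{z}'^{-1})$ and $K^{L,[l]}(\bar z^{-1},z')$ in terms of $\Phi_1^R(z),\Phi_1^L(z')$ and $C_{[p]}$. For $p=0$ this gives
\[
K^{R,[l]}\bigl(z,\tfrac{1}{\bar z'}\bigr)-K^{L,[l]}\bigl(\tfrac{1}{\bar z},z'\bigr)=\Phi_1^R(z)\,\bigl[\Pi^{[l]},C_{[0]}\bigr]\,\Phi_1^L(z'),
\]
and for $p=-1$, after pulling out the monomials $z,z'$,
\[
\tfrac{1}{z'}K^{R,[l]}\bigl(z,\tfrac{1}{\bar z'}\bigr)-\tfrac{1}{z}K^{L,[l]}\bigl(\tfrac{1}{\bar z},z'\bigr)=\Phi_1^R(z)\,\bigl[\Pi^{[l]},C_{[-1]}\bigr]\,\Phi_1^L(z').
\]

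The second step is to read off the nontrivial entries of $C_{[0]}$ and $C_{[-1]}$ directly from Proposition \ref{sss}: matching coefficients of $(\varphi_1^L)^{(\cdot)}$ in the expansions of $[(\varphi_2^R)^{(i)}(\bar z^{-1})]^{\dagger}$ and $\frac{1}{z}[(\varphi_2^R)^{(i)}(\bar z^{-1})]^{\dagger}$ identifies exactly which blocks $(C_{[0]})_{i,j}$ and $(C_{[-1]})_{i,j}$ are nonzero. One finds the staircase pattern
\begin{align*}
(C_{[0]})_{2k,2k-1}&=\mathbb{I}-[\alpha_{2,2k}^R]^\dagger\alpha_{1,2k}^L,& (C_{[0]})_{2k,2k}&=[\alpha_{2,2k}^R]^\dagger,\\
(C_{[0]})_{2k+1,2k+1}&=-\alpha_{1,2k+2}^L,& (C_{[0]})_{2k+1,2k+2}&=\mathbb{I},
\end{align*}
with all other blocks vanishing, and an analogous but shifted pattern for $C_{[-1]}$. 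In particular, for $C_{[0]}$ only the off-diagonal positions $(2k,2k-1)$ and $(2k+1,2k+2)$ are populated, whereas for $C_{[-1]}$ the populated off-diagonal positions are $(2k,2k+1)$ and $(2k+1,2k)$.

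The final step is elementary: the commutator $[\Pi^{[l]},A]_{ij}=A_{ij}(\mathbf{1}_{i<l}-\mathbf{1}_{j<l})$ only survives on entries straddling the truncation level. A straightforward parity count then shows that for $C_{[0]}$ the straddling off-diagonal entries live at $l$ even, while for $C_{[-1]}$ they live at $l$ odd. Consequently $[\Pi^{[2l+1]},C_{[0]}]=0$ and $[\Pi^{[2l+2]},C_{[-1]}]=0$, giving the two equalities of the proposition, whereas for the opposite parities the commutator reduces to exactly two surviving blocks,
\begin{align*}
\bigl[\Pi^{[2l+2]},C_{[0]}\bigr]&=E_{2l+1,2l+2}\otimes\mathbb{I}-E_{2l+2,2l+1}\otimes\bigl(\mathbb{I}-[\alpha_{2,2l+2}^R]^\dagger\alpha_{1,2l+2}^L\bigr),\\
\bigl[\Pi^{[2l+1]},C_{[-1]}\bigr]&=E_{2l,2l+1}\otimes\mathbb{I}-E_{2l+1,2l}\otimes\bigl(\mathbb{I}-\alpha_{1,2l+1}^L[\alpha_{2,2l+1}^R]^\dagger\bigr),
\end{align*}
yielding the remaining two identities after sandwiching with $\Phi_1^R(z)$ and $\Phi_1^L(z')$. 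The only delicate point is the parity bookkeeping for the commutator supports; once that is in place the identification with the right-hand sides is immediate.
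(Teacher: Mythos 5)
Your proof is correct and follows essentially the same route as the paper: the paper's own (very terse) argument is precisely to compare the left and right action of $C_{[0]}$ and $C_{[-1]}$ on the truncated pairing $\phi_1^R(z)C_{[p]}\phi_1^L(z')$, which is exactly your commutator $\Phi_1^R(z)\bigl[\Pi^{[l]},C_{[p]}\bigr]\Phi_1^L(z')$, and the surviving blocks you identify agree with the explicit entries of $C_{[0]}^{\pm1}$, $C_{[-1]}^{\pm1}$ listed in Appendix \ref{explicit}. Your version simply makes the parity bookkeeping explicit, and it is consistent with the paper once one accounts for the statement's own $l$ versus $k$ index slip.
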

\begin{proof}
 The first two relations arise when comparing the action of $C_{0}$ to the left or to the right in
 \begin{align*}
  \phi_{1}^{R}(z)C_{0}\phi_{1}^{L}(z')
 \end{align*}
and truncating the expressions up to $[2k+1]$ (first relation) or $[2k+2]$ (second one).
The other two relations are obtained proceeding in the same way but using $C_{-1}$ instead.
\end{proof}

\section{MOLPUC and two dimensional Toda type hierarchies}\label{Sec-integrability}

Once we have explored how the Gauss--Borel factorization of block CMV moment matrices  leads to the algebraic theory of MOLPUC, we are ready to show how this approach also connects these polynomials to integrable hierarchies of Toda type. We first introduce convenient deformations of the moment matrices, that as we will show correspond to deformations of the matrix measure.
With these we will construct wave functions, Lax equations, Zakharov--Shabat equations, discrete flows and Darboux transformations and Miwa transformations. These last transformations will lead to interesting relations between the matrix Christoffel--Darboux kernels, Miwa shifted MOLPUC and their ``norms". The integrable equations that we derive are a non-Abelian version of the Toeplitz lattice or non-Abelian ALL equations that extend, in the partial flows case,  those of \cite{Cafasso} --appearing these last ones in what we denominate total flows--.
\subsection{2D Toda continuous flows}\label{Sec-Toda-0}
In order to construct  deformation matrices which  will  act on the moment matrices
(resulting in a deformation of the matrix measure) we first introduce  some definitions.
\begin{definition}
\begin{enumerate}
\item Given the diagonal matrices $t^H_j=\diag(t^H_{j,1},\dots,t^H_{j,m})\in\diag _m$, $j=0,1,2,\dots$, $H=L,R$ and $t^H_{j,a}\in\C$ we introduce
 \begin{align*}
  t^L&:= \begin{pmatrix}
  t^L_0, t^L_1, t^L_2, \dots
  \end{pmatrix}, &
  t^H&:= \begin{pmatrix}
  t^R_0,
  t^R_1,
  t^R_2,
  \dots
  \end{pmatrix}^\top,
 \end{align*}
we also impose $t^H_0=0$.
 \item  We also consider the CMV ordered Fourier monomial  vector but evaluated in $\Upsilon$
  \begin{align*}
 [\chi(\Upsilon)]^\top&=\begin{pmatrix}\mathbb{I},\Upsilon,\Upsilon^{-1},\Upsilon^{2},\Upsilon^{-2},\dots \end{pmatrix}.
 \end{align*}
\item With this we introduce
 \begin{align*}
 t^L *\chi(\Upsilon)&:= \sum_{j=0}^\infty t^L_j \chi(\Upsilon)^{(j)}, &
 [\chi(\Upsilon)]^\top * t^R&:=\sum_{j=0}^\infty [\chi(\Upsilon)^{(j)}]^\top t^R_j.
 \end{align*}
 The products in the above expressions are by blocks; i.e. the factors in  $\mathbb{M}_m$  multiply $\mathbb{M}_m$ block of the $\mathbb{M}_{\infty}$ block matrix.
\item  The deformation matrices are
 \begin{align*}
  W_0(t^L)&:=\exp\Big(t^L *\chi(\Upsilon)\Big), &
  V_0(t^R)&:=\exp\Big([\chi(\Upsilon)]^\top * t^R\Big).
 \end{align*}
\item The $t$-dependent deformation of  moment matrices, $g^{H}(t)$, $H=L,R$, and their Gauss--Borel factorization are considered
\begin{align}
g^{L}(t)&:=W_0(t^L) g^L [V_0(-\eta t^R)]^{-1}, & g^{L}(t)&=(S_1(t))^{-1} S_2(t),\\
g^{R}(t)&:=[W_0(-t^L \eta)]^{-1} g^R V_0(t^R), & g^{R}(t)&=Z_2(t)(Z_1(t))^{-1}.
\end{align}
 \end{enumerate}
\end{definition}
\begin{pro}
\begin{enumerate}
  \item The deformed moment matrices can be understood as the moment matrices for a deformed ($t$-dependent) measure given by
\begin{align*}
\d\mu(z,t):= \exp \Big( t_L \chi(z) \Big)\d\mu(z) \exp \Big(\chi(z)^{\top} t_R \Big),
\end{align*}
 with  the deformed Fourier series of the evolved matrix measure given by
\begin{equation}\label{Fmut}
 F_{\mu(t)}(z):= \exp \Big( t_L \chi(z) \Big)F(z) \exp \Big(\chi(z)^{\top} t_R \Big).
\end{equation}
\item The  Hermitian character of the matrix measure is preserved by the deformation whenever  $t^L=(t^R)^\dagger \eta$.
\end{enumerate}
\end{pro}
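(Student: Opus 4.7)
The plan is to reduce both parts to a direct computation based on the spectral relations
$\Upsilon^{p}\chi(z)=z^{p}\chi(z)$, $\chi(z)^{\top}\Upsilon^{p}=z^{-p}\chi(z)^{\top}$, together with the identities $\chi(z)^{\dagger}=\chi(z)^{\top}\eta$ (valid on $\T$ since $\bar z=z^{-1}$) and the intertwining $\eta\Upsilon=\Upsilon^{-1}\eta$. These allow one to evaluate the deformation matrices $W_{0}$ and $V_{0}$ on $\chi$ and $\chi^{\dagger}$.

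For Part~1, I would first expand $t^{L}*\chi(\Upsilon)=\sum_{j}t^{L}_{j}\chi^{(j)}(\Upsilon)$ as a Laurent polynomial in $\Upsilon^{\pm1}$ with coefficients in $\diag_{m}$, and apply $\Upsilon^{p}\chi(z)=z^{p}\chi(z)$ term by term to obtain
\[
 W_{0}(t^{L})\,\chi(z)=e^{t_{L}\chi(z)}\,\chi(z),
\]
where the scalar factor $e^{t_{L}\chi(z)}\in\diag_{m}$ commutes block-wise with $\chi(z)$. Next, using $\chi(z)^{\dagger}=\chi(z)^{\top}\eta$ together with $\eta\Upsilon^{p}\eta=\Upsilon^{-p}$, the factor $\eta$ appearing in the argument of $V_{0}(-\eta t^{R})$ precisely cancels the ordering mismatch between $\chi(\Upsilon)$ and $\chi(z)$ so that the analogous identity
\[
 \chi(z)^{\dagger}\,[V_{0}(-\eta t^{R})]^{-1}=\chi(z)^{\dagger}\,e^{\chi(z)^{\top}t_{R}}
\]
holds. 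Substituting these two identities into $g^{L}(t):=W_{0}(t^{L})\,g^{L}\,[V_{0}(-\eta t^{R})]^{-1}$ and pulling the two (diagonal) exponential factors through the integrand yields
\[
 g^{L}(t)=\oint_{\T}\chi(z)\,\frac{\d\mu(z,t)}{\operatorname{i}z}\,\chi(z)^{\dagger},\qquad \d\mu(z,t)=e^{t_{L}\chi(z)}\,\d\mu(z)\,e^{\chi(z)^{\top}t_{R}},
\]
and the identical argument applied to the right deformation $g^{R}(t)=[W_{0}(-t^{L}\eta)]^{-1}g^{R}V_{0}(t^{R})$ produces the same deformed measure. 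The Fourier series formula (\ref{Fmut}) follows at once from the definition of $F_{\mu}$: for absolutely continuous measures $F_{\mu}=w$ and the claim is tautological, while in general one interchanges the Fourier sum defining $F_{\mu(t)}$ with the integration defining the deformed moments $c_{n}(t)$, the exponential factors being pulled outside because they are scalar (in the block sense).

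For Part~2 I would simply take the Hermitian adjoint of the deformed measure:
\[
 \d\mu(z,t)^{\dagger}=e^{(\chi(z)^{\top}t_{R})^{\dagger}}\,\d\mu(z)^{\dagger}\,e^{(t_{L}\chi(z))^{\dagger}}.
\]
Using $(\chi(z)^{\top}t_{R})^{\dagger}=t_{R}^{\dagger}\,\overline{\chi(z)}=t_{R}^{\dagger}\,\eta\,\chi(z)$ on $\T$, and symmetrically $(t_{L}\chi(z))^{\dagger}=\chi(z)^{\top}\eta\,t_{L}^{\dagger}$, Hermiticity of $\d\mu$ reduces the requirement $\d\mu(z,t)^{\dagger}=\d\mu(z,t)$ to
\[
 t_{R}^{\dagger}\eta=t_{L}\quad\text{and}\quad \eta\,t_{L}^{\dagger}=t_{R},
\]
which are two restatements of the single constraint $t^{L}=(t^{R})^{\dagger}\eta$ (using $\eta^{\dagger}=\eta$).

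The main technical obstacle is the bookkeeping in Part~1: the CMV orderings of the components of $\chi(\Upsilon)$ and $\chi(z)$ do not coincide term by term, which is exactly the reason the operator $\eta$ and the minus sign appear in $V_{0}(-\eta t^{R})$ and $W_{0}(-t^{L}\eta)$. Once this ordering is handled carefully via the intertwining $\eta\Upsilon=\Upsilon^{-1}\eta$, the remaining manipulations are linear algebra.
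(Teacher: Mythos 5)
The paper states this Proposition without an accompanying proof, so there is no in-text argument to compare against; your proposal is correct and supplies exactly the computation the authors implicitly rely on. The two key identities are the ones you isolate: $W_0(t^L)\chi(z)=\Exp{t_L\chi(z)}\chi(z)$, obtained by applying $\Upsilon^{p}\chi(z)=z^{p}\chi(z)$ termwise to the exponent, and $\chi(z)^{\dagger}[V_0(-\eta t^R)]^{-1}=\chi(z)^{\dagger}\Exp{\chi(z)^{\top}t_R}$, where the $\eta$ in the time argument compensates for the inversion $z\mapsto z^{-1}$ hidden in $\chi(z)^{\dagger}=\chi(\bar z)^{\top}=\chi(z)^{\top}\eta$ (the permutation of the odd/even CMV slots), and the minus sign is absorbed by the inverse; since the resulting exponential factors lie in $\operatorname{diag}_m$ and commute blockwise with $\chi$, they slide through to sandwich $\d\mu$, giving the stated $\d\mu(z,t)$, and the same bookkeeping for $g^R(t)=[W_0(-t^L\eta)]^{-1}g^RV_0(t^R)$ reproduces the identical measure, which is the consistency check that actually needs to be verified. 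Your Part 2 is also right: taking adjoints of $\d\mu(z,t)$ and using $\overline{\chi(z)}=\eta\chi(z)$ on $\T$ reduces Hermiticity to $(\chi(z)^{\top}t_R)^{\dagger}=t_L\chi(z)$, i.e.\ $t^L=(t^R)^{\dagger}\eta$, and the second condition $\eta (t^L)^{\dagger}=t^R$ is its adjoint. The only soft spot is the claim about $F_{\mu(t)}$, where the interchange of the Fourier sum with the deformation factors deserves the same convergence caveat the paper itself invokes (uniform convergence of $F_\mu$ on compacts of the annulus $D(0;R_-,R_+)$); within that regime your argument is complete.
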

Observe that in this paper we introduce a slightly different set  of flows or deformations of the measure than those in the scalar case \cite{carlos}. Despite that in that scalar situation both definitions give the very same flows that is not the case in this non-Abelian scenario, as in this case we have deformation matrices multiplying at the left and right of the initial matrix measure, and the order is relevant.

\subsubsection{The Gauss--Borel approach to integrability}
We consider the elements that enable us to construct the integrable hierarchy
\begin{definition}
   \begin{enumerate}
\item  Left and right  wave matrices
\begin{align}\label{wave}
  W_1^{L}(t)&:=S_1(t)W_{0}(t ^L), & W_2^{L}(t)&:=S_2(t)V_{0}(-\eta t^R),&
  W_1^{R}(t)&:=V_{0}(t^R)Z_1(t), & W_2^{R}(t)&:=W_{0}(-t^L \eta)Z_2(t).
\end{align}
\item Left and right wave and adjoint wave functions
\begin{align*}
 \Psi_1^L(z,t)&=W_1^L(t)\chi(z), & (\Psi_1^L)^*(z,t)&=[(W_1^L)^{-1}(t)]^\dagger\chi^*(z),\\
 \Psi_2^L(z,t)&=W_2^L(t)\chi^*(z), & (\Psi_2^L)^*(z,t)&=[(W_2^{L})^{-1}(t)]^\dagger\chi(z),\\
 \Psi_1^R(z,t)&=\chi(z)^\top W_1^R(t), & (\Psi_1^R)^*(z,t)&=\chi^*(z)^\top [(W_1^{R})^{-1}(t)]^\dagger, \\
 \Psi_2^R(z,t)&=\chi^*(z)^\top W_2^R(t), & (\Psi_2^R)^*(z,t)&=\chi(z)^\top [(W_2^{R})^{-1}(t)]^\dagger.
\end{align*}
\item Left and right Jacobi vector of matrices (using our previous notation)
\begin{align*}
 \chi\Big(J_H(t)\Big)& :=\begin{pmatrix}
             \mathbb{I}\\
             \big(J^H(t)\big)^{-1}\\
             J^H(t)\\
             \big(J^H(t)\big)^{-2}\\
            \big( J^H(t)\big)^{2}\\
             \vdots
            \end{pmatrix}, &
            H&= L,R.
\end{align*}
\item Projection operators, $a=1,\dots,m$
 \begin{align}\label{projections}
P^{(H,H')}_a&:=\begin{cases}
S_1 E_{aa}S_1^{-1},& H=L, H'=L,\\
S_2 E_{aa}S_2^{-1},& H=R, H'=L,\\
Z_2^{-1} E_{aa}Z_2,& H=L, H'=R,\\
Z_1^{-1} E_{aa}Z_1,& H=R, H'=R.
\end{cases}
\end{align}
\item Left and right Lax matrices
\begin{align}\label{Lax}
  L_1(t)&:=S_1(t)\Upsilon S_1(t)^{-1}=S_2(t)\Upsilon S_2(t)^{-1}=J^L(t), &
  R_1(t)&:=Z_1(t)^{-1}\Upsilon^{-1} Z_1(t)=Z_2(t)^{-1}\Upsilon^{-1} Z_2(t)=J^R(t), \\
  L_2(t)&:=S_2(t)\Upsilon ^{-1}  S_2(t)^{-1}=S_1(t)\Upsilon ^{-1}  S_1(t)^{-1}=\big(J^L(t)\big)^{-1}&
  R_2(t)&:=Z_2(t)^{-1}\Upsilon Z_2(t)=Z_1(t)^{-1}\Upsilon Z_1(t)=\big(J^R(t)\big)^{-1}.
\end{align}

\item Zakharov--Shabat matrices
\begin{align}\label{def:ZS}
B^{(H,H')}_{j,a}&:=\begin{cases}
  \big(S_1 E_{aa}(\chi(\Upsilon))^{(j)}S_1^{-1}\big)_+,& H=L, H'=L,\\
    -\big(S_2 E_{aa}(\chi(\Upsilon))^{(j)}S_2^{-1}\big)_-,& H=R, H'=L,\\
      -\big(Z_2^{-1} E_{aa}\big(\chi\big(\Upsilon^{-1}\big)\big)^{(j)}Z_2\big)_+,& H=L, H'=R,\\
        \big(Z_1^{-1} E_{aa}\big(\chi\big(\Upsilon^{-1}\big)\big)^{(j)}Z_1\big)_-,& H=R, H'=R,
\end{cases}&
B^{(H,H')}_{j}&:=\begin{cases}
  \big((\chi(J^L))^{(j)}\big)_+,& H=L, H'=L,\\
    -\big((\chi(J^L))^{(j)}\big)_-,& H=R, H'=L,\\
      -\big( (\chi((J^R)^{-1})^{(j)}\big)_+,& H=L, H'=R,\\
        \big((\chi((J^R)^{-1})^{(j)}\big)_-,& H=R, H'=R.
\end{cases}
\end{align}

\item A time dependent intertwining operator
\begin{align}\label{Cpt}
 C_{[p]}(t)&=Z_2(t)^{-1}\eta \Upsilon^p S_1(t)^{-1}= Z_1(t)^{-1}\eta \Upsilon^p S_2(t)^{-1}.
\end{align}

     \end{enumerate}
\end{definition}
Observe that
\begin{align}\label{ye}
g^{L}&= (W_1^L(t))^{-1}W_2^{L}(t), & g^{R}&=W_2^{R}(t) (W_1^R)^{-1}(t).
\end{align}

\begin{definition}
For $H=R,L$ we introduce the total derivatives
\begin{align*}
{\partial_{H,j}}&:=\sum_{a=1}^m {\frac{\partial}{\partial t^H_{j_{aa}}}}.
\end{align*}
\end{definition}

We now present the linear systems, Lax equations and Zakharov--Shabat equations that characterize integrability
\begin{pro}\label{integrable}
 The following  equations hold
\begin{enumerate}
 \item Linear systems for the wave matrices
 \begin{align*}
    {\frac{\partial W_i^{L}}{\partial t^H_{j,a}}} &=B^{H,L}_{j,a}W_i^{L}, &  {\partial_{H,j}}W_{i}^{L}&=B^{H,H'}_{j}W_{i}^{L},\\
     {\frac{\partial W_i^{R}}{\partial t^H_{j,a}}} &=W_i^RB^{H,R}_{j,a} &  {\partial_{H,j}}W_{i}^{R}&=W_{i}^{H'}B^{H,R}_{j},
 \end{align*}
for $ i=1,2$, $H=L,R$, $a=1,\dots, m$,  $j=0,1,\dots$.

\item Lax equations:
\begin{align*}
    {\frac{\partial J^{H'}}{\partial t^H_{j,a}}} &=[B^{H,H'}_{j,a},J^{H'}],                 &   \partial_{H,j} J^{H'}&=[B^{H,H'}_j,J^{H'}], \\
    {\frac{\partial P^{H',H''}_b}{\partial t^{H}_{j,a}}} &=[B^{H,H''}_{j,a},P^{H',H''}_b],  &   \partial_{H,j} P^{H',H''}_b&=[B^{H,H''}_{j},P^{H',H''}_b]
\end{align*}
with $H,H',H''=L,R$, $a,b=1,\dots, m$ and $j=0,1,\dots$.
\item Evolution of the dressed intertwining operator
\begin{align*}
\frac{  \partial C_{[p]}}{\partial t^H_{j,a}}&=-B_{j,a}^{H,R}C_{[p]}-C_{[p]}B^{H,L}_{j,a}, &
\frac{  \partial C_{[p]}}{\partial t^H_{j}}=-B_{j}^{H,R}C_{[p]}-C_{[p]}B^{H,L}_{j}.
\end{align*}
\item Zakharov-Shabat equations
\begin{align*}
     {\frac{\partial B^{H_1,H'}_{j_1,b_1}}{\partial t^{H_2}_{j_2,a_2}}}-   \frac{\partial B^{H_2,H'}_{j_2,b_2}}{\partial t^{H_1}_{j_1,a_1}}+[B^{H_1,H'}_{j_1,b_1},B^{H_2,H'}_{j_2,b_2} ]=0.
\end{align*}
\end{enumerate}
\end{pro}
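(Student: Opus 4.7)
The plan is to follow the standard dressing/Gauss--Borel machinery. Everything rests on two facts: the deformation generators are polynomials in $\Upsilon$ (up to left/right diagonal matrix factors $E_{aa}$), and by \eqref{conmut} the bare moment matrices commute with $\Upsilon$. Together with the unique direct sum decomposition $\mathbb{M}_\infty=\mathscr{L}_{-}\oplus\widehat{\mathscr{U}}$ (strictly lower-block $+$ upper-block with arbitrary diagonal), this will separate every equation into the ``dressing'' piece (which gives the $B$'s) and a pure power of the Lax matrix (which commutes with $J^H$).

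First I would derive the wave equations. Differentiating $g^{L}(t)=W_0(t^L)g^L[V_0(-\eta t^R)]^{-1}$ yields
\begin{align*}
\partial_{t^L_{j,a}}g^L(t)&=E_{aa}\chi(\Upsilon)^{(j)}g^L(t), &
\partial_{t^R_{j,a}}g^L(t)&=-g^L(t)\eta E_{aa}\chi(\Upsilon^{-1})^{(j),\top}\eta,
\end{align*}
and analogous formulas for $g^R(t)$. Differentiating $g^L=S_1^{-1}S_2$ gives
\[
-S_1^{-1}(\partial S_1)S_1^{-1}S_2+S_1^{-1}(\partial S_2)=\partial g^L,
\]
which after multiplying by $S_1$ on the left and $S_2^{-1}$ on the right becomes $(\partial S_2)S_2^{-1}-(\partial S_1)S_1^{-1}=S_1(\partial g^L)g^{L,-1}S_1^{-1}$. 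Splitting the right-hand side into its $(\,\cdot\,)_+$ (upper-block including diagonal) and $(\,\cdot\,)_-$ (strictly lower-block) components and using that $(\partial S_1)S_1^{-1}\in\mathscr{L}_{-}$ and $(\partial S_2)S_2^{-1}\in\widehat{\mathscr{U}}$, I identify $(\partial_{t^L_{j,a}}S_1)S_1^{-1}=-(S_1E_{aa}\chi(\Upsilon)^{(j)}S_1^{-1})_-$ and $(\partial_{t^L_{j,a}}S_2)S_2^{-1}=(S_1E_{aa}\chi(\Upsilon)^{(j)}S_1^{-1})_+-S_1E_{aa}\chi(\Upsilon)^{(j)}S_1^{-1}$. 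Substituting into $\partial W_i^L=(\partial S_i)W_0+S_i(\partial W_0)$ and collecting terms produces $\partial_{t^L_{j,a}}W_i^L=B^{L,L}_{j,a}W_i^L$. The $t^R$ derivatives are handled identically using the second expression of $g^L(t)$, giving $B^{R,L}_{j,a}$, and the right wave matrices are treated by the mirror splitting applied to $g^R=Z_2Z_1^{-1}$.

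Lax equations then follow by differentiating $J^L=S_1\Upsilon S_1^{-1}$:
\[
\partial_{t^L_{j,a}}J^L=[(\partial_{t^L_{j,a}}S_1)S_1^{-1},J^L]=[B^{L,L}_{j,a}-S_1E_{aa}\chi(\Upsilon)^{(j)}S_1^{-1},J^L]=[B^{L,L}_{j,a},J^L],
\]
where the second commutator vanishes because $E_{aa}$ (the block-diagonal matrix with $e_{aa}$ in every block) commutes with $\Upsilon$, hence $S_1E_{aa}\chi(\Upsilon)^{(j)}S_1^{-1}$ commutes with $J^L$. The same argument applied to $P^{H',H''}_b$ gives the second line. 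For the intertwining operator $C_{[p]}(t)$ I would differentiate both representations in \eqref{Cpt}: using $\partial Z_2^{-1}=-Z_2^{-1}(\partial Z_2)Z_2^{-1}$ together with the analogous formulas $(\partial_{t^L_{j,a}} Z_2)Z_2^{-1}$ obtained from the right factorization, the two terms $-B^{H,R}_{j,a}C_{[p]}$ and $-C_{[p]}B^{H,L}_{j,a}$ appear directly. The total-flow versions are obtained by summing over $a$. Finally, Zakharov--Shabat is pure compatibility: from $\partial_{t^{H_2}_{j_2,a_2}}\partial_{t^{H_1}_{j_1,a_1}}W_i^{H'}=\partial_{t^{H_1}_{j_1,a_1}}\partial_{t^{H_2}_{j_2,a_2}}W_i^{H'}$ and the linear systems just derived, one obtains $(\partial_{t^{H_2}_{j_2,a_2}}B^{H_1,H'}_{j_1,a_1}-\partial_{t^{H_1}_{j_1,a_1}}B^{H_2,H'}_{j_2,a_2}+[B^{H_1,H'}_{j_1,a_1},B^{H_2,H'}_{j_2,a_2}])W_i^{H'}=0$, and invertibility of $W_i^{H'}$ cancels the wave matrix.

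The main technical obstacle I expect is bookkeeping of the four ``flavours'' $(H,H')\in\{L,R\}^2$: signs, transposes, and the presence of $\eta$ in the right/left cross-terms demand care, and one must verify that the four independent splittings of $\mathbb{M}_\infty$ (lower versus upper for $g^L$ with $S_1,S_2$ in canonical position, and the mirrored version for $g^R$ with $Z_2,Z_1$) produce exactly the projectors written in \eqref{def:ZS} with the correct signs. Once this is organised, every formula in the proposition is obtained by one of the three mechanical moves above (differentiate a factorization and split; differentiate a dressing and exploit $[E_{aa},\Upsilon]=0$; compatibility of the linear system).
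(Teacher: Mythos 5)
Your proposal follows essentially the same route as the paper: differentiate the (constant or evolved) moment matrix through its Gauss--Borel factorization, split the resulting identity for $(\partial S_2)S_2^{-1}-(\partial S_1)S_1^{-1}$ (and its mirror for $Z_1,Z_2$) according to $\mathbb M_\infty=\mathscr L_-\oplus\widehat{\mathscr U}$ to read off the $B$'s, then get Lax, the $C_{[p]}$ evolution and Zakharov--Shabat mechanically. One bookkeeping slip: you write $(\partial_{t^L_{j,a}}S_2)S_2^{-1}=\big(S_1E_{a,a}\chi(\Upsilon)^{(j)}S_1^{-1}\big)_+-S_1E_{a,a}\chi(\Upsilon)^{(j)}S_1^{-1}$, but that right-hand side equals $-\big(S_1E_{a,a}\chi(\Upsilon)^{(j)}S_1^{-1}\big)_-$, i.e.\ it is $(\partial_{t^L_{j,a}}S_1)S_1^{-1}$ again; the correct identification is $(\partial_{t^L_{j,a}}S_2)S_2^{-1}=\big(S_1E_{a,a}\chi(\Upsilon)^{(j)}S_1^{-1}\big)_+$, which is what you in fact need to conclude $\partial_{t^L_{j,a}}W_2^L=B^{L,L}_{j,a}W_2^L$.
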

\begin{proof}
  See Appendix \ref{proofs}
\end{proof}

From the definitions of the wave functions, the action of $\Upsilon$ on $\chi$, the expression \eqref{Fmut},
and the relations \eqref{CF} it follows that
\begin{pro}
 The wave functions are linked to the CMV polynomials and the Fourier series of the measure as  follows
\begin{align}\label{WF}
\begin{aligned}
 \Psi_1^L(z,t)&=\phi_1^L(z,t)\exp \big( t^L \chi(z) \big), &
(\Psi_1^L)^*(z,t)&
=2\pi z^{-1} \phi_2^L(z^{-1},t)F_{\mu}^\dagger(z) \exp \big( -\overline{t^L}\chi(z)\big),\\
\Psi_2^L(z,t)&
2\pi z^{-1} \phi_1^L(z^{-1},t)F_{\mu}(z^{-1}) \exp \big( -\chi(z^{-1})^{\top}t^R\big), &
(\Psi_2^L)^*(z,t)&=\phi_2^L(z,t)\exp \big( \chi(z^{-1})^{\top}\overline{t^R}\big),\\
\Psi_1^R(z,t)&=\exp \big( \chi(z)^{\top}t^R\big)\phi_1^R(z,t), &
(\Psi_1^R)^*(z,t)&
2\pi z^{-1} \exp \big(-\chi(z)^{\top}\overline{t^R} \big) F_{\mu}^\dagger(z)(\phi_2^R)(z^{-1},t),\\
\Psi_2^R(z,t)&
2\pi z^{-1} \exp \big(-t^L\chi(z^{-1}) \big)F_{\mu}(z^{-1})\phi_1^R(z^{-1},t),&
(\Psi_2^R)^*(z,t)&=\exp \big( \overline{t^L}\chi(z^{-1})\big)\phi_2^R(z,t).
\end{aligned}
\end{align}
These wave functions are also  eigen-functions of the Lax  matrices \eqref{Lax} $L_i,R_i$ for $i=1,2$,
\begin{align*}
 L_i \Psi_i^L&=z\Psi_i^L &  \Psi_i^R R_i&=z\Psi_i^R,\\
 L_i^\dagger (\Psi_i^L)^*&=z(\Psi_i^L)^* & (\Psi_i^R)^* R_i^\dagger&=z(\Psi_i^R)^*.
\end{align*}
\end{pro}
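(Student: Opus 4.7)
The target splits into two independent parts: the representations \eqref{WF} of the wave functions in terms of MOLPUC and of the Fourier series, and the eigen-value equations for $L_i,R_i$. The common engine for both is the eigen-property of the CMV shift: $\Upsilon^{\pm 1}\chi(z)=z^{\pm 1}\chi(z)$, its transposed version $\chi(z)^\top\Upsilon^{\pm 1}=z^{\mp 1}\chi(z)^\top$, and the reversed version $\Upsilon^{\pm 1}\chi^*(z)=z^{\mp 1}\chi^*(z)$ (with $\chi^*(z):=z^{-1}\chi(z^{-1})=z^{-1}\eta\chi(z)$). Because every block $\chi(z)^{(i)}$ is a scalar multiple of $\mathbb{I}$ and therefore central in $\mathbb{M}_m$, these scalar factors can be moved through any MOLPUC block without re-ordering.

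For the four direct identities in \eqref{WF} I would expand $W_0(t^L)=\exp(t^L*\chi(\Upsilon))$ and $V_0(t^R)=\exp(\chi(\Upsilon)^\top * t^R)$ term by term in their defining series and apply the eigen-identifications above: each $\chi(\Upsilon)^{(j)}$ hitting $\chi(z)$ (or $\chi^*(z)$, $\chi(z)^\top$, $\chi^*(z)^\top$) becomes a scalar factor, so the full exponential separates block-wise. This gives at once
\begin{align*}
\Psi_1^L(z,t)=S_1(t)W_0(t^L)\chi(z)=S_1(t)\chi(z)\exp\bigl(t^L\chi(z)\bigr)=\phi_1^L(z,t)\exp\bigl(t^L\chi(z)\bigr),
\end{align*}
and the three remaining direct formulas follow by the identical manipulation with the appropriate eigen-identity.

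The four adjoint identities need one extra ingredient: the Gauss--Borel relation \eqref{ye} allows, e.g., $[(W_1^L)^{-1}]^\dagger=(W_2^{L,-1})^\dagger[g^L]^\dagger$, and similarly for the three other pairs. I would separate the $W_0^{-\dagger}$ or $V_0^{-\dagger}$ factors (these again become scalar exponentials by the same eigen-trick, now applied to $\chi^*$ via $\Upsilon^{-1}\chi^*=z\chi^*$), leaving a residue of the form $(S_i(t)^{-1})^\dagger\chi^*(z)$ (respectively $\chi^*(z)^\top(Z_i(t)^{-1})^\dagger$). The Cauchy-type formulae collected in Proposition \ref{previous} identify precisely this residue with $2\pi z^{-1}\phi_2^L(z^{-1},t)F_{\mu(t)}^\dagger(z)$ (and its three relatives); their proof depends only on the integral representation of $g^H$, so it extends verbatim to the deformed measure once \eqref{Fmut} is used. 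Finally, the Fourier exponentials coming from \eqref{Fmut} recombine with those produced by $W_0^{-\dagger}$ and $V_0^{-\dagger}$ to yield the right-hand sides of \eqref{WF}. The delicate step is precisely this last recombination: one must manage the $\eta$-intertwining so that $\phi_2^H(z^{-1})$ carries the correct argument, account for the $2\pi z^{-1}$ coming from the $L^2(\T)$ normalisation, and verify that the complex conjugates of the times assemble correctly into $\overline{t^L}\chi(z)$ and $\chi(z^{-1})^\top\overline{t^R}$.

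The eigen-value equations are then short. Since $W_0(t^L)$ is a power series in $\Upsilon$ and $V_0(t^R)$ one in the transposed $\Upsilon^{-1}$, both commute with $\Upsilon$ from the appropriate side, so by \eqref{Lax} and \eqref{wave},
\begin{align*}
L_1\Psi_1^L=S_1(t)\Upsilon S_1(t)^{-1}\cdot S_1(t)W_0(t^L)\chi(z)=S_1(t)W_0(t^L)\cdot\Upsilon\chi(z)=z\,\Psi_1^L(z,t),
\end{align*}
and the remaining three direct relations follow identically using $\Upsilon^{-1}\chi=z^{-1}\chi$ and the right-module transposed eigen-identities. Taking the conjugate transpose of \eqref{Lax} swaps $\Upsilon$ and $\Upsilon^{-1}$, so the four adjoint eigen-value relations follow from the reversed identities $\Upsilon^{\mp 1}\chi^*(z)=z^{\pm 1}\chi^*(z)$ applied to the starred sequence carried by $(\Psi_i^H)^*$.
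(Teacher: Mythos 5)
Your proposal is correct and follows essentially the same route as the paper, which itself only sketches the argument by citing the definitions of the wave matrices, the eigen-action of $\Upsilon^{\pm1}$ on $\chi$ and $\chi^*$, the deformed Fourier series \eqref{Fmut}, and the second-kind-function identities \eqref{CF}; you have simply fleshed out those same ingredients, including the Gauss--Borel swap $[(W_1^L)^{-1}]^\dagger=[(W_2^{L})^{-1}]^\dagger[g^L]^\dagger$ for the adjoint functions and the commutation of $W_0,V_0$ with $\Upsilon$ for the Lax eigen-relations. No gap.
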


\subsubsection{CMV matrices and matrix Toeplitz lattice}\label{Sec-ALL}
For the CMV ordering of the Laurent basis, the Lax equations acquire a dynamical non-linear system form that is the matrix
version, in the CMV context,  of the Toeplitz lattice developed in \cite{Adler-Van-Moerbecke-Toeplitz}. In \cite{Cafasso} Mattia Cafasso presented a non Abelian extension of the TL which corresponds to our total flows.
The partial flows presented here are, to our knowledge, new in the literature.
\begin{pro}
The Lax equations result in the following non-linear dynamical system for the matrix Verblunsky coefficients $H=L,R$,
\begin{itemize}
\item Partial flows
\begin{align*}
{\frac{\partial}{\partial t^L_{1,a}}}\alpha_{1,k}^R&=-(h_{k-1}^L)^{-1}\alpha_{1,k-1}^L E_{a,a}h_k^R,      &
{\frac{\partial}{\partial t^L_{1,a}}}[\alpha_{2,k}^L]^{\dagger}&=(h_{k-1}^R)^{-1} E_{a,a} [\alpha_{2,k+1}^R]^{\dagger}  h_k^L,  \\
{\frac{\partial}{\partial t^R_{1,a}}}[\alpha_{2,k}^R]^{\dagger}&=h_{k}^R  [\alpha_{2,k+1}^L]^{\dagger} E_{a,a} (h_{k-1}^L)^{-1}, &
{\frac{\partial}{\partial t^R_{1,a}}}\alpha_{1,k}^L&=-(h_{k}^L) E_{a,a} \alpha_{1,k-1}^R (h_{k-1}^R)^{-1}, \\
 {\frac{\partial}{\partial t^L_{2,a}}}\alpha_{1,k}^R&=(h_{k-1}^L)^{-1} E_{a,a} \alpha_{1,k+1}^L h_k^R,  &
{\frac{\partial}{\partial t^L_{2,a}}}[\alpha_{2,k}^L]^{\dagger}&=-(h_{k-1}^R)^{-1}  [\alpha_{2,k-1}^R]^{\dagger} E_{a,a} h_{k}^L, \\
{\frac{\partial}{\partial t^R_{2,a}}}[\alpha_{2,k}^R]^{\dagger}&=-h_{k}^R E_{a,a} [\alpha_{2,k-1}^L]^{\dagger}  (h_{k-1}^L)^{-1}, &
 {\frac{\partial}{\partial t^R_{2,a}}}\alpha_{1,k}^L&=h_{k}^L \alpha_{1,k+1}^R E_{a,a} (h_{k-1}^R)^{-1},
\end{align*}
\begin{align*}
{\frac{\partial}{\partial t^L_{1,a}}}h_k^L&=-\alpha_{1,k}^L E_{a,a} (\alpha_{2,k+1}^R)^{\dagger}h_k^L, &
{\frac{\partial}{\partial t^R_{1,a}}}h_k^R&=-h_k^R (\alpha_{2,k-1}^L)^{\dagger} E_{a,a} \alpha_{1,k}^R, \\
{\frac{\partial}{\partial t^L_{2,a}}}h_k^R&=-(\alpha_{2,k}^R)^{\dagger} E_{a,a} \alpha_{1,k+1}^L h_k^R, &
{\frac{\partial}{\partial t^R_{2,a}}}h_k^L&=-h_k^L \alpha_{1,k+1}^R E_{a,a} (\alpha_{2,k}^L)^{\dagger}.
\end{align*}
\item Total flows \cite{Cafasso}
\begin{align*}
{\partial_{H,1}}[\alpha_{2,k}^R]^\dagger  &= [\alpha_{2,k+1}^R]^\dagger (\mathbb{I}-\alpha_{1,k}[\alpha_{2,k}^R]^\dagger), &
{\partial_{H,1}} \alpha_{1,k}^L  &= -(\mathbb{I}-\alpha_{1,k}^L[\alpha_{2,k}^R]^\dagger)\alpha_{1,k-1}^L,           \\
{\partial_{H,1}} [\alpha_{2,k}^L]^\dagger &= (\mathbb{I}-[\alpha_{2,k}^L]^\dagger \alpha_{1,k}^R)[\alpha_{2,k+1}^L]^\dagger,      &
{\partial_{H,1}} \alpha_{1,k}^R  &= -\alpha_{1,k-1}^R(\mathbb{I}-[\alpha_{2,k}^L]^\dagger \alpha_{1,k}^R)           \\
{\partial_{H,2}}[\alpha_{2,k}^R]^\dagger &= -(\mathbb{I}-[\alpha_{2,k}^R]^{\dagger} \alpha_{1,k}^L)[\alpha_{2,k-1}^R]^{\dagger},    &
{\partial_{H,2}} \alpha_{1,k}^L  &=\alpha_{1,k+1}^L(\mathbb{I}-[\alpha_{2,k}^R]^\dagger \alpha_{1,k}^L),           \\
{\partial_{H,2}} [\alpha_{2,k}^L]^\dagger &=-[\alpha_{2,k-1}^L]^{\dagger}(\mathbb{I}-\alpha_{1,k}^R[\alpha_{2,k}^L]^{\dagger}),       &
{\partial_{H,2}} \alpha_{1,k}^R  &=(\mathbb{I}-\alpha_{1,k}^R[\alpha_{2,k}^L]^{\dagger})\alpha_{1,k+1}^R,            \\
\end{align*}
\begin{align*}
{\partial_{H,1}}h_k^L&=-\alpha_{1,k}^L (\alpha_{2,k+1}^R)^{\dagger}h_k^L, &
{\partial_{H,1}}h_k^R&=-h_k^R (\alpha_{2,k-1}^L)^{\dagger} \alpha_{1,k}^R,\\
{\partial_{H,2}}h_k^R&=-(\alpha_{2,k}^R)^{\dagger}  \alpha_{1,k+1}^L h_k^R, &
{\partial_{H,2}}h_k^L&=-h_k^L \alpha_{1,k+1}^R  (\alpha_{2,k}^L)^{\dagger}.
\end{align*}
\end{itemize}
\end{pro}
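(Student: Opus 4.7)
My plan is to derive the dynamical system directly from the Lax equations of Proposition~\ref{integrable} combined with the explicit block structure of $J^L, J^R$ and the projectors $P^{H',H''}_a$ in terms of the Verblunsky matrices and quasi-norms. Recall (from the discussion of $\Upsilon$ dressing and Proposition~\ref{rrr}, together with the schematic shapes and the explicit formulas referred to Appendix \ref{explicit}) that $J^L$ and $J^R$ are block pentadiagonal matrices whose nonzero blocks are polynomial expressions in $\alpha_{i,k}^H$ and $h_k^H, (h_k^H)^{-1}$. Therefore both sides of each Lax equation are banded block matrices whose entries are explicit rational functions of the Verblunsky data, and the proposition is obtained by equating these block entries.

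First I would compute the Zakharov--Shabat matrices $B^{H,H'}_{j,a}$ and $B^{H,H'}_{j}$ for $j=1,2$. By \eqref{def:ZS} these are upper/lower triangular truncations of the dressed $\Upsilon^{\pm 1}$, i.e.\ of $J^L$ or $(J^R)^{-1}$, conjugated by the matrix unit $E_{aa}$ in the partial case, or summed over $a$ in the total case. Because $J^H$ and $(J^H)^{-1}$ are pentadiagonal by blocks, the relevant $B$'s have at most two nonzero block diagonals, and their explicit entries are polynomial in $\alpha^H_{i,k}, h_k^H, (h_k^H)^{-1}$, so the commutator $[B,J^H]$ is a banded matrix that can be written down explicitly. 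The evolution of the $\alpha$'s is then obtained by reading off the block entries of $\partial_t J^H = [B, J^H]$ corresponding to the subdiagonal blocks of $J^H$ (which carry the Verblunsky data), while the evolution of $h_k^H$ comes from the block diagonals --- or equivalently, by differentiating the identities of Proposition~\ref{rrr} that relate $h_k^H$ to the $\alpha$'s.

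A cleaner and more systematic route is to start one step earlier, from the linear systems $\partial W^L_i/\partial t^H_{j,a} = B^{H,L}_{j,a} W^L_i$ and the analogous equations for $W^R_i$. Using the Gauss--Borel splittings $W^L_1 = S_1 W_0$, $W^L_2 = S_2 V_0(-\eta t^R)$ etc., and the fact that $S_1 \in \mathscr L$ while $S_2 \in \widehat{\mathscr U}$, one obtains ODEs $\partial S_i/\partial t^H_{j,a} = (\text{triangular projection})\,S_i$ in which the $a$-dependent matrix $E_{aa}$ sits at a specific location. Extracting the $(k,k-1)$, $(k,k)$ and $(k,k+1)$ block entries of these equations, together with the explicit parametrization of $S_1, \widehat S_2, Z_2, \widehat Z_1$ by the $\alpha$'s and $h$'s given in Proposition~\ref{faclosalpha}, yields all the partial-flow formulas stated in the proposition.

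The total flows follow by summing the partial flows over $a$ with the identity $\sum_a E_{aa} = \mathbb I$; this collapses the products $\alpha_{1,k-1}^L E_{a,a}$ (say) to $\alpha_{1,k-1}^L$ and, combined with the recurrence relations of Proposition~\ref{rrr} that factor out $(\mathbb I - \alpha\alpha^\dagger)$ blocks, reproduces the Cafasso-type non-Abelian Toeplitz/Ablowitz--Ladik equations. The main obstacle I anticipate is purely bookkeeping: the CMV block structure forces one to track even/odd parities of $k$, four separate pieces ($LL, LR, RL, RR$) of Zakharov--Shabat matrices, and to use the ``transmutation'' relations of Proposition~\ref{rrr} repeatedly to move $h$'s past $\alpha$'s so as to bring the final answer into the symmetric form stated. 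A secondary subtlety is to verify that the evolutions of $h_k^H$ derived from the diagonal block of $\partial_t J^H = [B, J^H]$ are consistent with (and actually follow from) those of the $\alpha$'s through the identities of Proposition~\ref{rrr}, which serves as an internal check on the computation.
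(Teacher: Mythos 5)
Your proposal is correct and follows essentially the same route as the paper, whose proof is precisely ``use the Lax equations for $j=1,2$ and operate, obtain the total flows by summing the partial ones over $a$, and rewrite via Proposition~\ref{rrr}''; your extra detail (explicit banded structure of $B^{H,H'}_{j,a}$ and of $J^H$ from Appendix~\ref{explicit}, the equivalent derivation from the $S_i$, $Z_i$ factorization ODEs, and the consistency check on the $h_k^H$ flows) is a faithful expansion of that same computation rather than a different method.
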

\begin{proof}
To obtain the partial flows it is enough to use the Lax equations for $j,p=1,2$ and operate. In order to obtain the
total flows we go back to the partial flows, and sum in $a$. From the Lax equations we know that in this total case
we no longer need to distinguish between $R,L$. This procedure leads to the result that is finally rewritten using the relations in
Proposition \ref{rrr}.
\end{proof}

\subsubsection{Bilinear equations}\label{Sec-bilinear}
Bilinear equations are an alternative way of expressing an integrable hierachy developed by the Japanese school, see \cite{date1}-\cite{date3}. We are going to show that these MOLPUC also fulfill a particular type
of bilinear equations.  These results are the matrix extensions of the  scalar situation described in \cite{carlos}. Let us start by considering
the wave semi-infinite matrices $W^H_i(t)$ \ref{wave} associated with the moment matrix $g^H$, $H=L,R$. Since the last one
is time independent the reader can easily check that
\begin{pro}
\begin{enumerate}
  \item The wave matrices associated with different times satisfy
 \begin{align}
 W_1^L(t) \big(W_1^{L}(t')\big)^{-1} &=W_2^L(t) \big(W_2^{L}(t')\big)^{-1},  &  \big(W_1^{R}(t)\big)^{-1} W_1^R(t')&=\big(W_2^{R}(t)\big)^{-1} W_2^R(t').
 \end{align}
\item The vectors $\chi,\chi^*$ fulfill
\begin{align*}
\operatorname{Res}_{z=0}[\chi(z)(\chi^*(\bar{z}))^\dagger ]&= \Res_{z=0} [\chi^*(z) (\chi(\bar{z}))^\dagger]=\mathbb{I}.
\end{align*}
\item One has that the product of two matrices can be expressed as
\begin{align*}
 UV&=\Res_{z=0}[U\chi(z)(V^\dagger\chi^*(\bar{z}))^\dagger ]= \Res_{z=0} [U\chi^*(z) (V^\dagger\chi(\bar{z}))^\dagger]=\\
   &= \Res_{z=0} [(\chi^*(\bar{z})^\top U^\dagger)^\dagger \chi(z)^\top V]=\Res_{z=0}[(\chi(\bar{z})^\top U^\dagger)^\dagger \chi^*(z)^\top V]
\end{align*}
\end{enumerate}
\end{pro}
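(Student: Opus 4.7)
My plan is to handle the three items in sequence: item (1) follows from the Gauss--Borel factorization, item (2) from a direct inspection of the CMV ordering, and item (3) from (2) via insertion of the identity.

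For item (1), the key input is the pair of identities \eqref{ye}, $g^L=(W_1^L(t))^{-1}W_2^L(t)$ and $g^R=W_2^R(t)(W_1^R(t))^{-1}$. Since the ``bare'' moment matrices $g^L,g^R$ do not depend on the flow parameters, equating the $t$- and $t'$-expressions gives $(W_1^L(t))^{-1}W_2^L(t)=(W_1^L(t'))^{-1}W_2^L(t')$ and $W_2^R(t)(W_1^R(t))^{-1}=W_2^R(t')(W_1^R(t'))^{-1}$. Multiplying appropriately on the left and on the right to isolate $W_1^L(t)(W_1^L(t'))^{-1}$, respectively $(W_1^R(t))^{-1}W_1^R(t')$, reproduces the two claimed equalities.

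For item (2), I would unpack the CMV ordering of $\chi(z)$ and of $\chi^*(z):=z^{-1}\chi(z^{-1})=\chi_1^*(z)+\chi_2(z)$, so that $\chi(z)=(\mathbb{I},\mathbb{I}z^{-1},\mathbb{I}z,\mathbb{I}z^{-2},\dots)^\top$ while, using $\overline{\bar z^{\,k}}=z^{k}$, one finds $(\chi^*(\bar z))^\dagger=(z^{-1}\mathbb{I},\mathbb{I},z^{-2}\mathbb{I},z\mathbb{I},\dots)$. A block-by-block multiplication then shows that the $(i,j)$-block of $\chi(z)(\chi^*(\bar z))^\dagger$ is a monomial in $z$ whose exponent equals $-1$ precisely when $i=j$, so that the coefficient of $z^{-1}$ (i.e.\ the residue at $z=0$) reduces to the identity of $\mathbb{M}_\infty$. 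The companion equality follows from the symmetric calculation after interchanging the roles of $\chi$ and $\chi^*$; along the way one also records the useful rewriting $(\chi(\bar z))^\dagger=\chi(z)^\top$, which is needed below.

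For item (3), I would use item (2) to represent the identity of $\mathbb{M}_\infty$ as a residue and sandwich it between $U$ and $V$, obtaining $UV=\Res_{z=0}[U\chi(z)(\chi^*(\bar z))^\dagger V]$. The first displayed form in the statement then results from the trivial rewriting $(\chi^*(\bar z))^\dagger V=(V^\dagger\chi^*(\bar z))^\dagger$; the second form comes from the companion residue identity; and the last two forms arise from the same manipulation together with $((\chi^*(\bar z))^\top U^\dagger)^\dagger=U\chi^*(z)$ and the analogous formula with $\chi$ in place of $\chi^*$ on the left-hand factor, after which $\chi(z)^\top V$ and $\chi^*(z)^\top V$ play the role of $(\chi(\bar z))^\dagger V$ and $(\chi^*(\bar z))^\dagger V$.

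The only mild obstacle is the bookkeeping in item (2): one must keep track simultaneously of the index shifts in the CMV ordering and of the interplay between evaluation at $\bar z$ and Hermitian conjugation, which together restore ordinary powers of $z$. Once this is arranged, the rest of the argument is essentially formal.
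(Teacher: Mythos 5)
Your proposal is correct and is exactly the verification the paper intends (the paper merely asserts "the reader can easily check" and supplies no written proof): item (1) from the time-independence of $g^H$ in \eqref{ye}, item (2) by matching the CMV exponents $e_i$ of $\chi^{(i)}$ against $-1-e_j$ of $(\chi^*)^{(j)}$ so that only the diagonal contributes a $z^{-1}$ term, and item (3) by inserting the resulting resolution of the identity and moving $U,V$ through the dagger/transpose. No gaps.
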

From where we derive
\begin{theorem}
 For two different set of times $t,\tilde t$ the wave functions satisfy
\begin{align}
 &\Res_{z=0}\left[\Psi_1^L(z,t) [(\Psi_1^L)^*(\bar{z},\tilde t)]^\dagger\right]=\Res_{z=0}\left[\Psi_2^L(z,t) [(\Psi_2^L)^*(\bar{z},\tilde t)]^\dagger\right],\\
 &\Res_{z=0}\left[[(\Psi_1^R)^*(\bar{z},t)]^\dagger\Psi_1^R(z,\tilde t)\right]=\Res_{z=0}\left[[(\Psi_2^R)^*(\bar{z},t)]^\dagger\Psi_2^R(z,\tilde t)\right].
\end{align}
\end{theorem}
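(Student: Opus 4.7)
The plan is to reduce each residue to a product of the wave matrices at the two times $t,\tilde t$, and then invoke the first part of the preceding proposition to match the $\Psi_1$ and $\Psi_2$ sides. For the left bilinear equation, I would start from $\Psi_1^L(z,t)=W_1^L(t)\chi(z)$ and
$[(\Psi_1^L)^*(\bar z,\tilde t)]^\dagger=[\chi^*(\bar z)]^\dagger (W_1^L(\tilde t))^{-1}$,
pull the $t$- and $\tilde t$-dependent factors (which are $z$-independent) outside of $\Res_{z=0}$, and use the residue identity $\Res_{z=0}[\chi(z)(\chi^*(\bar z))^\dagger]=\mathbb I$ from the previous proposition. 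This gives $\Res_{z=0}\!\left[\Psi_1^L(z,t)[(\Psi_1^L)^*(\bar z,\tilde t)]^\dagger\right]=W_1^L(t)(W_1^L(\tilde t))^{-1}$.

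The very same manoeuvre, applied to the $\Psi_2^L$ side with $\Psi_2^L(z,t)=W_2^L(t)\chi^*(z)$ and $[(\Psi_2^L)^*(\bar z,\tilde t)]^\dagger=[\chi(\bar z)]^\dagger (W_2^L(\tilde t))^{-1}$, together with the companion identity $\Res_{z=0}[\chi^*(z)(\chi(\bar z))^\dagger]=\mathbb I$, yields $W_2^L(t)(W_2^L(\tilde t))^{-1}$. The first part of the preceding proposition, which is itself a direct consequence of the time-independence of $g^L$ in \eqref{ye}, equates these two products and thereby closes the argument for the left bilinear relation.

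For the right bilinear equation the argument is structurally identical but now uses the row-vector residue identities in the third item of the preceding proposition. Namely, writing $\Psi_1^R(z,\tilde t)=\chi(z)^\top W_1^R(\tilde t)$ and $[(\Psi_1^R)^*(\bar z,t)]^\dagger=(W_1^R(t))^{-1}(\chi^*(\bar z)^\top)^\dagger$, and applying $UV=\Res_{z=0}[(\chi^*(\bar z)^\top U^\dagger)^\dagger \chi(z)^\top V]$ with $U=(W_1^R(t))^{-1}$ and $V=W_1^R(\tilde t)$, one obtains $\Res_{z=0}[[(\Psi_1^R)^*(\bar z,t)]^\dagger\Psi_1^R(z,\tilde t)]=(W_1^R(t))^{-1}W_1^R(\tilde t)$. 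The $\Psi_2^R$ side is handled using the companion identity with $\chi\leftrightarrow\chi^*$, producing $(W_2^R(t))^{-1}W_2^R(\tilde t)$, and equality again follows from the first item of the preceding proposition.

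No substantive difficulty is expected; the only bookkeeping requires caution: one must track carefully which factor carries the conjugation in the argument ($\bar z$) and which carries the $\dagger$, in order to match the correct form of the residue identity among $\chi$ and $\chi^*$, and one must use the pair of identities $\Res_{z=0}[\chi(\chi^*)^\dagger]=\Res_{z=0}[\chi^*\chi^\dagger]=\mathbb I$ on the appropriate side so that $\mathbb I$ actually appears sandwiched between the two wave-matrix factors.
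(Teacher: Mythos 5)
Your proposal is correct and follows exactly the route the paper intends: the theorem is stated as a direct consequence of the preceding proposition, and your computation — substituting $\Psi_i^H$ in terms of the wave matrices and $\chi,\chi^*$, pulling the $z$-independent factors $W_i^H(t)$, $(W_i^H(\tilde t))^{-1}$ out of the residue, applying $\Res_{z=0}[\chi(z)(\chi^*(\bar z))^\dagger]=\Res_{z=0}[\chi^*(z)(\chi(\bar z))^\dagger]=\mathbb I$, and then invoking $W_1^L(t)(W_1^L(\tilde t))^{-1}=W_2^L(t)(W_2^L(\tilde t))^{-1}$ and its right analogue — is precisely that argument. The bookkeeping of daggers and $\bar z$ is handled correctly on both the left (column-vector) and right (row-vector) sides.
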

From the identities in \eqref{WF} the previous theorem can be rewritten in terms of CMV polynomials as
\begin{multline*}
\Res_{z=0}\Big[(\varphi_1^L)^{(l)}(z,t) \left ( \exp \Big( (t^L-\tilde t^L)\chi(z) \Big)z^{-1}F_\mu (z)\right)  [(\varphi_2^L)^{(m)}(\bar{z}^{-1},\tilde t)]^\dagger  \Big]\\=-
\Res_{z=\infty}\Big[(\varphi_1^L)^{(l)}(z,t) \left (z^{-1}F_\mu (z)\exp \Big( \chi(z)^{\top}(\tilde t^R-t^R)  \Big)\right )  [(\varphi_2^L)^{(m)}(\bar{z}^{-1},\tilde t)]^\dagger  \Big],
\end{multline*}
\begin{multline*}
\Res_{z=0}\Big[[(\varphi_2^R)^{(l)}(\bar{z}^{-1},t)]^\dagger \left (z^{-1}F_\mu (z) \exp \Big(\chi(z)^{\top}(t_R'-t_R)  \Big) \right )  (\varphi_1^R)^{(m)}(z,\tilde t) \Big]\\=-
\Res_{z=\infty}\Big[[(\varphi_2^R)^{(l)}(\bar{z}^{-1},t)]^\dagger \left (\exp \Big((t_L-t_L')\chi(z)  \Big) z^{-1}F_\mu (z) \right ) (\varphi_1^R)^{(m)}(z,\tilde t) \Big].
\end{multline*}
Where we have used that $\Res_{z=\infty}F(z)=-\Res_{z=0}z^{-2}F(z^{-1})$.
Alternatively,   we can write all the previous expressions using integrals instead of using residues. To do this, let us denote
$\gamma_0$ and $\gamma_{\infty}$ two positively oriented circles  around $z=0$ and $z=\infty$, respectively, included in the annulus of convergence of the Fourier series of the matrix measure, such that they do not include different simple poles that $z=0,\infty$, respectively. Then,
\begin{align}
 \ointclockwise_{\gamma_0}\Psi_1^L(z,t) [(\Psi_1^L)^*(\bar{z},\tilde t)]^\dagger\d z&=  \ointclockwise_{\gamma_0}\Psi_2^L(z,t) [(\Psi_2^L)^*(\bar{z},\tilde t)]^\dagger\d z,\\
 \ointclockwise_{\gamma_0}[(\Psi_1^R)^*(\bar{z},t)]^\dagger\Psi_1^L(z,\tilde t)\d z&= \ointclockwise_{\gamma_0}[(\Psi_2^R)^*(\bar{z},t)]^\dagger\Psi_2^R(z,\tilde t)\d z
\end{align}
Or, in terms of matrix Laurent orthogonal polynomials and Fourier series of the matrix measure:
\begin{pro}
  The evolved MOLPUC satisfy
  \begin{multline*}
  \ointclockwise_{\gamma_0}(\varphi_1^L)^{(l)}(z,t) \left (\exp \Big( (t^L-\tilde t^L)\chi(z) \Big)z^{-1}F_\mu (z)\right )  [(\varphi_2^L)^{(m)}(\bar{z}^{-1},\tilde t)]^\dagger \d z\\
=\ointclockwise_{\gamma_{\infty}}(\varphi_1^L)^{(l)}(z,t) \left ( z^{-1}F_\mu (z)\exp \Big( \chi(z)^{\top}(\tilde t^R-t^R)  \Big)\right )  [(\varphi_2^L)^{(m)}(\bar{z}^{-1},\tilde t)]^\dagger \d z,
\end{multline*}
\begin{multline*}
\ointclockwise_{\gamma_0}[(\varphi_2^R)^{(l)}(\bar{z}^{-1},t)]^\dagger \left (z^{-1}F_\mu (z) \exp \Big(\chi(z)^{\top}(\tilde t^R-t^R)  \Big) \right )  (\varphi_1^R)^{(m)}(z,\tilde t)\d z\\=
\ointclockwise_{\gamma_{\infty}}[(\varphi_2^R)^{(l)}(\bar{z}^{-1},t)]^\dagger \left ( \exp \Big((t^L-\tilde t^L)\chi(z)  \Big) z^{-1}F_\mu (z) \right)  (\varphi_1^R)^{(m)}(z,\tilde t)\d z.
\end{multline*}
\end{pro}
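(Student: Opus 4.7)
The plan is to derive the proposition as the contour-integral reformulation of the wave-function bilinear identities of the preceding Theorem, using the wave-function/MOLPUC dictionary \eqref{WF} and Cauchy's residue theorem on the annulus $D(0;R_-,R_+)$ of convergence of $F_\mu$. The intermediate residue version of the identity, displayed in the text immediately before the proposition, already contains most of the algebraic content; what remains is to translate each residue into the appropriate contour integral.

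First, I would substitute \eqref{WF} into each side of the Theorem's identity $\Res_{z=0}[\Psi_1^L(z,t)[(\Psi_1^L)^*(\bar z,\tilde t)]^\dagger]=\Res_{z=0}[\Psi_2^L(z,t)[(\Psi_2^L)^*(\bar z,\tilde t)]^\dagger]$ and read off the $(l,m)$ block. On the $\Psi_1^L$ side, exploiting that on $\T$ one has $\chi(\bar z)^\dagger=\chi(z)^\top$, the two deformation exponentials combine into $\exp((t^L-\tilde t^L)\chi(z))$, and the factor $z^{-1}F_\mu(z)$ sits between the two MOLPUC blocks. The $\Psi_2^L$ side instead produces $z^{-1}F_\mu(z^{-1})\exp(\chi(z^{-1})^\top(\tilde t^R-t^R))$ with Laurent polynomial arguments at $z^{-1}$. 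Applying $\Res_{z=\infty}F(z)=-\Res_{z=0}z^{-2}F(z^{-1})$ (equivalently, the change of variable $w=z^{-1}$) rewrites this residue at $0$ as $-\Res_{z=\infty}$ of a structurally analogous expression in the original variable $z$, with the $t^R$-exponential now on the right of $F_\mu(z)$. This is exactly the intermediate identity printed before the proposition.

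To close the proof I would convert residues into contour integrals. Choose $\gamma_0$ and $\gamma_\infty$ inside the annulus $D(0;R_-,R_+)$ so that $F_\mu(z)$ is analytic on both curves and between them; then the LHS integrand has its only effective singularity at $z=0$, while the RHS integrand has its only effective singularity at $z=\infty$. By Cauchy's theorem applied to the Laurent expansion on the annulus, together with the orientations encoded in $\ointclockwise$, one has $\ointclockwise_{\gamma_0}=-2\pi\im\Res_{z=0}$ and $\ointclockwise_{\gamma_\infty}=2\pi\im\Res_{z=\infty}$, and the signs conspire so that the intermediate identity $\Res_{z=0}[\cdot]=-\Res_{z=\infty}[\cdot]$ becomes the displayed equality of contour integrals. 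The second (right-case) equation is proved in exactly the same way, swapping $L\leftrightarrow R$ and $\Psi^L_i\leftrightarrow\Psi^R_i$ throughout.

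The main obstacle is the bookkeeping in the substitution step: tracking how the complex conjugations, daggers and $z\leftrightarrow z^{-1}$ flips interact with the four deformation exponentials on each side so that they genuinely amalgamate into $\exp((t^L-\tilde t^L)\chi(z))$ and its $t^R$-counterpart. This is routine but tedious; the analytic content is minimal, everything reducing to the bilinear identity of the Theorem plus elementary residue calculus on the annulus of convergence of $F_\mu$.
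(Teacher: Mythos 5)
Your proposal follows exactly the route the paper takes: the bilinear residue identity for the wave functions, the dictionary \eqref{WF} translating wave functions into MOLPUC and the Fourier series of the measure, the identity $\Res_{z=\infty}F(z)=-\Res_{z=0}z^{-2}F(z^{-1})$, and finally the conversion of the two residues into clockwise contour integrals over $\gamma_0$ and $\gamma_\infty$ inside the annulus of convergence of $F_\mu$. The sign bookkeeping you describe is consistent, so the argument is correct and essentially identical to the paper's.
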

\subsection{2D Toda discrete flows}\label{Sec-discrete}
Given a couple of sequences of diagonal matrices
\begin{align*}
  d&=\{d_+,d_-\}, & d_\pm&=\{d_{\pm,0}=0,d_{\pm,1},d_{\pm,2},\dots\}, & d_{\pm,j}\in\diag_m,
\end{align*}
and a pair of non-negative integers  $n=\{n_+,n_-\}\in\Z_+^2$ we consider the next semi-infinite block matrices
\begin{align*}
  \Delta^L_d(n)&=\big(\mathbb{I}-d_{-,0}\Upsilon^{-1} \big)\cdots\big(\mathbb{I}-d_{-,n_-}\Upsilon^{-1} \big)\big(\mathbb{I}-d_{+,0}\Upsilon \big)\cdots \big(\mathbb{I}-d_{+,n_+}\Upsilon\big),\\
  \Delta^R_d(n)&=\big(\mathbb{I}-d_{-,0}\Upsilon \big)\cdots\big(\mathbb{I}-d_{-,n_-}\Upsilon\big)\big(\mathbb{I}-d_{+,0}\Upsilon^{-1}  \big)\cdots \big(\mathbb{I}-d_{+,n_+}\Upsilon^{-1} \big),
\end{align*}
Observe that the order of the factors does not alter the product as each of them commutes  with the others.
\begin{definition}
 Given two couples of sequences of diagonal matrices, say $d^H=\{d_{+}^H,d^H_{-}\}$, $H=L,R$, we introduce the discrete flows for the right and left moment matrices
 \begin{align*}
  g^H(n^L,n^R) &=\Delta^H_{d^L}(n^L)g^H\Delta^H_{d^R}(n^R),
  & n^H=\{n_+^{H},n_-^{H}\}&\in\Z_+^2, &
   g^H(0,0) &=g^H, & H&=L,R.
 \end{align*}
\end{definition}
The property $\eta g^L(n^L,n^R)=g^R(n^L,n^R)\eta$ is easily checked and it follows that we have an associated measure of which these are the corresponding left and right moment matrices given by
\begin{align*}
 \d \mu(n^L,n^R)&=\Big[\prod_{i=0}^{n^{L}_-}\left(\mathbb{I}-d^L_{-,i}z^{-1} \right) \prod_{j=0}^{n^{L}_+}\left(\mathbb{I}-d_{+,j}^Lz \right)\Big]\d \mu  \Big[\prod_{i=0}^{n^{R}_-}\left(\mathbb{I}-d^R_{-,i} z^{-1} \right) \prod_{j=0}^{n_+^{R}}\left(\mathbb{I}-d_{+,j}^Rz \right)\Big].
\end{align*}
The measure is Hermitian if the following conditions are fulfilled
\begin{align*}
\big[d_{\mp,j}^R\big]^\dagger&=d_{\pm,j}^L=d_{\pm,j}, &n_\pm^L&=n_\mp^R=n_\pm,
\end{align*}
 being the evolved measure
\begin{align*}
   \d \mu(n_+,n_-)&=\Big[\prod_{i=0}^{n_-}\left(\mathbb{I}-d_{-,i}z^{-1} \right) \prod_{j=0}^{n_+}\left(\mathbb{I}-d_{+,j}z \right)\Big]\d \mu  \Big[\prod_{i=0}^{n_-}
   \left(\mathbb{I}-d_{-,i}z^{-1} \right) \prod_{j=0}^{n_+}\left(\mathbb{I}-d_{+,j}z \right)\Big]^\dagger.
\end{align*}
Positive definiteness for the Hermitian situation  can be ensured if we request $d_i:=d_{+,i}=[d_{-,i}]^\dagger$ and $n:=n_+=n_-$ so that
\begin{align*}
   \d \mu(n)&=\Big[\sum_{a=1}^m\Big(\prod_{j=0}^{n}|1-d_{j,a}z|^2\Big)E_{a,a} \Big]\d \mu
\Big[\sum_{a=1}^m\Big(\prod_{j=0}^{n}|1-d_{j,a}z|^2\Big)E_{a,a} \Big].
\end{align*}

As in the continuous case we introduce
\begin{definition}The wave matrices, depending on discrete variables $n^L,n^R\in\Z_+^2$, are defined as
\begin{align*}
W_1^L(n^L,n^R)&= S_1(n^L,n^R)\Delta^L_{d^L}(n^L), & W_2^L(n^L,n^R)&= S_2(n^L,n^R)\big(\Delta^L_{d^R}(n^R)\big)^{-1},\\
W_1^R(n^L,n^R)&=\Delta^R_{d^R}(n^R) Z_1(n^L,n^R),&
 W_2^R(n^L,n^R)&=\big(\Delta^R_{d^L}(n^L) \big)^{-1}Z_2(n^L,n^R).
\end{align*}
\end{definition}
Hence
\begin{align*}
 g^L&=\left[W_1^L(n^L,n^R) \right]^{-1}W_2^{L}(n^L,n^R) & g^R&=W_2^{R}(n^L,n^R) \left[W_1^R(n^L,n^R) \right]^{-1}.
\end{align*}
We also need to introduce the following objects
\begin{definition}\label{delta}
\begin{enumerate}
  \item Given a diagonal matrix $d\in\diag_m$  we define the semi-infinite block matrices
\begin{align*}
  \delta^{H,H'}_\pm(d)&=\begin{cases}
S_1\left(\mathbb{I}-d\Upsilon^{\pm 1} \right)S_1^{-1}, & H=L, H'=L,\\
S_2\left(\mathbb{I}-d\Upsilon^{\pm 1} \right)S_2^{-1}, &H=R, H'=L,\\
Z_2^{-1}\left(\mathbb{I}-d\Upsilon^{\mp 1} \right)Z_2, & H=L, H'=R,\\
Z_1^{-1}\left(\mathbb{I}-d\Upsilon^{\mp 1} \right)Z_1, & H=R, H'=R.
  \end{cases}
\end{align*}
 \item The shifts are
\begin{align*}
 T^{L}_+&:\begin{pmatrix} (n^{L}_+,n^{L}_-)\longrightarrow (n^{L}_++1,n^{R}_-)\\ n^R \longrightarrow n^R \end{pmatrix}, &
 T^{L}_-&:\begin{pmatrix} (n^{L}_+,n^{R}_-)\longrightarrow(n^{L}_+,n^{L}_-+1) \\ n^R \longrightarrow n^R \end{pmatrix} ,\\
 T^{R}_+&:\begin{pmatrix} n_L\longrightarrow n_L \\  (n^{R}_+,n^{R}_-)\longrightarrow   (n^{R}_++1,n^{R}_-)\end{pmatrix}, &
 T^{R}_-&:\begin{pmatrix} n_L\longrightarrow n_L \\  (n^{R}_+,n^{R}_-)\longrightarrow  (n^{R}_+,n^{R}_-+1) \end{pmatrix}.
\end{align*}
\end{enumerate}
\end{definition}
For any diagonal matrix $d=\sum_{a=1}^m d_aE_{a,a}\in\diag_m$, $d_a\in\C$, we introduce the semi-infinite matrices
\begin{align*}
  d^{H,H'}&=\sum_{a=1}^md_aP^{H,H'}_{a},
\end{align*}
where $P^{H,H'}_a$ was defined in \eqref{projections}; observe that when  $d= c \mathbb I_m$, $c\in\C$, we have $d^{H,H'}=c\mathbb I$.

 Notice  that the $  \delta^{H,H'}_\pm$ are just particular combinations of the block  Jacobi matrices $J^H$
\begin{align*}
  \delta^{H,H'}_\pm(d)=\mathbb{I}-d^{H,H'}\big(J^{H'}\big)^{\pm 1}.
\end{align*}
\begin{pro}
  If $g^H(n^L,n^R)$, $(T^{H}_{\pm }g^{H'})(n^L,n^R)$, $H,H'=L,R$ admit a block \emph{LU} factorization then the $\delta$ matrices introduced in Definition \ref{delta}.1 also admit a block \emph{LU} factorization.
\end{pro}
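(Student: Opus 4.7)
The plan is to produce explicit block $LU$ factorizations of each $\delta^{H,H'}_\epsilon(d)$ by leveraging the assumed factorizations of the moment matrices $g^H$ and of their one-step discrete shifts $T^H_\epsilon g^{H'}$. The guiding observation is that every $\Delta^H_d(n)$ is a polynomial in $\Upsilon^{\pm 1}$, so all deformation factors commute with one another. Consequently, a one-step shift simply appends a single elementary factor $(\mathbb{I} - d \Upsilon^{\pm 1})$ at the appropriate end: the shifts $T^L_\pm$ act by left multiplication on $g^H$ and the shifts $T^R_\pm$ by right multiplication, while the sign in the exponent of $\Upsilon$ is fixed by whether we are dealing with $g^L$ (governed by $\Delta^L$) or $g^R$ (governed by $\Delta^R$, in which the roles of $\Upsilon$ and $\Upsilon^{-1}$ are interchanged).

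The core step is then a single rearrangement. From $T^L_+ g^L = (\mathbb{I} - d^L_{+, n^L_+ + 1}\Upsilon)\, g^L$ together with the factorizations $g^L = S_1^{-1} S_2$ and $T^L_+ g^L = (T^L_+ S_1)^{-1}(T^L_+ S_2)$, a one-line manipulation gives
\[
\delta^{L,L}_+\bigl(d^L_{+, n^L_+ + 1}\bigr) \;=\; S_1\bigl(\mathbb{I} - d^L_{+, n^L_+ + 1}\Upsilon\bigr) S_1^{-1} \;=\; \bigl[S_1 (T^L_+ S_1)^{-1}\bigr]\cdot\bigl[(T^L_+ S_2) S_2^{-1}\bigr].
\]
The first bracketed factor is a product of two elements of $\mathscr{L}$ (a group under multiplication) and hence lies in $\mathscr{L}$, while the second is a product of two elements of $\mathscr{U}$ and so lies in $\mathscr{U}$; this is precisely the required block $LU$ factorization. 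Using $T^R_+$ on $g^L$ instead yields $\delta^{R,L}_+ = [S_1 (T^R_+ S_1)^{-1}]\cdot[(T^R_+ S_2) S_2^{-1}]$. The completely analogous argument applied to $g^R = Z_2 Z_1^{-1}$ under the shifts $T^L_\pm$ (from the left) and $T^R_\pm$ (from the right) produces
\[
\delta^{L,R}_\pm \;=\; \bigl[Z_2^{-1}(T^L_\pm Z_2)\bigr]\cdot\bigl[(T^L_\pm Z_1)^{-1} Z_1\bigr], \qquad \delta^{R,R}_\pm \;=\; \bigl[Z_2^{-1}(T^R_\pm Z_2)\bigr]\cdot\bigl[(T^R_\pm Z_1)^{-1} Z_1\bigr],
\]
each time with first factor in $\mathscr{L}$ and second factor in $\mathscr{U}$. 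The four cases with $-$ in $\delta^{H,H'}_-$ are handled identically after replacing the $+$-shift with the corresponding $-$-shift, which only flips the sign of the exponent on $\Upsilon$.

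The main obstacle is purely notational: in each of the eight cases one must correctly match the side of the shift (left or right), the choice of moment matrix $g^L$ or $g^R$, and the sign convention on $\Upsilon^{\pm 1}$ dictated by the differing definitions of $\Delta^L$ and $\Delta^R$. Beyond this bookkeeping, the algebraic content reduces to commutativity of polynomials in $\Upsilon^{\pm 1}$ and the group structure of $\mathscr{L}$ and $\mathscr{U}$, both of which are immediate, so no genuine difficulty arises.
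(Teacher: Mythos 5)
Your proof is correct and follows essentially the same route as the paper: apply the one-step shift to the factored moment matrix, conjugate so that the elementary factor $(\mathbb{I}-d\,\Upsilon^{\pm 1})$ becomes the relevant $\delta^{H,H'}_{\pm}$, and read off the lower and upper factors as $S_1(T^{H}_{\pm}S_1)^{-1}$, $(T^{H}_{\pm}S_2)S_2^{-1}$ (resp.\ $Z_2^{-1}(T^{H}_{\pm}Z_2)$, $(T^{H}_{\pm}Z_1)^{-1}Z_1$), using that $\mathscr L$ and $\mathscr U$ are groups. This matches the paper's own argument in both the key identity and the final identification of the factors.
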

\begin{proof}
We have
\begin{align*}
 T^{L}_\pm g^L&=\left(\mathbb{I}-d^L_{\pm,n^{L}_\pm+1}\Upsilon^{\pm 1} \right)g^L  & &\Longrightarrow   &
 S_1(T^{L}_{\pm} S_1)^{-1}(T^{L}_{\pm} S_2)S_2^{-1}&=S_1\left(\mathbb{I}-d^L_{\pm,n^{L}_{\pm}+1}\Upsilon^{\pm 1} \right)S_1^{-1}=\delta^{L,L}_{\pm}(d^L_{\pm,n^{L}_{\pm}+1}),\\
  T^{L}_{\pm} g^R&=\left(\mathbb{I}-d^L_{\pm,n^{L}_{\pm}+1}\Upsilon^{\mp 1} \right)g^R  & &\Longrightarrow   &
 Z_2^{-1}(  T^{L}_{\pm}  Z_2)(  T^{L}_{\pm}  Z_1)^{-1}Z_1&=Z_2^{-1}\left(\mathbb{I}-d^L_{\pm,n^{L}_{\pm}+1}\Upsilon^{\mp 1} \right)Z_2=\delta^{L,R}_{\pm}(d^L_{\pm,n^{L}_{\pm}+1}),\\
 T^{R}_{\pm} g^L&=g^L\left(\mathbb{I}-d^R_{\pm,n^{R}_\pm+1}\Upsilon^{\pm 1} \right) & &\Longrightarrow   &
 S_1(T^{R}_{\pm} S_1)^{-1}(T^{R}_{\pm} S_2)S_2^{-1}&=S_2\left(\mathbb{I}-d^R_{\pm,n^{R,\pm}+1}\Upsilon^{\pm 1} \right)S_2^{-1}= \delta^{R,L}_{\pm}(d^R_{\pm,n^{R}_{\pm}+1}),\\
T^{R}_{\pm} g^R&=g^R\left(\mathbb{I}-d_{\pm,n^{R}_{\pm}+1}^R\Upsilon^{\mp 1} \right) & &\Longrightarrow   &
 Z_2^{-1}(T^{R}_{\pm} Z_2)(T^{R}_{\pm} Z_1)^{-1}Z_1&=Z_1^{-1}\left(\mathbb{I}-d_{\pm,n^{R}_{\pm}+1}^R\Upsilon^{\mp 1} \right)Z_1
 = \delta^{R,R}_{\pm}(d_{\pm,n^{R}_{\pm}+1}^R). 
\end{align*}
Therefore, for $H=L,R$
\begin{align*}
  \delta^{H,L}_{\pm}&=(\delta^{H,L}_{\pm})_-^{-1}\cdot (\delta^{H,L}_{\pm})_+, &
  (\delta^{H,L}_{\pm})_-&=(T^{H}_{\pm} S_1)S_1^{-1}\in\mathscr L, &
  (\delta^{H,L}_{\pm})_+&=(T^{H}_{\pm} S_2)S_2^{-1}\in\mathscr U,\\
    \delta^{H,R}_{\pm}&=(\delta^{H,R}_{\pm})_-\cdot (\delta^{H,R}_{\pm})_+^{-1}, &
  (\delta^{H,R}_{\pm})_-&=Z_2^{-1}(T^{H}_{\pm} Z_2)\in\mathscr L, &
  (\delta^{H,R}_{\pm})_+&=Z_1^{-1}(T^{H}_{\pm} Z_1)\in\mathscr U.
\end{align*}
\end{proof}
\begin{definition}
  We define
\begin{align*}
  \omega^{H,H'}_{\pm}&:=
  \begin{cases}
  (\delta^{L,L}_{\pm})_+=(T^{L}_{\pm} S_2)S_2^{-1}, &  H=L, H'=L,\\
  (\delta^{R,L}_{\pm})_-=(T^{R}_{\pm} S_1)S_1^{-1}, & H=R, H'=L, \\
  (\delta^{L,R}_{\pm})_-=Z_1^{-1}(T^{L}_{\pm} Z_1), & H=L, H'=R, \\
  (\delta^{R,R}_{\pm})_+=Z_2^{-1}(T^{R}_{\pm} Z_2), & H=R, H'=R.
\end{cases}
\end{align*}
\end{definition}
We ready to derive discrete  integrability.
\begin{theorem}
 \begin{itemize}
  \item The discrete linear systems
 \begin{align*}
  T_\pm^H W_i^L&=\omega^{H,L}_{\pm}W_i^L, &   T_\pm^H W_i^R&= W_i^R\omega^{H,R}_{\pm}, & i&=1,2, \quad H=L,R.
 \end{align*}
\item Discrete  Lax equations hold
\begin{align*}
T_\pm^H J^{L}&=\omega^{H,L}_{\pm}J^L\big(\omega^{H,L}_{\pm}\big)^{-1}, &
T_\pm^H J^R&=\big(\omega^{H,R}_\pm\big)^{-1}J^R\omega^{H,R}_{\pm}, & H&=L,R.
\end{align*}
\item Intertwining matrix
\begin{align*}
  T^H_\pm C_{[p]}&=\big(\omega^{H,R}_\pm\big)^{-1}C_{[p]}\big(\omega^{H,L}_\pm\big)^{-1}, & H&=L,R.
\end{align*}
 \item Zakharov--Shabat equations
 \begin{align*}
\big( T_a^H\omega^{H',L}_{b}\big)\omega^{H,L}_{a}&=\big( T_b^{H'}\omega^{H,L}_{a}\big)\omega^{H',L}_{b}, &
\omega^{H,R}_{a}\big( T_a^H\omega^{H',R}_{b}\big)&=\omega^{H',R}_{b}\big( T_b^{H'}\omega^{H,R}_{a}\big), & a,b=\pm,\quad  H,H'=L,R.
 \end{align*}
\item Continuous-discrete equations
\begin{align*}
  \frac{\partial\omega^{H',L}_\pm}{\partial t^H_{j,a}}+\omega^{H',L}_\pm B^{H,L}_{j,a}=&\big(T^{H'}_\pm B^{H,L}_{j,a}\big)\omega^{H',L}_\pm,
  &
    \frac{\partial\omega^{H',R}_\pm}{\partial t^H_{j,a}} +B^{H,R}_{j,a}\omega^{H',R}\pm=&\omega^{H',R}_\pm\big(T^{H'}_\pm B^{H,R}_{j,a}\big),
\end{align*}
with $H,H'=L,R$, $a=1,\dots,m$ and $j=0,1,\dots$.
\end{itemize}
\end{theorem}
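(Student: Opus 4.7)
My plan is to parallel the dressing strategy used in Proposition \ref{integrable}, but now with the shift operators $T^H_\pm$ replacing the derivations $\partial/\partial t^H_{j,a}$. The key algebraic input is the factorization identity already established in the proposition that precedes the theorem: the shifts of the Borel factors can be written in terms of the $\delta^{H,H'}_\pm$ and the $\omega^{H,H'}_\pm$. Throughout, I will repeatedly use that every constant diagonal matrix $d\in\diag_m$ commutes with $\Upsilon^{\pm 1}$ (since $d\Upsilon\chi(z)=zd\chi(z)=\Upsilon d\chi(z)$), so that arbitrary products of factors $(\mathbb{I}-d_{\pm,j}\Upsilon^{\pm 1})$ commute among themselves.

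For the discrete linear systems I will compute, say, $T^L_\pm W_1^L=(T^L_\pm S_1)(T^L_\pm\Delta^L_{d^L}(n^L))$. Shifting $n^L_\pm$ introduces the extra factor $(\mathbb{I}-d^L_{\pm,n^L_\pm+1}\Upsilon^{\pm 1})$ in $\Delta^L_{d^L}$. Substituting $T^L_\pm S_1=(\delta^{L,L}_\pm)_-S_1$ and using $S_1(\mathbb{I}-d^L_{\pm,n^L_\pm+1}\Upsilon^{\pm 1})S_1^{-1}=\delta^{L,L}_\pm=(\delta^{L,L}_\pm)_-^{-1}(\delta^{L,L}_\pm)_+$ telescopes the product to $(\delta^{L,L}_\pm)_+W_1^L=\omega^{L,L}_\pm W_1^L$. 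The other three combinations $T^R_\pm W_i^L$ and $T^H_\pm W_i^R$ are handled in exactly the same way, using for the right wave matrices the analogous identity $T^H_\pm g^R=g^R(\mathbb{I}-\cdots)$ and the factorization $\delta^{H,R}_\pm=(\delta^{H,R}_\pm)_-(\delta^{H,R}_\pm)_+^{-1}$. The Lax equations then follow by dressing $\Upsilon$: for instance $T^H_\pm J^L=(T^H_\pm S_1)\Upsilon(T^H_\pm S_1)^{-1}$; the two admissible choices $(\delta^{H,L}_\pm)_\mp$ for $T^H_\pm S_1\cdot S_1^{-1}$ agree after conjugation on $J^L$ because $\delta^{H,L}_\pm$ commutes with $J^L$ (both are dressed polynomials in $\Upsilon^{\pm 1}$), which allows one to replace the dressing factor by $\omega^{H,L}_\pm$.

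The intertwining identity $T^H_\pm C_{[p]}=(\omega^{H,R}_\pm)^{-1}C_{[p]}(\omega^{H,L}_\pm)^{-1}$ will be obtained by choosing the representation of $C_{[p]}$ in \eqref{Ccas} that matches the $\omega$'s that appear on each side; again the freedom to use either of the two equivalent expressions for $C_{[p]}$ is crucial and reflects the commutation of $\delta$'s with $\Upsilon^p\eta$ modulo the intertwining property $\eta\Upsilon=\Upsilon^{-1}\eta$. The Zakharov--Shabat equations are then a compatibility condition: applying $T_a^H T_b^{H'}=T_b^{H'}T_a^H$ to $W_i^L$ via the linear system and equating the two orders of composition yields $(T_a^H\omega^{H',L}_b)\omega^{H,L}_a=(T_b^{H'}\omega^{H,L}_a)\omega^{H',L}_b$, and similarly for the $R$ version. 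The continuous-discrete equations arise from compatibility of the continuous linear system $\partial W^L_i/\partial t^H_{j,a}=B^{H,L}_{j,a}W^L_i$ with the discrete one $T^{H'}_\pm W^L_i=\omega^{H',L}_\pm W^L_i$: differentiating the discrete relation and equating with the shifted continuous relation cancels $W_i^L$ and leaves the stated identity.

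The main obstacle is bookkeeping rather than conceptual: one must verify, case by case over $(H,H')\in\{L,R\}^2$ and sign $\pm$, that the correct half of each $\delta^{H,H'}_\pm$ (lower- or upper-triangular part) appears on the correct side of the shifted wave matrix, and that all intermediate factors of the form $(\mathbb{I}-d\Upsilon^{\pm 1})$ can legitimately be commuted past the factors of $\Delta^{H}_{d^{H'}}(n^{H'})$ they meet. Once the commutation lemma for constant diagonals with $\Upsilon$ is in place, each of the four parallel computations reduces to the same telescoping argument illustrated above, so no new analytical ingredient beyond the Gauss--Borel factorization is required.
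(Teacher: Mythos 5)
Your proof is correct and is precisely the argument the paper intends: the paper in fact states this theorem \emph{without} proof, immediately after the proposition establishing $(T^{H}_{\pm}S_1)S_1^{-1}=(\delta^{H,L}_{\pm})_-$, $(T^{H}_{\pm}S_2)S_2^{-1}=(\delta^{H,L}_{\pm})_+$, $Z_2^{-1}(T^{H}_{\pm}Z_2)=(\delta^{H,R}_{\pm})_-$ and $Z_1^{-1}(T^{H}_{\pm}Z_1)=(\delta^{H,R}_{\pm})_+$, and your telescoping computation for the linear systems, the dressing of $\Upsilon$ (via either Borel factor, using that the constant block-diagonal $d$ commutes with $\Upsilon$) for the Lax equations, the choice of the matching representation of $C_{[p]}$ for the intertwining identity, and the two compatibility arguments are exactly how those identities are meant to be used. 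The only point worth flagging is that the paper's labelling of the $\pm$ triangular parts in the definition of $\omega^{H,R}_{\pm}$ is not fully consistent with the preceding proposition, and your case-by-case bookkeeping over $(H,H')$ and the sign is indeed what is needed to resolve that.
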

From these results one may derive discrete matrix equations for the Verblunsky coefficients.

 It also follows that these flows are extensions of Darboux transformations, see \cite{carlos} for the scalar case.
 Each of these discrete shifts are generalizations of the typical Darboux transformation corresponding to the flip of the upper  and lower triangular factors of the operators $\delta^{H,H'}_\pm$. These flips occur in some specific cases as follows. Let us assume that the diagonal matrices $d^H_{\pm,j}$ do not depend on $j$; then,
\begin{align*}
  \delta_\pm^{H,H'}&=\begin{cases}
    \big(\delta^{H,L}\big)_-^{-1}\big(\delta^{H,L}_\pm\big)_+, & H'=L,\\
    \big(\delta^{H,R}\big)_-\big(\delta^{H,R}_\pm\big)_+^{-1}, & H'=L,
  \end{cases}& &\stackrel{T^H_\pm}{ \xrightarrow{\hspace*{1cm}}}&
    T^H_\pm\delta_\pm^{H,H'}&=\begin{cases}
\big(\delta^{H,L}_\pm\big)_+ \big(\delta^{H,L}\big)_-^{-1}, & H'=L,\\
   \big(\delta^{H,R}_\pm\big)_+^{-1} \big(\delta^{H,R}\big)_-, & H'=L.
  \end{cases}
\end{align*}
It is clear that the shift corresponds to the flip of the factors in the Gaussian factorization of the $\delta^{H,H'}_\pm$ matrices, just as in the Darboux transformations. When the constant sequences $d^H_{\pm,j}=c^H_\pm\mathbb I_m$, with $c^H_\pm\in\C$ scalars we have that $\delta^{H,H'}_\pm$ are pentadiagonal block matrices (main diagonal and the two next diagonals above and below it), and therefore the Gauss-Borel factorizations give upper or lower block tridiagonal matrices, $(\delta^{H,H'}_\pm)_+$ and $(\delta^{H,H'}_\pm)_-$, respectively. This is quite close to some results in the talk \cite{CMV-Paco}.

\subsection{Miwa shifts}\label{Sec-Miwa}
In our unsuccessful search for a neat  $\tau$-function theory in this matrix scenario we have studied the action of Miwa shifts. Despite we did not find appropriate $\tau$-functions we found interesting relations among Christoffel--Darboux kernels and the Miwa transformations of the  MOLPUC.
These relations do in fact lead in the scalar case to the $\tau$-function representation of MOLPUC. Unfortunately, apparently that is not the case in the matrix scenario.

Miwa shifts are coherent time translations that lead to discrete type flows. Given a diagonal matrix $w=\diag(w_1,\dots,w_m)\in\C^{m\times m}$ we introduce four different $\mathfrak M^{H,\pm}_w$, $H=L,R$, coherent shifts
\begin{align*}
{ \mathfrak M}^{L,+}_{w}&:t^L_{2k} \mapsto t^L_{2k}-\frac{w^k}{k}, &
{\mathfrak M}^{L,-}_{w}&: t^L_{2k-1} \mapsto t^L_{2k-1}-\frac{w^k}{k}, &
 {\mathfrak M}^{R,+}_{w}&:t^R_{2k} \mapsto t^R_{2k}-\frac{w^k}{k},&
 {\mathfrak M}^{R,-}_{w}&:t^R_{2k-1} \mapsto t^R_{2k-1}-\frac{w^k}{k}.
 \end{align*}
 For each Miwa shift we only write down those times with a non trivial transformation. When these shifts act on the deformed matrix measure we get new matrix measures
  \begin{align}\label{eq:miwa.measure}
\d {\mathfrak M}^{L,\pm}_{w}\left[\mu \right]&=(1-wz^{\pm 1})\d \mu,  & \d {\mathfrak M}^{R,\pm}_{w}\left[\mu \right]&=\d \mu(1-wz^{\pm 1}),
 \end{align}
with corresponding left and right moment matrices given by
\begin{align}\label{eq:miwa.moment}
{\mathfrak M}^{L,\pm}_w\left[g^L\right]&=(\mathbb{I}-w\Upsilon^{\pm 1})g^L, &
 {\mathfrak M}^{L,\pm}_w\left[g^R\right]&=(\mathbb{I}-w\Upsilon^{\mp 1})g^R, &
 {\mathfrak M}^{R,\pm}_w\left[g^L\right]&=g^L(\mathbb{I}-w\Upsilon^{\pm 1}), &
{\mathfrak M}^{R,\pm}_w\left[g^R\right]&=g^R(\mathbb{I}-w\Upsilon^{\mp 1}).
\end{align}

From these we can deduce the next
\begin{theorem}\label{th: Miwa.diag.CD}
 For every diagonal matrix $w\in\diag_m$ the following relations between Miwa shifted and non shifted Christoffel-Darboux kernels and MOLPUC hold
 \begin{align*}
  K^{L,[2l+1]}(z,u)&={\mathfrak M}^{L,+}_w\left[K^{L,[2l]}\right](z,u)(\mathbb{I}-wu)+
  \left({\mathfrak M}^{L,+}_{w}\left[(\varphi_2^L)^{(2l)}\right](z)\right)^\dagger
  {\mathfrak M}^{L,+}_w\left[h_{2l}^L\right](h_{2l}^L)^{-1}(\varphi_1^L)^{(2l)}(u),\\
  K^{R,[2l]}(z,u)&={\mathfrak M}^{L,+}_w\left[K^{R,[2l-1]}\right](z,u)(\mathbb{I}-w\bar{u}^{-1})+
{\mathfrak M}^{L,+}_w\left[(\varphi_1^R)^{(2l-1)}\right](z)
{\mathfrak M}^{L,+}_w\left[h_{2l-1}^L\right](h_{2l-1}^L)^{-1}\left((\varphi_2^R)^{(2l-1)}(u)\right)^\dagger,\\
   K^{L,[2l]}(z,u)&={\mathfrak M}^{L,-}_w\left[K^{L,[2l-1]}\right](z,u)(\mathbb{I}-wu^{-1})+
  \left({\mathfrak M}^{L,-}_w\left[(\varphi_2^L)^{(2l-1)}\right](z)\right)^\dagger
  {\mathfrak M}^{L,-}_w\left[h_{2l-1}^R\right](h_{2l-1}^R)^{-1}(\varphi_1^L)^{(2l-1)}(u),\\
   K^{R,[2l+1]}(z,u)&={\mathfrak M}^{L,-}_w\left[K^{R,[2l]}\right](z,u)(\mathbb{I}-w\bar{u})+
{\mathfrak M}^{L,-}_w\left[(\varphi_1^R)^{(2l)}\right](z)
{\mathfrak M}^{L,-}_w\left[h_{2l}^L\right](h_{2l}^L)^{-1}\left((\varphi_2^R)^{(2l)}(u)\right)^\dagger,\\
  K^{L,[2l]}(z,u)&=(\mathbb{I}-w\bar{z}^{-1}){\mathfrak M}^{R,+}_w\left[K^{L,[2l-1]}\right](z,u)+
\left((\varphi_2^L)^{(2l-1)}(z)\right)^\dagger {\mathfrak M}^{R,+}_w\left[(\varphi_1^L)^{(2l-1)}\right](u),\\
 K^{R,[2l+1]}(z,u)&=(\mathbb{I}-wz){\mathfrak M}^{R,+}_w\left[K^{R,[2l]}\right](z,u)+
(\varphi_1^R)^{(2l)}(z) \left({\mathfrak M}^{R,+}_w\left[(\varphi_2^R)^{(2l)}\right](u)\right)^{\dagger},\\
K^{L,[2l+1]}(z,u)&=(\mathbb{I}-w\bar{z}){\mathfrak M}^{R,-}_w\left[K^{L,[2l]}\right](z,u)+
\left((\varphi_2^L)^{(2l)}(z)\right)^\dagger {\mathfrak M}^{R,-}_w\left[(\varphi_1^L)^{(2l)}\right](u),\\
K^{R,[2l]}(z,u)&=(\mathbb{I}-wz^{-1}){\mathfrak M}^{R,-}_w\left[K^{R,[2l-1]}\right](z,u)+
(\varphi_1^R)^{(2l-1)}(z) \left({\mathfrak M}^{R,-}_w\left[(\varphi_2^R)^{(2l-1)}\right](u)\right)^{\dagger}.
 \end{align*}
\end{theorem}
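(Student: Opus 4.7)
I will prove the eight Miwa-shift identities as consequences of the Gauss--Borel factorisations of the Miwa-shifted block CMV moment matrices. The key preliminary observation is a compact bilinear representation of the kernels: from the block Gauss--Borel factorisations \eqref{eq:fac1}--\eqref{eq:fac2} and the triangular structure of $S_1,S_2,Z_1,Z_2$ one has $(g^L)^{[l]}=((S_1)^{[l]})^{-1}(S_2)^{[l]}$, and analogously for $g^R$, so the defining sums \eqref{CDkernel} collapse to
$$K^{L,[l]}(z,u)=\chi(z)^{[l]\dagger}\bigl[(g^L)^{[l]}\bigr]^{-1}\chi(u)^{[l]},\qquad K^{R,[l]}(z,u)=\chi(z)^{[l]\top}\bigl[(g^R)^{[l]}\bigr]^{-1}\bigl[\chi(u)^{[l]\top}\bigr]^{\dagger}.$$

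First I would concentrate on the identity involving $\mathfrak M^{L,+}_w$ and $K^{L,[2l+1]}$. By \eqref{eq:miwa.moment} the Miwa-shifted moment matrix is $\tilde g^L:=(\mathbb I-w\Upsilon)g^L$. Since $\Upsilon\chi(u)=u\chi(u)$, one has $(\mathbb I-wu)\chi(u)=(\mathbb I-w\Upsilon)\chi(u)$, and, applying the projection $\pi^{[2l]}$ onto the first $2l$ blocks, the explicit structure of $\Upsilon$ yields $\pi^{[2l]}(\mathbb I-w\Upsilon)\chi(u)=(\mathbb I-w\Upsilon)^{[2l]}\chi(u)^{[2l]}-w\,b(u)$, where $b(u)$ is a boundary column supported in a single block and carrying $\chi^{(2l)}(u)$, produced by the unique nonzero entry $\Upsilon_{2l-2,2l}=\mathbb I$ that links rows inside $[2l]$ to columns outside it. Substituting this into the compact formula for $\mathfrak M^{L,+}_w[K^{L,[2l]}](z,u)$, and writing $(\tilde g^L)^{[2l]}$ as a Schur complement of a bordered truncation of $(g^L)^{[2l+1]}$ (again via $\tilde g^L=(\mathbb I-w\Upsilon)g^L$), the product $\mathfrak M^{L,+}_w[K^{L,[2l]}](z,u)(\mathbb I-wu)$ reduces to $K^{L,[2l+1]}(z,u)$ plus a bordering contribution. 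The latter collapses, using Proposition \ref{pro:quasinorms}, relation \eqref{com} (which identifies $h^L_{2l}$ as the Schur complement $(g^L)^{[2l+1]}/(g^L)^{[2l]}$), and the Schur-complement representations of $(\varphi^L_i)^{(2l)}$ in Lemma \ref{lemma-alternative}, precisely to the stated correction $[\mathfrak M^{L,+}_w[(\varphi_2^L)^{(2l)}](z)]^\dagger\mathfrak M^{L,+}_w[h_{2l}^L](h_{2l}^L)^{-1}(\varphi_1^L)^{(2l)}(u)$.

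The remaining seven identities are established by the same mechanism with the obvious substitutions: for the $R$-type identities one works with $(g^R,Z_2,Z_1)$ and the compact formula for $K^R$; for the $-$ shifts one replaces $\Upsilon$ by $\Upsilon^{-1}$, whose boundary entry falls at a different position and produces the odd-truncation companions; and for the right Miwa shifts $\mathfrak M^{R,\pm}_w$ one uses the post-multiplications $g^H(\mathbb I-w\Upsilon^{\pm 1})$. The parity of the truncation index in each of the eight relations ($[2l]$ versus $[2l+1]$) is dictated by the parity of the boundary block produced by $\Upsilon^{\pm 1}$, and tracks the even/odd asymmetry of the CMV shift.

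The main technical difficulty I anticipate is exactly this boundary book-keeping: the CMV shift $\Upsilon$ acts asymmetrically on even and odd positions, so that $\pi^{[l]}(\mathbb I-w\Upsilon^{\pm 1})$ spills into a single boundary block whose parity, index and associated matrix quasi-norm ($h^L_{l-1}$ or $h^R_{l-1}$) depend subtly on the choice of shift. Once that boundary is correctly identified in each of the eight cases, the remainder of the argument is routine block Schur-complement algebra and requires no new analytical input.
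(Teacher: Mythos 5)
Your compact representations $K^{L,[l]}(z,u)=\chi(z)^{[l]\dagger}\bigl[(g^L)^{[l]}\bigr]^{-1}\chi(u)^{[l]}$ and its right analogue are correct (they do follow from $(g^{L})^{[l]}=((S_1)^{[l]})^{-1}(S_2)^{[l]}$), and your identification of $\Upsilon_{2l-2,2l}=\mathbb I$ as the unique entry that spills out of the $[2l]$ truncation is also correct, so the strategy is viable. It is, however, genuinely different in mechanics from the paper's. The paper never inverts truncated moment matrices: it forms the connection matrix $\mathcal M:={\mathfrak M}^{L,+}_w[S_2]S_2^{-1}={\mathfrak M}^{L,+}_w[S_1](\mathbb I-w\Upsilon)S_1^{-1}$, reads off from the first expression that $\mathcal M$ is block upper triangular with diagonal blocks ${\mathfrak M}^{L,+}_w[D^L_j](D^L_j)^{-1}$, from the second that it is supported on the main diagonal and the first two superdiagonals (with entries expressible through Verblunsky matrices), and then evaluates the rectangularly truncated pairing $\sum_{j\le 2l-1}\sum_{k\le 2l}\bigl({\mathfrak M}^{L,+}_w[(\varphi_2^L)^{(j)}](z)\bigr)^\dagger\mathcal M_{j,k}(\varphi_1^L)^{(k)}(u)$ in the two possible orders, exactly as in the proof of Theorem \ref{th:CD}. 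The only block that survives the mismatch of the two truncations is the diagonal entry $\mathcal M_{2l,2l}={\mathfrak M}^{L,+}_w[h^L_{2l}](h^L_{2l})^{-1}$, which is precisely why that factor sits between the shifted and unshifted polynomials in the statement. The paper's route buys the correction term as a single matrix entry read off from Propositions \ref{pro:quasinorms} and \ref{faclosalpha}; your route buys independence from the explicit band structure of $S_1,S_2$ at the price of a bordered-inverse computation.

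That computation is where your plan has a genuine gap: the decisive step is asserted, not performed. Writing $(\tilde g^L)^{[2l]}=(\mathbb I-w\Upsilon)^{[2l]}(g^L)^{[2l]}-w\,e_{2l-2}\,r$ with $r$ the truncated $2l$-th block row of $g^L$, your product ${\mathfrak M}^{L,+}_w[K^{L,[2l]}](z,u)(\mathbb I-wu)$ involves the inverse of a block rank-one update of $(g^L)^{[2l]}$ plus the boundary-column contribution, and extracting from this exactly $\bigl({\mathfrak M}^{L,+}_w[(\varphi_2^L)^{(2l)}](z)\bigr)^\dagger{\mathfrak M}^{L,+}_w[h^L_{2l}](h^L_{2l})^{-1}(\varphi_1^L)^{(2l)}(u)$ requires a Sherman--Morrison/Schur-complement manipulation that must also bring in $(\tilde g^L)^{[2l+1]}$, since ${\mathfrak M}^{L,+}_w[(\varphi_2^L)^{(2l)}]$ and ${\mathfrak M}^{L,+}_w[h^L_{2l}]$ live in the $(2l+1)$-truncation of the \emph{shifted} matrix, not in the $2l$-truncation you manipulate. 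Citing Lemma \ref{lemma-alternative}, \eqref{com} and Proposition \ref{pro:quasinorms} names the right ingredients but does not carry out this identification, and it is where essentially all the content of the theorem sits; the same remark applies verbatim to the other seven cases. Until that algebra is written out (or replaced by the paper's two-sided evaluation of the banded connection matrix), what you have is a plan rather than a proof.
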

\begin{proof}
We just give the main ideas of the proof not dealing with details. Let us consider \eqref{eq:miwa.moment} at the light of the Gauss--Borel factorizations \eqref{eq:fac1} and \eqref{eq:fac2}
\begin{align*}
{\mathfrak M}^{L,\pm}_w\left[S_2\right] S_2^{-1}&={\mathfrak M}^{L,\pm}_w\left[S_1\right]\left[\mathbb{I}-w\Upsilon^{\pm 1} \right]S_1^{-1}, &
\left({\mathfrak M}^{L,\pm}_w\left[Z_1\right]\right)^{-1}Z_1&=\left({\mathfrak M}^{L,\pm}_w\left[Z_2\right]\right)^{-1}\left[\mathbb{I}-w\Upsilon^{\mp 1} \right]Z_2,\\
S_1 \left( {\mathfrak M}^{R,\pm}_w\left[S_1\right]\right)^{-1}&=S_2 \left[\mathbb{I}-w\Upsilon^{\pm 1} \right]\left(  {\mathfrak M}^{R,\pm}_w\left[S_2\right]\right)^{-1}, &
Z_2^{-1} {\mathfrak M}^{R,\pm}_w\left[Z_2\right]&=Z_1^{-1}\left[\mathbb{I}-w\Upsilon^{\mp 1} \right] {\mathfrak M}^{R,\pm}_w\left[Z_1\right].
\end{align*}
Each of these equalities defines a semi-infinite matrix relating shifted and non shifted polynomials.
At this point it is important to stress that the LHS in the two first equations are upper triangular semi-infinite  matrices, while the two last equations have in the RHS upper triangular semi-infinite matrices.
Observe also that in the two first equations, because of the RHS  only the main, the first and the second block diagonals over the first have non zero
blocks while in the LHS of the two last equations only the main diagonal and the two immediate diagonals below it have non zero blocks.
Then we proceed as in the proof of the Christoffel--Darboux formula in Theorem \ref{th:CD}. To get a glance of the technique let us illustrate it for the first equation.
In the one hand we  have for the $2l$-th and $(2l+1)$-th block rows
\begin{align*}
{\mathfrak M}^{L,+}_w\left[S_2\right] S_2^{-1}&={\mathfrak M}^{L,+}_w\left[S_1\right]\left[\mathbb{I}-w\Upsilon \right]S_1^{-1}
\\&=\begin{pmatrix}
                       \ddots  &  \ddots & & & &\\
                                    \cdots    &         0    &{\mathfrak M}^{L,+}_w\left[h_{2l}^L\right](h_{2l}^L)^{-1}   &  w\alpha_{1,2l+2}^L                                            & -w & 0&\cdots\\
                  \cdots   &          0   &                            0                                  &{\mathfrak M}^{L,+}_w\left[h_{2l+1}^R\right](h_{2l+1}^R)^{-1}       & -\left({\mathfrak M}^{L,+}_w\left[\alpha_{2,2l+1}^R\right]\right)^\dagger w & * & 0\\
                     &             &                                                              &                                                                      &  \\
                     &             &                                                              &                                                                      &\ddots
        \end{pmatrix}.
\end{align*}
In the other hand
\begin{align*}
\left({\mathfrak M}^{L,+}_w\left[\phi_2^L\right](z)\right)^\dagger{\mathfrak M}^{L,+}_w\left[S_2\right] S_2^{-1}&=
\left(\phi_2^L(z)\right)^\dagger, &
{\mathfrak M}^{L,+}_w\left[S_1\right]\left[\mathbb{I}-w\Upsilon \right]S_1^{-1}\phi_1^L(u)&=
{\mathfrak M}^{L,+}_w\left[\phi_1^L\right](u)(\mathbb{I}-wu).
\end{align*}
Then, by appropriate scalar product pairings we get the result.
\end{proof}
An appropriated choice of the variables allows us to express the rows or columns of the kernel in terms of
the rows or columns of a product of a shifted and a non shifted polynomial
\begin{cor}\label{KSnoKS}
If $w=\diag (w_1,\dots,w_m)$, $w_k\in\C$, we have
\begin{align*}
K^{L,[2l+1]}(z,w_k^{-1})E_{k,k}&=
  \left({\mathfrak M}^{L,+}_w\left[(\varphi_2^L)^{(2l)}\right](z)\right)^\dagger
 {\mathfrak M}^{L,+}_w\left[h_{2l}^L\right](h_{2l}^L)^{-1}(\varphi_1^L)^{(2l)}(w_k^{-1}) E_{k,k},\\
  K^{R,[2l]}(z,\bar{w}_k)E_{k,k}&=
{\mathfrak M}^{L,+}_w\left[(\varphi_1^R)^{(2l-1)}\right](z)
{\mathfrak M}^{L,+}_w\left[h_{2l-1}^L\right](h_{2l-1}^L)^{-1}\left((\varphi_2^R)^{(2l-1)}(\bar{w}_{k})\right)^\dagger E_{k,k},\\
   K^{L,[2l]}(z,w_k)E_{k,k}&=
  \left({\mathfrak M}^{L,-}_w\left[(\varphi_2^L)^{(2l-1)}\right](z)\right)^\dagger
 {\mathfrak M}^{L,-}_w\left[h_{2l-1}^R\right](h_{2l-1}^R)^{-1}(\varphi_1^L)^{(2l-1)}(w_{k})E_{k,k},\\
   K^{R,[2l+1]}(z,\bar{w}_{k}^{-1})E_{k,k}&=
{\mathfrak M}^{L,-}_w\left[(\varphi_1^R)^{(2l)}\right](z)
{\mathfrak M}^{L,-}_w\left[h_{2l}^L\right](h_{2l}^L)^{-1}\left((\varphi_2^R)^{(2l)}(\bar{w}_{k}^{-1})\right)^\dagger E_{k,k},\\
  E_{k,k}K^{L;[2l]}(\bar{w}_{k},u)&=
E_{k,k}\left((\varphi_2^L)^{(2l-1)}(\bar{w}_{k})\right)^\dagger {\mathfrak M}^{R,+}_w\left[(\varphi_1^L)^{(2l-1)}\right](u),\\
 E_{k,k}K^{R,[2l+1]}(w_{k}^{-1},u)&=
E_{k,k}(\varphi_1^R)^{(2l)}(w_{k}^{-1}) \left( {\mathfrak M}^{R,+}_w\left[(\varphi_2^R)^{(2l)}\right](u)\right)^{\dagger},\\
E_{k,k}K^{L,[2l+1]}(\bar{w}_{k}^{-1},u)&=
E_{k,k}\left((\varphi_2^L)^{(2l)}(\bar{w}_{k}^{-1})\right)^\dagger {\mathfrak M}^{R,-}_w\left[(\varphi_1^L)^{(2l)}\right](u),\\
E_{k,k}K^{R,[2l]}(w_{k},u)&=
E_{k,k}(\varphi_1^R)^{(2l-1)}(w_{k}) \left( {\mathfrak M}^{R,-}_w\left[(\varphi_2^R)^{(2l-1)}\right](u)\right)^{\dagger}.
\end{align*}
\end{cor}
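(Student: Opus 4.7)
The plan is to derive each identity directly from the corresponding relation in Theorem \ref{th: Miwa.diag.CD} by evaluating at a distinguished point chosen so that the ``remainder'' term involving the shifted Christoffel--Darboux kernel drops out when multiplied by the projector $E_{k,k}$. The key observation is that, since $w=\diag(w_1,\dots,w_m)$, the scalar factors appearing in Theorem \ref{th: Miwa.diag.CD} become diagonal matrices whose $(k,k)$-entry is of the form $1-w_k(\cdot)$, so a judicious specialization of $z$ or $u$ sends this entry to zero.

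For the first four identities, the factor $(\mathbb{I}-w u^{\pm 1})$ or $(\mathbb{I}-w\bar u^{\pm 1})$ sits to the right of the shifted kernel, so I multiply each relation from Theorem \ref{th: Miwa.diag.CD} on the right by $E_{k,k}$ and choose $u$ so that $(\mathbb{I}-w u^{\pm 1})E_{k,k}=0$ or $(\mathbb{I}-w\bar u^{\pm 1})E_{k,k}=0$. Concretely: for the first relation take $u=w_k^{-1}$, so that $(\mathbb{I}-w u)E_{k,k}=(1-w_k\cdot w_k^{-1})E_{k,k}=0$; for the second take $u=\bar w_k$ so that $(\mathbb{I}-w\bar u^{-1})E_{k,k}=0$; for the third $u=w_k$; for the fourth $u=\bar w_k^{-1}$. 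In each case the term ${\mathfrak M}^{H,\pm}_w[K^{H',[\cdot]}](z,u)(\mathbb{I}-w u^{\pm 1})E_{k,k}$ vanishes, leaving exactly the claimed identity.

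For the last four identities, the diagonal factor appears to the left of the shifted kernel, so I symmetrically multiply on the left by $E_{k,k}$ and specialize $z$: for the fifth take $z=\bar w_k$; for the sixth $z=w_k^{-1}$; for the seventh $z=\bar w_k^{-1}$; for the eighth $z=w_k$. In each case $E_{k,k}(\mathbb{I}-w z^{\pm 1})=0$ or $E_{k,k}(\mathbb{I}-w\bar z^{\pm 1})=0$, killing the shifted-kernel term.

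The step that requires slight care is matching up the shift label ($L$ versus $R$, $+$ versus $-$) with the correct multiplicative factor and the correct side of multiplication by $E_{k,k}$; this is purely bookkeeping since the proof itself reduces to the diagonal identity $(1-w_k v)E_{k,k}=0$ for the appropriate specialization $v$. No new ingredient beyond Theorem \ref{th: Miwa.diag.CD} is needed.
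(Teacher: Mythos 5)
Your proof is correct and is precisely the argument the paper intends: the corollary is stated immediately after Theorem \ref{th: Miwa.diag.CD} with only the remark that "an appropriated choice of the variables" kills the shifted-kernel term, and your specializations ($u=w_k^{-1},\bar w_k,w_k,\bar w_k^{-1}$ for the right-multiplied factors and $z=\bar w_k,w_k^{-1},\bar w_k^{-1},w_k$ for the left-multiplied ones) are exactly the right choices to make the $(k,k)$ entry of the corresponding diagonal factor vanish. Nothing further is needed.
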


Let us consider what happens when instead of a diagonal matrix $w$ is proportional to the identity matrix. In this case \eqref{eq:miwa.measure} informs us that
left and right handed Miwa shifts coincide. We only have two Miwa shifts ${\mathfrak M}^\pm_w$ where now $w\in\C$
  \begin{align}\label{eq:miwa.scalar.measure}
\d {\mathfrak M}^{\pm}_{w}\left[\mu \right]&=(1-wz^{\pm 1})\d \mu.
 \end{align}
In this case Corolary \ref{KSnoKS} would be written in much a simpler
way (closer to the scalar case),
\begin{pro}\label{clave teorema}
The following relations hold
\begin{align*}
[(\varphi^L_2)^{(2l-1)}(\bar w)]^\dagger {\mathfrak M}^+_w (h^{R}_{2l-1})&=(\varphi^R_1)^{(2l)}\big(w^{-1}\big)h^{R}_{2l},&
 {\mathfrak M}^-_w (h^{R}_{2l-1}) (h^{R}_{2l-1})^{-1}(\varphi^L_1)^{(2l-1)}(w)&=[(\varphi^R_2)^{(2l)}(\bar w^{-1})]^\dagger, \\
(\varphi^R_1)^{(2l-1)}(w){\mathfrak M}^-_w (h^{L}_{2l-1})&=[(\varphi^L_2)^{(2l)}(\bar w^{-1})]^\dagger  h^{L}_{2l},&
{\mathfrak M}^+_w  (h^{L}_{2l-1}) (h^{L}_{2l-1})^{-1}[(\varphi^R_2)^{(2l-1)}(\bar w)]^\dagger &=(\varphi^L_1)^{(2l)}\big(w^{-1}\big),\\
(\varphi^R_1)^{(2l)}\big(w^{-1}\big) {\mathfrak M}^+_w  (h^{R}_{2l})&=[\bar w(\varphi^L_2)^{(2l+1)}(\bar w)]^\dagger  h^{R}_{2l+1},&
 {\mathfrak M}^-_w(h^{ R}_{2l}) (h^{R}_{2l})^{-1}[(\varphi^R_2)^{(2l)}(\bar w^{-1})]^\dagger  &=z(\varphi^L_1)^{(2l+1)}(w),\\
[(\varphi^L_2)^{(2l)}(\bar w^{-1})]^\dagger  {\mathfrak M}^-_w(h^{ L}_{2l})&=w (\varphi^R_1)^{(2l+1)}(z) h^{L}_{2l+1},&
{\mathfrak M}^+_w( h^{ L}_{2l}) (h^{L}_{2l})^{-1} (\varphi^L_1)^{(2l)}\big(w^{-1}\big)&= [\bar w(\varphi^R_2)^{(2l+1)}(\bar w)]^\dagger.
\end{align*}
\end{pro}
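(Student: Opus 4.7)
For $w=w\mathbb I_m$ the two Miwa shifts $\mathfrak M^{L,\pm}_w$ and $\mathfrak M^{R,\pm}_w$ coincide, since the scalar factor $(1-wz^{\pm 1})$ commutes with the matrix measure $\d\mu$, so \eqref{eq:miwa.measure} collapses to a single shift $\mathfrak M^\pm_w$. Summing the eight identities of Corollary \ref{KSnoKS} over $k=1,\dots,m$ (using $\sum_k E_{k,k}=\mathbb I$), each identity becomes a single-product expression: for example the third relation of that corollary collapses to
\begin{align*}
K^{L,[2l]}(z,w)=\bigl(\mathfrak M^-_w[(\varphi^L_2)^{(2l-1)}](z)\bigr)^\dagger\,\mathfrak M^-_w(h^R_{2l-1})(h^R_{2l-1})^{-1}(\varphi^L_1)^{(2l-1)}(w).
\end{align*}
The right-hand factor $\mathfrak M^-_w(h^R_{2l-1})(h^R_{2l-1})^{-1}(\varphi^L_1)^{(2l-1)}(w)$ is precisely the left-hand side of the second identity of the proposition, and exactly the same thing happens for the remaining seven identities: each of the eight statements of the proposition is one of the two factors in a scalarised Corollary \ref{KSnoKS} identity.

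The plan is therefore to pin down that factor by writing the same kernel evaluation in a second way, provided by the Christoffel--Darboux formula from Theorem \ref{th:CD}. For the example above,
\begin{align*}
K^{L,[2l]}(z,w)(1-\bar z w)=(\varphi^R_1)^{(2l)}(\bar z^{-1})h^R_{2l}(h^R_{2l-1})^{-1}(\varphi^L_1)^{(2l-1)}(w)-(\varphi^R_1)^{(2l-1)}(\bar z^{-1})(\varphi^L_1)^{(2l)}(w).
\end{align*}
Equating the two representations yields a Laurent polynomial identity in $z$. Matching the extreme powers $z^l$ and $z^{-l}$ on both sides, aided by the explicit leading/trailing structure of $(\varphi^H_j)^{(2l)}$ and $(\varphi^H_j)^{(2l-1)}$ listed in the Proposition after \ref{faclosalpha}, together with the Verblunsky recursions for the quasi-norms in Proposition \ref{rrr}, produces exactly the two identities in the first column of the proposition. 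Iterating over the four kernels $K^{L,[2l]}$, $K^{L,[2l+1]}$, $K^{R,[2l]}$, $K^{R,[2l+1]}$ at the relevant evaluation points $w$, $\bar w$, $w^{-1}$, $\bar w^{-1}$, and using the corresponding CD formulas in \eqref{quer1}--\eqref{quer2}, delivers the remaining six identities in a uniform manner.

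The main obstacle is the non-commutative bookkeeping: matching leading and trailing coefficients forces the identification of $\mathfrak M^\pm_w(h^H_k)$ in terms of the unshifted quasi-norm and two unshifted MOLPUC at the distinguished points. Because the Miwa-shifted MOLPUC $\mathfrak M^\pm_w[(\varphi^H_j)^{(k)}]$ inhabit the same Laurent-degree range as the unshifted ones, this coefficient matching is well-posed; however, one must carefully track the order of matrix factors and invoke the recurrences of Proposition \ref{rrr} (e.g.\ $h^H_k=(\mathbb I-\alpha\cdot\alpha)h^H_{k-1}$) to convert differences of the form $\mathfrak M^\pm_w(h^H_k)-h^H_k$ into the exact combinations demanded by the proposition. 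No further input is needed beyond Theorem \ref{th: Miwa.diag.CD}, Corollary \ref{KSnoKS}, Theorem \ref{th:CD}, and the Verblunsky parametrization.
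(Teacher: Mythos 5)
Your setup coincides with the paper's: for scalar $w$ the left and right Miwa shifts merge, Theorem \ref{th: Miwa.diag.CD} (equivalently Corollary \ref{KSnoKS} summed over $k$) writes each kernel at the distinguished point as a single product of one shifted and one unshifted object, and the factor to be identified is exactly one side of the claimed identities. The gap is in the extraction step. Matching extreme Laurent coefficients of the two representations of, say, $K^{L,[2l]}(z,w)$ cannot work: the coefficient of $\bar z^{-l}$ in $\bigl(\mathfrak M^-_w[(\varphi^L_2)^{(2l-1)}](z)\bigr)^\dagger$ is $\bigl(\mathfrak M^-_w(h^R_{2l-1})\bigr)^{-1}$, which cancels the very factor $\mathfrak M^-_w(h^R_{2l-1})$ you are trying to pin down and leaves the tautology $(h^R_{2l-1})^{-1}(\varphi^L_1)^{(2l-1)}(w)=(h^R_{2l})^{-1}h^R_{2l}(h^R_{2l-1})^{-1}(\varphi^L_1)^{(2l-1)}(w)$, while the coefficient of $\bar z^{l}$ drags in the Miwa-shifted Verblunsky coefficient $\mathfrak M^-_w[\alpha^R_{1,2l-1}]$, a new unknown. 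More fundamentally, $\mathfrak M^\pm_w(h^H_k)$ is not a Laurent coefficient of any polynomial appearing in the kernel identity; it is the integral of the shifted polynomial against the shifted measure, so no amount of coefficient matching or point evaluation in the free variable can produce it.

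The missing idea is to integrate rather than compare coefficients. The paper starts from \eqref{norms1} applied to the shifted system, $\mathfrak M^+_w(h^R_{2l-1})=\oint_\T \mathfrak M^+_w[(\varphi_1^L)^{(2l-1)}](u)\,\frac{\d\mathfrak M^+_w(\mu)(u)}{\operatorname{i}u}\,u^{l}$, substitutes the version of Corollary \ref{KSnoKS} in which the \emph{shifted} polynomial carries the integration variable, namely $\mathfrak M^+_w[(\varphi_1^L)^{(2l-1)}](u)=\bigl([(\varphi_2^L)^{(2l-1)}(\bar w)]^\dagger\bigr)^{-1}K^{L,[2l]}(\bar w,u)$, uses $\d\mathfrak M^+_w(\mu)=(1-wu)\,\d\mu$ to create the Christoffel--Darboux factor, expands $K^{L,[2l]}(\bar w,u)(1-wu)$ by Theorem \ref{th:CD}, and then the orthogonality relations \eqref{ort} annihilate one term while \eqref{norms1} evaluates the other to $h^R_{2l-1}$, yielding the first formula; the remaining seven follow the same pattern. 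Without this integration step your argument does not close.
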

\begin{proof}
  See Appendix \ref{proofs}
\end{proof}
Now, we can state
\begin{theorem}\label{elteorema}
 The CMV matrix Laurent orthogonal  polynomials can be expressed as follows
\begin{align}
 (\varphi^L_1)^{(2l)}(z)&= z^{l} \left[ {\mathfrak M}^+_{z^{-1}}(h^{ L}_{2l-1})(h^{L}_{2l-1})^{-1}\right]\cdots  \left[ {\mathfrak M}^+_{z^{-1}}(h^{ L}_{0}) (h^{L}_{0})^{-1}\right],\\
 (\varphi^L_1)^{(2l+1)}(z)&= z^{-(l+1)} \left[ {\mathfrak M}^-_{z}(h^{R}_{2l}) (h^{R}_{2l})^{-1}\right] \cdots \left[ {\mathfrak M}^-_{z}(h^{R}_{0} )(h^{R}_{0})^{-1}\right],\\
 [(\varphi^L_2)^{(2l)}(\bar z^{-1})]^\dagger&= z^{-l} \left[ (h^{L}_{0})^{-1}{\mathfrak M}^-_z(h^{ L}_{0})\right] \cdots \left[( h^{L}_{2l-1}) {\mathfrak M}^-_{z}(h^{ L}_{2l-1})\right] (h^{L}_{2l})^{-1},\\
 [(\varphi^L_2)^{(2l+1)}(\bar z^{-1})]^\dagger&= z^{l+1} \left[ (h^{R}_{0})^{-1} {\mathfrak M}^+_{z^{-1}}(h^{R}_{0})\right] \cdots \left[ (h^{R}_{2l})^{-1}{\mathfrak M}^+_{z^{-1}}(h^{ R}_{2l})\right] (h^{R}_{2l+1})^{-1},
\end{align}
\begin{align}
 (\varphi^R_1)^{(2l)}(z)&= z^{l} \left[ (h^{R}_{0})^{-1} {\mathfrak M}^+_{z^{-1}}(h^{\ R}_{0})\right]  \cdots\left[ (h^{R}_{2l-1})^{-1}{\mathfrak M}^+_{z^{-1}}( h^{\ R}_{2l-1})\right] (h^{R}_{2l})^{-1},\\
 (\varphi^R_1)^{(2l+1)}(z)&= z^{-(l+1)} \left[ (h^{L}_{0})^{-1} {\mathfrak M}^-_{z}(h^{ L}_{0})\right] \cdots \left[ (h^{L}_{2l})^{-1}] {\mathfrak M}^-_{z}(h^{\ L}_{2l})\right] (h^{L}_{2l+1})^{-1},\\
 [(\varphi^R_2)^{(2l)}(\bar z^{-1})]^\dagger&=z^{-l}\left[ {\mathfrak M}^-_{z}( h^{R}_{2l-1})(h^{R}_{2l-1})^{-1}\right] \cdots  \left[{\mathfrak M}^-_{z}( h^{ R}_{0})(h^{R}_{0})^{-1}\right],\\
 [(\varphi^R_2)^{(2l+1)}(\bar z^{-1})]^\dagger&= z^{l+1} \left[ {\mathfrak M}^+_{z^{-1}}(h^{\ L}_{2l})(h^{L}_{2l})^{-1}\right]  \cdots \left[ {\mathfrak M}^+_{z^{-1}}(h^{L}_{0}) (h^{L}_{0})^{-1}\right].
\end{align}
\end{theorem}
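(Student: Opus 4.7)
The strategy is to read Theorem \ref{elteorema} as a telescoping consequence of Proposition \ref{clave teorema}, which is itself the scalar-$w$ specialization of Corollary \ref{KSnoKS}. The eight identities of that proposition come in four natural pairs, and each pair yields one induction that proves two of the eight formulas simultaneously; peeling off two units of the index on the left polynomial introduces exactly two Miwa-shifted-over-unshifted quasi-norm factors on the right, and the base case is the trivial observation $(\varphi^L_1)^{(0)} = (\varphi^R_2)^{(0)} = (\varphi^L_2)^{(0)} = (\varphi^R_1)^{(0)} = \mathbb{I}$ coming from the fact that the $(0,0)$ blocks of $S_1, S_2, Z_1, Z_2$ equal $\mathbb{I}$.

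I illustrate with the first formula. Set $w = z^{-1}$ in the identity
\[
{\mathfrak M}^+_w(h^L_{2l-1})(h^L_{2l-1})^{-1}[(\varphi^R_2)^{(2l-1)}(\bar w)]^\dagger = (\varphi^L_1)^{(2l)}(w^{-1})
\]
of Proposition \ref{clave teorema}. To replace the factor $[(\varphi^R_2)^{(2l-1)}(\bar w)]^\dagger$ with something involving $(\varphi^L_1)^{(2l-2)}$, apply the companion identity from the last row of Proposition \ref{clave teorema} with $l \mapsto l-1$, namely
\[
{\mathfrak M}^+_w(h^L_{2l-2})(h^L_{2l-2})^{-1}(\varphi^L_1)^{(2l-2)}(w^{-1}) = w\,[(\varphi^R_2)^{(2l-1)}(\bar w)]^\dagger,
\]
solve for $[(\varphi^R_2)^{(2l-1)}(\bar w)]^\dagger$ and substitute. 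Since $w$ here is a scalar and commutes with all matrix blocks, and $[(\varphi^R_2)^{(2l-1)}(\bar w)]^\dagger$ is understood as a polynomial in $w$ (the dagger absorbing complex conjugation into the coefficients), this gives a clean recursion
\[
(\varphi^L_1)^{(2l)}(z) = z \bigl[{\mathfrak M}^+_{z^{-1}}(h^L_{2l-1})(h^L_{2l-1})^{-1}\bigr]\bigl[{\mathfrak M}^+_{z^{-1}}(h^L_{2l-2})(h^L_{2l-2})^{-1}\bigr]\,(\varphi^L_1)^{(2l-2)}(z).
\]
Iterating $l$ times and using $(\varphi^L_1)^{(0)}(z) = \mathbb{I}$ produces the factor $z^l$ and the full product stated in the theorem.

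The remaining seven formulas follow by exactly the same mechanism, using the appropriate paired identities of Proposition \ref{clave teorema}: the pair involving ${\mathfrak M}^-_w$ and the $h^R_k$'s proves the expressions for $(\varphi^L_1)^{(2l+1)}$ and $[(\varphi^R_2)^{(2l+1)}(\bar z^{-1})]^\dagger$; the pair involving $(\varphi^R_1)^{(2l)}$ and $[(\varphi^L_2)^{(2l)}]^\dagger$ proves the right-handed even cases; and the fourth pair does the right-handed odd cases. The indexing of the quasi-norms (left vs.\ right, even vs.\ odd) and the direction of the product (which factor appears leftmost) are dictated in each case by which of the four polynomials sits on the left of the recursion step.

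The main obstacle is purely bookkeeping: keeping straight which identity of Proposition \ref{clave teorema} to apply at which stage, tracking the scalar powers of $z$ (and of $z^{-1}$) that accumulate from the factor $w$ or $\bar w$ appearing on the right-hand side of each identity, and verifying that the induction terminates correctly at the constant $\mathbb{I}$. No deeper analytic or algebraic input is needed beyond Proposition \ref{clave teorema}. The one subtle point worth flagging is the role of the daggered polynomial $[\,\cdot\,(\bar w)\,]^\dagger$: it must be read as a Laurent polynomial in $w$ with coefficients daggered, so that substituting $w = z^{\pm 1}$ never introduces a spurious $\bar z$ and the resulting product is a genuine matrix Laurent polynomial in $z$.
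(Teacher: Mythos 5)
Your proposal is correct and follows essentially the same route as the paper: the paper's own proof also obtains the theorem by iterating the identities of Proposition \ref{clave teorema} with $w_1=w_2=z$, the only cosmetic difference being that the paper carries the residual degree-one polynomial factors (e.g.\ $(\varphi^L_1)^{(1)}$, $[(\varphi^R_2)^{(1)}(\bar z^{-1})]^\dagger$) to the end and evaluates them explicitly from the moment matrix, whereas you absorb that last step into one more application of the proposition at $l=0$ together with the degree-zero normalization $\mathbb{I}$. The bookkeeping points you flag (accumulation of scalar powers of $z$ and the daggered-coefficient reading of $[\,\cdot\,(\bar w)]^\dagger$) are exactly the ones that matter.
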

\begin{proof}
  See Appendix \ref{proofs}
\end{proof}
This is the furthest we have managed to take our $\tau$ description of the MOLPUC search.
The reader may have noticed that forgetting about the $R$ and $L$ labels and the noncommutativity of the matrix norms
we would be left with a quotient of Miwa shifted and non shifted norms which in the scalar case
coincides whith the quotient of the determinants of the truncated Miwa shifted and non shifted moment matrices.


\begin{appendices}
\section{Proofs}\label{proofs}
\paragraph{ Proposition \ref{Pro-existence LU}}
\begin{proof}
Assuming  $\det A\neq 0$ for any block matrix $M=\left(\begin{smallmatrix}
    A & B\\
    C & D
   \end{smallmatrix}\right)$ we can write in terms of Schur complements
\begin{align*}
M&=\begin{pmatrix}
    \mathbb{I} & 0\\
    CA^{-1} & \mathbb{I}
   \end{pmatrix}
\begin{pmatrix}
    A & 0\\
    0 & M\diagup A
   \end{pmatrix}
\begin{pmatrix}
    \mathbb{I} & A^{-1}B\\
    0 & \mathbb{I}
   \end{pmatrix}.
\end{align*}
Thus, as  $g^H$ is given for a  matrix quasi-definite measure
\begin{align*}
( g^H)^{[l+1]}&=\left(\begin{BMAT}{c|c}{c|c}
    \mathbb{I}_{l\times l}               &    0    \\
    v_{[l]}      &        \mathbb{I}      \\
    \end{BMAT}\right)
\left(\begin{BMAT}{c|c}{c|c}
( g^H)^{[l]}       & 0       \\
       0                 &              (g^H)^{[l+1]}\diagup (g^H)^{[l]}     \\
      \end{BMAT}\right)
\left(\begin{BMAT}{c|c}{c|c}
\mathbb{I}_{l\times l}           &  w^{[l]}      \\
           0                   &  \mathbb{I}      \\
      \end{BMAT}\right),
\end{align*}
 where $v_{[l]}=( v_0,    \dots,  v_{l-1})$ and $w^{[l]}=\left(\begin{smallmatrix} w^0 \\ w^1 \\ \vdots \\ w^{l-1} \end{smallmatrix}\right)$
are two matrix vectors.
Applying the  same factorization   to $(g^H)^{[l]}$
we get
\begin{align*}
(g^H)^{[l+1]}&=\left(\begin{BMAT}{c|cc}{c|cc}
     \mathbb{I}_{(l-1)\times (l-1)}              &     0       &  0  \\
      r_{[l-1]}                  & \mathbb{I} &  0    \\
      v'_{[l-1]}                  &      *      &  \mathbb{I}    \\
      \end{BMAT}\right)
\left(\begin{BMAT}{c|cc}{c|cc}
   (g^H)^{[l-1]}                        &      0     &    0\\
      0              & (g^H)^{[l]}\diagup (g^H)^{[l-1]} &  0    \\
      0              &          0      &  (g^H)^{[l+1]}\diagup (g^H)^{[l]}
      \end{BMAT}\right)
\left(\begin{BMAT}{c|cc}{c|cc}
   \mathbb{I}_{(l-1)\times (l-1)}                & s^{[l-1]}  &  w'^{[l-1]}  \\
             0              & \mathbb{I} &    * \\
              0              &      0      &  \mathbb{I}    \\
      \end{BMAT}\right).
\end{align*}
Finally, the iteration of these factorizations leads to
\begin{align}\label{Gss}
(g^H)^{[l+1]}&=\begin{pmatrix}
  \mathbb{I}&     0     &\dots&0\\
         *   &\mathbb{I}&    \ddots  &\vdots\\
\vdots    &\ddots          &\ddots&0\\
*       &\dots   &*   &\mathbb{I}\\
\end{pmatrix}
\begin{pmatrix}
(g^H)^{[1]}\diagup(g^H)^{[0]}&0&  \dots&0\\
     0        &       (g^H)^{[2]}\diagup(g^H)^{[1]}&    \ddots &  \vdots  \\
     \vdots &  \ddots          & \ddots&0   \\
                  0    &      \dots      &    0 &(g^H)^{[l+1]}\diagup(g^H)^{[l]}\\
\end{pmatrix}
\begin{pmatrix}
  \mathbb{I}&   * &\dots&*\\
          0  &\mathbb{I}&  \ddots&\vdots\\
           \vdots & \ddots& \ddots&*\\
            0&\dots  & 0&\mathbb{I}\\
\end{pmatrix},
\end{align}
for $H=L,R$.
Since this would have been valid for any $l$ it would also hold for the direct limit $\lim\limits_{\longrightarrow}(g^H)^{[l]}$; i. e.,  for $g^H$ with $H=L,R$.
\end{proof}

\paragraph{ Lemma \ref{lemma-alternative}}
\begin{proof}
Notice is that the third equality of each expression is  the second one written in terms of
Schur complements. Therefore, just the first and second equalities of each expression need to be proven.  The $LU$ factorization leads to
\begin{align*}
S_1^{[\geq l, l]}(g^L)^{[l]}&= -S_1^{[\geq l]}(g^L)^{[\geq l, l]},\\
S_2^{[l]}((g^L)^{[l]})^{-1}&=S_1^{[l]}
  \end{align*}
from where the result  follows immediately. As an illustration let us derive the first expression; in the one hand,
\begin{align*}\begin{aligned}
(\varphi^L)_{1}^{(l)}(z)&=\sum_{j=0}^l(S_1)_{l,j}\chi^{(j)}\\&=\chi^{(l)}+\big(S_1^{[\geq l, l]}\chi^{[l]}\big]^{(0)}\\
&=\chi^{(l)}-\sum_{m=0}^{l-1}\sum_{j=0}^l((g^L)^{[\geq l, l]})_{0,m}(((g^{L})^{[l]})^{-1})_{m,j}\chi^{(j)}\\&=\chi^{(l)}-\begin{pmatrix}
    (g^L)_{l,0}&(g^L)_{l,1}&\cdots &(g^L)_{l,l-1}
  \end{pmatrix}((g^{L})^{[l]})^{-1}\chi^{[l]},
\end{aligned}\end{align*}
and on the other
\begin{align*}\begin{aligned}
(\varphi^L)_{1}^{(l)}(z)&=\sum_{m=0}^{l}\sum_{j=0}^l(S_2^{[l+1]})_{l,m}(((g^L)^{[l+1]})^{-1})_{m,j}\chi^{(j)}\\ &=(S_2^{[l+1]})_{l,l}([(g^L)^{[l+1]}]^{-1})_{l,j}\chi^{(j)}\\&=(S_2)_{ll}\begin{pmatrix}
    0 &0 &\dots &0& \mathbb{I}
  \end{pmatrix}  ((g^L)^{[l+1]})^{-1}\chi^{[l+1]}.
\end{aligned}
\end{align*}

Proceeding in a similar manner one gets all the other identities.\footnote{It is interesting to notice that in order to prove the right case expressions,
once we have worked out the left ones, there is no need to go over the same calculations again. It is enough to realize that
\begin{align*}
 \varphi_1^R(\bar{z})&=\chi^\dagger(z) Z_1 \,\, \mbox{   same structure as   }\,\, [\varphi_2^L(z)]^\dagger=\chi^\dagger(z) S_2^{-1},\\
 \varphi_1^L(z)&=S_1\chi(z) \,\, \mbox{   same structure as   } \,\,  [\varphi_2^R(\bar{z}]^\dagger=Z_2^{-1}\chi(z).
\end{align*}}
\end{proof}
\paragraph{ Theorem \ref{th:CD}}
\begin{proof}
 We will only prove the first equation as the other three are proven in  a similar way.
 In particular, we first prove the  second equality of the first equation. We are interested in evaluating the expression
 \begin{align*}
  [(\varphi^L_2)^{[2k]}(z)]^\dagger\left[(J ^L)^{[2k]}\right](\varphi^L_{1})^{[2k]}(z')
 \end{align*}
in two different ways. In the one hand, we could first let $J$ act to the right. Truncating the expression
\begin{align*}
 J^L \varphi^L_{1}(z)=z\varphi^L_{1}(z)
\end{align*}
we  have
\begin{multline*}
\left[(J^L)^{[2k]}\right](\varphi^L_{1})^{[2k]}(z')\\=
\begin{pmatrix}
z'(\varphi^L_{1})^{(0)}(z')\\
z'(\varphi^L_{1})^{(1)}(z') \\
\vdots \\
-\alpha_{1,2k-1}^L h_{2k-2}^{R}[h_{2k-3}^{R}]^{-1}(\varphi^L_{1})^{(2k-3)}(z')-\alpha_{1,2k-1}^L[\alpha_{2,2k-2}^R]^\dagger(\varphi^L_{1})^{(2k-2)}(z')-\alpha_{1,2k}^L(\varphi^L_{1})^{(2k-1)}(z')\\
h_{2k-1}^{R}[h_{2k-3}^{R}]^{-1}(\varphi^L_{1})^{(2k-3)}(z')+h_{2k-1}^{R}[h_{2k-2}^{R}]^{-1}[\alpha_{2,2k-2}^R]^\dagger(\varphi^L_{1})^{(2k-2)}(z')-[\alpha_{2,2k-1}^R]^\dagger\alpha_{1,2k}^L(\varphi^L_{1})^{(2k-1)}(z')
\end{pmatrix}.
\end{multline*}
But
{\small\begin{align*}
 z'(\varphi^L_{1})^{(2k-2)}(z')=&-\alpha_{1,2k-1}^L h_{2k-2}^{R}[h_{2k-3}^{R}]^{-1}(\varphi^L_{1})^{(2k-3)}(z')-\alpha_{1,2k-1}^L[\alpha_{2,2k-2}^R]^\dagger(\varphi^L_{1})^{(2k-2)}(z')-\alpha_{1,2k}^L(\varphi^L_{1})^{(2k-1)}(z')+(\varphi^L_{1})^{(2k)}(z')\\
 z'(\varphi^L_{1})^{(2k-1)}(z')=&h_{2k-1}^{R}[h_{2k-3}^{R}]^{-1}(\varphi^L_{1})^{(2k-3)}(z')+h_{2k-1}^{R}[h_{2k-2}^{R}]^{-1}[\alpha_{2,2k-2}^R]^\dagger(\varphi^L_{1})^{(2k-2)}(z')\\
 &-[\alpha_{2,2k-1}^R]^\dagger\alpha_{1,2k}^L(\varphi^L_{1})^{(2k-1)}(z')+[\alpha_{2,2k-1}^R]^\dagger(\varphi^L_{1})^{(2k)}(z'),
\end{align*}}
so that we obtain
\begin{multline*}
  [(\varphi^L_2)^{[2k]}(z)]^\dagger\left[J_L^{[2k]}\right](\varphi^L_{1})^{[2k]}(z')\\
=
\left([(\varphi^L_2)^{(0)}(z)]^\dagger,
[(\varphi^L_2)^{(1)}(z)]^\dagger,\dots,
[(\varphi^L_2)^{(2k-1)}(z)]^\dagger\right)
\cdot
\begin{pmatrix}
z'(\varphi^L_{1})^{(0)}(z')\\
z'(\varphi^L_{1})^{(1)}(z') \\
\vdots \\
z'(\varphi^L_{1})^{(2k-2)}(z')-(\varphi^L_{1})^{(2k)}(z')\\
z'(\varphi^L_{1})^{(2k-1)}(z')-[\alpha_{2,2k-1}^R]^\dagger(\varphi^L_{1})^{(2k)}(z')
\end{pmatrix}.
\end{multline*}
On the other hand, we could let $J^L$ act to the left and remember that
\begin{align*}
[(\varphi^L_2)(z)]^\dagger J^L=\bar{z}^{-1} [(\varphi^L_2)(z)]^\dagger.
\end{align*}
So, truncating the expression as we did when $J^L$ acted to the right, we are  left with
\begin{multline*}
[(\varphi^L_2)^{[2k]}(z)]^\dagger (J^L)^{[2k]}\\=
\left(
\bar{z}^{-1}[(\varphi^L_2)^{(0)}(z)]^\dagger,
\dots,
\bar{z}^{-1}[(\varphi^L_2)^{(2k-2)}(z)]^\dagger,
[(\varphi^L_2)^{(2k-2)}(z)]^\dagger(-\alpha_{1,2k}^L) +[(\varphi^L_2)^{(2k-1)}(z)]^\dagger(-[\alpha_{2,2k-1}^R]^\dagger\alpha_{1,2k}^L)
\right).
\end{multline*}
But we also have
\begin{align*}
 \bar{z}^{-1}[(\varphi^L_2)^{(2k-1)}(z)]^\dagger=&
 [(\varphi^L_2)^{(2k-2)}(z)]^\dagger(-\alpha_{1,2k}^L) +[(\varphi^L_2)^{(2k-1)}(z)]^\dagger(-[\alpha_{2,2k-1}^R]^\dagger\alpha_{1,2k}^L)\\
 &+[(\varphi^L_2)^{(2k)}(z)]^\dagger[-\alpha_{1,2k+1}^L h_{2k}^R (h_{2k-1}^R)^{-1}]+
 [(\varphi^L_2)^{(2k+1)}(z)]^\dagger[h_{2k+1}^R (h_{2k-1}^R)^{-1}].
\end{align*}
So, inserting it into the equation we are interested in, we have
\small\begin{multline*}
[(\varphi^L_2)^{[2k]}(z)]^\dagger\left[(J^L)^{[2k]}\right](\varphi^L_{1})^{[2k]}(z')
\\=
\left(
\bar{z}^{-1}[(\varphi^L_2)^{(0)}(z)]^\dagger,
\dots,
\bar{z}^{-1}[(\varphi^L_2)^{(2k-1)}(z)]^\dagger
[(\varphi^L_2)^{(2k)}(z)]^\dagger \alpha_{1,2k+1}^L h_{2k}^R (h_{2k-1}^R)^{-1}
-[(\varphi^L_2)^{(2k+1)}(z)]^\dagger h_{2k+1}^R (h_{2k-1}^R)^{-1}
\right)
\begin{pmatrix}
(\varphi^L_{1})^{(0)}(z')\\
(\varphi^L_{1})^{(1)}(z') \\
\vdots \\
(\varphi^L_{1})^{(2k-1)}(z')
\end{pmatrix}.
\end{multline*}

Hence, we are left with the result we wanted to prove
\begin{align*}
\bar{z}^{-1}[(\varphi^L_2)^{[2k]}(z)]^\dagger\cdot(\varphi^L_{1})^{[2k]}(z')
+\left[
[(\varphi^L_2)^{(2k)}(z)]^\dagger \alpha_{1,2k+1}^L h_{2k}^R (h_{2k-1}^R)^{-1}
-[(\varphi^L_2)^{(2k+1)}(z)]^\dagger h_{2k+1}^R (h_{2k-1}^R)^{-1}
\right](\varphi^L_{1})^{(2k-1)}(z')\\
=[(\varphi^L_2)^{[2k]}(z)]^\dagger\cdot z'(\varphi^L_{1})^{[2k]}(z')
-\left[
[(\varphi^L_2)^{(2k-2)}(z)]^\dagger
+[(\varphi^L_2)^{(2k-1)}(z)]^\dagger[\alpha_{2,2k-1}^{R}]^\dagger
\right](\varphi^L_{1})^{(2k)}(z').
\end{align*}
Finally,  the first equality in the first equation follows from the just proven result and Proposition \ref{sss}.
As was said at the beginning of this proof the rest of the relations are proven in the exact same way.
\end{proof}

\paragraph{ Proposition \ref{integrable}}
\begin{proof}
First of all we have
\begin{align*}
{\frac{\partial}{\partial t^L_{j,a}}} W_0(t^L)&=\left[E_{a,a}\chi(\Upsilon)^{(j)} \right]W_0(t^L) & &\Longrightarrow&
{\partial_{L,j}}W_0(t^L)&=\chi(\Upsilon)^{(j)}W_0(t^L),\\
{\frac{\partial}{\partial t^R_{j,a}}}V_0(t^R) &=V_0(t^R)\left[\chi(\Upsilon^{-1})^{(j)} E_{a,a} \right] &&\Longrightarrow&
{\partial_{R,j}}V_0(t^R) &=V_0(t^R) \chi(\Upsilon^{-1})^{(j)}.
\end{align*}
The previous derivatives make sense and are well defined since the two factors in the results commute. Hence, we have
\begin{align*}
{\frac{\partial}{\partial t^L_{j,a}}} W_1^L(t)&=\left[ \left({\frac{\partial}{\partial t^L_{j,a}}} S_1(t)\right) S_1(t)^{-1}+ S_1(t)\left[E_{a,a}\chi(\Upsilon)^{(j)} \right]S_1(t)^{-1}\right]W_1^L(t),\\
{\frac{\partial}{\partial t^R_{j,a}}} W_1^L(t)&=\left[\left({\frac{\partial}{\partial t^R_{j,a}}} S_1(t) \right)S_1(t)^{-1}\right]W_1^L(t),\\
{\frac{\partial}{\partial t^R_{j,a}}} W_2^L(t)&=\left[ \left({\frac{\partial}{\partial t^R_{j,a}}} S_2(t)\right) S_2(t)^{-1}- S_2(t)\left[E_{a,a}\chi(\Upsilon)^{(j)} \right]S_2(t)^{-1}\right]W_2^L(t),\\
{\frac{\partial}{\partial t^L_{j,a}}} W_2^L(t)&=\left[\left({\frac{\partial}{\partial t^L_{j,a}}} S_2(t) \right)S_2(t)^{-1}\right]W_2^L(t),
\end{align*}
\begin{align*}
{\frac{\partial}{\partial t^L_{j,a}}} W_2^R(t)&=W_2^R(t)\left[ Z_2(t)^{-1} \left({\frac{\partial}{\partial t^L_{j,a}}}Z_2(t) \right)-Z_2(t)^{-1}\left[E_{a,a}\chi(\Upsilon^{-1})^{(j)} \right]Z_2(t)\right],\\
{\frac{\partial}{\partial t^R_{j,a}}}W_2^R(t)&=W_2^R(t)\left[ Z_2(t)^{-1} \left({\frac{\partial}{\partial t^R_{j,a}}}Z_2(t) \right)\right],\\
{\frac{\partial}{\partial t^R_{j,a}}} W_1^R(t)&=W_1^R(t)\left[ Z_1(t)^{-1} \left({\frac{\partial}{\partial t^R_{j,a}}}Z_1(t) \right)+Z_1(t)^{-1}\left[\chi(\Upsilon^{-1})^{(j)}E_{a,a} \right]Z_1(t)\right],\\
{\frac{\partial}{\partial t^L_{j,a}}} W_1^R(t)&=W_1^R(t)\left[ Z_1(t)^{-1} \left({\frac{\partial}{\partial t^L_{j,a}}}Z_1(t) \right)\right].
\end{align*}
Now, if we let ${\frac{\partial}{\partial t^H_{j,a}}}$ act on both sides of
the first expression in \eqref{ye} we obtain
\begin{align*}
 \left({\frac{\partial}{\partial t^L_{j,a}}} S_1(t)\right) S_1(t)^{-1}+ S_1(t)\left[E_{a,a}\chi(\Upsilon)^{(j)} \right]S_1(t)^{-1}&=\left({\frac{\partial}{\partial t^L_{j,a}}} S_2(t)\right) S_2(t)^{-1},\\
 \left({\frac{\partial}{\partial t^R_{j,a}}} S_2(t) \right)S_2(t)^{-1}- S_2(t)\left[\chi(\Upsilon)^{(j)} E_{a,a}\right]S_2(t)^{-1}&=\left({\frac{\partial}{\partial t^R_{j,a}}} S_1(t)\right) S_1(t)^{-1}.
\end{align*}
Which implies
\begin{align*}
\left({\frac{\partial}{\partial t^L_{j,a}}} S_2(t) \right)S_2(t)^{-1}&=\left( S_1(t)\left[E_{a,a}\chi(\Upsilon)^{(j)} \right]S_1(t)^{-1}\right)_{+},\\
\left({\frac{\partial}{\partial t^L_{j,a}}} S_1(t) \right)S_1(t)^{-1}&=-\left( S_1(t)\left[E_{a,a}\chi(\Upsilon)^{(j)} \right]S_1(t)^{-1}\right)_{-},\\
\left({\frac{\partial}{\partial t^R_{j,a}}} S_2(t) \right)S_2(t)^{-1}&=\left( S_2(t)\left[E_{a,a}\chi(\Upsilon)^{(j)} \right]S_2(t)^{-1}\right)_{+},\\
\left({\frac{\partial}{\partial t^R_{j,a}}} S_1(t) \right)S_1(t)^{-1}&=-\left( S_2(t)\left[E_{a,a}\chi(\Upsilon)^{(j)} \right]S_2(t)^{-1}\right)_{-}.
\end{align*}
Similarly let ${\frac{\partial}{\partial t^H_{j,a}}}$ act on both sides of the second expression in \eqref{ye}
\begin{align*}
    Z_2(t)^{-1} \left({\frac{\partial}{\partial t^L_{j,a}}} Z_2(t) \right)-Z_2(t)^{-1}\left[E_{a,a}\chi(\Upsilon^{-1})^{(j)} \right]Z_2(t)&=Z_1^{-1} \left({\frac{\partial}{\partial t^L_{j,a}}} Z_1(t) \right),\\
    Z_1(t)^{-1} \left({\frac{\partial}{\partial t^R_{j,a}}} Z_1(t) \right)+Z_1(t)^{-1}\left[\chi(\Upsilon^{-1})^{(j)}E_{a,a} \right]Z_1(t)&=Z_2^{-1} \left({\frac{\partial}{\partial t^R_{j,a}}}Z_2(t) \right).
\end{align*}
Which means
\begin{align*}
Z_2(t)^{-1} \left({\frac{\partial}{\partial t^L_{j,a}}} Z_2(t) \right)&=\left( Z_2(t)^{-1}\left[E_{a,a}\chi(\Upsilon^{-1})^{(j)} \right]Z_2(t)\right)_{-},\\
Z_1(t)^{-1} \left({\frac{\partial}{\partial t^L_{j,a}}} Z_1(t) \right)&=-\left( Z_2(t)^{-1}\left[E_{a,a}\chi(\Upsilon^{-1})^{(j)} \right]Z_2(t)\right)_{+},\\
Z_2(t)^{-1} \left({\frac{\partial}{\partial t^R_{j,a}}} Z_2(t) \right)&=\left( Z_1(t)^{-1}\left[E_{a,a}\chi(\Upsilon^{-1})^{(j)} \right]Z_1(t)\right)_{-},\\
Z_1(t)^{-1} \left({\frac{\partial}{\partial t^R_{j,a}}} Z_1(t) \right)&=-\left( Z_1(t)^{-1}\left[E_{a,a}\chi(\Upsilon^{-1})^{(j)} \right]Z_1(t)\right)_{+}.
\end{align*}
With all these results is easy to prove both the linear systems for the wave functions and the Lax equations.
For the flows of the intertwining operators we use these relations together with \eqref{Cpt}, the first expression for the right times and the second one for the left times; then just recall \eqref{def:ZS}. Finally,
the Zakharov--Shabat equations are just the compatibility conditions of the Lax equations.
\end{proof}

\paragraph{ Proposition \ref{clave teorema}}
\begin{proof}
First we use \eqref{norms1} and apply a Miwa shift
\begin{align*}
   {\mathfrak M}^+_w(h^R_{2l-1})&=\oint_{\T} {\mathfrak M}^+_w((\varphi^{ L}_1)^{(2l-1)})(u) \frac{\d {\mathfrak M}^+_w(\mu)(u)}{\operatorname {i}u} u^{l}.
\end{align*}
Second, from Theorem \ref{th: Miwa.diag.CD}  ($w_k=w$ for $k=1,\dots,m$) we get
\begin{align*}
\Big(\Big[(\varphi_2^L)^{(2l-1)}(\bar w)\Big]^\dagger\Big)^{-1} K^{L,[2l]}(\bar w,u)=
{\mathfrak M}^+_w \Big[(\varphi_1^L)^{(2l-1)}\Big](u),
\end{align*}
so that
\begin{align*}
\big[(\varphi^L_2)^{(2l-1)}\big(\bar{w}\big)\big]^\dagger {\mathfrak M}^+_w(h^R_{2l-1}) &=\oint_{\T}K^{L,[2l]}(\bar w,u) \frac{\d {\mathfrak M}^+_w(\mu)(u)}{\operatorname {i}u} u^{l}\\&=
\oint_{\T}K^{L,[2l]}(\bar w,u) (1-wu)\frac{\d \mu(u)}{\operatorname {i}u} u^{l} &\text{by \eqref{eq:miwa.scalar.measure}}\\&=
\oint_{\T}\Big((\varphi_1^R)^{(2l)}(w^{-1})h_{2l}^R (h^R_{2l-1})^{-1}(\varphi_1^L)^{(2l-1)}(u)&\text{by \eqref{quer1}}\\
&\hspace*{5cm}-(\varphi_1^R)^{(2l-1)}(w^{-1})(\varphi_1^L)^{(2l)}(u)\Big)\frac{\d \mu(u)}{\operatorname {i}u} u^{l}\\&=
\oint_{\T}(\varphi^R_1)^{(2l)}\big(w^{-1}\big)h^{ R}_{2l}h^{ R}_{2l-1}(\varphi^L_1)^{(2l-1)}(u)\frac{\d \mu(u)}{\operatorname {i}u} u^{l}&\text{by \eqref{ort}}\\&=
(\varphi^R_1)^{(2l)}\big(w^{-1}\big)h^{ R}_{2l}.&\text{by \eqref{norms1}}
\end{align*}
 This same procedure applies for the proof of the remaining formulae.
\end{proof}

\paragraph{ Theorem \ref{elteorema}}
\begin{proof}
Since we can take any value for $w_1,w_2$ let us consider them our variable
and name them $w_1=w_2=z$. Now by iteration of the formulae in Proposition \ref{clave teorema} we get
\begin{align*}
  [(\varphi^R_2)^{(2l+1)}(\bar z^{-1})]^\dagger&= {\mathfrak M}^+_{z^{-1}}(h^{ L}_{2l}) (h^{L}_{2l})^{-1} \cdots {\mathfrak M}^+_{z^{-1}}(h^{ L}_{1}) (h^{L}_{1})^{-1} [(\varphi^R_2)^{(1)}(\bar z^{-1})]^\dagger z^l,\\
  (\varphi^L_1)^{(2l)}(z)&= {\mathfrak M}^+_{z^{-1}}(h^ L_{2l-1}) (h^{L}_{2l-1})^{-1}  \cdots {\mathfrak M}^+_{z^{-1}}(h^{L}_{1})(h^{L}_{1})^{-1} {\mathfrak M}^+_{z^{-1}}(h^{L}_{0})(h^{L}_{0})^{-1}z^l,\\
  [(\varphi^L_2)^{(2l+1)}(\bar z^{-1})]^\dagger&= z^{l} [(\varphi^L_2)^{(1)}(\bar z^{-1})]^\dagger {\mathfrak M}^+_{z^{-1}}(h^{R}_{1})\cdots (h^{R}_{2l})^{-1} {\mathfrak M}^+_{z^{-1}}(h^{R}_{2l}) (h^{R}_{2l+1})^{-1},\\
  (\varphi^R_1)^{(2l)}(z)&= z^{l} [h^{R}_{0}]^{-1}{\mathfrak M}^z_2(h^{ R}_{0}) (h^{R}_{1})^{-1} {\mathfrak M}^+_{z^{-1}}(h^{R}_{1})\cdots (h^{R}_{2l-2})^{-1}{\mathfrak M}^z_2(h^{R}_{2l-2}) (h^{R}_{2l-1})^{-1}{\mathfrak M}^z_2(h^{R}_{2l-1}) (h^{R}_{2l})^{-1},\\
  (\varphi^L_1)^{(2l+1)}(z)&= {\mathfrak M}^-_{z}(h^{R}_{2l})(h^{R}_{2l})^{-1} {\mathfrak M}^-_{z}(h^{R}_{2l-1}) (h^{R}_{2l-1})^{-1}\cdots {\mathfrak M}^-_{z}(h^{R}_{1}) (h^{R}_{1})^{-1}(\varphi^L_1)^{(1)}(z) z^{-l},\\
  [(\varphi^R_2)^{(2l)}(\bar z^{-1})]^\dagger&= {\mathfrak M}^-_{z}(h^{R}_{2l-1}) (h^{R}_{2l-1})^{-1}  \cdots {\mathfrak M}^-_{z}(h^{R}_{0})(h^{R}_{0})^{-1} z^{-l},\\
  (\varphi^R_1)^{(2l+1)}(z)&= z^{-l} (\varphi^R_1)^{(1)}(z) {\mathfrak M}^-_{z}(h^{L}_{1})\cdots (h^{L}_{2l-1})^{-1} {\mathfrak M}^-_{z}(h^{L}_{2l-1})(h^{L}_{2l})^{-1} {\mathfrak M}^-_{z}(h^{L}_{2l})(h^{L}_{2l+1})^{-1},\\
  [(\varphi^L_2)^{(2l)}(\bar z^{-1})]^\dagger&= z^{-l} (h^{L}_{0})^{-1} {\mathfrak M}^-_{z}(h^{L}_{0})\cdots (h^{L}_{2l-1})^{-1} {\mathfrak M}^-_{z}(h^{L}_{2l-1}) (h^{L}_{2l})^{-1}.
\end{align*}
Finally, noticing that
\begin{align*}
 {\mathfrak M}^+_{z^{-1}}( h^{ L}_0)={\mathfrak M}^+_{z^{-1}}(h^{R}_0)&= \mathbb{I}-\frac{1}{z}(g^L)_{01}=\mathbb{I}-\frac{1}{z}(g^R)_{10},\\
 {\mathfrak M}^-_{z}( h^{L}_0)={\mathfrak M}^-_{z}(h^{R}_0)&= \mathbb{I}-z(g^L)_{10}=\mathbb{I}-z(g^R)_{01},
\end{align*}
we get
\begin{align*}
[(\varphi^R_2)^{(1)}(\bar z^{-1})]^\dagger&=z(\mathbb{I}-\frac{1}{z}(g^R)_{10})=z{\mathfrak M}^+_{z^{-1}}(h^{L}_0),\\
[(\varphi^L_2)^{(1)}(\bar z^{-1})]^\dagger&=z(\mathbb{I}-\frac{1}{z}(g^L)_{01})(h_1^R)^{-1}=z{\mathfrak M}^+_{z^{-1}}(h^{R}_0 )(h_1^{R})^{-1},\\
(\varphi^L_1)^{(1)}(z)&=\frac{1}{z}(\mathbb{I}-z(g^L)_{10})=\frac{1}{z} {\mathfrak M}^-_{z}(h^{R}_0),\\
(\varphi^R_1)^{(1)}(z)&=\frac{1}{z}(\mathbb{I}-z(g^R)_{01})(h_1^L)^{-1}=\frac{1}{z}{\mathfrak M}^-_{z}(h^{L}_0) (h_1^{L})^{-1},
\end{align*}
and the result is proven.
\end{proof}

\section{Explicit coefficients of $J$ and $C$}\label{explicit}
\begin{pro}
The following expressions correspond to the block non zero elements of $(J^H)^{\pm1}$
  \begin{align*}
&(J^L)_{2k,2k-1}=-h^L_{2k} \alpha_{1,2k+1}^{R} (h^R_{2k-1})^{-1},                      & &(J^R)_{2k,2k-1}=h^R_{2k} [\alpha_{2,2k+1}^L]^\dagger(h^L_{2k-1})^{-1},   \\
&(J^L)_{2k,2k}=-h^L_{2k} \alpha_{1,2k+1}^R [\alpha_{2,2k}^L]^\dagger(h^L_{2k})^{-1},     & &(J^R)_{2k,2k}=-h^R_{2k} [\alpha_{2,2k+1}^L]^\dagger \alpha_{1,2k}^R (h^R_{2k})^{-1} ,  \\
&(J^L)_{2k,2k+1}=-\alpha_{1,2k+2}^L ,                                                  & &(J^R)_{2k,2k+1}=-[\alpha_{2,2k+2}^R]^\dagger,\\
&(J^L)_{2k,2k+2}=\mathbb{I},                                                           & &(J^R)_{2k,2k+2}=\mathbb{I},\\
&(J^L)_{2k+1,2k-1}=h^R_{2k+1} (h^R_{2k-1})^{-1},                                       & &(J^R)_{2k+1,2k-1}=h^L_{2k+1} (h^L_{2k-1})^{-1},   \\
&(J^L)_{2k+1,2k}=h^R_{2k+1}[\alpha_{2,2k}^L]^\dagger (h^L_{2k})^{-1},                  & &(J^R)_{2k+1,2k}=h^L_{2k+1}\alpha_{1,2k}^R (h^R_{2k})^{-1},   \\
&(J^L)_{2k+1,2k+1}=-[\alpha_{2,2k+1}^R]^\dagger \alpha_{1,2k+2}^L,                     & &(J^R)_{2k+1,2k+1}=-\alpha_{1,2k+1}^L [\alpha_{2,2k+2}^R]^\dagger,\\
&(J^L)_{2k+1,2k+2}=[\alpha_{2,2k+1}^R]^\dagger,                                        & &(J^R)_{2k+1,2k+2}=\alpha_{1,2k+1}^L,\\
\end{align*}
\begin{align*}
&(J^L)_{0,0}=-h^L_0\alpha_{1,1}^R (h^{L}_0)^{-1},                         & &(J^R)_{0,0}=-h^R_0 [\alpha_{2,1}^L]\dagger (h^R_0)^{-1},\\
&(J^L)_{0,1}=-\alpha_{1,2}^L,                                          & &(J^R)_{0,1}=-[\alpha_{2,2}^R]^\dagger,\\
&(J^L)_{0,2}=\mathbb{I}                                                & &(J^R)_{0,2}=\mathbb{I},\\
&(J^L)_{1,0}=h^R_1 (h^{L}_0)^{-1},                                        & &(J^R)_{1,0}=h^L_1 (h^R_0)^{-1},\\
&(J^L)_{1,1}=-[\alpha_{1,2}^R]^\dagger \alpha_{1,2}^L,                  & &(J^R)_{1,1}=-\alpha_{1,1}^L [\alpha_{2,2}^R]^\dagger,\\
&(J^L)_{1,2}=[\alpha_{1,2}^R]^\dagger,                                  & &(J^R)_{1,2}=\alpha_{1,2}^L,\\
\end{align*}
\begin{align*}
&((J^L)   ^{-1})_{2k-1,2k}=-[\alpha_{2,2k+1}^R]^\dagger,                                                      &   &((J^R)   ^{-1})_{2k-1,2k}=-\alpha_{1,2k+1}^R, \\
&((J^L)   ^{-1})_{2k,2k}= -\alpha_{1,2k}^L[\alpha_{2,2k+1}^R]^\dagger,                                         &   &((J^R)   ^{-1})_{2k,2k}= -[\alpha_{2,2k}^R]^\dagger \alpha_{1,2k+1}^L,\\
&((J^L)   ^{-1})_{2k+1,2k}= -h^R_{2k+1} [\alpha_{2,2k+2}^L]^\dagger (h^L_{2k})^{-1},                           &   &((J^R)   ^{-1})_{2k+1,2k}= -h^L_{2k+1} \alpha_{1,2k+2}^R (h^R_{2k})^{-1},    \\
&((J^L)   ^{-1})_{2k+2,2k}= (h^L)_{2k+2} (h^L_{2k})^{-1},                                                        &   &((J^R)   ^{-1})_{2k+2,2k}= (h^R)_{2k+2} (h^R_{2k})^{-1},   \\
&((J^L)   ^{-1})_{2k-1,2k+1}= \mathbb{I},                                                                      &   &((J^R)   ^{-1})_{2k-1,2k+1}= \mathbb{I},\\
&((J^L)   ^{-1})_{2k,2k+1}=\alpha_{1,2k}^L,                                                                    &   &((J^R)   ^{-1})_{2k,2k+1}=[\alpha_{2,2k}^R]^\dagger,\\
&((J^L)   ^{-1})_{2k+1,2k+1}=-h^R_{2k+1} [\alpha_{2,2k+2}^L]^\dagger \alpha_{1,2k+1}^R (h^{R}_{2k+1})^{-1},      &   &((J^R)   ^{-1})_{2k+1,2k+1}=-h^L_{2k+1} \alpha_{1,2k+2}^R [\alpha_{2,2k+1}^L]^\dagger (h^L_{2k+1})^{-1},   \\
&((J^L)   ^{-1})_{2k+2,2k+1}=h^L_{2k+2}\alpha_{1,2k+1}^R(h^R_{2k+1})^{-1},                                     &   &((J^R)   ^{-1})_{2k+2,2k+1}=(h^R)_{2k+2}[\alpha_{2,2k+1}^L]^\dagger (h^L_{2k+1})^{-1},
\end{align*}
\begin{align*}
&((J^L)   ^{-1})_{0,0}= -[\alpha_{2,1}^R]^\dagger,                                                         &  &((J^R)   ^{-1})_{0,0}= -\alpha_{1,1}^L,\\
&((J^L)   ^{-1})_{1,0}= -h^R_1 [\alpha_{2,2}^L]^\dagger (h^{L}_0)^{-1},                                       &  &((J^R)   ^{-1})_{1,0}= -h^L_1 \alpha_{1,2}^R (h^{R}_0)^{-1},\\
&((J^L)   ^{-1})_{2,0}=h^L_2 (h^{L}_0)^{-1},                                                                 &  &((J^R)   ^{-1})_{2,0}=h^R_2 (h^{R}_0)^{-1},\\
&((J^L)   ^{-1})_{0,1}= \mathbb{I},                                                                         &  &((J^R)   ^{-1})_{0,1}= \mathbb{I}\\
&((J^L)   ^{-1})_{1,1}= -h^R_1 [\alpha_{2,2}^L]^\dagger \alpha_{1,1}^R (h^{R}_1)^{-1},                        &  &((J^R)   ^{-1})_{1,1}= -h^L_1 \alpha_{1,2}^R [\alpha_{2,1}^L]^\dagger (h^{L}_1)^{-1},\\
&((J^L)   ^{-1})_{2,1}= h^L_2 \alpha_{1,1}^R (h^{R}_1)^{-1},                                                  &  &((J^R)   ^{-1})_{2,1}= h^R_2 [\alpha_{2,1}^L]^\dagger (h^{L}_1)^{-1}.\\
\end{align*}
\end{pro}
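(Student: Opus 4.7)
The plan is to read off the block entries of $J^H$ and $(J^H)^{\pm 1}$ directly from the recursion relations. By Proposition \ref{rec}, the MOLPUC are eigenvectors of the dressed shift operators: $J^L\phi_1^L = z\phi_1^L$, $(J^L)^{-1}\phi_1^L = z^{-1}\phi_1^L$, $\phi_1^R J^R = z^{-1}\phi_1^R$ and $\phi_1^R (J^R)^{-1} = z\phi_1^R$. Hence each recursion in Proposition \ref{ss} (and its $z^{-1}$ counterpart in Appendix \ref{zeta-1}) is literally the statement that a specific block row of $J^H$ or of $(J^H)^{-1}$ applied to the vector $\phi_1^H$ produces the corresponding shifted component. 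Matching coefficients of $(\varphi_1^H)^{(j)}$ on the two sides therefore reads off the nonzero blocks directly.

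Concretely, for the left case I would pick out from the recursion
\[
z(\varphi_1^L)^{(2k)}=-\alpha_{1,2k+1}^L(\mathbb{I}-[\alpha_{2,2k}^R]^\dagger\alpha_{1,2k}^L)(\varphi_1^L)^{(2k-1)}-\alpha_{1,2k+1}^L[\alpha_{2,2k}^R]^\dagger(\varphi_1^L)^{(2k)}-\alpha_{1,2k+2}^L(\varphi_1^L)^{(2k+1)}+(\varphi_1^L)^{(2k+2)}
\]
the four nonzero blocks of row $2k$ of $J^L$ in their ``raw'' form, and analogously from the $z(\varphi_1^L)^{(2k+1)}$ recursion the row $2k+1$, and similarly the rows $(J^L)^{-1}$ from the $z^{-1}$ recursions. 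For the right case I read columns of $J^R$ and $(J^R)^{-1}$ from the recursions for $z(\varphi_1^R)^{(k)}$ and $z^{-1}(\varphi_1^R)^{(k)}$, using the right-sided action $\phi_1^R J^R = z^{-1}\phi_1^R$.

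The resulting raw expressions are not yet in the form stated; they involve products of Verblunsky matrices of the form $(\mathbb{I}-[\alpha_{2,k}^R]^\dagger\alpha_{1,k}^L)$ or $\alpha_{1,k+1}^L$, rather than the factors $h^H_k(h^H_{k'})^{-1}$ and $\alpha_{i,l}^R$ that appear in the Proposition. To bridge the two forms I would invoke Proposition \ref{rrr}, in particular
\[
h_k^R=(\mathbb{I}-[\alpha_{2,k}^R]^\dagger\alpha_{1,k}^L)h_{k-1}^R,\qquad h_k^L\alpha_{1,k+1}^R=\alpha_{1,k+1}^L h_k^R,
\]
together with their $L/R$ and left/right duals. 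For instance these two identities turn $-\alpha_{1,2k+1}^L(\mathbb{I}-[\alpha_{2,2k}^R]^\dagger\alpha_{1,2k}^L)$ into $-h_{2k}^L\alpha_{1,2k+1}^R(h_{2k-1}^R)^{-1}$, reproducing $(J^L)_{2k,2k-1}$ exactly as stated; each of the forty-odd block entries is reduced similarly by a chain of two or three applications of Proposition \ref{rrr}. The initial block rows/columns (the $(J^L)_{0,j}$, $(J^L)_{1,j}$ and the $((J^H)^{-1})_{i,0}$, $((J^H)^{-1})_{i,1}$ entries) are handled by the same argument but using the boundary form of the recursion, where several potential terms are absent because $\phi_1^L$ begins at degree zero.

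The real labor is purely bookkeeping: there are four matrices, each with up to five nonzero block diagonals, and both parities $2k$ and $2k+1$ must be done separately, and in each case the raw form obtained from the recursion must be massaged by Proposition \ref{rrr} into the symmetric form of the statement. The one genuine subtlety is consistency: for each matrix we have \emph{two} equivalent dressings (e.g., $J^L=S_1\Upsilon S_1^{-1}=S_2\Upsilon S_2^{-1}$), so the diagonal blocks $(J^L)_{k,k}$ admit two a priori distinct raw expressions depending on whether they are extracted from the $\phi_1^L$ recursion or the corresponding $\phi_2^L$ recursion; verifying that both yield the same entry is another application of Proposition \ref{rrr}, and it is this double check that justifies the compact symmetric form chosen in the statement.
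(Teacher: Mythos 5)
Your proposal rests on reading the block entries of $(J^H)^{\pm1}$ off the recursion relations of Proposition \ref{ss} and Appendix \ref{zeta-1}, but inside this paper that is circular: those recursions are not an independent input. They are obtained precisely by substituting the explicit block entries of $(J^H)^{\pm1}$ (i.e.\ the statement you are asked to prove) into the eigenvalue relations of Proposition \ref{rec} --- the paper says as much in the proof of Proposition \ref{sss} (``substituting into \eqref{rec} the expressions of the blocks of $(J_H)^{\pm1}$''), and Proposition \ref{ss} carries no other derivation. So ``matching coefficients'' against Proposition \ref{ss} presupposes the answer. The non-circular route, which is what the paper intends, is to compute $J^L=S_1\Upsilon S_1^{-1}=S_2\Upsilon S_2^{-1}$ (and likewise $J^R$, $(J^H)^{-1}$) directly from the factorization: $S_1$ is block lower unitriangular and $\Upsilon^{\pm1}$ shifts by at most two block indices, so $S_1\Upsilon S_1^{-1}$ has only finitely many nonzero diagonals above the main one, while $S_2\Upsilon S_2^{-1}$ has only finitely many below; the equality of the two dressings forces the pentadiagonal support, and the entries on and near the band edges are then computed from the explicitly known diagonals of $S_1,\widehat{S_2},Z_2,\widehat{Z_1}$ in Proposition \ref{faclosalpha} together with the identification $D^H_l\leftrightarrow h^H_l$ of Proposition \ref{pro:quasinorms}. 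The identities of Proposition \ref{rrr} then reconcile the two raw expressions each entry acquires from the two dressings and put them in the stated symmetric form. Your closing paragraph contains exactly this comparison-of-dressings idea, but demoted to a ``consistency check''; it should be the engine of the argument, with the recursions of Proposition \ref{ss} appearing downstream as a corollary rather than upstream as a hypothesis.

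Two smaller points. First, for the right objects the pairing is reversed from what you wrote: since $\Phi_1^R J^R=z^{-1}\Phi_1^R$ and $\Phi_1^R(J^R)^{-1}=z\Phi_1^R$, the $z$-recursion for $(\varphi_1^R)^{(k)}$ encodes the columns of $(J^R)^{-1}$ and the $z^{-1}$-recursion encodes those of $J^R$, not the other way around. Second, extracting entries by comparing $\mathbb M_m$-linear combinations of the $(\varphi_1^H)^{(j)}$ does require observing that these form a basis of the relevant free module (guaranteed by quasi-definiteness), so that the coefficients in such an expansion are unique; this is worth stating if coefficient matching is to be used anywhere in the argument.
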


\begin{pro}
The following expressions correspond to the block non zero elements of $C_{[0]}^{[\pm1]}$
\end{pro}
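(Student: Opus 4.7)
The plan is to mimic, in $C$-setting, the block-by-block bookkeeping that produced the explicit entries of $J^H$ and $(J^H)^{-1}$ just above. By definition \eqref{Ccas} we have the two equalities
\begin{align*}
C_{[0]} &= Z_2^{-1}\eta S_1^{-1} = Z_1^{-1}\eta S_2^{-1}, &
C_{[-1]} &= Z_2^{-1}\Upsilon^{-1}\eta S_1^{-1}= Z_1^{-1}\Upsilon^{-1}\eta S_2^{-1},
\end{align*}
and from Proposition \ref{faclosalpha} the factors $S_1^{-1},\widehat{S_2}^{-1},Z_2^{-1},\widehat{Z_1}$ are band matrices with explicit entries written in terms of $\alpha^{H}_{i,l}$; after normalizing via $\widehat{S_2}^{-1} = S_2^{-1}D^L$, $\widehat{Z_1}=(D^R)^{-1}Z_1$ and recalling the identifications $h^L_{2l}=D^L_{2l}$, $h^R_{2l+1}=D^R_{2l+1}$, $h^R_{2l}=D^R_{2l}$, $h^L_{2l+1}=D^L_{2l+1}$ from Proposition \ref{pro:quasinorms}, every block of the factors is already in Verblunsky form. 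Since $\eta$ swaps the odd/even indexed rows within each $2\times 2$ block (and sends $\chi_1\leftrightarrow\chi_2^*$), and $\Upsilon^{-1}$ shifts by one step in the CMV ordering, the products $Z_2^{-1}\eta S_1^{-1}$ and $Z_2^{-1}\Upsilon^{-1}\eta S_1^{-1}$ collapse to tridiagonal block matrices (compare with the diagrams for $C_{[0]}$ and $C_{[-1]}$ displayed in the paper).

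First I would carry out the banded multiplication row-by-row. For each fixed $k$, only a handful of entries of $Z_2^{-1}$ and $S_1^{-1}$ contribute; their product gives the blocks $(C_{[0]})_{2k,2k-1}$, $(C_{[0]})_{2k,2k}$, $(C_{[0]})_{2k,2k+1}$, and similarly for odd rows, directly in terms of $\alpha^L_{i,l},[\alpha^R_{i,l}]^\dagger$ and the $h^H_l$. For $C_{[-1]}$ I would repeat the computation with the extra factor $\Upsilon^{-1}$, which simply shifts the incoming $\chi$-vector basis and produces a companion tridiagonal expression. For the inverses $C_{[0]}^{-1}$ and $C_{[-1]}^{-1}$ one proceeds symmetrically: $C_{[0]}^{-1}=S_1\eta Z_2=S_2\eta Z_1$ and $C_{[-1]}^{-1}=S_1\eta\Upsilon Z_2$, and the same band-multiplication gives the entries.

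The key consistency step is that each matrix admits two distinct factored expressions (one via $(S_1,Z_2)$ and one via $(S_2,Z_1)$). Computing entries through each route yields two formulas for the same block; equating them is precisely what produces the identities of Proposition \ref{rrr}, for instance $h^R_k[\alpha^L_{2,k}]^\dagger = [\alpha^R_{2,k}]^\dagger h^L_k$ and $h^L_k\alpha^R_{1,k+1}=\alpha^L_{1,k+1}h^R_k$. One can therefore use Proposition \ref{rrr} freely to rewrite each block in whichever of its equivalent forms is most convenient for later use (e.g.\ in the recursion relations of Propositions \ref{ss}, \ref{sss} and the Christoffel--Darboux proof).

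The main obstacle is purely bookkeeping: keeping track of the $L/R$ labels together with the alternating even/odd structure of the CMV ordering, and correctly propagating the shift produced by $\Upsilon^{-1}$ in $C_{[-1]}$. Once the band structure is used to truncate each sum to at most three surviving products, no genuine computation beyond matrix multiplication of known banded matrices remains; the identifications with $h^H_l$ reduce everything to Verblunsky data. I would organize the writeup as a single table of entries for $C_{[0]}, C_{[-1]},C_{[0]}^{-1},C_{[-1]}^{-1}$ and then defer the verification of equivalence of the two expressions to Proposition \ref{rrr}, rather than re-deriving those relations here.
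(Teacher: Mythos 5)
Your proposal follows essentially the same route the paper intends: the appendix states these entries without a written proof, and the in-text justification ("the shape of each matrix is a consequence of the two possible definitions") is exactly your banded multiplication of the explicit factors from Proposition \ref{faclosalpha} via the two factorizations $C_{[0]}=Z_2^{-1}\eta S_1^{-1}=Z_1^{-1}\eta S_2^{-1}$, with Proposition \ref{rrr} emerging from equating the two resulting expressions. One small correction in the extra material you include: since $\eta\Upsilon=\Upsilon^{-1}\eta$, one has $C_{[-1]}=Z_2^{-1}\eta\Upsilon^{-1}S_1^{-1}=Z_2^{-1}\Upsilon^{+1}\eta S_1^{-1}$ rather than $Z_2^{-1}\Upsilon^{-1}\eta S_1^{-1}$ (the latter is $C_{[+1]}$), but this does not affect the stated proposition on $C_{[0]}^{\pm 1}$.
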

\begin{align*}
 &(C_{[0]})_{2k,2k-1}=h^R_{2k} [(h^R)_{2k-1}]^{-1}=\mathbb{I}-[\alpha_{2,2k}^R]^\dagger \alpha_{1,2k}^L,  \\
 &(C_{[0]})_{2k,2k}=[\alpha_{2,2k}^R]^\dagger=h^R_{2k}[\alpha_{2,2k}^L]^\dagger[h^L_{2k}]^{-1},  \\
 &(C_{[0]})_{2k+1,2k+1}=-\alpha_{1,2k+2}^L=-h^L_{2k+1}\alpha_{1,2k+2}^R[h^R_{2k+1}]^{-1},  \\
 &(C_{[0]})_{2k+1,2k+2}=\mathbb{I}=h^L_{2k+1}[\mathbb{I}-\alpha_{1,2k+2}^R[\alpha_{2,2k+2}^L]^\dagger][(h^L)_{2k+2}]^{-1},
\end{align*}
\begin{align*}
 &(C_{[0]}^{-1})_{2k,2k-1}=h^L_{2k} [(h^L)_{2k-11}]^{-1}=\mathbb{I}-\alpha_{1,2k}^L[\alpha_{2,2k}^R]^\dagger, \\
 &(C_{[0]}^{-1})_{2k,2k}=\alpha_{1,2k}^L=h^L_{2k}\alpha_{1,2k}^R[h^R_{2k}]^{-1}, \\
 &(C_{[0]}^{-1})_{2k+1,2k+1}=-[\alpha_{2,2k+2}^R]^\dagger=-h^R_{2k+1}[\alpha_{2,2k+2}^L]^\dagger[h^L_{2k+1}]^{-1},\\
 &(C_{[0]}^{-1})_{2k+1,2k+2}=\mathbb{I}=h^R_{2k+1}[\mathbb{I}-[\alpha_{2,2k+2}^L]^\dagger \alpha_{1,2k+2}^R][h^R_{2k+2}]^{-1}.
\end{align*}

\begin{pro}
The following expressions correspond to the block non zero elements of $C_{[-1]}^{[\pm1]}$
\end{pro}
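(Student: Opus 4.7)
The plan is to compute the non-zero blocks of $C_{[-1]}$ and $C_{[-1]}^{-1}$ using the same two-expression technique that was applied to $C_{[0]}$ in the preceding proposition. Concretely, I would start from the definition \eqref{Ccas}, which for $p=-1$ reads
\begin{align*}
C_{[-1]} = Z_2^{-1}\eta\,\Upsilon^{-1}\, S_1^{-1} = Z_1^{-1}\eta\,\Upsilon^{-1}\, S_2^{-1},
\end{align*}
and use the identity $\eta\Upsilon^{-1}=\Upsilon\eta$ to rewrite these as $C_{[-1]}=Z_2^{-1}\Upsilon\eta S_1^{-1}=Z_1^{-1}\Upsilon\eta S_2^{-1}$. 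An equivalent and computationally more convenient route is to apply the relation $(J^R)^{l-p}=C_{[p]}[C_{[l]}]^{-1}$ and $(J^L)^{p-l}=[C_{[l]}]^{-1}C_{[p]}$ with $l=0$, $p=-1$, which gives
\begin{align*}
C_{[-1]} = J^R\, C_{[0]} = C_{[0]}\,(J^L)^{-1},\qquad C_{[-1]}^{-1}=C_{[0]}^{-1}(J^R)^{-1}= J^L\, C_{[0]}^{-1}.
\end{align*}
This reduces the whole computation to block multiplication of already-known matrices.

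Next I would carry out the block products. The matrix $C_{[0]}$ is block-tridiagonal (from Appendix \ref{explicit}), while $J^R$ and $(J^L)^{-1}$ are block-pentadiagonal of CMV shape. Multiplying a pentadiagonal CMV matrix with a tridiagonal matrix produces again a matrix whose non-zero entries sit on a fixed finite number of block diagonals, so one expects $C_{[-1]}$ to display the same qualitative pentadiagonal-like pattern shown schematically earlier. The coefficients of each non-zero block can then be read off as explicit sums of products of Verblunsky matrices $\alpha^{H}_{i,k}$ and quasi-norms $h_k^H$, extracted directly from the Appendix \ref{explicit} entries of $J^R$, $(J^L)^{-1}$, $C_{[0]}$ and $C_{[0]}^{-1}$.

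The consistency of the two formulas $J^RC_{[0]}=C_{[0]}(J^L)^{-1}$ (respectively $J^LC_{[0]}^{-1}=C_{[0]}^{-1}(J^R)^{-1}$) yields two algebraic expressions for each non-zero block of $C_{[-1]}$ (resp.\ $C_{[-1]}^{-1}$), and the equality of the two expressions reproduces precisely the Verblunsky relations of Proposition \ref{rrr}. In the same spirit as the $C_{[0]}$ case, these double expressions are what allow us to present each entry in two forms, one involving $\alpha_{i,k}^L$ and the other involving $\alpha_{i,k}^R$ intertwined by $h_k^{H}$.

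The main obstacle will be purely combinatorial: in the non-Abelian setting the order of factors matters, so particular care is required in tracking which Verblunsky coefficient appears on which side of the quasi-norm, which parity ($2k$ vs.\ $2k+1$) each non-zero block sits at, and which shifted indices of $\alpha$ and $h$ survive after the multiplication. This is a delicate but mechanical check; no further analytic input is needed beyond Proposition \ref{faclosalpha}, the explicit block formulas for $J^H$ and $C_{[0]}^{\pm 1}$ in Appendix \ref{explicit}, and the Verblunsky identities of Proposition \ref{rrr} that guarantee the two forms agree.
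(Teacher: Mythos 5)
Your route is valid but genuinely different from the one the paper relies on. The paper obtains both the band structure and the entries of $C_{[-1]}^{\pm 1}$ directly from the two alternative factorizations in \eqref{Ccas}: writing $C_{[-1]}=Z_2^{-1}\Upsilon\eta S_1^{-1}=Z_1^{-1}\Upsilon\eta S_2^{-1}$ (using $\eta\Upsilon^{-1}=\Upsilon\eta$), the first expression is a lower block unitriangular matrix times the permutation-type matrix $\Upsilon\eta$ times a lower block unitriangular matrix, while the second is the analogous upper triangular product, so only the blocks lying in the intersection of the two supports can survive; the surviving entries are then read off from the first off-diagonals of $S_1,\widehat{S_2},Z_2,\widehat{Z_1}$ given in Proposition \ref{faclosalpha}, which is exactly why each block is recorded in both a ``left'' and a ``right'' form. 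Your reduction $C_{[-1]}=J^RC_{[0]}=C_{[0]}(J^L)^{-1}$ and $C_{[-1]}^{-1}=J^LC_{[0]}^{-1}=C_{[0]}^{-1}(J^R)^{-1}$ is correct (it follows from the stated power identities with $l=0$, $p=-1$, or directly from \eqref{jota} and \eqref{Ccas}) and turns the whole computation into block multiplication of tabulated matrices, which is arguably more mechanical. The price is that the product of a pentadiagonal matrix with the staircase-shaped $C_{[0]}$ has a priori more non-zero block diagonals than $C_{[-1]}$ actually has, so you must exhibit the cancellations, and these cancellations are precisely (part of) the identities of Proposition \ref{rrr}. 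On that last point, be careful about circularity: the paper proves Proposition \ref{rrr} by comparing the two definitions of $C_{[0]}^{\pm1}$ and $C_{[-1]}^{\pm1}$, so in your argument you should treat the agreement of the two product formulas as an independent re-derivation of those Verblunsky identities rather than invoking Proposition \ref{rrr} as an external input.
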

\begin{align*}
  &(C_{[-1]})_{2k,2k}=-[\alpha_{2,2k+1}^R]^\dagger=-h^R_{2k}[\alpha_{2,2k+1}^L]^\dagger[h^L_{2k}]^{-1}  &
  &(C_{[-1]^{-1}})_{2k,2k}=-\alpha_{1,2k+1}^L=-h^L_{2k}\alpha_{2,2k+1}^L [h^R_{2k}]^{-1} \\
  &(C_{[-1]})_{2k,2k+1}=\mathbb{I} &
  &(C_{[-1]}^{-1})_{2k,2k+1}=\mathbb{I} \\
  &(C_{[-1]})_{2k+1,2k}=\mathbb{I}-\alpha_{1,2k+1}^L[\alpha_{2,2k+1}^R]^\dagger]=h^L_{2k+1}[h^L_{2k}]^{-1}          &
  &(C_{[-1]}^{-1})_{2k+1,2k}=\mathbb{I}-[\alpha_{2,2k+1}^R]^\dagger\alpha_{1,2k+1}^L]=h^R_{2k+1}[h^R_{2k}]^{-1} \\
  &(C_{[-1]})_{2k+1,2k+1}=\alpha_{1,2k+1}^L=h^L_{2k+1}\alpha_{1,2k+1}^R [h^R_{2k+1}]^{-1}   &
  &(C_{[-1]}^{-1})_{2k+1,2k+1}=[\alpha_{2,2k+1}^R]^\dagger=h^R_{2k+1}[\alpha_{2,2k+1}^L]^\dagger[h^L_{2k+1}]^{-1}
\end{align*}
\section{Complete recursion relations}\label{zeta-1}
Here we give a more complete set of recursion relations for the MOLPUC
\begin{pro} The five term CMV recursion relations are
{\small\begin{align*}
 z(\varphi_1^L)^{(2k)}(z)&=-\alpha_{1,2k+1}^L(\mathbb{I}-[\alpha_{2,2k}^R]^\dagger \alpha_{1,2k}^L)(\varphi_1^L)^{(2k-1)}
-\alpha_{1,2k+1}^L[\alpha_{2,2k}^R]^\dagger (\varphi_1^L)^{(2k)}-\alpha_{1,2k+2}^L (\varphi_1^L)^{(2k+1)}+(\varphi_1^L)^{(2k+2)},\\
z(\varphi_1^L)^{(2k+1)}(z)&=(\mathbb{I}-[\alpha_{2,2k+1}^R]^\dagger \alpha_{1,2k+1}^L)(\mathbb{I}-[\alpha_{2,2k}^R]^\dagger \alpha_{1,2k}^L)(\varphi_1^L)^{(2k-1)},\\
&\quad\quad+(\mathbb{I}-[\alpha_{2,2k+1}^R]^\dagger \alpha_{1,2k+1}^L)[\alpha_{2,2k}^R]^\dagger(\varphi_1^L)^{(2k)}-[\alpha_{2,2k+1}^R]^\dagger\alpha_{1,2k+2}^L(\varphi_1^L)^{(2k+1)}+[\alpha_{2,2k+1}^R]^\dagger(\varphi_1^L)^{(2k+2)},\\
z(\varphi_1^L)^{(0)}(z)&=-\alpha_{1,1}^R(\varphi_1^L)^{(0)}-\alpha_{1,2}^L(\varphi_1^L)^{(1)}+(\varphi_1^L)^{(2)},\\
z(\varphi_1^L)^{(1)}(z)&=(\mathbb{I}-[\alpha_{2,1}^R]^\dagger \alpha_{1,1}^L)(\varphi_1^L)^{(0)}-[\alpha_{2,1}^R]^\dagger\alpha_{1,2}^L(\varphi_1^L)^{(1)}+[\alpha_{2,1}^R]^\dagger (\varphi_1^L)^{(2)},
\end{align*}
\begin{align*}
z^{-1}(\varphi_1^L)^{(2k)}(z)&=(\mathbb{I}-\alpha_{1,2k}^L[\alpha_{2,2k}^R]^\dagger)(\mathbb{I}-\alpha_{1,2k-1}^L[\alpha_{2,2k-1}^R]^\dagger)(\varphi_1^L)^{(2k-2)}\\
&\quad\quad+(\mathbb{I}-\alpha_{1,2k}^L[\alpha_{2,2k}^R]^\dagger)\alpha_{1,2k-1}^L(\varphi_1^L)^{(2k-1)}-\alpha_{1,2k}^L [\alpha_{2,2k+1}^R]^\dagger(\varphi_1^L)^{(2k)}+[\alpha_{1,2k}^L]^\dagger(\varphi_1^L)^{(2k+1)},\\
z^{-1}(\varphi_1^L)^{(2k+1)}(z)&=-[\alpha_{2,2k+2}^R]^\dagger (\mathbb{I}-\alpha_{1,2k+1}^L[\alpha_{2,2k+1}^R]^\dagger )(\varphi_1^L)^{(2k)}
-[\alpha_{2,2k+2}^R]^\dagger\alpha_{1,2k+1}^L (\varphi_1^L)^{(2k+1)}-[\alpha_{2,2k+3}^R]^\dagger (\varphi_1^L)^{(2k+2)}+(\varphi_1^L)^{(2k+3)},\\
z^{-1}(\varphi_1^L)^{(0)}(z)&=-[\alpha_{2,1}^R]^\dagger(\varphi_1^L)^{(0)}+(\varphi_1^L)^{(1)},
\end{align*}
\begin{align*}
 [z(\varphi_2^L)^{(2k)}(z)]^\dagger=& -[(\varphi_2^L)^{(2k-1)}(z)]^\dagger [\alpha_{2,2k+1}^R]^\dagger- [(\varphi_2^L)^{(2k)}]^\dagger \alpha_{1,2k}^L[\alpha_{2,2k+1}^R]^\dagger-[(\varphi_2^L)^{(2k+1)}(z)]^\dagger [\alpha_{2,2k+2}^R]^\dagger (\mathbb{I}-\alpha_{1,2k+1}^L [\alpha_{2,2k+1}^R]^\dagger)\\
&\quad\quad+[(\varphi_2^L)^{(2k+2)}(z)]^\dagger (\mathbb{I}-\alpha_{1,2k+2}^L [\alpha_{2,2k+2}^R]^\dagger)(\mathbb{I}-\alpha_{1,2k+1}^L [\alpha_{2,2k+1}^R]^\dagger),\\
[z(\varphi_2^L)^{(2k+1)}(z)]^\dagger&= [(\varphi_2^L)^{(2k-1)}(z)]^\dagger + [(\varphi_2^L)^{(2k)}(z)]^\dagger \alpha_{1,2k}^L\\
&\quad\quad-[(\varphi_2^L)^{(2k+1)}(z)]^\dagger [\alpha_{2,2k+2}^R]^\dagger \alpha_{1,2k+1}^L +[(\varphi_2^L)^{(2k+2)}(z)]^\dagger (\mathbb{I}-\alpha_{1,2k+2}^L [\alpha_{2,2k+2}^R]^\dagger)\alpha_{1,2k+1}^L
\end{align*}
\begin{align*}
[z^{-1}(\varphi_2^L)^{(2k)}(z)]^\dagger=& [(\varphi_2^L)^{(2k-2)}(z)]^\dagger - [(\varphi_2^L)^{(2k-1)}(z)]^\dagger [\alpha_{2,2k-1}^R]^\dagger,\\
&-[(\varphi_2^L)^{(2k)}(z)]^\dagger \alpha_{1,2k+1}^L[\alpha_{2,2k}^R]^\dagger  +[(\varphi_2^L)^{(2k+1)}(z)]^\dagger (\mathbb{I}-[\alpha_{2,2k+1}^R]^\dagger\alpha_{1,2k+1}^L )[\alpha_{2,2k}^R]^\dagger,\\
[z^{-1}(\varphi_2^L)^{(2k+1)}(z)]^\dagger=& -[(\varphi_2^L)^{(2k)}(z)]^\dagger \alpha_{1,2k+2}^L- [(\varphi_2^L)^{(2k+1)}]^\dagger [\alpha_{2,2k+1}^R]^\dagger\alpha_{1,2k+2}^L-[(\varphi_2^L)^{(2k+2)}(z)]^\dagger \alpha_{1,2k+3}^L(\mathbb{I}-[\alpha_{2,2k+2}^R]^\dagger\alpha_{1,2k+2}^L )\\
&\quad\quad+[(\varphi_2^L)^{(2k+3)}(z)]^\dagger (\mathbb{I}-[\alpha_{2,2k+3}^R]^\dagger\alpha_{1,2k+3}^L )(\mathbb{I}- [\alpha_{2,2k+2}^R]^\dagger \alpha_{1,2k+2}^L),\\
\end{align*}
\begin{align*}
 z(\varphi_1^R)^{(2k)}=&-(\varphi_1^R)^{(2k-1)}\alpha_{1,2k+1}^L-(\varphi_1^R)^{(2k)} [\alpha_{2,2k}^R]^\dagger \alpha_{1,2k+1}^L\\
&\quad-(\varphi_1^R)^{(2k+1)}\alpha_{1,2k+2}^L(\mathbb{I}-[\alpha_{2,2k+1}^R]^\dagger\alpha_{1,2k+1}^L )+(\varphi_1^R)^{(2k+2)}(\mathbb{I}-[\alpha_{2,2k+2}^R]^\dagger\alpha_{1,2k+2}^L )(\mathbb{I}-[\alpha_{2,2k+1}^R]^\dagger\alpha_{1,2k+1}^L )\\
z(\varphi_1^R)^{(2k+1)}=&(\varphi_1^R)^{(2k-1)}+(\varphi_1^R)^{(2k)} [\alpha_{2,2k}^R]^\dagger-(\varphi_1^R)^{(2k+1)}\alpha_{1,2k+2}^L [\alpha_{2,2k+1}^R]^\dagger+(\varphi_1^R)^{(2k+2)}(\mathbb{I}-[\alpha_{2,2k+2}^R]^\dagger\alpha_{1,2k+2}^L )[\alpha_{2,2k+1}^R]^\dagger\\
\end{align*}
\begin{align*}
z^{-1}(\varphi_1^R)^{(2k)}=&(\varphi_1^R)^{(2k-2)}+(\varphi_1^R)^{(2k-1)} \alpha_{1,2k-1}^L-\\
&-(\varphi_1^R)^{(2k)}\alpha_{1,2k}^L[\alpha_{2,2k+1}^R]^\dagger+ (\varphi_1^R)^{(2k+1)}(\mathbb{I}-\alpha_{1,2k+1}^L [\alpha_{2,2k+1}^R]^\dagger)\alpha_{1,2k}^L,\\
z^{-1}(\varphi_1^R)^{(2k-1)}=&-(\varphi_1^R)^{(2k-2)}[\alpha_{2,2k}^R]^\dagger-(\varphi_1^R)^{(2k-1)} \alpha_{1,2k-1}^L[\alpha_{2,2k}^R]^\dagger\\
&-(\varphi_1^R)^{(2k)}[\alpha_{1,2k+1}^L]^\dagger(\mathbb{I}-\alpha_{1,2k}^L [\alpha_{2,2k}^R]^\dagger)+(\varphi_1^R)^{(2k+1)}(\mathbb{I}-\alpha_{1,2k+1}^L[\alpha_{2,2k+1}^R]^\dagger )(\mathbb{I}-\alpha_{1,2k}^L[\alpha_{2,2k}^R]^\dagger ),\\
\end{align*}
\begin{align*}
 [z(\varphi_2^R)^{(2k)}(z)]^\dagger=&-[\alpha_{2,2k+1}^R]^\dagger(\mathbb{I}-\alpha_{1,2k}^L [\alpha_{2,2k}^R]^\dagger)[(\varphi_2^R)^{(2k+1)}(z)]^\dagger-\\
&-[\alpha_{2,2k+1}^R]^\dagger \alpha_{1,2k}^L [(\varphi_2^R)^{(2k)}(z)]^\dagger-[\alpha_{2,2k+2}^R]^\dagger [(\varphi_2^R)^{(2k+1)}(z)]^\dagger+[(\varphi_2^R)^{(2k+2)}(z)]^\dagger\\
[z(\varphi_2^R)^{(2k+1)}(z)]^\dagger&=(\mathbb{I}-\alpha_{1,2k+1}^L [\alpha_{2,2k+1}^R]^\dagger)(\mathbb{I}-\alpha_{1,2k}^L [\alpha_{2,2k}^R]^\dagger)[(\varphi_2^R)^{(2k-1)}(z)]^\dagger+(\mathbb{I}-\alpha_{1,2k+1}^L [\alpha_{2,2k+1}^R]^\dagger)\alpha_{1,2k}^L [(\varphi_2^R)^{(2k)}(z)]^\dagger\\
&-\alpha_{1,2k+1}^L [\alpha_{2,2k+1}^R]^\dagger [(\varphi_2^R)^{(2k+1)}(z)]^\dagger+\alpha_{1,2k+1}^L [(\varphi_2^R)^{(2k+2)}(z)]^\dagger\\
[z(\varphi_2^R)^{(0)}(z)]^\dagger=&-[\alpha_{2,1}^R]^\dagger[(\varphi_2^R)^{(0)}(z)]^\dagger-[\alpha_{2,2}^R]^\dagger [(\varphi_2^R)^{(1)}(z)]^\dagger+[(\varphi_2^R)^{(2)}(z)]^\dagger\\
[z(\varphi_2^R)^{(1)}(z)]^\dagger=&(\mathbb{I}-\alpha_{1,1}^L [\alpha_{2,1}^R]^\dagger)[(\varphi_2^R)^{(0)}(z)]^\dagger-\alpha_{1,1}^L [\alpha_{2,2}^R]^\dagger[(\varphi_2^R)^{(1)}(z)]^\dagger \alpha_{1,1}^L [(\varphi_2^R)^{(2)}(z)]^\dagger
\end{align*}
\begin{align*}
[z^{-1}(\varphi_2^R)^{(2k)}(z)]^\dagger=&(\mathbb{I}-[\alpha_{2,2k}^R]^\dagger\alpha_{1,2k}^L )(\mathbb{I}-[\alpha_{2,2k-1}^R]^\dagger\alpha_{1,2k-1}^L )[(\varphi_2^R)^{(2k-2)}(z)]^\dagger+(\mathbb{I}-[\alpha_{2,2k}^R]^\dagger \alpha_{1,2k}^L )[\alpha_{2,2k-1}^R]^\dagger [(\varphi_2^R)^{(2k-1)}(z)]^\dagger\\
&-[\alpha_{2,2k}^R]^\dagger \alpha_{1,2k+1}^L  [(\varphi_2^R)^{(2k)}(z)]^\dagger+[\alpha_{2,2k}^R]^\dagger [(\varphi_2^R)^{(2k+1)}(z)]^\dagger\\
[z^{-1}(\varphi_2^R)^{(2k-1)}(z)]^\dagger=&-\alpha_{1,2k}^L(\mathbb{I}- [\alpha_{2,2k-1}^R]^\dagger\alpha_{1,2k-1}^L)[(\varphi_2^R)^{(2k-2)}(z)]^\dagger\\
&-\alpha_{1,2k}^L [\alpha_{2,2k-1}^R]^\dagger [(\varphi_2^R)^{(2k-1)}(z)]^\dagger-\alpha_{1,2k+1}^L [(\varphi_2^R)^{(2k)}(z)]^\dagger+[(\varphi_2^R)^{(2k+1)}(z)]^\dagger\\
[z^{-1}(\varphi_2^R)^{(0)}(z)]^\dagger=&-\alpha_{1,1}[(\varphi_2^R)^{(0)}(z)]^\dagger+[(\varphi_2^R)^{(1)}(z)]^\dagger
\end{align*}}
\end{pro}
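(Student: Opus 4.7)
The plan is to derive all the displayed recursion relations as componentwise readings of the eigenvalue identities collected in Proposition \ref{rec}, using the explicit block-pentadiagonal form of the dressed Lax matrices $J^L,J^R,(J^L)^{-1},(J^R)^{-1}$ tabulated in Appendix \ref{explicit}.

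First I would treat the four families involving $\varphi_1^L$ and $\varphi_2^L$. The identity $J^L\Phi_1^L=z\Phi_1^L$ says that, for every $l\geq 0$, $z(\varphi_1^L)^{(l)}$ equals the sum $\sum_j (J^L)_{l,j}(\varphi_1^L)^{(j)}$. Because $(J^L)_{l,j}$ is zero outside of the four diagonals $j=l-1,l,l+1,l+2$ when $l$ is even, and $j=l-2,l-1,l+1,l+2$ when $l$ is odd, the sum is already a five-term recurrence, and the two cases $l=2k$ and $l=2k+1$ (with the boundary cases $l=0,1$ read separately from the explicit entries) give exactly the first four lines of the statement. The relation $(J^L)^{-1}\Phi_1^L=z^{-1}\Phi_1^L$ gives the next four lines for $z^{-1}(\varphi_1^L)^{(l)}$ in the same way. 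For $\varphi_2^L$ one uses the transposed eigen-equations, which amount to $[\Phi_2^L]^\dagger J^L=z^{-1}[\Phi_2^L]^\dagger$ and $[\Phi_2^L]^\dagger (J^L)^{-1}=z[\Phi_2^L]^\dagger$; reading the $l$-th column of $(J^L)^{\pm 1}$ produces the four identities for $[z^{\pm 1}(\varphi_2^L)^{(l)}]^\dagger$.

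Second I would repeat the procedure on the right side, using $\Phi_1^R J^R=z^{-1}\Phi_1^R$, $\Phi_1^R(J^R)^{-1}=z\Phi_1^R$ and the adjoint versions for $[\Phi_2^R]^\dagger$. Once more the explicit block entries of $J^R$ and $(J^R)^{-1}$ collected in Appendix \ref{explicit} have the same pentadiagonal support, so every line of the statement appears as a direct block-by-block transcription.

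The only real content beyond indexing is converting the coefficients: Appendix \ref{explicit} expresses the entries of $(J^H)^{\pm 1}$ in the mixed form $h_{2k+1}^R(h_{2k-1}^R)^{-1}$, $h_{2k}^L\alpha_{1,2k+1}^R(h^R_{2k-1})^{-1}$, etc., whereas the target formulae use the normalized forms $\mathbb I-[\alpha_{2,k}^R]^\dagger\alpha_{1,k}^L$, $\mathbb I-\alpha_{1,k}^L[\alpha_{2,k}^R]^\dagger$ and so on. Both expressions are reconciled by the identities of Proposition \ref{rrr}: from $h_k^H=(\mathbb I-[\alpha_{2,k}^R]^\dagger\alpha_{1,k}^L)h_{k-1}^H$ and its companions we obtain $h_{2k+1}^R(h_{2k-1}^R)^{-1}=(\mathbb I-[\alpha_{2,2k+1}^R]^\dagger\alpha_{1,2k+1}^L)(\mathbb I-[\alpha_{2,2k}^R]^\dagger\alpha_{1,2k}^L)$, and the Verblunsky commutation relations $h_{m}^R\alpha_{1,m}^R\alpha_{1,m-1}^R(h^R_{m-1})^{-1}=\cdots$ allow us to rewrite every compound block entry as the product of $\mathbb I-[\alpha_{2,\cdot}^R]^\dagger\alpha_{1,\cdot}^L$ (or its mirror) and bare Verblunsky matrices.

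The main obstacle is purely bookkeeping: keeping track of the ordering of factors (non-abelian setting), the placement of daggers, and the discrepancy between the ``mixed'' $h$-$\alpha$ form of the matrix entries and the ``normalized'' form used in the Proposition, across all eight families of relations and their boundary cases at $l=0,1$. Once a systematic dictionary via Proposition \ref{rrr} is set up, each line of the statement is obtained in one step from the corresponding row (or column) of the matrix table in Appendix \ref{explicit}.
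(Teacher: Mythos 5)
Your approach is essentially the paper's: the paper obtains these relations precisely by substituting the explicit block entries of $(J^H)^{\pm 1}$ from Appendix \ref{explicit} into the eigenvalue identities of Proposition \ref{rec} (this is stated as the proof of Proposition \ref{sss} and of the five-term relations in Proposition \ref{ss}), with Proposition \ref{rrr} supplying exactly the dictionary you describe between the mixed $h$--$\alpha$ form of the entries and the normalized $\mathbb I-[\alpha_{2,\cdot}^R]^\dagger\alpha_{1,\cdot}^L$ form. The only blemishes are two bookkeeping slips in your description --- the nonzero entries of row $2k+1$ of $J^L$ sit at columns $2k-1,2k,2k+1,2k+2$, i.e.\ $l-2,l-1,l,l+1$ rather than your stated index set, and the adjoint eigen-relations for $[\Phi_2^L]^\dagger$ carry $\bar z^{\pm 1}$ rather than $z^{\pm 1}$ --- neither of which affects the validity of the argument.
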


\section{Projections in modules} \label{modules}
For a ring  $\mathbb M$  and  left and right modules $V$ and $W$  over $\mathbb M$, respectively,  bilinear forms  are  applications
\begin{align*}
   G&:V \times W \longrightarrow \mathbb{M}
\end{align*}
such that
\begin{align*}
  G(m_1v_1+m_2v_2,w)&=m_1G(v_1,w)+m_2G(v_2,w), & \forall m_1,m_2 &\in\mathbb{M}, &v,v_1,v_2&\in V,\\
  G(v,w_1m_1+w_2m_2)&=G(v,w_1)m_1+G(v,w_2)m_2,&
  \forall m_1,m_2 &\in\mathbb{M}, & w,w_1,w_2&\in W.
\end{align*}

 In free modules any such bilinear form can be represented by a unique $l\times r$ matrix denoted also by $G$, with coefficients in the ring $\mathbb M$, as follows
\begin{align*}
 G&:V \times W \longrightarrow \mathbb{M},\\
 G(v,w)&:=\begin{pmatrix}v_0 & \ldots & v_{l-1} \end{pmatrix} \,G\,\begin{pmatrix} w_0 \\  \vdots \\ w_{l-1}\end{pmatrix}.
\end{align*}
Given free submodules $\tilde V\subset V$ and $\tilde W\subset W$ of the modules (not necessarily free) $V, W$ and two bases $\{e_0,\dots,e_{\tilde l-1}\}\subset \tilde V$ and $\{f_0,\dots,f_{\tilde r-1}\}\subset \tilde W$  of $\tilde V$ and $\tilde W$, respectively, we denote $G_{i,j}=G(e_i,f_j)$. For the  same rank, $\tilde l=\tilde r$,  the matrix $\tilde G=(G_{i,j})$ can be assumed to be invertible, $\tilde G\in\operatorname{GL}(\tilde l,\mathbb M)\cong \operatorname{GL}(\tilde l m,\mathbb C)$. In such case we introduce the $G$-dual vectors to $e_i,f_j$ defined as
\begin{align*}
  e_i^*&=\sum_{j=0}^{\tilde l-1} f_j (\tilde G^{-1})_{j,i},&
  f_j^*=\sum_{i=0}^{\tilde r-1} (\tilde G^{-1})_{j,i}e_i.
\end{align*}

These vectors have some interesting properties
\begin{enumerate}
 \item If we change basis $\hat e_j=\sum_{i=0}^{\tilde l-1}a_{j,i}e_i$ and $\hat f_j=\sum_{i=0}^{\tilde l-1}f_i b_{i,j}$ then
 \begin{align*}
\hat e_j^*&=\sum_{i=0}^{\tilde l-1}e_i^*(a^{-1})_{i,j}&
\hat f_i^*&=\sum_{i=0}^{\tilde l-1}(b^{-1})_{i,j}f_j^*,
 \end{align*}
 where we have used the matrices $a=(a_{i,j})$ and $b=(b_{i,j})$, $a,b \in\operatorname{GL}(\tilde l,\mathbb M)$.
 \item The set of dual vectors $\{e_i^*\}_{i=0}^{\tilde l-1}$ and $\{f_i^*\}_{i=0}^{\tilde l-1}$ are bases with duals given by
 \begin{align*}
   (e^*_i)^*&=e_i, & (f_j^*)^*&=f_j.
 \end{align*}
  \item It is easy to see that they satisfy the bi-ortogonal type identity
 \begin{align*}
 G(e_i, e_j^*)=G(f_i^*,f_j)&=\delta_{i,j},&\forall i,j&=0,\dots,\tilde l-1.
 \end{align*}
\end{enumerate}
Given the bilinear form $G$ we can construct the associated projections on these
\begin{align*}
\begin{aligned}
  p&:V\rightarrow \tilde V,& p(v)&:=\sum_{i=0}^{\tilde l-1}G(v,e^*_i)e_i,\\
  q&:W\rightarrow \tilde W,& q(w)&:=\sum_{j=0}^{\tilde l-1}f_jG(f^*_j,w).
\end{aligned}
\end{align*}
These constructions are relevant when considering the Christoffel--Darboux operators and formulae in the matrix context.

\end{appendices}

\end{document}